\newcounter{mnotecount}[section]
\numberwithin{equation}{section}
\newcommand{\QED}{\hfill\ensuremath{\square}}
\newcommand{\half}{\frac{1}{2}}         %
\newcommand{\veps}{\varepsilon}
\newcommand{\R}{r^2+a^2}
\newcommand{\PR}{r^3-3Mr^2+a^2r+a^2M}
\newcommand{\prb}{\partial_{\rb}}
\newcommand{\di}{\mathrm{d}} 
\newcommand{\Donetwo}{\Omega_{\tb_1,\tb_2}}
\newcommand{\Dinfty}{\Omega_{\tb,\infty}}
\newcommand{\NPplus}{\Upsilon_{+1}}
\newcommand{\NPminus}{\Upsilon_{-1}}
\newcommand{\NPzero}{\Upsilon_{0}}
\newcommand{\NPR}{\widetilde{\Upsilon}}
\newcommand{\NPRplus}{\NPR_{+1}}
\newcommand{\NPRminus}{\NPR_{-1}}
\newcommand{\NPRzero}{\NPR_{0}}
\newcommand{\psiplusc}{\check{\psi}_{+1}}
\newcommand{\psiplus}{\psi_{+1}}
\newcommand{\psiminus}{\psi_{-1}}
\newcommand{\psizero}{\psi_{0}}
\providecommand{\psiminusHigh}[1]{\psiminus^{(#1)}}
\newcommand{\Psiplus}{\Psi_{+1}}
\newcommand{\Psiminus}{\Psi_{-1}}
\providecommand{\PsiminusHigh}[1]{\Psiminus^{(#1)}}
\newcommand{\hatpsizero}{\hat{\psi}_{0}^{\text{rad}}}
\newcommand{\hatpsizeroS}{\hat{\psi}_{0,s}^{\text{rad}}}
\newcommand{\hatpsizeroN}{\hat{\psi}_{0,n}^{\text{rad}}}
\newcommand{\Phiplus}{\Phi_{+1}}
\providecommand{\Phiminus}[1]{\Phi_{-1}^{(#1)}}
\providecommand{\phiminus}[1]{\phi_{-1}^{(#1)}}
\providecommand{\PhiplusHigh}[1]{\Phi_{+1}^{(#1)}}
\providecommand{\tildePhiminus}[1]{\widetilde{\Phi}_{-1}^{(#1)}}
\providecommand{\tildePhiplusHigh}[1]{\widetilde{\Phi}_{+1}^{(#1)}}
\providecommand{\BEplus}[2]{\mathbf{BE}_{#1,+1}^{#2}}
\providecommand{\BEminus}[2]{\mathbf{BE}_{#1,-1}^{#2}}
\providecommand{\InitialEnergyplus}[1]{\mathbb{I}^{#1}_{\Sigmazero,+1}}
\providecommand{\InitialEnergyminus}[1]{\mathbb{I}^{#1}_{\Sigmazero,-1}}
\providecommand{\InizeroEnergyplus}[2]{\mathbb{I}_{\Sigmazero,+1}^{\ell_0,#1,#2}}
\providecommand{\InizeroEnergyminus}[2]{\mathbb{I}_{\Sigmazero,-1}^{\ell_0,#1,#2}}
\providecommand{\NPCP}[1]{\mathbb{Q}_{+1}^{(#1)}}
\providecommand{\NPCN}[1]{\mathbb{Q}_{-1}^{(#1)}}
\newcommand{\Lxi}{\mathcal{L}_{\xi}}
\newcommand{\Leta}{\mathcal{L}_{\eta}}
\newcommand{\curlV}{\mathcal{V}}
\newcommand{\curlY}{\mathcal{Y}}
\newcommand{\TAO}{\mathbf{T}}
\newcommand{\pu}{\partial_u}
\newcommand{\pv}{\partial_v}
\newcommand{\DOC}{\mathcal{D}}
\newcommand{\tb}{\tau}
\newcommand{\pb}{\tilde{\phi}}
\newcommand{\rb}{\rho}
\newcommand{\Hyper}{\Sigma}
\newcommand{\Sigmazero}{\Hyper_{\tb_0}}
\newcommand{\Sigmatb}{\Hyper_{\tb}}
\newcommand{\Sigmatwo}{\Hyper_{\tb_2}}
\newcommand{\Sigmaone}{\Hyper_{\tb_1}}
\newcommand{\Horizon}{\mathcal{H}^+}
\newcommand{\Scri}{\mathcal{I}^+}
\newcommand{\Horizononetwo}{\Horizon_{\tb_1,\tb_2}}
\newcommand{\Scrionetwo}{\Scri_{\tb_1,\tb_2}}
\newcommand{\KDeri}{\mathbb{K}}
\newcommand{\SDeri}{\mathbb{S}}
\newcommand{\CDeri}{\mathbb{D}}
\newcommand{\CDeriphi}{\mathbb{D}_{\partial_{\phi}}}
\newcommand{\PDeri}{\mathbb{B}}
\newcommand{\PSDeri}{\widetilde{\mathbb{B}}}
\newcommand{\RDeri}{\mathbb{H}}
\newcommand{\ScriDeri}{\mathbb{X}_{\Scri}}
\newcommand{\ScriDeripm}{\tilde{\mathbb{X}}_{\Scri}}
\newcommand{\VR}{\hat{V}}
\newcommand{\curlVR}{\hat{\mathcal{V}}}
\newcommand{\edthR}{\mathring{\eth}}
\providecommand{\abs}[1]{\lvert#1\rvert}
\providecommand{\norm}[1]{\lVert#1\rVert}
\providecommand{\absHighOrder}[3]{\abs{#1}_{#2,#3}}
\providecommand{\absSDeri}[2]{\absHighOrder{#1}{#2}{\SDeri}}
\providecommand{\absCDeri}[2]{\absHighOrder{#1}{#2}{\CDeri}}
\providecommand{\absScriDeri}[2]{\abs{#1}_{{#2},{\mathbb{D}_1}}}
\providecommand{\absScriDerit}[2]{\abs{#1}_{{#2},{\mathbb{D}_2}}}
  \def\moverlay{\mathpalette\mov@rlay}
  \def\mov@rlay#1#2{\leavevmode\vtop{%
     \baselineskip\z@skip \lineskiplimit-\maxdimen
     \ialign{\hfil$#1##$\hfil\cr#2\crcr}}}
\newcommand{\squareS}{\moverlay{\square\cr {\scriptscriptstyle \mathrm S}}}
\newcommand{\squareShat}{\widehat{\squareS}}
\newcommand{\reg}{k}
\newcommand{\ireg}{k_0}
\newcommand{\regl}{k'}
\theoremstyle{plain}
\newtheorem{thm}{Theorem}[section]
\newtheorem{lemma}[thm]{Lemma}
\newtheorem{prop}[thm]{Proposition}
\theoremstyle{definition}
\newtheorem{definition}[thm]{Definition}
\newtheorem{remark}[thm]{Remark}
\title{Almost Price's law in Schwarzschild and decay estimates  in  Kerr for Maxwell field}
\author[S. Ma]{Siyuan Ma$^\dagger$}
\email{siyuan.ma@sorbonne-universite.fr}
\address{$^\dagger$Laboratoire Jacques-Louis Lions,
Sorbonne Université, Campus Jussieu,
4 place Jussieu 75005 Paris, France. }
\begin{document}

%
%
%


\allowdisplaybreaks

\begin{abstract}
We consider in this work the asymptotics of a Maxwell field in Schwarzschild and Kerr spacetimes. In any subextremal Kerr spacetime, we show energy and pointwise decay estimates for all components under an assumption of a basic energy and Morawetz estimate for spin $\pm 1$ components. If restricted to slowly rotating Kerr, we utilize the basic energy and Morawetz estimates proven in an earlier work to further improve these decay estimates such that the total power of decay for all components of Maxwell field is $-7/2$. In the end, depending on if the Newman--Penrose constant vanishes or not, we prove almost sharp Price's law decay $\tb^{-5+}$ (or $\tb^{-4+}$) for Maxwell field and $\tb^{-\ell -4+}$ (or $\tb^{-\ell -3+}$) for any $\ell$ mode of the field towards a static solution on a Schwarzschild background. All estimates are uniform in the exterior of the black hole.
\end{abstract}

\maketitle


\section{Introduction}
In this paper, we prove decay estimates for Maxwell field, a real two-form $\mathbf{F}_{\alpha\beta}$ satisfying the Maxwell equations
\begin{align}\label{eq:MaxwellEqs}
\nabla^{\alpha}\mathbf{F}_{\alpha\beta}&=0 & \nabla_{[\gamma}\mathbf{F}_{\alpha\beta]}&=0,
\end{align}
in the exterior of a subextremal Kerr black hole.

\subsection{Foliation of Kerr spacetimes}
\label{sect:foliation}
The metrics of the subextremal Kerr family of spacetimes $(\mathcal{M},g_{M,a})$ ($|a|< M$), when written in Boyer-Lindquist (B-L) coordinates $(t,r,\theta,\phi)$ \cite{boyer:lindquist:1967}, take the form of
\begin{align}\label{eq:KerrMetricBoyerLindquistCoord}
g_{M,a}= & -\left(1-\tfrac{2Mr}{\Sigma} \right) \di t^2 -\tfrac{2Mar \sin^2\theta}{\Sigma}(\di t \di \phi + \di \phi \di t) \nonumber\\
 & + \tfrac{\Sigma}{\Delta} \di r^2 + \Sigma \di \theta^2 +\tfrac{\sin^2\theta}{\Sigma} \left[(r^2+a^2)^2 -a^2\Delta \sin^2\theta\right]\di \phi^2,
\end{align}
where $M$ and $a$ are the mass and angular momentum per mass of the black hole and the functions
$\Delta=\Delta(r)= r^2 -2Mr +a^2$ and $\Sigma=\Sigma(r,\theta) = r^2+a^2 \cos^2\theta$.
The Schwarzschild metric \cite{schw1916} is obtained by setting $a=0$ in \eqref{eq:KerrMetricBoyerLindquistCoord}. The two roots $r_+=M+\sqrt{M^2-a^2}$    and $
r_-=M-\sqrt{M^2-a^2}$ of  function $\Delta$ correspond to the locations of event horizon $\mathcal{H}$ and Cauchy horizon, respectively. The domain of outer communication (DOC) of a Kerr black hole is denoted as
\begin{equation}\label{def:DOC}
\mathcal{D}=\overline{\{(t,r,\theta,\phi)\in \mathbb{R}\times (r_+,\infty)\times \mathbb{S}^2\}}.
\end{equation}
In the context, we also use a phrase \textquotedblleft{a slowly rotating Kerr spacetime\textquotedblright} which should be referred to as the DOC of a Kerr spacetime with $|a|/M\ll 1$ sufficiently small.

Let $\mu=\mu(r)=\frac{\Delta}{\R}$. Define additionally a tortoise coordinate $r^*$ by
\begin{align}
\di r^*=\mu^{-1}\di r,\qquad r^*(3M)=0.
\end{align}
The B-L coordinate system is convenient when expressing the form of the Kerr metric, but the metric shows a \textquotedblleft{singularity\textquotedblright} in the coefficients in this coordinate system. To justify this is not a real singularity, one shall use a different coordinate system--the ingoing Eddington--Finkelstein coordinate system $(v,r,\theta,\tilde{\phi})$--which is regular at future event horizon $\Horizon$ and is defined by
\begin{equation}\label{def:IngoingEddiFinkerCoord}
\left\{
  \begin{array}{ll}
    \di v=\di t +\di r^*, \\
    \di \pb=\di \phi +a(r^2 +a^2)^{-1}\di r^*,\\
    r=r,\\
    \theta=\theta.\\
  \end{array}
\right.
\end{equation}
To foliate the DOC, let $h=h(r)$ be as in \cite[Equation (1.7)]{andersson2019stability} and define a hyperboloidal time function
\begin{align}\label{def:timefts}
\tb={}v-h.
\end{align}
We call $(\tb,\rb=r, \theta, \pb)$ the hyperboloidal coordinates.
Let $\tb_0\geq 1$, and define for any $\tb_0\leq \tb_1<\tb_2$,
\begin{subequations}
\label{def:domainnotations}
\begin{align}
&\Sigmaone={}\{(\tb, \rb, \theta, \pb)|\tb=\tb_1\}\cap \DOC, \quad \Donetwo={}\bigcup_{\tb\in [\tb_1,\tb_2]}\Sigmatb,\\
&\Scrionetwo={}\lim_{c\to \infty}\{\rb=c\}\cap \Donetwo, \quad
\Horizononetwo={}\Donetwo\cap\Horizon.
\end{align}
\end{subequations}
We fix $\tb_0$ by requiring  $v\geq M$ on $\Sigmazero$ such that $v\geq c (\tb+\rb)$ in $\DOC_{\tb_0,\infty}$.
As discussed in \cite{andersson2019stability},  the level sets of the time function $\tb$ are strictly spacelike with
\begin{align}
c(M)r^{-2}\leq -g(\nabla \tb,\nabla\tb)\leq C(M) r^{-2}
\end{align}
for two positive universal constants $c(M)$ and $C(M)$,
and they cross the future event horizon regularly, and for large $r$, the level sets of $\tb$ are asymptotic to future null infinity $\Scri$.

\subsection{Maxwell equations in Newman--Penrose formalism}

As is shown in \cite{Ma2017Maxwell}, one can project the Maxwell field onto a Kinnersley null tetrad \cite{Kinnersley1969tetradForTypeD}  $(l,n,m,\overline{m})$ and obtain the Newman--Penrose components of the Maxwell field
\begin{equation}\label{eq:MaxwellNPcomponentswithnosuperscript}
\NPplus = \mathbf{F}_{\mu\nu} l^\mu m^\nu ,\ \NPzero= \mathbf{F}_{\mu\nu}(l^{\mu}n^{\nu}+\overline{m}^{\mu}m^{\nu}),\  \NPminus = \mathbf{F}_{\mu\nu} \overline{m}^\mu n^\nu.
\end{equation}
The Kinnersley tetrad written in B--L coordinates is
\begin{align}\label{eq:Kinnersleytetrad}
l^\mu &= \tfrac{1}{\Delta}(r^2+a^2 , \Delta , 0 , a), \notag\\
n^\mu &= \tfrac{1}{2\Sigma} (r^2+a^2 , - \Delta , 0 , a), \notag\\
m^\mu &= \tfrac{1}{\sqrt{2} \bar{\kappa}}\left(i a \sin{\theta},0 , 1, \tfrac{i}{\sin{\theta}}\right),
\end{align}
and $\overline{m}^{\mu}$ the complex conjugates of $m^{\mu}$, with $\bar\kappa$ being the complex conjugate of $\kappa=r-ia\cos\theta$.
Define further spin $s=\pm1$ components
\begin{equation}
\begin{split}
\psiplus= 2^{-1/2}\Delta \NPplus  ,\qquad\psiminus=2^{1/2}\Delta^{-1}\kappa^2\NPminus,
\end{split}
\label{eq:spinsfields}
\end{equation}
and the middle component
\begin{align}
\label{def:middlecompMaxwell}
\psizero={}&\kappa^2\NPzero.
 \end{align}
Denote the regular, future-directed ingoing and outgoing principal null vector fields in B-L coordinates \footnote{The operator $V$ here is $\frac{\Delta}{\R}$ times the operator $V$ in \cite{Ma2017Maxwell}.}
\begin{align}\label{def:VectorFieldYandV}
Y&\triangleq \tfrac{2\Sigma}{\Delta}n^{\mu}\partial_{\mu}
=\tfrac{(r^2+a^2)\partial_t +a\partial_{\phi}}{\Delta}-\partial_r, \ &\ V&\triangleq \tfrac{\Delta}{\R}l^{\mu}\partial_{\mu}= \tfrac{
(\R)\partial_t+a\partial_{\phi}}{\R}
+\tfrac{\Delta}{\R}\partial_r.
\end{align}
The full system of Maxwell equations can be written in a form of first-order differential system:
\begin{subequations}\label{eq:TSIsSpin1Kerr}
\begin{align}
\label{eq:TSIsSpin1KerrAngular0With1}
\sqrt{2}\bar{\kappa}m^{\mu}\partial_{\mu}\psizero={}& 2\kappa^2 Y\left({\kappa}^{-1}{\psiplus}\right),\\
\label{eq:TSIsSpin1KerrAngular0With-1}
\sqrt{2}\kappa \overline{m}^{\mu}\partial_{\mu}\psizero={}&2{\kappa}^2
\tfrac{\R}{\Delta} V\left(\kappa^{-1}{\Delta}\psiminus\right),\\
\label{eq:TSIsSpin1KerrRadipal0With1}
\tfrac{\R}{\kappa^2}V \psizero=
{}&2\Big(\partial_{\theta}
-\tfrac{i}{\sin\theta}\partial_{\phi}
-ia\sin\theta\partial_t
+\cot\theta\Big)\left(\kappa^{-1}{\psiplus}\right),\\
\label{eq:TSIsSpin1KerrRadial0With-1}
Y\psizero
={}&2\kappa^2\Big(\partial_{\theta}
+\tfrac{i}{\sin\theta}\partial_{\phi}
+ia\sin\theta\partial_t+\cot\theta\Big)
\left(\kappa^{-1}{\psiminus}\right).
\end{align}
\end{subequations}

Since the Kinnersley tetrad has singularity at $\Horizon$, we can use instead a regular Hawking--Hartle tetrad as in \cite{Ma2017Maxwell}, with the same $m^{\mu}$, $\tilde{l}^{\mu}=\tfrac{\Delta}{2\Sigma}l^{\mu}$ and $\tilde{n}^{\mu}=\tfrac{2\Sigma}{\Delta}n^{\mu}$, to define regular N--P components $\NPR_i$ which are
\begin{align}\label{def:regularNPComps}
\left\{
  \begin{array}{ll}
    \NPRplus(\mathbf{F}_{\alpha\beta})=\mathbf{F}_{\mu\nu} \tilde{l}^\mu m^\nu
    =\tfrac{\Delta}{2\Sigma}\NPplus
    =\tfrac{1}{\sqrt{2}\Sigma}\psiplus, \\
   \NPRzero(\mathbf{F}_{\alpha\beta})
   =\mathbf{F}_{\mu\nu}(\tilde{l}^{\mu}\tilde{n}^{\nu}+\overline{m}^{\mu}m^{\nu})
   =\NPzero
   =\kappa^{-2}\psizero,\\
    \NPRminus(\mathbf{F}_{\alpha\beta})=\mathbf{F}_{\mu\nu} \overline{m}^\mu \tilde{n}^\nu
    =\tfrac{2\Sigma}{\Delta}\NPminus
    =\tfrac{\sqrt{2}\Sigma}{\kappa^2}\psiminus.\\
  \end{array}
\right.
\end{align}
We consider in this work \underline{only regular Maxwell fields} in the sense that all the regular N--P components are smooth in the hyperboloidal coordinates in the region $\DOC_{\tb_0,\infty}$.

\subsection{TME and BEAM estimates for spin $\pm 1$ components}

It is remarkable that Teukolsky found in \cite{Teu1972PRLseparability} that the spin $s=\pm 1$ components satisfy a decoupled, separable wave equation--the Teukolsky Master Equation (TME)--which in B--L coordinates takes the form of
\begin{align}\label{eq:TME}
& -\left[\tfrac{(r^2+a^2)^2}{\Delta} -a^2 \sin^2{\theta} \right] \tfrac{\partial^2 \psi_{[s]}}{\partial t^2} - \tfrac{4Mar}{\Delta} \tfrac{\partial^2 \psi_{[s]}}{\partial t \partial \phi}-\left[\tfrac{a^2}{\Delta} -\tfrac{1}{\sin^2{\theta}} \right] \tfrac{\partial^2 \psi_{[s]}}{\partial \phi^2}   \notag\\
&  +\Delta^{s} \tfrac{\partial}{\partial r} \left( \Delta^{-s+1} \tfrac{\partial \psi_{[s]}}{\partial r} \right) + \tfrac{1}{\sin{\theta}} \tfrac{\partial}{\partial \theta} \left( \sin{\theta} \tfrac{\partial \psi_{[s]}}{\partial \theta}\right) +2s \left[ \tfrac{a(r-M)}{\Delta} + \tfrac{i \cos{\theta}}{\sin^2{\theta} } \right] \tfrac{\partial \psi_{[s]}}{\partial \phi} \notag\\
&  +2s\left[ \tfrac{M(r^2-a^2)}{\Delta} -r -ia \cos{\theta} \right] \tfrac{\partial \psi_{[s]}}{\partial t}- (s^2 \cot^2{\theta} +s) \psi_{[s]} = 0 .
\end{align}
Note that these N--P scalars satisfy the TME differ with the ones used in \cite{Teukolsky1973I} by a rescaling of $2^{-s/2}\Delta^s$, and the reason we use these scalars lies in the fact that they are both regular at $\Horizon$ from \eqref{def:regularNPComps}. This TME serves as a starting point in obtaining estimates for the spin $\pm 1$ components, from which the full Maxwell field can then be recovered from system \eqref{eq:TSIsSpin1Kerr}.

A robust way of proving decay estimates for a wave equation is to first show a certain type of weak decay estimates, known as a Morawetz estimate. A uniform boundedness of a non-degenerate energy and such a Morawetz estimate are useful as precursors in proving stronger decay estimates. It is shown in our earlier work \cite{Ma2017Maxwell} that such estimates hold true for spin $\pm 1$ components on slowly rotating Kerr backgrounds, and we call such estimates in this paper \textquotedblleft{basic energy and Morawetz estimates (BEAM estimates).\textquotedblright} For convenience of later discussions, we shall introduce a few notations before restating these BEAM estimates.
\begin{definition}
Define $\di^2\mu=\sin\theta \di \theta \wedge \di \pb$, and define the reference volume forms
\begin{subequations}
\begin{align}
\di^3\mu ={}&\di \rb\wedge \di^2\mu,\\
\di^4\mu ={}&\di \tb\wedge\di^3\mu .
\end{align}
\end{subequations}
Given a $1$-form $\nu$, let $\di^3\mu_{\nu}$ denote a Leray $3$-form such that $\nu\wedge\di^3\mu_{\nu}=\di^4\mu$.
\end{definition}

Note that these are convenient reference volume forms in calculations and in stating
the estimates, but not the volume element of DOC or the induced volume form on a $3$-dimensional hypersurface.

\begin{definition}
Let a multi-index $\mathbf{a}$ be an ordered set $\mathbf{a}=(a_1,a_2,\ldots,a_m)$ with all $a_i\in \{1,\ldots, n\}$, $m,n\in \mathbb{Z}^+$ and let $\mathbb{X}=\{X_1, X_2, \ldots, X_n\}$ be a set of spin-weighted operators. Define $|\mathbf{a}|=m$ and define $\mathbb{X}^{\mathbf{a}}=X_{a_1}X_{a_2}\cdots X_{a_m}$. Let $\varphi$ be a spin-weighted scalar, and define its pointwise norm of order $k$, $k\in \mathbb{N}$, as
\begin{align}
\absHighOrder{\varphi}{m}{\mathbb{X}}={}\sqrt{\sum_{\abs{\mathbf{a}}\leq m}\abs{\mathbb{X}^{\mathbf{a}}\varphi}^2} .
\end{align}
\end{definition}

\begin{definition}
Let $\varphi$ be a spin weight $s$ scalar.
Let the spherical edth operators $\edthR$ and $\edthR'$ be as defined in B--L coordinates by
\begin{subequations}
\begin{align}
\edthR\varphi={}&\frac{1}{\sqrt{2}}\partial_{\theta}\varphi
+\frac{i}{\sqrt{2}}\csc\theta\partial_{\phi}\varphi
-\frac{s}{\sqrt{2}}\cot\theta\varphi,\\
\edthR'\varphi={}&\frac{1}{\sqrt{2}}\partial_{\theta}\varphi
-\frac{i}{\sqrt{2}}\csc\theta\partial_{\phi}\varphi
+\frac{s}{\sqrt{2}}\cot\theta\varphi.
\end{align}
\end{subequations}
Define first order differential operators \begin{align}\label{def:curlVop}
\curlY(\cdot)={}&\sqrt{r^2+a^2}Y(\sqrt{r^2+a^2}\cdot),
&
\curlV(\cdot)={}&\sqrt{r^2+a^2}V(\sqrt{r^2+a^2}\cdot).
\end{align}
Define two Killing vector fields
\begin{align}
\label{def:Killingvectors}
\Lxi={}\partial_{\tb}, \quad \Leta=\partial_{\pb}.
\end{align}
Define a set of operators
\begin{subequations}
\begin{align}
\PDeri={}\{Y,V, r^{-1}\edthR,r^{-1}\edthR',\Leta\}
\end{align}
adapted to the Hawking--Hartle tetrad, and its rescaled one
\begin{align}
\PSDeri={}\{rY,rV, \edthR,\edthR'\}.
\end{align}
Define a set of operators
\begin{align}
\CDeri={}\{Y,rV, \edthR,\edthR'\}
\end{align}
adapted to both the hyperboloidal foliation and the set of commutators.
Additionally, define a set of rescaled spherical edth operators
\begin{align}
\SDeri={}\{r^{-1}\edthR,r^{-1}\edthR'\}.
\end{align}
\end{subequations}
\end{definition}
Now we are able to define energy norms and (spacetime) Morawetz norms.
\begin{definition}
\label{def:basicweightednorm}
Let $\varphi$ be a spin-weighted scalar and let $k\in \mathbb{N}$ and $\gamma\in \mathbb{R}$. Let $\Omega$ be a $4$-dimensional subspace of the DOC and let $\Sigma$ be a $3$-dimensional space that can be parameterized by $(\rb,\theta,\pb)$. Define
\begin{subequations}
\begin{align}
\norm{\varphi}_{W_{\gamma}^{\reg}(\Omega)}^2
={}&\int_{\Omega} r^{\gamma}\absCDeri{\varphi}{\reg}^2\di^4\mu,\\
\norm{\varphi}_{W_{\gamma}^{\reg}(\Sigma)}^2
={}&\int_{\Hyper} r^{\gamma}\absCDeri{\varphi}{\reg}^2\di^3\mu,\\
\norm{\varphi}_{W_{\gamma}^{\reg}(\mathbb{S}^2(r))}^2
={}&\int_{\mathbb{S}^2} r^{\gamma}\absSDeri{\varphi}{\reg}^2\di^2\mu.
\end{align}
\end{subequations}
\end{definition}
For convenience of stating the BEAM estimates for spin $\pm 1$ components and the main theorems, we define a few scalars.
\begin{definition}\label{def:scalars}
For any $i\in \mathbb{Z}^+$, define
\begin{subequations}\label{eq:scalars}
\begin{align}
\psiminusHigh{0}={}\psiminus,\qquad
\psiminusHigh{i}={}\curlV^i\psiminusHigh{0}
\end{align}
and their radiation fields
\begin{align}
\label{def:Psiminusi}
\PsiminusHigh{0}={}\sqrt{\R}\psiminusHigh{0},\qquad
\PsiminusHigh{i}={}\sqrt{\R}\psiminusHigh{i},
\end{align}
and define the radiation field of $\psiplus$
\begin{align}
\Psiplus={}\sqrt{\R}\psiplus.
\end{align}
\end{subequations}
\end{definition}
Let us in the end define two initial energies of spin $\pm 1$ components respectively, both of which are crucial in stating the results about the asymptotics of the Maxwell field.

\begin{definition}
\label{def:initialenergyp=2:spinpm1}
Let $3\leq\reg\in \mathbb{Z}^+$. Define on $\Sigmazero$ an initial energy  of spin $+1$ component
\begin{subequations}
\begin{align}
\InitialEnergyplus{\reg}={}
\norm{\Psiplus}_{W_{-2}^{\reg}(\Sigmazero)}
+\norm{rV\Psiplus}_{W_{0}^{\reg-1}(\Sigmazero)},
\end{align}
and an initial energy of spin $-1$ component
\begin{align}
\InitialEnergyminus{\reg}={}\sum_{i=0,1,2}\norm{\PsiminusHigh{i}}_{W_{-2}^{\reg-i}(\Sigmazero)}
+\norm{rV\PsiminusHigh{2}}_{W_{0}^{\reg-3}(\Sigmazero)}.
\end{align}
\end{subequations}
\end{definition}

The BEAM estimates for spin $\pm 1$ components proven in \cite{Ma2017Maxwell} on slowly rotating Kerr backgrounds are as follows.
\begin{thm}\label{thm:BEAM}
In the DOC of a slowly rotating Kerr spacetime $(\mathcal{M},g=g_{M,a})$, given any $0<\delta<1/2$ and any $2\leq \reg\in \mathbb{N}^+$, there exist universal constants $\veps_0=\veps_0(M)$ and $C=C(M,\veps_0,\delta,\reg)$ such that for all $|a|/M\leq \veps_0$ and any solution $\mathbf{F}_{\alpha\beta}$ to the Maxwell equations \eqref{eq:MaxwellEqs}, one has
BEAM estimates in the region $\Donetwo$ for any $\tb_0\leq \tb_1<\tb_2$:
\begin{subequations}\label{eq:BEAM}
\begin{align}
\label{eq:BEAM:-1}
\hspace{4ex}&\hspace{-4ex}
\sum_{i=0,1}\sum_{\abs{\mathbf{a}}\leq \reg-2}\left(
\norm{\PDeri^{\mathbf{a}}\curlV^i\psiminus}_{W_0^1(\Sigmatwo)}
+\norm{\PDeri^{\mathbf{a}}\curlV^i\psiminus}_{W_{-1}^0(\Donetwo)}
+\norm{\PDeri^{\mathbf{a}}\PSDeri(\curlV^i\psiminus)}_{W_{-1}^0(\Donetwo\cap\{r\geq 4M\})}
\right)\notag\\
\leq {}&C\sum_{i=0,1}\sum_{\abs{\mathbf{a}}\leq \reg-2}
\norm{\PDeri^{\mathbf{a}}\curlV^i\psiminus}_{W_0^1(\Sigmaone)},\\
\label{eq:BEAM:+1}
\hspace{4ex}&\hspace{-4ex}
\sum_{\abs{\mathbf{a}}\leq \reg-2}\left(
\norm{\PDeri^{\mathbf{a}}(r^{-\delta}\psiplus)}_{W_0^1(\Sigmatwo)}
+\norm{\PDeri^{\mathbf{a}}Y\psiplus}_{W_0^1(\Sigmatwo)}
\right)\notag\\
\hspace{4ex}&\hspace{-4ex}
+\sum_{\abs{\mathbf{a}}\leq \reg-2}\Big(\norm{\PDeri^{\mathbf{a}}(r^{-\delta}\psiplus)}_{W_{-1}^0(\Donetwo)}
+\norm{\PDeri^{\mathbf{a}}Y\psiplus}_{W_{-1-\delta}^0(\Donetwo)}\notag\\
&\qquad \quad
+\norm{\PDeri^{\mathbf{a}}\PSDeri(r^{-\delta}\psiplus)}_{W_{-1}^0(\Donetwo)}
+\norm{\PDeri^{\mathbf{a}}\PSDeri(Y\psiplus)}_{W_{-1}^0(\Donetwo\cap\{r\geq 4M\})}\Big)\notag\\
\leq {}&C\sum_{\abs{\mathbf{a}}\leq \reg-2}\left(
\norm{\PDeri^{\mathbf{a}}(r^{-\delta}\psiplus)}_{W_0^1(\Sigmaone)}
+\norm{\PDeri^{\mathbf{a}}Y\psiplus}_{W_0^1(\Sigmaone)}\right).
\end{align}
\end{subequations}
\end{thm}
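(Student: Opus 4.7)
The plan is to reproduce the scheme of \cite{Ma2017Maxwell}, which proceeds in three conceptual stages: reduction of the TME to a Regge--Wheeler-type wave equation via a Chandrasekhar-style transformation, derivation of basic multiplier estimates for the transformed scalars, and then transfer of these estimates back to the original spin $\pm 1$ components $\psi_{\pm 1}$. For the spin $-1$ component, I would first set $\Phi_{-1} = \sqrt{\R}\,\psi_{-1}$ (and its $\curlV$-iterates $\PsiminusHigh{i}$) and compute that $\curlV\psiminus$ satisfies, after rescaling, a wave equation whose principal term is $-4\mu^{-1}\partial_u\partial_v + r^{-2}\laplacianslash$ plus a real potential of size $O(r^{-3})$ with the correct sign near the photon sphere. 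For the spin $+1$ component, the analogous transformation acts with $Y$ instead, producing a wave equation for $Y\psiplus$; this is the algebraic reason why the spin $+1$ estimate comes in two pieces, one for the undifferentiated $\psiplus$ (which controls the \emph{ingoing} behavior at $\Horizon$) and one for $Y\psiplus$.

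Next, on each of the two Regge--Wheeler-type scalars I would apply the multiplier vector field $f_T T + f_K K + f_Y Y + f_V V$ with carefully chosen radial weights, together with the redshift commutator near $\Horizon$ and the $r^p$-hierarchy of Dafermos--Rodnianski in a neighborhood of $\Scri$. The Morawetz current is built from a standard $h(r)\partial_{r^*}$-type multiplier whose seed function is engineered to have its critical point at (a slight perturbation of) $r = 3M$ so that the trapping degeneracy is confined to an $O(\veps_0)$ neighborhood; since for $|a|/M \le \veps_0$ the ergoregion $r_+ \le r \le M + \sqrt{M^2 - a^2\cos^2\theta}$ lies strictly inside the domain where the $T$-energy current is positive (thanks to a small admixture of the axial Killing field $K$), the superradiant term is absorbed by the Morawetz bulk term, exactly as in \cite{andersson2019stability}. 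This produces the desired BEAM bounds for $\psi_{\pm 1}$ at the lowest regularity; higher-order estimates follow by commuting with the elements of $\PDeri$, all of which either commute with the wave operator on Kerr (for $\Lxi,\Leta$) or generate at worst first-order error terms (for $Y$, $V$, and the angular $\edthR, \edthR'$) that can be re-absorbed by one another in the $\PDeri$-norm hierarchy.

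The transfer back to the theorem as stated is the routine but bookkeeping-intensive step. For $\psiminus$ one integrates $\curlV \PsiminusHigh{i-1} = \PsiminusHigh{i}$ from $\Sigmatb$ outward, using the $r^p$-flux at $\Scri$ to control the lower-order $\PsiminusHigh{0}$ in terms of $\PsiminusHigh{1}$, $\PsiminusHigh{2}$ and initial data — this is why the $-1$ estimate involves a sum over $i = 0, 1$ of $\curlV^i\psiminus$. For $\psiplus$ the analogous loss appears through the factor $r^{-\delta}$: near $\Scri$ the natural quantity controlled by the Morawetz bulk is $Y\psiplus$ with its full weight, whereas the undifferentiated $\psiplus$ picks up an $r^{-\delta}$ penalty because the Hardy inequality used to recover it from $Y\psiplus$ loses an arbitrarily small power of $r$.

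The main obstacle, and the place where the small-$|a|/M$ hypothesis is used essentially, is the interaction between superradiance and trapping. On Schwarzschild these two phenomena are geometrically separated (no ergoregion at all), but on Kerr one must simultaneously dominate the indefinite $K$-energy flux inside the ergoregion and produce a non-negative Morawetz bulk despite the degeneration of the principal symbol of the multiplier at the photon sphere. The combined multiplier of \cite{Ma2017Maxwell}, together with the fact that for $s = \pm 1$ the TME potential is complex and first-order (so the reduction to a self-adjoint Regge--Wheeler operator is genuinely nontrivial), is the substantive content of the theorem; once that is in hand, the remainder is a matter of commutator bookkeeping and weighted Hardy inequalities.
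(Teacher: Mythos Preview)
The paper does not prove this theorem; it is quoted from the author's earlier work \cite{Ma2017Maxwell} (see the sentence immediately preceding the statement: ``The BEAM estimates for spin $\pm 1$ components proven in \cite{Ma2017Maxwell} on slowly rotating Kerr backgrounds are as follows''). There is therefore no in-paper proof to compare against.

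Your sketch is a fair high-level summary of the strategy in \cite{Ma2017Maxwell}: Chandrasekhar-type differential transform to a Regge--Wheeler system (via $\curlV$ for spin $-1$, via $Y$ for spin $+1$), Morawetz multiplier with photon-sphere degeneracy, redshift near $\Horizon$, Hawking--Reall-type $T+\chi K$ energy to handle the small ergoregion, and commutation with $\PDeri$ for higher order. Two small inaccuracies worth flagging. First, the $r^p$ hierarchy of Dafermos--Rodnianski is not part of the BEAM estimate itself; in this paper it enters only afterwards (Section~3, Propositions \ref{prop:rpplusglobal} and \ref{prop:rpglobal:Phiminus}) to upgrade BEAM to energy decay. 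The large-$r$ control in \eqref{eq:BEAM} comes from the Morawetz multiplier and standard weighted energy currents, not from an $r^p$ flux. Second, your explanation of the $r^{-\delta}$ weight on $\psiplus$ as a Hardy loss in recovering $\psiplus$ from $Y\psiplus$ is not quite the mechanism: the loss is intrinsic to the spin $+1$ multiplier estimate near infinity (the first-order $V$-term in the TME for $s=+1$ has the wrong sign for a clean $r^0$ energy), and is already present at the level of the transformed system rather than appearing in a post-processing Hardy step.
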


\subsection{Two conditions for spin $\pm 1$ components}

Such BEAM estimates in Theorem \ref{thm:BEAM} are only available for slowly rotating Kerr spacetimes, but are not proven yet for an arbitrary subextremal Kerr spacetime. However, from the experience of proving BEAM estimates for scalar field on subextremal Kerr backgrounds in \cite{dafermos2016decay}, these estimates for spin $\pm 1$ components of Maxwell field are expected to be extended to full subextremal Kerr backgrounds if combined with a mode stability result on the real axis in full subextremal Kerr spacetimes which in turn has been shown in \cite{andersson2017mode,da2019mode}; hence, we are inspired to put forward a BEAM condition:
\begin{definition}\textbf{(BEAM condition to order $\reg$).}
Let $2\leq\reg\in \mathbb{N}^+$. Let $M>0$ and $a$ $(|a|<M)$ be given. The spin $\pm 1$ components satisfy \textquotedblleft{BEAM condition to order $\reg$\textquotedblright} if there exists a constant $0<\delta<1/2$ and a constant $C=C(M, a, \delta, \reg)$ such that the BEAM estimates \eqref{eq:BEAM} hold true in the DOC of a Kerr spacetime $(\mathcal{M},g_{M,a})$.
\end{definition}

Combined with other tools, BEAM estimates can be used to show decay estimates for the energy, from which pointwise behaviours of the field then follow. The late-time asymptotics are relevant to many problems like \emph{the black hole (in)stability} and \emph{Strong Cosmic Censorship}, and there is a heuristic Price's law \cite{Price1972SchwScalar,Price1972SchwIntegerSpin,price2004late} which predicts the sharp upper and lower bounds of the tails of spin fields on a Schwarzschild background.  A novel idea in \cite{angelopoulos2018vector}, which proves almost Price's law for scalar field in Reissner--Nordstr\"{o}m spacetimes, is to show a weighted basic energy has stronger decay rate than the ones which are obtained in former works, and this stronger energy decay enables the authors to prove in a subsequent work \cite{angelopoulos2018late} the sharp upper and lower bounds for the scalar field. Although the methodology therein requires the background to be spherically symmetric and treats only the simplest model--the equation of scalar field, it can in principal be generalized to other spin fields on Kerr backgrounds. A first natural question would be what the analogous basic energy is for higher spin fields in Kerr spacetimes. We propose an appropriate notion of such an basic energy for each spin $\pm 1$ component as follows.
\begin{definition}
Let $\reg\in \mathbb{Z}^+$, let $j\in \mathbb{N}$ and let $p\geq 0$. Define the basic energies with weight $p$ for spin $\pm 1$ components on $\Sigmatb$
\begin{align}
\BEplus{\tb}{\reg,j,p}
={}&\norm{\Lxi^j\Psiplus}_{W_{-2}^{\reg}(\Sigmatb)}
+\norm{rV\Lxi^j\Psiplus}_{W_{p-2}^{\reg-1}(\Sigmatb)},\\
\BEminus{\tb}{\reg,j,p}={}&
\sum_{i=0,1}
\Big(\norm{\Lxi^j\PsiminusHigh{i}}_{W_{-2}^{\reg-i}(\Sigmatb)}
+\norm{rV\Lxi^j\PsiminusHigh{i}}_{W_{p-2}^{\reg-i-1}(\Sigmatb)}\Big).
\end{align}
\end{definition}
On the other hand, while it is routine to obtain pointwise decay from the energy decay for scalar field, it is non-trivial to do so for higher spin fields. We are thus interested in the problem that what (almost) sharp pointwise asymptotics can be achieved given a certain amount of decay of this basic energy; hence, this naturally introduces a condition of decay rate for such basic energies.

\begin{definition}\textbf{(Basic energy $\gamma$-decay condition).}
Let $\gamma \geq 1$ and let $k\in \mathbb{Z}^+$. The spin $+ 1$ and $-1$ components are called to satisfy \textquotedblleft{basic energy $\gamma$-decay condition\textquotedblright} on a subextremal Kerr background $(\mathcal{M},g_{M,a})$ if for any $j\in\mathbb{N}$, there exist constants  $D_{\pm 1}=D_{\pm 1}(M,a,\reg,j)$ such that for any $p\in [0,1]$,
\begin{subequations}
\begin{align}
\BEplus{\tb}{\reg,j,p}\leq{}& D_{+1}\tb^{-\gamma+p-2j} \\
\BEminus{\tb}{\reg,j,p}\leq{}& D_{-1}\tb^{-\gamma-2+p-2j},
\end{align}
\end{subequations}
respectively.
\end{definition}

\subsection{Main theorems}
We are now ready to state the main results of this work. The first result is to use the $r^p$ method initiated in \cite{dafermos2009new} and follow the approach in the part of treating spin $\pm 2$ components of linearized gravity in \cite{andersson2019stability} to prove that the BEAM condition implies $\tb^{-2}$ decay for the basic energy, i.e. basic energy $2$-decay condition, for each of spin $\pm 1$ components.
\begin{thm}\label{thm:1}
\textbf{(BEAM condition implies basic energy $2$-decay condition).} Given the BEAM condition to order $\ireg$ with $\ireg$ suitably large, there exists a constant $j_0=j_0(\reg_0)$ and a constant $\regl(j)$ such that for any $0\leq j\leq j_0$, the basic energy $2$-decay condition is satisfied for both spin $\pm1$ components with $\reg\leq \ireg-\regl(j)$, $D_{+1}=C\InitialEnergyplus{\reg_0}$ and $D_{-1}=C\InitialEnergyminus{\reg_0}$ for some $C=C(M,a,\reg_0,j)$.
\end{thm}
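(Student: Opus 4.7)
The plan is to apply the $r^p$-weighted energy method of \cite{dafermos2009new}, in the form used for spin $\pm 2$ components in \cite{andersson2019stability}, to suitable wave equations for the radiation fields of the spin $\pm 1$ Teukolsky variables, and to combine the resulting hierarchy of estimates with the BEAM condition via a dyadic pigeonhole argument so as to produce the $\tb^{-2}$ energy decay.

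First, I would derive the governing wave equations. As in \cite{Ma2017Maxwell}, the radiation field $\Psiplus=\sqrt{\R}\psiplus$ satisfies a wave equation of the schematic form
\[
-2\curlY\curlV\Psiplus + r^{-2}\edthR'\edthR\Psiplus + r^{-1}\cdot(\text{l.o.t.}) = a\cdot(\text{err}_{+1}),
\]
which is well-adapted to the $r^p$-method for $p\in[0,2)$. On the spin $-1$ side the untransformed radiation field $\PsiminusHigh{0}=\sqrt{\R}\psiminus$ has borderline first-order terms at $\Scri$; after a Chandrasekhar-type transformation, $\PsiminusHigh{1}=\curlV\PsiminusHigh{0}$ satisfies a wave equation with the same favorable structure. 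Both equations commute with $\Lxi$ up to $a$-dependent errors that are controlled by $\Leta$-commutations together with the BEAM estimates \eqref{eq:BEAM}.

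For each $p\in[0,2)$ I would then contract these wave equations with a multiplier of the form $r^p\curlV(\cdot)$, integrate over $\Donetwo$ against $\di^4\mu$, and integrate by parts. The bulk terms near $\Scri$ furnish the $(p-1)$-weighted basic energy on the bulk, the boundary terms on $\Sigmatb$ reproduce $\BEplus{\tb}{\reg,0,p}$ (and its spin $-1$ analogue), while the remaining errors (supported in $\{r\leq R\}$, in the trapping region, and the $a$-weighted ones) are absorbed using the BEAM condition to order $\ireg$ at a fixed derivative loss $\regl=\regl(0)$. This yields the nested hierarchy
\[
\BEplus{\tb_2}{\reg,0,p} + \int_{\tb_1}^{\tb_2}\BEplus{\tb}{\reg,0,p-1}\,\di\tb \,\lesssim\, \BEplus{\tb_1}{\reg,0,p} + (\text{BEAM}_{\ireg}\text{-flux on }\Sigmaone),
\]
together with its analogue for $\BEminus{\tb}{\reg,0,p}$, where one runs the hierarchy simultaneously on the pair $(\PsiminusHigh{0},\PsiminusHigh{1})$ and uses the transport identity $\PsiminusHigh{1}=\curlV\PsiminusHigh{0}$ to feed control of the higher-order quantity back into the lower. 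Iterating from $p=2-\delta$ down to $p=0$ by the standard dyadic pigeonhole \cite{dafermos2009new} then produces $\BEplus{\tb}{\reg,0,p}\lesssim \tb^{p-2}\InitialEnergyplus{\ireg}$ and $\BEminus{\tb}{\reg,0,p}\lesssim \tb^{p-4}\InitialEnergyminus{\ireg}$ for all $p\in[0,1]$, with $\reg\leq \ireg-\regl$.

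For $j\geq 1$ I would commute the wave equations with $\Lxi^j$: since $\Lxi$ is Killing and commutes with the TME exactly, $\Lxi^j\Psiplus$ and $\Lxi^j\PsiminusHigh{i}$ satisfy the same wave equations, and the same hierarchy produces the additional $\tb^{-2j}$ factor via $j$ further bulk--boundary iterations, each costing $\regl(j)-\regl(j-1)$ extra derivatives through the BEAM absorption; this fixes the form of $\regl(j)$ and caps $j$ at some $j_0(\ireg)$ once the derivative budget is exhausted. The main obstacle I expect is closing the top-of-hierarchy estimate as $p\nearrow 2$ for the spin $-1$ variable: the transformed wave equation for $\PsiminusHigh{i}$ carries first-order perturbations that, on Kerr, involve $a$-weighted $\Lxi$ and $\Leta$ derivatives with borderline $r^{-2}$ bulk weight near $\Scri$, and these cannot be absorbed by the spacetime Morawetz term alone. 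The remedy, mirroring the spin $-2$ treatment in \cite{andersson2019stability}, is to close the estimate on the pair $(\PsiminusHigh{0},\PsiminusHigh{1})$ simultaneously, exploiting the transport identity together with the $\Leta$-commuted estimate \eqref{eq:BEAM:-1}; this coupled structure is precisely what dictates defining $\BEminus{\tb}{\reg,j,p}$ as a sum over $i=0,1$ and what forces the fixed derivative loss $\regl$.
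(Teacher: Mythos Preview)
Your overall architecture is correct and matches the paper's approach: derive $r^p$-compatible wave equations for the radiation fields, run the hierarchy for $p\in[0,2]$, and combine with the BEAM estimates via the dyadic pigeonhole to extract $\tb$-decay. There are, however, two genuine gaps.

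\textbf{The spin $-1$ hierarchy is one level too short.} Running the $r^p$ hierarchy on the pair $(\PsiminusHigh{0},\PsiminusHigh{1})$ only yields $\BEminus{\tb}{\reg,0,p}\lesssim \tb^{-2+p}$, not $\tb^{-4+p}$ as you claim. The basic energy $2$-decay condition for spin $-1$ requires $\BEminus{\tb}{\reg,j,p}\lesssim\tb^{-4+p-2j}$, i.e.\ two extra powers of $\tb$ compared with spin $+1$. The paper obtains this by introducing a \emph{third} variable $\PsiminusHigh{2}=\curlV^2\psiminus$ (equivalently $\Phiminus{2}$), deriving its wave equation (which again falls into the $r^p$-admissible class with $b_{V,-1}=2$ and $b_{0,0}+s+|s|=0$), and running a \emph{second} full $r^p$ hierarchy on the triple $(\PsiminusHigh{0},\PsiminusHigh{1},\PsiminusHigh{2})$. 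The crucial identity is $F^{(1)}(\reg,2,\tb,\Psiminus)\sim F^{(2)}(\reg,0,\tb,\Psiminus)$, which lets the $\tb^{-2}$ decay of the higher-level energy feed back as the initial data for the lower-level hierarchy; two passes then give $\tb^{-4}$. This is exactly why $\InitialEnergyminus{\reg}$ in Definition~\ref{def:initialenergyp=2:spinpm1} involves $\PsiminusHigh{2}$. Your proposal, stopping at $i=1$, cannot recover the correct rate.

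\textbf{The endpoint $p=2$ on Kerr is not addressed.} Iterating only over $p\in[0,2-\delta]$ as you do yields $\tb^{-2+\delta+p}$, not $\tb^{-2+p}$; the $\delta$ cannot be removed at fixed regularity. The $p=2$ estimate (Proposition~\ref{prop:wave:rp}, cases \eqref{eq:rp:p=2} and \eqref{eq:rp:p=2:2}) carries an $a^2$-weighted error term $\int\tb^{1+\delta}\|\Lxi\varphi\|^2\,\di\tb$ coming from the $a^2\sin^2\theta\,\Lxi^2$ and $a\Lxi\Leta$ terms in $\squareShat_s$, which is borderline at $p=2$ and cannot be absorbed by BEAM alone. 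The paper closes this by a bootstrap: first establishing a preliminary $\tb^{-5/3(1+j)}$ decay from the $p\in[0,5/3]$ hierarchy, then feeding this into the $a^2$ error at $p=2$ to make it summable. Your proposal omits this step entirely; on genuine Kerr ($a\neq 0$) the argument as written would not close at the top of the hierarchy.
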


\begin{remark}
As discussed above, such a BEAM condition is currently only valid for slowly rotating Kerr backgrounds. However, from the experience of extending a BEAM estimate of scalar field from slowly rotating Kerr to full subextremal Kerr, it is enough to combine the techniques of proving BEAM estimates for spin $\pm 1$ components in slowly rotating Kerr spacetimes with a mode stability result for spin $s=\pm 1$ TME in any subextremal Kerr spacetime to justify this BEAM condition on any subextremal Kerr background.
\end{remark}

The second main result is to see what the asymptotics of all components of Maxwell field are by assuming basic energy $\gamma$-decay condition. We shall need the following definition.

\begin{definition}
Let $\leftidx^{\star} \mathbf{F}$ be the Hodge dual of the Maxwell field $\mathbf{F}$.
Define the electronic and magnetic charges of a Maxwell field by
\begin{align}
q_{\mathbf{E}}={}&\frac{1}{4\pi}\int_{\mathbb{S}^2(\tb,\rb)}\leftidx^{\star} \mathbf{F}, &q_{\mathbf{B}}={}&\frac{1}{4\pi}\int_{\mathbb{S}^2(\tb,\rb)} \mathbf{F}.
\end{align}
\end{definition}
These two charges are constants at all spheres $\mathbb{S}^2(\tb,\rb)$ and can be calculated from the initial data. See also Lemma \ref{lem:decomp:Maxwellfield}.

\begin{thm}\label{thm:2}
\textbf{(Basic energy $\gamma$-decay condition implies pointwise decay for the full Maxwell field).}
Let $j\in \mathbb{N}$, and let the basic energy $\gamma$-decay condition with a $\gamma\geq 1$, a suitably large $\reg$ and $D_{\pm 1}=D_{\pm 1}(M,a,\reg,j)$ be satisfied for spin $\pm 1$ components in a subextremal Kerr spacetime $(\mathcal{M},g_{M,a})$. For any $\veps\in (0,1/2)$, there exist universal constants $\veps_0=\veps_0(M)>0$, $C=C(\veps)$ and $\regl>0$ such that for all $\abs{a}\leq \veps_0$,
\begin{enumerate}
  \item for spin $\pm 1$ components,
  \begin{subequations}
  \label{eq:thm:2:spinpm1}
  \begin{align}
  \label{eq:thm:2:spinpm1:+1}
  \absCDeri{\Lxi^j\NPRplus}{\reg-\regl}\leq{}& C\times(D_{+ 1}+D_{- 1})^{\half}v^{-3}
  \tb^{-\frac{\gamma-1}{2}+\veps-j}\max\{r^{-\veps}, \tb^{-\veps}\},\\
  \label{eq:thm:2:spinpm1:-1}
  \absCDeri{\Lxi^j\NPRminus}{\reg-\regl}\leq{}& C\times(D_{-1})^{\half}v^{-1}
  \tb^{-\frac{\gamma+3}{2}+\veps-j}
  \max\{r^{-\veps}, \tb^{-\veps}\};
  \end{align}
  \end{subequations}
  \item\label{point2:thm:2}
  for the middle component, there exists a stationary function $\NPRzero^{\text{sta}}$ defined at every point $(\tb,\rb)$ by
$\NPRzero^{\text{sta}}
={\kappa^{-2}}
(q_{\mathbf{E}} + iq_{\mathbf{B}})$
such that
\begin{align}
  \label{eq:thm:2:spin0}
\absCDeri{\Lxi^j(\NPRzero
-\NPRzero^{\text{sta}}))}
{\reg-\regl}
\leq {}&C\times(D_{+ 1}+D_{- 1})^{\half}
v^{-2}\tb^{-\frac{\gamma+1}{2}+\veps-j}\max\{r^{-\veps}, \tb^{-\veps}\}.
\end{align}
\end{enumerate}
On the other hand, there exist universal constants $C$ and $\regl>0$ such that in the exterior region $\{\rb\geq \tb\}$ of any subextremal Kerr spacetime, the above estimates \eqref{eq:thm:2:spinpm1} and \eqref{eq:thm:2:spin0} are valid for $\veps=0$ and a universal constant $C$, and moreover, $(D_{+ 1}+D_{- 1})^{\half}$ can be replaced by $(D_{+ 1})^{\half}$ in \eqref{eq:thm:2:spinpm1:+1}.
\end{thm}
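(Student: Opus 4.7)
The plan is to upgrade the basic energy $\gamma$-decay condition to pointwise decay for the spin $\pm 1$ components via Sobolev embedding, and then use the Teukolsky--Starobinsky type system \eqref{eq:TSIsSpin1Kerr} to control the middle component after isolating the stationary charge contribution. First I would apply Sobolev embedding on each hyperboloidal slice $\Sigmatb$ using the set $\CDeri$ of derivatives adapted to this foliation; losing a fixed number $\regl$ of derivatives, this converts the basic energy decay (at each fixed $p$) into a pointwise estimate with an $r$-weight on $rV\Lxi^j\Psiplus$ and $rV\Lxi^j\PsiminusHigh{i}$. Evaluating the condition at the endpoints $p=0$ and $p=1$, interpolating in $p$, and then pigeonholing in dyadic $r$-shells upgrades this to the mixed weights $v^{-\alpha}\tb^{-\beta}$ appearing in \eqref{eq:thm:2:spinpm1}; the factors $v^{-3}$ and $v^{-1}$ reflect, via the definitions in \eqref{def:regularNPComps} and \eqref{def:Psiminusi}, that $\NPRplus\sim r^{-3}\Psiplus$ whereas $\NPRminus\sim r^{-1}\PsiminusHigh{0}$.

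For the middle component, a direct computation using \eqref{eq:TSIsSpin1KerrRadipal0With1}--\eqref{eq:TSIsSpin1KerrRadial0With-1} shows that the sphere integrals defining $q_{\mathbf{E}}$ and $q_{\mathbf{B}}$ are independent of $(\tb,\rb)$, so $\NPRzero^{\text{sta}}=\kappa^{-2}(q_{\mathbf{E}}+iq_{\mathbf{B}})$ is a stationary, purely $\ell=0$ solution of the $s=0$ TSI subsystem. Hence $\NPRzero-\NPRzero^{\text{sta}}$ has vanishing $\ell=0$ spherical projection and is governed by the first-order system \eqref{eq:TSIsSpin1Kerr} with spin $\pm 1$ source terms that decay pointwise by the previous step. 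Integrating \eqref{eq:TSIsSpin1KerrRadial0With-1} outward from $\Horizon$ and \eqref{eq:TSIsSpin1KerrRadipal0With1} inward from $\Scri$, and gluing the two representations with a cutoff in $r$, produces a single pointwise bound on the dynamical part consistent with both boundary behaviours and yields \eqref{eq:thm:2:spin0} once the $\ell\geq 1$ part of the data on $\Sigmazero$ is matched.

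In the exterior region $\{\rb\geq\tb\}$ I would avoid the $\veps$-loss by working directly with the $p=1$ endpoint of the $r^p$-hierarchy and transporting along outgoing null generators. Since there $v\sim r\gtrsim \tb$, the initial data on $\Sigmazero$ already provides the required decay in $v$ without any time-decay input, which is why these bounds hold for all subextremal parameters $|a|<M$ without requiring the BEAM machinery. The equation for $\psiplus$ at large $r$ decouples from $\psiminus$ to leading order, so only $(D_{+1})^{1/2}$ appears in \eqref{eq:thm:2:spinpm1:+1}.

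The main difficulty lies in the middle-component step: obtaining a uniform pointwise estimate for $\NPRzero-\NPRzero^{\text{sta}}$ across $r\in[r_+,\infty)$ while isolating exactly $\kappa^{-2}(q_{\mathbf{E}}+iq_{\mathbf{B}})$. One must use two different branches of the TSI system (one near $\Horizon$, the other near $\Scri$) and glue them without sacrificing the optimal $v^{-2}$ weight; moreover the target rate $v^{-2}\tb^{-(\gamma+1)/2+\veps-j}$ has to be reconciled with the slower $v^{-1}$ decay of $\NPRminus$ entering as a source. The bookkeeping required to identify the stationary residue precisely as the charge term, and to propagate the sharp weights through the cutoff, is the most delicate part of the argument.
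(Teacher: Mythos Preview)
Your outline captures the exterior region correctly but has two genuine gaps in the interior region $\{\rb\leq\tb\}$, which is where almost all the work in the paper lies.

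\textbf{Spin $-1$ in the interior.} Sobolev embedding and interpolation in $p$ only reproduce the weak decay of Proposition~\ref{prop:weakdecay:spin+1-1:v3}: $\absCDeri{\Lxi^j\psiminus}{\reg-\regl}\lesssim (D_{-1})^{\half}v^{-1}\tb^{-(\gamma+1)/2-j}\max\{r^{-1},\tb^{-1}\}$. To upgrade this to the claimed rate $v^{-1}\tb^{-(\gamma+3)/2+\veps-j}$ in the interior, the paper runs a separate argument: a degenerate elliptic estimate on each $\Sigmatb$ (obtained by squaring the rewritten system \eqref{eq:Phiminus01:hyperboloidal} and integrating by parts), combined with a red-shift estimate near $\Horizon$ to remove the $\mu$-degeneracy, and then an iteration to drive the energy down (Propositions~4.2--4.6). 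The $\veps$-loss and the smallness restriction $|a|\leq\veps_0$ both originate precisely here; your proposal does not account for either.

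\textbf{Spin $+1$ in the interior.} Your claim that $v^{-3}$ for $\NPRplus$ follows from $\NPRplus\sim r^{-3}\Psiplus$ plus Sobolev is only valid when $r\sim v$, i.e.\ in the exterior. In the interior $r$ can be bounded while $v\sim\tb$, so Sobolev alone gives at best $r^{-2}v^{-1}$. The paper closes this gap via the Teukolsky--Starobinsky identity \eqref{eq:TSI:simpleform}, which expresses $(\edthR')^2(r^{-2}\psiplus)$ in terms of $V^2$-derivatives of $\psiminus$ plus $\Lxi$-derivatives of $\psiplus$; the already-established interior decay for $\psiminus$ then feeds back to give the extra $v^{-2}$ for $\psiplus$. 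This coupling is exactly why $(D_{+1}+D_{-1})^{\half}$ appears in \eqref{eq:thm:2:spinpm1:+1} in the interior; it is not a matter of the equations decoupling at large $r$.

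\textbf{Middle component.} The paper does not integrate \eqref{eq:TSIsSpin1KerrRadipal0With1}--\eqref{eq:TSIsSpin1KerrRadial0With-1} radially and glue. Instead it subtracts the stationary Coulomb solution (Lemma~\ref{lem:decomp:Maxwellfield}), uses the vanishing charge of the radiative part to control the spherically symmetric piece pointwise via \eqref{eq:Psizero:middle:rad:0}, and handles the non-spherically symmetric piece by an elliptic estimate on each sphere with source $\edthR\psizero^{\text{rad}}$, which is in turn bounded through the angular Maxwell equations \eqref{eq:TSIsSpin1KerrAngular0With1}--\eqref{eq:TSIsSpin1KerrAngular0With-1}. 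Your outgoing/ingoing integration scheme would have to contend with the fact that the source $\psiminus$ decays only like $v^{-1}$, which is not integrable in $r$ toward $\Scri$; the paper avoids this by never integrating radially for $\psizero$.
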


As an application, since the BEAM condition is shown in Theorem \ref{thm:BEAM} on slowly rotating Kerr backgrounds, the above two theorems together prove the following asmptotics for the Maxwell field on slowly rotating Kerr backgrounds.

\begin{thm}\label{thm:3}
\textbf{(Decay estimates for Maxwell field on slowly rotating and subextremal Kerr backgrounds).}
Consider a Maxwell field in the DOC of a subextremal Kerr spacetime $(\mathcal{M},g=g_{M,a})$. Let $j\in \mathbb{N}$ and let $ \ireg\in \mathbb{N}^+$ be suitably large.
\begin{enumerate}
\item For any $\veps\in (0,1/2)$, there exist universal constants $\veps_0=\veps_0(M)>0$,  $\reg'(j)>0$ and $C=C(M,j,\veps,\ireg)$ such that for all $|a|\leq \veps_0$, the estimates in Theorem \ref{thm:2}  hold true with $\reg=\ireg-\regl(j)$, $\gamma=2$, $D_{+1}=C\InitialEnergyplus{\reg_0}$ and $D_{-1}=C\InitialEnergyminus{\reg_0}$.
\item There exist universal constants $\veps_0=\veps_0(M)>0$, $\reg'(j)>0$ and $C=C(M,j,\ireg)$ such that for all $|a|\leq \veps_0$, the estimates in Theorem \ref{thm:2}  hold true in the exterior region $\{\rb\geq \tb\}$ with $\veps=0$, $\reg=\ireg-\regl(j)$, $\gamma=2$, $D_{+1}=C\InitialEnergyplus{\reg_0}$ and $D_{-1}=C\InitialEnergyminus{\reg_0}$.
\item Assume the BEAM condition holds for spin $\pm 1$ component, then there exist universal constants  $\reg'(j)>0$ and $C=C(M,a,j,\ireg)$ such that  the estimates in Theorem \ref{thm:2}  hold true in the exterior region $\{\rb\geq \tb\}$  with $\veps=0$, $\reg=\ireg-\regl(j)$, $\gamma=2$, $D_{+1}=C\InitialEnergyplus{\reg_0}$ and $D_{-1}=C\InitialEnergyminus{\reg_0}$.
\end{enumerate}
\end{thm}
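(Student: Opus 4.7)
The plan is simply to concatenate the two main theorems of this work with the BEAM result previously established on slowly rotating Kerr: Theorem \ref{thm:BEAM} (or, for part (3), the BEAM condition hypothesis) feeds into Theorem \ref{thm:1}, whose output feeds into Theorem \ref{thm:2}. No new analytic tool is introduced; all three parts of the claim should fall out as corollaries once the derivative budgets are carefully tracked.

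For part (1), I would start from Theorem \ref{thm:BEAM}, which on any slowly rotating Kerr $(\mathcal{M},g_{M,a})$ with $|a|\leq \veps_0(M)$ yields the BEAM estimates \eqref{eq:BEAM} to any fixed order $\ireg$; this is exactly the BEAM condition to order $\ireg$ in the sense of the preceding definition. Theorem \ref{thm:1} then promotes this into the basic energy $2$-decay condition for both spin $\pm 1$ components, with decay constants $D_{+1}=C\InitialEnergyplus{\ireg}$ and $D_{-1}=C\InitialEnergyminus{\ireg}$ for some $C=C(M,\veps_0,\ireg,j)$. Plugging $\gamma=2$ and these constants into Theorem \ref{thm:2} produces the pointwise bounds \eqref{eq:thm:2:spinpm1} and \eqref{eq:thm:2:spin0} exactly as asserted, completing part (1).

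Parts (2) and (3) are handled through the same pipeline, but rely on the concluding sentence of Theorem \ref{thm:2}, which in the exterior region $\{\rb\geq\tb\}$ allows $\veps=0$ with a universal constant and remains valid on any subextremal Kerr rather than only on slowly rotating ones. Part (2) proceeds as in part (1) but restricts attention to $\{\rb\geq\tb\}$ and sets $\veps=0$; in that region one also gets to drop the $D_{-1}$ contribution from the bound \eqref{eq:thm:2:spinpm1:+1} for the spin $+1$ component, as noted at the end of Theorem \ref{thm:2}. For part (3) the initial input from Theorem \ref{thm:BEAM} is replaced by the standing hypothesis that the BEAM condition holds on $(\mathcal{M},g_{M,a})$; everything after that step is identical to parts (1)--(2).

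The only nontrivial point is the bookkeeping of regularity losses: Theorem \ref{thm:1} costs some $\regl(j)$ derivatives in passing from the data regularity $\ireg$ to the energy decay regularity, and Theorem \ref{thm:2} consumes a further universal loss $\regl$. Absorbing both into a single function $\regl(j)$, and requiring $\ireg$ large enough so that $\reg=\ireg-\regl(j)$ still gives a positive number of derivatives on the pointwise side, is routine but tedious. I expect this accounting, together with the fact that the constants in Theorem \ref{thm:1} implicitly depend on $a$, $\ireg$ and $j$ but become uniform in $a$ once $|a|\leq \veps_0$ is imposed, to be the only mild obstacle; no additional ideas beyond those developed for Theorems \ref{thm:1} and \ref{thm:2} are required.
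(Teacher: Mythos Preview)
Your proposal is correct and matches the paper's approach exactly: the paper states Theorem~\ref{thm:3} as an immediate application of Theorems~\ref{thm:BEAM}, \ref{thm:1} and \ref{thm:2} combined in precisely the chain you describe, without giving a separate proof. The regularity bookkeeping you flag is indeed the only content to verify, and the paper handles it implicitly through the ``suitably large $\ireg$'' and $\regl(j)$ conventions just as you outline.
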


\begin{remark}
We have actually shown the peeling properties for spin $\pm 1$ components in slowly rotating Kerr spacetimes, and also in any subextremal Kerr spacetime but under the BEAM condition. We also show the scalars $\NPRplus$, $\NPRzero-\NPRzero^{\text{sta}}$ and $\NPRminus$ have decay $v^{-2-s}\tb^{-3/2+s+\veps/2}\max\{r^{-\veps}, \tb^{-\veps}\}$ where $s$ is the spin weight, and the total power of decay is $-7/2$.
\end{remark}

As can be seen in the pointwise decay estimates in Theorems \ref{thm:2} and \ref{thm:3}, there is a $\tb^{-\veps}$ loss in the stationary region on Kerr backgrounds. This can be removed in Schwarzschild case, and moreover, we can prove basic energy $\gamma$-decay condition for larger $\gamma$ from which pointwise asymptotics close to Price's law can be achieved.

\begin{definition}
\label{def:energyonSigmazero:Schw}
Let $\ell_0\geq 1$, and let $\reg\geq \ell_0$ be arbitrary. Let $\tildePhiplusHigh{\ell_0-1}$ and $\tildePhiminus{\ell_0+1}$ be defined as in Definition \ref{def:tildePhiplusandminusHigh}. Define on $\Sigmazero$ an energy  of spin $+1$ component
\begin{subequations}
\begin{align}
\InizeroEnergyplus{\reg}{\alpha}={}
\sum_{i=0}^{\ell_0-1}\norm{(r^2V)^i\Psiplus}^2_{W_{-2}^{\reg-i}(\Sigmazero)}+
\norm{r^2V\tildePhiplusHigh{\ell_0-1}}^2_{W_{\alpha}^{\reg-\ell_0}(\Sigmazero\cap\{\rb\geq 3M\})},
\end{align}
and an energy of spin $-1$ component
\begin{align}
\InizeroEnergyminus{\reg}{\alpha}={}
\sum_{i=0}^{\ell_0+1}\norm{(r^2V)^i\Psiminus}^2_{W_{-2}^{\reg+2-i}(\Sigmazero)}+
\norm{r^2V\tildePhiminus{\ell_0+1}}^2_{W_{\alpha}^{\reg-\ell_0}(\Sigmazero\cap\{\rb\geq 3M\})}.
\end{align}
\end{subequations}
\end{definition}
\begin{thm}\label{thm:Schw}
\textbf{(Almost Price's law for Maxwell field on Schwarzschild).}
Consider a Maxwell field on a Schwarzschild spacetime. Let N--P constants $\NPCN{i}$, $i\in \mathbb{Z}^+$, be defined as in Definition \ref{def:NPCs}. Let $\veps\in (0,1/2)$ be arbitrary and let $j\in \mathbb{N}$.
Assume the spin $\pm 1$ components are supported on $\ell\geq\ell_0$ modes\footnote{See Section \ref{sect:decompIntoModes} for mode decompositions. } for $\ell_0\geq 1$.
\begin{enumerate}
\item \label{pt:NPCneq0:l=l0:decay:Schw}
If the $\ell_0$-th N--P constant $\NPCN{\ell_0}$ of $\ell=\ell_0$ mode does not vanish, then the basic energy $\gamma$-decay condition with $\gamma=2\ell_0+1-\veps$ holds for the spin $\pm 1$ components, and there exist universal constants  $C=C(\veps,j,\reg,\ell_0)$ and $\regl(j,\ell_0)>0$ such that
\begin{itemize}
  \item for spin $\pm 1$ components,
  \begin{subequations}
  \begin{align}
  \absCDeri{\Lxi^j\NPRplus}{\reg-\regl(j,\ell_0)}\leq{}& C\times(\InizeroEnergyplus{\reg}{-1-\veps}
  +\InizeroEnergyminus{\reg}{-1-\veps})^{\half}v^{-3}
  \tb^{-\frac{2\ell_0-\veps}{2}-j},\\
  \absCDeri{\Lxi^j\NPRminus}{\reg-\regl(j,\ell_0)}\leq{}& C\times(\InizeroEnergyminus{\reg}{-1-\veps})^{\half}v^{-1}
  \tb^{-\frac{2\ell_0+4-\veps}{2}-j};
  \end{align}
  \end{subequations}
  \item\label{point2:thm:4:l=l0}
  for the middle component, there exists a static function $\NPRzero^{\text{sta}}$ defined at every point $(\tb,\rb)$ by
$\NPRzero^{\text{sta}}
={r^{-2}}
(q_{\mathbf{E}} + iq_{\mathbf{B}})$
such that
\begin{align}
\absCDeri{\Lxi^j(\NPRzero
-\NPRzero^{\text{sta}}))}
{\reg-\regl(j,\ell_0)}
\leq {}&C\times(\InizeroEnergyplus{\reg}{-1-\veps}
  +\InizeroEnergyminus{\reg}{-1-\veps})^{\half}
v^{-2}\tb^{-\frac{2\ell_0+2-\veps}{2}-j}.
\end{align}
\end{itemize}
That is, the scalars $\NPRplus$, $\NPRzero-\NPRzero^{\text{sta}}$ and $\NPRminus$ have decay $v^{-2-s}\tb^{-1+s-\ell_0+\veps/2}$ where $s$ is the spin weight.
\item \label{pt:NPCeq0:l=l0:decay:Schw}
Instead, if the $\ell_0$-th N--P constant $\NPCN{\ell_0}$  of $\ell=\ell_0$ mode vanishes, then the basic energy $\gamma$-decay condition with $\gamma=2\ell_0+3-\veps$ holds for the spin $\pm 1$ components, and the above pointwise decay rates of $\NPRplus$, $\NPRzero-\NPRzero^{\text{sta}}$ and $\NPRminus$ hold by decreasing the power of $\tb$ by $1$ and replacing the initial energy  $\InizeroEnergyplus{\reg}{-1-\veps}$ and
 $\InizeroEnergyminus{\reg}{-1-\veps}$ by $\InizeroEnergyplus{\reg}{1-\veps}$ and
 $\InizeroEnergyminus{\reg}{1-\veps}$, respectively.
 That is, the scalars $\NPRplus$, $\NPRzero-\NPRzero^{\text{sta}}$ and $\NPRminus$ have decay $v^{-2-s}\tb^{-2+s-\ell_0+\veps/2}$ where $s$ is the spin weight.
\end{enumerate}
\end{thm}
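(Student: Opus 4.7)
The plan is to reduce the statement to establishing the basic energy $\gamma$-decay condition with the sharp value of $\gamma$ in each case, and then apply Theorem \ref{thm:2} in the Schwarzschild case (where the $\tb^{-\veps}$-loss in the stationary region can be removed) to read off the pointwise rates. Since Schwarzschild is spherically symmetric, I work mode by mode on the spin-weighted spherical harmonic decomposition; both $\Lxi$ and all radial operators preserve fixed modes, so it suffices to concentrate on $\ell = \ell_0$, and the factor $\tb^{-2j}$ from $\Lxi^j$ is recovered by standard commutation that shifts $\gamma$ to $\gamma + 2j$ in the rate formula.

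The analytic heart of the argument is an enhanced $r^p$-hierarchy run on the commuted scalars $\tildePhiplusHigh{i}$ and $\tildePhiminus{i}$, obtained by iterating $r^2V$ starting from $\Psiplus$ and $\PsiminusHigh{0}$. Each $r^2V$-commutation, combined with the angular spectral gap available on $\ell \geq \ell_0$-modes and the Teukolsky--Starobinsky identities in \eqref{eq:TSIsSpin1Kerr}, effectively raises the range of admissible weights in the Dafermos--Rodnianski $r^p$-method by one. After $\ell_0 - 1$ iterations on the spin $+1$ side (respectively $\ell_0 + 1$ on the spin $-1$ side), the top-level scalars $\tildePhiplusHigh{\ell_0-1}$ and $\tildePhiminus{\ell_0+1}$ satisfy wave equations admitting the $r^p$-estimate up to $p = 2\ell_0 - 1 - \veps$. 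One then cascades down the hierarchy: at each level, the nonnegative bulk term of the $r^p$-identity at weight $p_k$ is converted, by a dyadic pigeonhole/mean-value argument, into $\tb^{-(p_k - p_{k-1})}$-decay of the next weight $p_{k-1}$. Chaining these and unpacking the $\tildePhi$-scalars back in terms of $\Psiplus$ and $\PsiminusHigh{i}$ yields $\tb^{-(2\ell_0+1-\veps)}$-decay of $\BEplus{\tb}{\reg,0,p}$ and $\BEminus{\tb}{\reg,0,p}$, precisely the basic energy $\gamma$-decay condition with $\gamma = 2\ell_0 + 1 - \veps$.

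The branching between \ref{pt:NPCneq0:l=l0:decay:Schw} and \ref{pt:NPCeq0:l=l0:decay:Schw} enters at the top of the hierarchy, where the $r^p$-identity acquires a boundary flux at $\Scri$ whose leading coefficient is $\abs{\NPCN{\ell_0}}^2$. If $\NPCN{\ell_0} \neq 0$ this caps $p$ at $2\ell_0 - 1 - \veps$; if $\NPCN{\ell_0} = 0$ the obstruction disappears and one further $r^2V$-commutation becomes admissible, extending $p$ to $2\ell_0 + 1 - \veps$ and gaining two extra powers of $\tb^{-1}$. This explains the choice of $\alpha = -1-\veps$ in the first case and $\alpha = 1-\veps$ in the second in Definition \ref{def:energyonSigmazero:Schw}. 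Substituting the resulting $\gamma$ values into the $\tb$-exponents $-\frac{\gamma-1}{2}$, $-\frac{\gamma+1}{2}$, $-\frac{\gamma+3}{2}$ for the spin $+1$, middle, and spin $-1$ components respectively, as provided by Theorem \ref{thm:2}, yields the exact rates claimed in Theorem \ref{thm:Schw}.

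The principal obstacle will be the sharp treatment of the top-level $r^p$-identity. One must identify the $\Scri$-boundary flux explicitly as a positive multiple of $\abs{\NPCN{\ell_0}}^2$, so that the vanishing/non-vanishing dichotomy acts as a clean threshold between the two decay regimes, and one must control the zeroth and first-order error terms generated by the iterated $r^2V$-commutations in the wave equations for $\tildePhiplusHigh{\ell_0-1}$ and $\tildePhiminus{\ell_0+1}$. For modes on $\ell = \ell_0$ these errors must be given a definite sign, or else absorbed by Hardy/Poincar\'e inequalities whose constants depend on the angular eigenvalue $\ell_0(\ell_0+1) - 1$; any mismatch would cost a power of $r$ and destroy the sharpness. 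A secondary subtlety is the TSI-mediated exchange between the two hierarchies at the top level, which must transfer information between spin $+1$ and spin $-1$ without derivative loss and forces the appearance of the combined initial norm $\InizeroEnergyplus{\reg}{\alpha}+\InizeroEnergyminus{\reg}{\alpha}$ on the right-hand sides.
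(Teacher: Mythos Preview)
Your overall strategy matches the paper's: build a commuted $r^p$-hierarchy on the scalars $\tildePhiplusHigh{i}$ and $\tildePhiminus{i}$, let the N--P constant govern the top of the hierarchy, cascade down to the basic energy $\gamma$-decay condition, then apply the Schwarzschild pointwise machinery. However, several of your quantitative claims about the mechanism are off and, taken literally, would not produce the stated $\gamma$. Each $\curlVR$-commutation does not raise the admissible $r^p$-range ``by one''; rather, every intermediate scalar $\PhiplusHigh{i}$ (for $0\le i\le \ell_0-2$, where $b_{0,0}+\ell_0(\ell_0+1)>0$) carries its own full $p\in[0,2]$ hierarchy, with the gluing relation $F^{(i)}(\reg,2,\tb,\Psiplus)\sim F^{(i+1)}(\reg,0,\tb,\Psiplus)$, so each level contributes $\tb^{-2}$. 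The top scalar $\tildePhiplusHigh{\ell_0-1}$ has $b_{0,0}+\ell_0(\ell_0+1)=0$ and admits $r^p$ only for $p\in[2,3-\veps]$ (or $[2,5-\veps]$ if $\NPCP{\ell_0}=0$), not up to $p=2\ell_0-1-\veps$; the extension past $p=3$ in the vanishing case is achieved by a weighted Cauchy--Schwarz/bootstrap on the \emph{same} scalar (as in Proposition \ref{prop:BED:Psiplus:l=1}), not by a further commutation. The totals $2+2(\ell_0-1)+(1-\veps)=2\ell_0+1-\veps$ and $2+2(\ell_0-1)+(3-\veps)=2\ell_0+3-\veps$ then recover the claim.

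Two further corrections. The Teukolsky--Starobinsky identity plays no role in the $r^p$-hierarchy itself; the commuted wave equations \eqref{eq:Phiplushighi:Schw:generall}, \eqref{eq:Phiminusi:Schw:generall} together with the spectral gap of Proposition \ref{prop:wave:rp:highmodes} are what is used (and \eqref{eq:TSIsSpin1Kerr} is the first-order Maxwell system, not the TSI). The TSI \eqref{eq:TSI:simpleform} enters only at the pointwise stage (Proposition \ref{prop:ImproDecaySpin+1}) to convert the $\psiminus$-decay into the $v^{-3}$ peeling for $\psiplus$; this is the actual source of the combined norm $\InizeroEnergyplus{\reg}{\alpha}+\InizeroEnergyminus{\reg}{\alpha}$ in the $\NPRplus$ bound, while $\NPRminus$ needs only $\InizeroEnergyminus{\reg}{\alpha}$. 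Finally, removing the $\tb^{-\veps}$-loss in the interior is not automatic from Theorem \ref{thm:2}; it requires the separate Schwarzschild identity \eqref{eq:W-3energynorm:spin-1:Schw} and the argument of Proposition \ref{prop:ptwFromBEDC:extremecomps:Schw}.
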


\begin{remark}
The components of Maxwell field have homogeneity in the total rate of the pointwise decay, and the total power of decay is $-\ell_0-4+$ or $-\ell_0-3+$ depending on if the $\ell_0$-th N--P constant $\NPCN{\ell_0}$ of $\ell=\ell_0$ mode vanishes or not. In particular, we show that the Maxwell field approaches a static solution in a uniform $v^{-1}\tb^{-4-}$ decay if the data are compactly supported or  decay sufficiently fast towards spatial infinity on an initial future Cauchy surface terminating at spatial infinity. Compared to the Price's law predicted in \cite{Price1972SchwScalar,Price1972SchwIntegerSpin,price2004late,gleiser2008late}, there is only an $\veps$ loss of time decay, and hence this is an almost sharp upper bound for the decay estimate of Maxwell field on Schwarzschild.

In the stationary region where $r$ is finite, the Price's law suggests $\tb^{-2\ell_0-3}$ or $\tb^{-2\ell_0-2}$ pointwise asymptotics depending on if the $\ell_0$-th N--P constant $\NPCN{\ell_0}$ of $\ell=\ell_0$ mode vanishes or not, and our results do \underline{not} imply almost Price's law in this compact region for $\ell_0\geq 2$ modes.
\end{remark}

\begin{remark}
There are currently many works aiming at proving Price's law for integer-spin fields on various backgrounds. See Section \ref{sect:relatedresults} below for explicit references. We would like to point out that the current best results for Maxwell field as we know are $\tb^{-4}$ decay obtained in \cite{donninger2012pointwise} on Schwarzschild and in \cite{metcalfe2017pointwise} on a class of stationary asymptotically flat backgrounds under an assumption of an energy and Morawetz estimate, that is, one less power of decay than Price's law.
\end{remark}

\begin{remark}
The proof going from Theorem \ref{thm:3} to this theorem is in the spirit of \cite{angelopoulos2018vector}, and in view of the work \cite{angelopoulos2018late} in which Price's law for scalar field on Reissner-Nordstr\"{o}m is obtained, the N-P constants found here will be crucial in proving the Price's law and characterizing the precise leading asymptotics of Maxwell field on Schwarzschild.
\end{remark}

\subsection{Related results}
\label{sect:relatedresults}

There is a large amount of literature in wave equations in asymptotically flat spacetimes. Pioneering works on wave equations in Minkowski include \cite{morawetz1968time,klainerman1986null,christodoulou1986global,CK93global,lindblad2010global} and references therein.

Boundedness of scalar field on Schwarzschild was first proven by Wald and Kay--Wald \cite{wald1979note,kay1987linear}. Blue--Soffer \cite{bluesoffer03mora,blue:soffer:integral} and Dafermos--Rodnianski \cite{dafrod09red} obtained integral local energy decay and pointwise decay estimates using a Morawetz type multiplier.  Finster--Kamran--Smoller--Yau \cite{Finster2006} obtained some partial results towards the decay of scalar field on Kerr and Dafermos--Rodnianski \cite{dafermos2011bdedness} showed the first uniform boundedness result in slowly rotating Kerr spacetimes. Integrated local energy decay estimates are extended to slowly rotating Kerr by Andersson--Blue \cite{larsblue15hidden} and Tataru--Tohaneanu \cite{tataru2011localkerr}, and further to subextremal Kerr by Dafermos--Rodnianski--Shlapentokh-Rothman \cite{dafermos2016decay} using a generalization \cite{2015AnHP...16..289S} by Shlapentokh-Rothman of the classic mode stability result \cite{whiting1989mode} by Whiting. In all these works for scalar field on Kerr, the separability of the wave equation or the complete integrability of the geodesic flow found by Carter \cite{Carter1968Separability} is of crucial importance.
The asymptotics for scalar field on these backgrounds are improved by Schlue \cite{schlue2013decay} and Moschidis \cite{moschidis2016r}.
Strichartz estimates are shown in \cite{MMTT,tohaneanu2012strichartz}. In all these works, the decay rate is at most $v^{-1}\tb^{-1/2}$.

Decay behaviours for Maxwell field outsider a Schwarzschild black hole were first obtained by Blue \cite{blue08decayMaxSchw} where the author started with the Fackerell--Ipser equation \cite{fackerell:ipser:EM} satisfied by the middle component, while Pasqualotto \cite{pasqualotto2019spin} proved decay estimates for Maxwell field on a Schwarzschild spacetime starting from the TME of spin $\pm 1$ components. Sterbenz and Tataru \cite{sterbenz2015decayMaxSphSym}  also proved integrated local energy decay estimates for Maxwell field but in more general spherically symmetric stationary spacetimes.  Turning to the Maxwell field outsider a slowly rotating Kerr black hole, Andersson--Blue \cite{larsblue15Maxwellkerr} proved energy and Morawetz estimates for both the fisrt-order Maxwell system and the Fackerell-Ipser wave equation for the middle component, and Ma \cite{Ma2017Maxwell} proved similar estimates by treating only the TME satisfied by spin $\pm 1$ components.
We note also the work \cite{andersson16decayMaxSchw} by Andersson--B\"{a}ckdahl--Blue where  a conserved energy current is obtained in Schwarzschild after contracting a superenergy tensor with a Killing vector $\partial_t$, and Gudapati constructed in \cite{gudapati2017positive,gudapati2019conserved} a conserved, positive definite energy for axially symmetric Maxwell field on Kerr and Kerr-de Sitter backgrounds.

The tails of integer-spin fields on Schwarzschild are analyzed by Price in \cite{Price1972SchwScalar,Price1972SchwIntegerSpin} and later by Price--Burko \cite{price2004late}, and these heuristic asymptotics are called Price's law. In particular, they also discussed the tails of a fixed $\ell$ mode. Generalizations of $\ell$-dependent Price's law to Kerr spacetimes are discussed in \cite{gleiser2008late}. In these works, they suggest that for any fixed $\ell$ mode of integer spin fields in a Schwarzschild spacetime, it falls off as $\tb^{-2\ell-3}$ at any finite radius as $\tb\to \infty$ provided the initial data decay sufficiently fast (or have compact support) on an initial future Cauchy surface terminating at spatial infinity.
Donninger--Schlag--Soffer  first proved in \cite{donninger2011proof} $\tb^{-2\ell-2}$ decay for a fixed $\ell$ mode of scalar field on Schwarzschild and then obtained in \cite{donninger2012pointwise} $\tb^{-3}$, $\tb^{-4}$ and $\tb^{-6}$ for scalar field, Maxwell field and gravitational perturbations on Schwarzschild, respectively.
Tataru \cite{tataru2013local} proved $\tb^{-3}$ decay for scalar field on a class of stationary asymptotically flat backgrounds under an assumption that an energy and Morawetz estimate holds, and subsequently, Metcalfe--Tataru--Tohaneanu extended this result and obtained  $\tb^{-3}$ decay for scalar field in \cite{metcalfe2012price} and $\tb^{-4}$ decay for Maxwell field \cite{metcalfe2017pointwise} in a class of non-stationary asymptotically flat spacetimes under a similar assumption. Angelopoulos--Aretakis--Gajic proved in \cite{angelopoulos2018vector} an almost Price's law for scalar field on a class of spherically symmetric, stationary spacetimes including Schwarzschild and subextremal Reissner--Nordstr\"{o}m spacetimes, and further obtained in \cite{angelopoulos2018late} the precise $t^{-3}$ leading order term in the asymptotic profiles for the first time. For the radiation field, apart from the $\tb^{-2}$ leading term, they are able to calculate in \cite{angelopoulos2019logarithmic} the subleading $\tb^{-3}\log \tb$ term. More recently, Angelopoulos--Aretakis--Gajic announced a result saying that the leading order term of $\ell$ mode of scalar field on a subextremal Reissner--Nordstr\"{o}m background can be expressed explicitly, hence justifying the Price's law, and Hintz
\cite{hintz2020sharp} computed the $\tb^{-3}$ leading order term for the scalar field on both Schwarzschild and subextremal Kerr spacetimes and obtained $\tb^{-2\ell-3}$ upper bound for a fixed $\ell$ mode on Schwarzschild.

Linear stability of Schwarzschild spacetimes is shown by Dafermos--Holzegel--Rodnianski \cite{dafermos2019linear} and Hung--Keller--Wang \cite{hung2017linearstabSchw} and, more recently, by Hung \cite{Hung18odd, Hung19even} and Johnson \cite{johnson2019linear} in a harmonic gauge,
while linear stability of any subextremal Reissner--Nordstr\"{o}m spacetime is proven by Giorgi \cite{Giorgi2019linearRNsmallcharge,Giorgi2019linearRNfullcharge}.
The energy estimates and decay estimates for linearized gravity on Schwarzschild are also obtained by Andersson--Blue--Wang \cite{Jinhua17LinGraSchw}.

Energy estimate and integrated local energy estimate for TME of spin $\pm 2$ components of linearized gravity are obtained by Ma \cite{Ma17spin2Kerr} and Dafermos--Holzegel--Rodnianski \cite{dafermos2019boundedness}. In order to generalize these estimates to full subextremal range of Kerr spacetimes, a crucial ingredient, i.e. a generalization of Whiting's mode stability result, has been proven by Andersson--Ma--Paganini--Whiting \cite{andersson2017mode} and da Costa \cite{da2019mode}. Note also the work \cite{finster2016linear} by Finster--Smoller which
discusses the stability problem for each azimuthal mode solution to spin $\pm 2$ TME.
The linear stability for slowly rotating Kerr spacetimes are proven by Andersson--B\"{a}ckdahl--Blue--Ma \cite{andersson2019stability} and H\"{a}fner--Hintz--Vasy
\cite{hafner2019linear}.

Lindblad--Tohaneanu \cite{lindblad2018global,lindblad2020local} obtained a global existence result for a quasilinear wave equation on Schwarzschild and Kerr backgrounds.
In the end, we mention two nonlinear stability results by Klainerman--Szeftel
\cite{kla2015globalstabwavemapKerr} for Schwarzschild under polarized axisymmetry  and by Hintz--Vasy \cite{HintzKds2018} for  slowly rotating Kerr-de Sitter.

\subsection*{Acknowledgment}
The author would like to thank the co-authors Lars Andersson, Thomas B\"{a}ckdahl and Pieter Blue in an earlier joint work for helpful discussions and valuable insights which inspire this current work. The author acknowledges the support by the ERC grant ERC-2016 CoG 725589 EPGR, as well as the hospitality of Institut Mittag-Leffler in the fall semester 2019 where part of the work was done.

\section{General conventions and basic estimates}

\subsection{Conventions}
$\mathbb{N}$ is denoted as the natural number set $\{0,1,\ldots\}$, and $\mathbb{Z}^+$ the positive integer set. Denote $\Re(\cdot)$ as the real part.

We denote a universal constant by $C$. If it depends on a set of parameters $\mathbf{P}$, we denote it by $C(\mathbf{P})$. We use regularity parameters, generally denoted by $\reg$, and $\regl$ which is a universal constant. Also, $\regl(\mathbf{P})$ means a regularity constant depending on the parameters in the set $\mathbf{P}$.

We say $F_1\lesssim F_2$ if there exists a universal constant $C$ such that $F_1\leq CF_2$. Similarly for $F_1\gtrsim F_2$. If both $F_1\lesssim F_2$ and $F_1\gtrsim F_2$ hold, we say $F_1\sim F_2$.

Let $\mathbf{P}$ be a set of parameters. We say $F_1\lesssim_{\mathbf{P}} F_2$ if there exists a universal constant $C(\mathbf{P})$ such that $F_1\leq C(\mathbf{P})F_2$. Similarly for $F_1\gtrsim_{\mathbf{P}} F_2$. If both $F_1\lesssim_{\mathbf{P}} F_2$ and $F_1\gtrsim_{\mathbf{P}} F_2$ hold, then we say $F_1\sim_{\mathbf{P}} F_2$.

For any $\alpha \in \mathbb{N}$, we say a function $f(r,\theta,\pb)$ is $O(r^{-\alpha})$ if it is a sum of two smooth functions $f_1(\theta,\pb) r^{-\alpha}$ and $f_2(r,\theta,\pb)$ satisfying that for any $j\in \mathbb{N}$,
$\abs{(\partial_r)^j f_2}\leq C_j r^{-\alpha-1-j}$. Therefore, in particular, if $f$ is $O(1)$, then $\abs{\partial_r f }\leq Cr^{-2}$.

Let  $\chi_1$ be a standard smooth cutoff function which is decreasing, $1$ on $(-\infty,0)$, and $0$ on $(1,\infty)$, and let $\chi=\chi_1((R_0-r)/M)$ with $R_0$ suitably large and to be fixed in the proof.  So $\chi=1$ for $r\geq R_0$ and vanishes identically for $r\leq R_0-M$.

\subsection{Further definitions}

Following \cite{andersson2019stability}, we define a wave operator acting on a spin weight $s$ scalar.
\begin{definition}\label{def:squareShat}
Define a spin-weighted wave operator
\begin{align}
\label{eq:squareShat}
\squareShat_{s} = {}&-(\R)YV
+2\edthR\edthR'
+2a\Lxi\Leta+a^2 \sin^2 \theta\Lxi^2
-2ias\cos\theta \Lxi
\notag\\
&+\frac{2ar}{\R}\Leta
-s^2\frac{2Mr^3+a^2r^2-4a^2Mr+a^4}{(\R)^2}.
\end{align}
\end{definition}

\begin{remark}\label{rem:squareShatandLs}
This is the same as \cite[Equation (2.36)]{andersson2019stability} except for a sign difference because of the signature convention difference in the metric.
Compared with the  wave operator $\mathbf{L}_{s}$ in \cite[Equation (1.32)]{Ma2017Maxwell}, we have for any spin weight $s$ scalar $\varphi$ that
\begin{align}
\label{eq:BoxHatandLs}
\squareShat_{s} (\sqrt{\R}\varphi)={}&
\sqrt{\R}\bigg(\mathbf{L}_{s}+\frac{s^2(r^4-2Mr^3+6a^2Mr-a^4)}{(\R)^2}-s\bigg)\varphi.
\end{align}
\end{remark}

\begin{definition}
Let $\tb_2>\tb_1\geq \tb_0$  and let $r_2>r_1\geq r_+$. Define
\begin{subequations}
\label{def:domainnotations:subdomains}
\begin{align}
\Sigmaone^{r_1}={}&\Sigmaone\cap \{r\geq r_1\}, & \Donetwo^{r_1}={}&\Donetwo\cap\{r\geq r_1\},\\
\Sigmaone^{r_1,r_2}={}&\Sigmaone\cap \{r_1\leq r\leq r_2\}, &\Donetwo^{r_1,r_2}={}&\Donetwo\cap\{r_1\leq r\leq r_2\},\\
\Sigmaone^{\leq r_1}={}&\Sigmaone\cap \{r_+\leq r\leq r_1\}, & \Donetwo^{\leq r_1}={}&\Donetwo\cap\{r_+\leq r\leq r_1\}.
\end{align}
\end{subequations}
\end{definition}

\subsection{Decomposition into modes for spin weight $s$ scalars}
\label{sect:decompIntoModes}

For any spin weight $s$ scalar $\varphi$, we can decompose it into modes $\varphi=\sum\limits_{\ell_0=\abs{s}}^{\infty}\varphi^{\ell=\ell_0}$, with each mode
$\varphi^{\ell=\ell_0}=\sum\limits_{m=-\ell_0}^{\ell_0}\varphi_{m\ell_0}
(\tb,\rb)Y_{m\ell_0}^{s}(\cos\theta)e^{im\pb}$.
Here, $\left\{Y_{m\ell}^{s}(\cos\theta)e^{im\pb}\right\}_{m,\ell}$ are the eigenfunctions, called as "\emph{spin-weighted spherical harmonics}", of a self-adjoint operator
$2\edthR\edthR'$
on $L^2(\sin\theta d\theta)$, form a complete orthonormal basis on $L^2(\sin\theta \di\theta\di \pb)$ and have eigenvalues $-\Lambda_{\ell}=-(\ell+s)(\ell-s+1)$ defined by
\begin{equation}
\label{eq:eigenvalueSWSHO}
2\edthR\edthR'(Y_{m\ell}^{s}(\cos\theta)e^{im\pb})=
-\Lambda_{\ell}
Y_{m\ell}^{s}(\cos\theta)e^{im\pb}.
\end{equation}
In particular,
\begin{align}
\label{eq:l=l0mode:eigenvalue}
2\edthR\edthR'\varphi^{\ell=\ell_0}
={}-(\ell_0+s)(\ell_0-s+1)\varphi^{\ell=\ell_0}, \quad
2\edthR'\edthR\varphi^{\ell=\ell_0}
={}-(\ell_0-s)(\ell_0+s+1)\varphi^{\ell=\ell_0}.
\end{align}

\subsection{Simple estimates}
The following simple Hardy's inequality will be useful.
\begin{lemma}
Let $\varphi$ be a spin weight $s$ scalar. Then for any $r'>r_+$,
\begin{align}
\label{eq:Hardy:trivial}
\int_{r_+}^{r'}\abs{\varphi}^2\di r
\lesssim{}&\int_{r_+}^{r'}\mu^2r^2\abs{\partial_r\varphi}^2\di r
+(r'-r_+)\abs{\varphi(r')}^2.
\end{align}
In particular, if $\lim\limits_{r\to \infty} r\abs{\varphi}^2 =0$, then
\begin{align}
\label{eq:Hardy:trivial:1}
\int_{r_+}^{\infty}\abs{\varphi}^2\di r
\lesssim{}&\int_{r_+}^{\infty}\mu^2r^2\abs{\partial_r\varphi}^2\di r.
\end{align}
\end{lemma}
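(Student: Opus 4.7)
The plan is to reduce this to a textbook Hardy inequality after verifying that the weight $\mu^2 r^2$ is comparable to $(r-r_+)^2$ on $[r_+,\infty)$. Factoring
\begin{align*}
\mu^2 r^2 = (r-r_+)^2 \cdot \frac{(r-r_-)^2 r^2}{(r^2+a^2)^2},
\end{align*}
the second factor is continuous and strictly positive on $[r_+,\infty)$; it tends to $1$ as $r\to\infty$ and takes the value $(r_+-r_-)^2/(4M^2)$ at $r=r_+$ (using the horizon identity $r_+^2+a^2=2Mr_+$), hence it is bounded between two positive constants depending only on $M,a$. So it suffices to prove both inequalities with $(r-r_+)^2$ replacing $\mu^2 r^2$ on the right.

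Next I would use the identity $1=\partial_r(r-r_+)$ and integrate by parts:
\begin{align*}
\int_{r_+}^{r'} |\varphi|^2 \,\di r = (r'-r_+)|\varphi(r')|^2 - 2\int_{r_+}^{r'}(r-r_+)\,\Re(\bar\varphi\,\partial_r\varphi)\,\di r,
\end{align*}
with the boundary term at $r_+$ vanishing since $r-r_+$ does. A weighted Cauchy-Schwarz, $2|ab|\leq \tfrac12 a^2 + 2b^2$ applied with $a=|\varphi|$ and $b=(r-r_+)|\partial_r\varphi|$, controls the cross-term by $\tfrac12\int_{r_+}^{r'}|\varphi|^2\,\di r + 2\int_{r_+}^{r'}(r-r_+)^2|\partial_r\varphi|^2\,\di r$. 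Absorbing the first of these into the left-hand side yields \eqref{eq:Hardy:trivial}. The spin weight of $\varphi$ is irrelevant at this level, since $|\varphi|^2$ and $|\partial_r\varphi|^2$ are genuine scalars on $(r_+,\infty)$.

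For the unbounded version \eqref{eq:Hardy:trivial:1}, I would apply \eqref{eq:Hardy:trivial} on $[r_+,r']$ and send $r'\to\infty$: the boundary contribution is controlled by $r'|\varphi(r')|^2$, which tends to $0$ by hypothesis, while the left-hand side is monotone in $r'$, so monotone convergence delivers the bound with the integration extended to $\infty$. There is no real obstacle here; the only point requiring verification is the weight equivalence, after which the argument is a standard one-dimensional Hardy inequality with weight vanishing linearly at the horizon endpoint.
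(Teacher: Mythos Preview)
Your proof is correct and follows essentially the same route as the paper: both integrate the identity $\partial_r\big((r-r_+)|\varphi|^2\big)=|\varphi|^2+2(r-r_+)\Re(\bar\varphi\,\partial_r\varphi)$ from $r_+$ to $r'$ and then apply a weighted Cauchy--Schwarz to absorb the cross term. You simply make explicit the weight comparison $(r-r_+)^2\sim \mu^2 r^2$ that the paper leaves implicit.
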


\begin{proof}
It follows easily by integrating the following equation
\begin{align}
\partial_r((r-r_+)\abs{\varphi}^2)=\abs{\varphi}^2
+2(r-r_+)\Re(\bar{\varphi}\partial_r\varphi)
\end{align}
from $r_+$ to $r'$ and applying Cauchy-Schwarz to the last product term.
\end{proof}

We will also use the following standard Hardy's inequality, cf. \cite[Lemma 4.30]{andersson2019stability}.
\begin{lemma}[One-dimensional Hardy estimates]
\label{lem:HardyIneq}
Let $\alpha \in \mathbb{R}\setminus \{0\}$  and $h: [r_0,r_1] \rightarrow \mathbb{R}$ be a $C^1$ function.
\begin{enumerate}
\item \label{point:lem:HardyIneqLHS} If $r_0^{\alpha}\vert h(r_0)\vert^2 \leq D_0$ and $\alpha<0$, then
\begin{subequations}
\label{eq:HardyIneqLHSRHS}
\begin{align}\label{eq:HardyIneqLHS}
-2\alpha^{-1}r_1^{\alpha}\vert h(r_1)\vert^2+\int_{r_0}^{r_1}r^{\alpha -1} \vert h(r)\vert ^2 \di r \leq \frac{4}{\alpha^2}\int_{r_0}^{r_1}r^{\alpha +1} \vert \partial_r h(r)\vert ^2 \di r-2\alpha^{-1}D_0.
\end{align}
\item \label{point:lem:HardyIneqRHS} If $r_1^{\alpha}\vert h(r_1)\vert^2 \leq D_0$ and $\alpha>0$, then
\begin{align}\label{eq:HardyIneqRHS}
2\alpha^{-1}r_0^{\alpha}\vert h(r_0)\vert^2+\int_{r_0}^{r_1}r^{\alpha -1} \vert h(r)\vert ^2 \di r \leq \frac{4}{\alpha^2}\int_{r_0}^{r_1}r^{\alpha +1} \vert \partial_r h(r)\vert ^2 \di r +2\alpha^{-1}D_0.
\end{align}
\end{subequations}
\end{enumerate}
\end{lemma}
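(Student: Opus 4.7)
The plan is to derive both inequalities from a single integration-by-parts identity combined with a weighted Cauchy--Schwarz absorption. Setting $I=\int_{r_0}^{r_1} r^{\alpha-1}\abs{h}^2\,\di r$ and writing $r^{\alpha-1}=\alpha^{-1}\partial_r(r^{\alpha})$, integration by parts yields
\begin{align*}
I = \alpha^{-1}\bigl(r_1^{\alpha}\abs{h(r_1)}^2 - r_0^{\alpha}\abs{h(r_0)}^2\bigr) - \tfrac{2}{\alpha}\int_{r_0}^{r_1} r^{\alpha}\Re(\bar h\,\partial_r h)\,\di r.
\end{align*}
This identity is common to both cases and reduces the problem to estimating the cross term while tracking the signs of the boundary contributions.

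For the cross term I would apply the weighted AM--GM inequality with weight $\abs{\alpha}/2$, which gives
\begin{align*}
\Bigl|\tfrac{2}{\alpha}\int_{r_0}^{r_1} r^{\alpha}\Re(\bar h\,\partial_r h)\,\di r\Bigr|\leq \tfrac{1}{2}I + \tfrac{2}{\alpha^2}\int_{r_0}^{r_1} r^{\alpha+1}\abs{\partial_r h}^2\,\di r.
\end{align*}
The weight $\abs{\alpha}/2$ is chosen precisely so that the coefficient of $I$ on the right is $\tfrac{1}{2}$, allowing absorption into the left-hand side. Multiplying the resulting inequality through by $2$ produces exactly the constants $4/\alpha^2$ in front of the derivative integral and $-2\alpha^{-1}$ in front of the boundary terms that appear in \eqref{eq:HardyIneqLHSRHS}.

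The two cases then differ only in which endpoint contributes favorably. In part \eqref{point:lem:HardyIneqLHS}, with $\alpha<0$, the factor $-\alpha^{-1}$ is positive, so moving $\alpha^{-1}r_1^{\alpha}\abs{h(r_1)}^2$ to the left produces the desired good-sign term $-2\alpha^{-1}r_1^{\alpha}\abs{h(r_1)}^2$, while the $r_0$ boundary contribution is bounded above by $-2\alpha^{-1}D_0$ using the hypothesis $r_0^{\alpha}\abs{h(r_0)}^2\leq D_0$. In part \eqref{point:lem:HardyIneqRHS}, with $\alpha>0$, the roles of $r_0$ and $r_1$ swap: the $r_0$ boundary term $2\alpha^{-1}r_0^{\alpha}\abs{h(r_0)}^2$ stays on the left with the favorable sign, while the $r_1$ boundary term is controlled by $2\alpha^{-1}D_0$ via the new hypothesis at $r_1$.

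There is no substantive obstacle beyond bookkeeping; the only subtle point is choosing the AM--GM weight so that exactly half of $I$ is recovered after absorption, since any other choice would either fail to absorb $I$ entirely or would alter the sharp constant $4/\alpha^2$. The rest is a direct computation, and the same argument simultaneously handles both signs of $\alpha$ by simply keeping track of which endpoint yields a favorable sign.
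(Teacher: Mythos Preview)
Your proof is correct and follows the standard approach: integrate by parts using $r^{\alpha-1}=\alpha^{-1}\partial_r(r^{\alpha})$, then absorb the cross term via weighted Cauchy--Schwarz with exactly the weight that recovers half of $I$. The paper does not actually prove this lemma; it merely cites \cite[Lemma~4.30]{andersson2019stability}, so there is nothing to compare against beyond noting that your argument is the expected one. One cosmetic remark: the statement takes $h$ to be real-valued, so the $\Re(\bar h\,\partial_r h)$ in your identity is just $h\,\partial_r h$, but this does not affect anything.
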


Recall the following Sobolev-type estimates from \cite[Lemmas 4.32, 4.33]{andersson2019stability}.
\begin{lemma}
\label{lem:Sobolev}
Let $\varphi$ be a spin weight $s$ scalar. Then
\begin{align}
\label{eq:Sobolev:1}
\sup_{\Sigmatb}\abs{\varphi}^2\lesssim_{s}{} \norm{\varphi}_{W_{-1}^3(\Sigmatb)}^2.
\end{align}
If $\alpha\in (0,1]$, then
\begin{align}
\label{eq:Sobolev:2}
\sup_{\Sigmatb}\abs{\varphi}^2\lesssim_{s,\alpha} {} (\norm{\varphi}_{W_{-2}^3(\Sigmatb)}^2
+\norm{rV\varphi}_{W_{-1-\alpha}^2(\Sigmatb)}^2)^{\half}
(\norm{\varphi}_{W_{-2}^3(\Sigmatb)}^2
+\norm{rV\varphi}_{W_{-1+\alpha}^2(\Sigmatb)}^2)^{\half}.
\end{align}
If $\lim\limits_{{\tb\to\infty}}\abs{r^{-1}\varphi}=0$ pointwise in $(\rb,\theta,\pb)$, then
\begin{align}
\label{eq:Sobolev:3}
\abs{r^{-1}\varphi}^2\lesssim_{s} {}\norm{\varphi}_{W_{-1}^3(\DOC_{\tb,\infty})}
\norm{\Lxi\varphi}_{W_{-1}^3(\DOC_{\tb,\infty})}.
\end{align}
\end{lemma}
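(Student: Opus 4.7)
The plan is to establish \eqref{eq:Sobolev:1}--\eqref{eq:Sobolev:3} hierarchically, in each case combining a spin-weighted Sobolev embedding on $\mathbb{S}^2$ with a one-dimensional estimate in the radial or time direction supplied by Hardy's inequalities from Lemma \ref{lem:HardyIneq} together with the fundamental theorem of calculus.

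For \eqref{eq:Sobolev:1}, on each sphere $\mathbb{S}^2(r)\subset \Sigmatb$ the classical embedding $H^2(\mathbb{S}^2)\hookrightarrow L^\infty$ applied to the spin-weighted spherical harmonic expansion recalled in Section \ref{sect:decompIntoModes} yields $\sup_{\mathbb{S}^2(r)}|\varphi|^2 \lesssim_s \sum_{j\leq 2}\int_{\mathbb{S}^2}|\edthR^{j}\varphi|^2\di^2\mu$. To promote this to a supremum over $\Sigmatb$, I would apply the fundamental theorem of calculus, writing $|\varphi(r,\omega)|^2 = |\varphi(r_0,\omega)|^2 + 2\int_{r_0}^r\Re(\bar\varphi\,\partial_{r'}\varphi)\di r'$ for an auxiliary reference radius $r_0$, averaging over $r_0$ against an integrable weight (equivalently, applying the trivial Hardy inequality \eqref{eq:Hardy:trivial}), and rewriting $\partial_r$ as a linear combination of $Y$ near the horizon and $r^{-1}\cdot rV$ at large $r$ modulo the Killing fields $\Lxi,\Leta$. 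Cauchy-Schwarz then produces $\norm{\varphi}_{W_{-1}^3(\Sigmatb)}^2$ on the right-hand side.

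For \eqref{eq:Sobolev:2}, I would refine the previous argument by a weighted Cauchy-Schwarz splitting. Using $|\varphi(r_0)|^2 \lesssim \int_{r_0}^{\infty}|\bar\varphi\cdot rV\varphi|\,r^{-1}\di r + \text{boundary}$ (boundary terms handled via Hardy \eqref{eq:HardyIneqLHS}) and splitting with weights $r^{-\alpha/2}$ and $r^{+\alpha/2}$ leads to $|\varphi(r_0)|^2$ bounded by the geometric mean of $\int r^{-2-\alpha}|\varphi|^2\di r$ against $\int r^{-1+\alpha}|rV\varphi|^2\di r$, together with a symmetric counterpart with $-\alpha$ and $+\alpha$ interchanged. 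Combining with the spherical step, commuting with $\edthR$ and $\edthR'$ to promote to the requisite higher order, and converting the $L^2(r^{-2\mp\alpha-1}\di r)$ norms of $|\varphi|$ into $W_{-2}^3$ norms through Hardy \eqref{eq:HardyIneqLHS} then yields the desired product structure. The strict positivity of $\alpha$ is exactly what avoids the borderline logarithmic divergence.

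For \eqref{eq:Sobolev:3}, I would combine \eqref{eq:Sobolev:1} applied to $r^{-1}\varphi$ on each slice $\Sigma_{\tb'}$ with a fundamental-theorem step in $\tb'$. Since $|r^{-1}\varphi(\tb')|\to 0$ as $\tb'\to\infty$ pointwise, one has $|r^{-1}\varphi(\tb,\ldots)|^2 = -2\int_\tb^\infty\Re(\overline{r^{-1}\varphi}\,\Lxi(r^{-1}\varphi))\di\tb'$, so Cauchy-Schwarz in $\tb'$ followed by \eqref{eq:Sobolev:1} on $\Sigma_{\tb'}$ (using that $\Lxi$ is Killing and hence commutes with every member of $\CDeri$) produces the claimed spacetime product bound over $\DOC_{\tb,\infty}$. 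The main technical obstacle throughout is the careful bookkeeping of $r$-weights produced by the rescalings implicit in $\CDeri$ and $\SDeri$, ensuring that the implicit $r^{-1}$ factors in $\SDeri$ combine with the weight $r^{\gamma}$ in the $W_\gamma^k$ definitions to yield exactly $W_{-1}^3$, $W_{-2}^3$, and $W_{-1\pm\alpha}^2$ rather than slightly shifted variants.
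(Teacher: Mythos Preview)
The paper does not actually prove this lemma here: it is stated with the remark ``Recall the following Sobolev-type estimates from \cite[Lemmas 4.32, 4.33]{andersson2019stability}'' and no argument is given. So there is no in-paper proof to compare against; your sketch is supplying what the paper outsources.

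Your strategy---spherical spin-weighted Sobolev embedding followed by a one-dimensional fundamental-theorem-of-calculus step (radial for \eqref{eq:Sobolev:1}--\eqref{eq:Sobolev:2}, temporal for \eqref{eq:Sobolev:3}) with Hardy's inequality to close the weights---is the standard route and is correct in outline. A few remarks: for \eqref{eq:Sobolev:2} the specific weights you wrote are slightly inconsistent (the split $r^{-1}=r^{-(1+\alpha)/2}r^{-(1-\alpha)/2}$ applied to $\int r^{-1}|\varphi||rV\varphi|\,\di r$ produces $(\int r^{-1-\alpha}|\varphi|^2)^{1/2}(\int r^{-1+\alpha}|rV\varphi|^2)^{1/2}$, not $\int r^{-2-\alpha}|\varphi|^2$), and the first factor is then controlled via Hardy \eqref{eq:HardyIneqLHS} by $\norm{rV\varphi}_{W_{-1-\alpha}^2}^2$ plus a compactly supported remainder absorbed into $\norm{\varphi}_{W_{-2}^3}^2$. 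You already flag the weight bookkeeping as the main obstacle, so this is a matter of execution, not a gap in the idea. For \eqref{eq:Sobolev:3}, after Cauchy--Schwarz in $\tb'$ you should invoke \eqref{eq:Sobolev:1} on each slice to bound the pointwise value $|r^{-1}\varphi(\tb',\rb,\omega)|^2\le r^{-2}\sup_{\Sigma_{\tb'}}|\varphi|^2\lesssim r^{-2}\norm{\varphi}_{W_{-1}^3(\Sigma_{\tb'})}^2$ and then integrate; the resulting bound in fact carries an extra harmless $r^{-2}$, which is stronger than stated since $r\ge r_+$.
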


\begin{prop}
\label{prop:basicesti}
\begin{enumerate}
\item \label{prop:basicesti:pt2}
Let $\varphi$ be a spin weight $s$ scalar and be supported on $\ell\geq \ell_0$ modes. Then
\begin{align}
\label{eq:ellip:highermodes}
\hspace{4ex}&\hspace{-4ex}
\int_{\mathbb{S}^2}\bigg(\abs{\edthR'\varphi}^2
-\frac{(\ell_0+s)(\ell_0-s+1)}{2}\abs{\varphi}^2\bigg) \di^2\mu\notag\\
={}&\int_{\mathbb{S}^2}\bigg(\abs{\edthR\varphi}^2
-\frac{(\ell_0-s)(\ell_0+s+1)}{2}\abs{\varphi}^2\bigg) \di^2\mu \geq 0.
\end{align}
In particular, let $\varphi$ be an arbitrary spin weight $s$ scalar, then
\begin{align}
\label{eq:ellipestis}
\int_{\mathbb{S}^2}\bigg(\abs{\edthR'\varphi}^2
-\frac{s+\abs{s}}{2}\abs{\varphi}^2\bigg) \di^2\mu =\int_{\mathbb{S}^2}\bigg(\abs{\edthR\varphi}^2
-\frac{\abs{s}-s}{2}\abs{\varphi}^2\bigg) \di^2\mu \geq 0.
\end{align}
\item \label{prop:basicesti:pt1}
Let $m\in \mathbb{Z}^+$ and let a multiindex $\mathbf{a}$ $(\abs{\mathbf{a}}= m)$ be given. Let $\CDeriphi=\CDeri\cup \{\partial_{\phi}\}$. Then
\begin{align}
[V,\CDeri^{\mathbf{a}}]={}&
\sum_{\abs{\mathbf{a}_1}\leq m-1} f_{\mathbf{a}_1}\CDeriphi^{\mathbf{a}_1} V
+\sum_{\abs{\mathbf{a}_2}\leq m} h_{\mathbf{a}_2}\CDeriphi^{\mathbf{a}_2},
\end{align}
where for each $\mathbf{a}_1$ and $\mathbf{a}_2$, $f_{\mathbf{a}_1}= O(1)$ and $h_{\mathbf{a}_2}=O(r^{-2})$.
\end{enumerate}
\end{prop}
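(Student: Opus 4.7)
For Part \ref{prop:basicesti:pt2}, I would decompose $\varphi = \sum_{L\geq\ell_0}\varphi^{\ell=L}$ into spin-weighted spherical harmonics as in Section \ref{sect:decompIntoModes}; by \eqref{eq:l=l0mode:eigenvalue} each $\varphi^{\ell=L}$ is an eigenfunction of $2\edthR\edthR'$ (respectively $2\edthR'\edthR$) with eigenvalue $-(L+s)(L-s+1)$ (respectively $-(L-s)(L+s+1)$). The conjugation identity $\overline{\edthR\psi}=\edthR'\bar\psi$ follows line-by-line from the explicit formulas for $\edthR$ and $\edthR'$, and combining it with an integration by parts in $(\theta,\pb)$ on $\mathbb{S}^2$ yields the adjoint relation $\int_{\mathbb{S}^2}|\edthR'\varphi|^2\di^2\mu = -\int_{\mathbb{S}^2}\bar\varphi\cdot\edthR\edthR'\varphi\,\di^2\mu$ together with its analogue for $\edthR$. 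Consequently each $L$-mode satisfies $\int_{\mathbb{S}^2}|\edthR'\varphi^{\ell=L}|^2\di^2\mu = \tfrac{(L+s)(L-s+1)}{2}\int_{\mathbb{S}^2}|\varphi^{\ell=L}|^2\di^2\mu$ and likewise for $\edthR$. The equality in \eqref{eq:ellip:highermodes} then reduces to the algebraic identity $(L+s)(L-s+1)-(L-s)(L+s+1)=2s$, which is independent of $L$ and therefore cancels between the $L$-contribution and the $\ell_0$-constant; non-negativity follows from the monotonicity of $L\mapsto(L+s)(L-s+1)$ and $L\mapsto(L-s)(L+s+1)$ on $L\geq|s|$. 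The identity \eqref{eq:ellipestis} is the special case $\ell_0=|s|$, for which exactly one of the two eigenvalues vanishes, matching $\tfrac{s+|s|}{2}$ or $\tfrac{|s|-s}{2}$ according to the sign of $s$.

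For Part \ref{prop:basicesti:pt1}, I would induct on $m=|\mathbf{a}|$. A direct change of variables into the hyperboloidal coordinates gives $V=A(\rb)\partial_\tb + B(\rb)\Leta + C(\rb)\partial_\rb$ and $Y = h'(\rb)\partial_\tb - \partial_\rb$ with $A=2-\Delta h'/\R$, $B=2a/\R$, $C=\Delta/\R$; crucially all coefficients depend only on $\rb$, one has $A=O(r^{-1})$, $B=O(r^{-2})$, $C=O(1)$, and by the paper's conventions $A',B',C'$ are all $O(r^{-2})$. Since the coefficients of $V$ are independent of $\theta,\pb$ while those of $\edthR,\edthR'$ are independent of $\tb,\rb$, we have immediately $[V,\edthR]=[V,\edthR']=0$. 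Next, $[V,rV] = (Vr)V = (\Delta/\R)V$ with $\Delta/\R=O(1)$, while a direct coordinate computation gives $[V,Y] = (Ca'+A')\partial_\tb + B'\Leta + C'\partial_\rb$, all of whose coefficients are $O(r^{-2})$. Using $\partial_\pb=\Leta$, $\partial_\rb = h'\partial_\tb - Y$, and the key identity $A+Ch'=2$ (hence $\partial_\tb = (V+CY-B\Leta)/2$), one recasts $[V,Y]$ in the form $fV + h_1 Y + h_2\Leta$ with $f=O(1)$ and $h_1,h_2=O(r^{-2})$, which completes the base case.

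For the inductive step, split $\CDeri^{\mathbf{a}} = X\cdot \CDeri^{\mathbf{a}'}$ with $X\in\CDeri$, $|\mathbf{a}'|=m$, and expand $[V,\CDeri^{\mathbf{a}}] = [V,X]\CDeri^{\mathbf{a}'} + X[V,\CDeri^{\mathbf{a}'}]$. Into the first summand substitute the base-case identity, then commute the extra $\CDeri^{\mathbf{a}'}$ past the coefficient functions by the Leibniz rule; into the second substitute the inductive hypothesis and then push the leading $X$ rightward through coefficients and operators, rewriting $XV = VX + [X,V]$ whenever needed to keep $V$ on the right and reapplying the base case to any $[X,V]$ that appears. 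The induction closes cleanly because each operator in $\CDeriphi$ preserves the two decay classes when applied to smooth coefficient functions of $\rb$: by the paper's convention a single $\partial_r$-derivative of an $O(1)$ coefficient is bounded by $Cr^{-2}$ and of an $O(r^{-2})$ coefficient by $Cr^{-3}$, while $\edthR,\edthR',\Leta$ either annihilate a purely radial coefficient or multiply by bounded trigonometric functions. I expect the main obstacle to be the combinatorial bookkeeping of this induction: verifying, through repeatedly nested commutations, that the multi-indices $\mathbf{a}_1$ and $\mathbf{a}_2$ stay within their stated length bounds $m-1$ and $m$ and that no coefficient escapes the $O(1)$ or $O(r^{-2})$ class after the Leibniz expansions are fully carried out.
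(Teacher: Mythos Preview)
Your proposal is correct and follows the same skeleton as the paper. For Part~\ref{prop:basicesti:pt2} the paper simply cites \cite[Lemma 4.25]{andersson2019stability} together with the identity $\edthR\edthR'=\edthR'\edthR-s$; your mode-by-mode computation using the eigenvalue relations \eqref{eq:l=l0mode:eigenvalue} is exactly the argument behind that citation, so there is no real difference.

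For Part~\ref{prop:basicesti:pt1} the overall strategy (verify the four base commutators, then induct) coincides, but the execution diverges in two places. First, the paper computes $[V,Y]$ directly in the Boyer--Lindquist frame and records the closed formula \eqref{eq:YVcommutator}, namely $[V,Y]=\frac{-4ar}{(r^2+a^2)^2}\partial_\phi-\frac{2M(r^2-a^2)}{(r^2+a^2)^2}Y$, which immediately exhibits the commutator as a combination of $\partial_\phi$ and $Y$ with $O(r^{-2})$ coefficients and no $V$ contribution. Your route through the hyperboloidal expansion of $V$ and $Y$ reaches the same conclusion but only after re-expressing $\partial_\tau$ and $\partial_\rho$ back in terms of $V,Y,\Leta$; this works, but is more delicate (and your stated decay $A=O(r^{-1})$ is actually $O(r^{-2})$ once the asymptotics \eqref{eq:propertyofHfunction} for $H$ are used, though the weaker claim suffices). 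Second, the paper collapses the entire inductive step into the single structural observation that for any $X_1,X_2$ in the $O(1)$-span of $\CDeriphi$ one has $[r^{-2}X_1,X_2]=r^{-2}X_3$ for some $X_3$ of the same type; this absorbs all of your Leibniz bookkeeping in one stroke and makes the preservation of the $O(1)$ and $O(r^{-2})$ classes automatic. Your explicit commutator-pushing would also close, but the paper's formulation is what makes the ``combinatorial bookkeeping'' you flag as the main obstacle disappear.
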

\begin{proof}
A proof of point \ref{prop:basicesti:pt2} can be found in \cite[Lemma 4.25]{andersson2019stability} together with the fact that $\edthR\edthR'=\edthR'\edthR-s$.

As to point \ref{prop:basicesti:pt1}, we consider first $m=1$. This is manifest since $[V,\edthR]=0$, $[V,\edthR']=0$, $[V,rV]=\frac{\Delta}{\R}V$, and
\begin{align}
\label{eq:YVcommutator}
[V,Y]=
\frac{-4ar}{(\R)^2}\partial_{\phi}-\frac{2M(r^2-a^2)}{(\R)^2}Y
= aO(r^{-3})\partial_{\phi}+MO(r^{-2})Y.
\end{align}
The general $m\in \mathbb{Z}^+$ cases follow by induction and the following fact:
For any $X_1, X_2$ lying in the span of $\CDeriphi$ with $O(1)$ coefficients, by direct calculations, there exists a vector field $X_3$ also lying in the span of $\CDeriphi$ with $O(1)$ coefficients such that $[r^{-2}X_1,X_2]= {}r^{-2} X_3$.
\end{proof}

\subsection{A general $r^p$ estimate for a spin-weighted wave equation}
We present an $r^p$ estimate for a spin-weighted wave equation with a source term.

\begin{definition}\label{def:scriandothernorm}
\begin{enumerate}
  \item
  Define a few sets of operators acting on spin weight $s$ scalars
\begin{subequations}
\label{def:mathbbD1and2}
\begin{align}
\mathbb{D}_1={}&\{\Lxi, \edthR,\edthR', rV\},\\
\mathbb{D}_2={}&\left\{
                  \begin{array}{ll}
                    \{\Lxi, \edthR, rV\}, & \hbox{if $s>0$;} \\
                    \{\Lxi, \edthR', rV\}, & \hbox{if $s\leq 0$,}
                  \end{array}
                \right.\\
\ScriDeri={}&\{\Lxi, \edthR, \edthR'\},\\
\ScriDeripm={}&\left\{
                  \begin{array}{ll}
                    \{\Lxi, \edthR\}, & \hbox{if $s>0$;} \\
                    \{\Lxi, \edthR'\}, & \hbox{if $s\leq 0$.}
                  \end{array}
                \right.
\end{align}
\end{subequations}
Define scri fluxes for any $\tb_2>\tb_1$
\begin{subequations}
\begin{align}
\norm{\varphi}_{F^{\reg}(\Scrionetwo)}^2
={}&\int_{\Scrionetwo}  \absScriDeri{\Lxi\varphi}{\reg-1}^2\di \tb\di^2\mu,\\
\norm{\varphi}_{W^{\reg}(\Scrionetwo)}^2
={}&\int_{\Scrionetwo}  \absScriDeri{\ScriDeri\varphi}{\reg-1}^2\di \tb\di^2\mu,\\
\norm{\varphi}_{\tilde{W}^{\reg}(\Scrionetwo)}^2
={}&\int_{\Scrionetwo}  \absScriDerit{\ScriDeripm\varphi}{\reg-1}^2\di \tb\di^2\mu.
\end{align}
\end{subequations}

  \item Define a Teukolsky angular operator
\begin{align}
\TAO={}&2\edthR\edthR' +a^2\sin^2\theta\Lxi^2 -2ias\cos\theta\Lxi,
\end{align}
and a spherical operator
\begin{align}
\mathbf{S}=\TAO\vert_{a=0}=2\edthR\edthR'.
\end{align}
Define sets of operators
\begin{align}
\KDeri_1={}\{Y, rV\},\qquad
\KDeri_2={}\{\Lxi, \TAO, rV\},
\end{align}
and for $\mathbf{n}=(n_1,n_2,n_3)$ with $\abs{\mathbf{n}}=n_1+n_2+2n_3$,
\begin{align}
\KDeri_2^\mathbf{n}=\{\Lxi^{n_1}(rV)^{n_2}\TAO^{n_3}\}.
\end{align}
Define a norm square in a $4$-dimensional subspace of the DOC as in Definition \ref{def:basicweightednorm}
\begin{align}
\norm{\varphi}_{\hat{W}_{\gamma}^{\reg}(\Omega)}^2
={}&\sum_{\abs{\mathbf{a}}+2{\mathbf{b}}=\reg}
\int_{\Omega} r^{\gamma}\abs{\KDeri_1^{\mathbf{a}}\mathbf{S}^{{b}}\varphi}^2\di^4\mu.
\end{align}
Similarly, one can define $\norm{\varphi}_{\hat{W}_{\gamma}^{\reg}(\Sigma)}^2$ on a $3$-dimensional space.
\end{enumerate}
\end{definition}

\begin{prop}
\label{prop:wave:rp}
Let $\reg\in \mathbb{N}$, $\abs{s}\leq 2$\footnote{Although only the Maxwell field $(s=\pm 1)$ is considered in this work, this proposition applies to general half integer spin weight $s$ which corresponds to the extreme components of different spin fields.}, and $p\in [0,2]$. Let $\delta\in (0,1/2)$ be arbitrary. Let $\varphi$ and $\vartheta=\vartheta(\varphi)$ be spin weight $s$ scalars satisfying
\begin{align}
\label{eq:wave:rp}
\squareShat_{s}\varphi -b_V V\varphi -b_{\phi}\Leta\varphi -b_0\varphi=\vartheta.
\end{align}
Let $b_{V}$, $b_{\phi}$ and $b_0$ be smooth real functions of $r$ such that
\begin{enumerate}
\item $\exists b_{V,-1}\geq 0$ such that $b_V=b_{V,-1} r +O(1)$,
\item $b_{\phi}=O(r^{-1})$, and
\item $\exists b_{0,0}\in \mathbb{R}$ such that $b_0=b_{0,0}+O(r^{-1})$ and $b_{0,0}+s+\abs{s}\geq 0$.
\end{enumerate}
Then there are constants $\hat{R}_0=\hat{R}_0(p,b_0,b_{\phi},b_V)$, $c= c(p,\hat{R}_0,b_0,b_{\phi},b_V)$ and $C=C(p,\hat{R}_0,b_0,b_{\phi},b_V)$ such that for all $R_0\geq \hat{R}_0$ and $\tb_2>\tb_1\geq \tb_0$,
\begin{enumerate}
\item for $p\in (0, 2)$,
    \begin{align}\label{eq:rp:less2}
\hspace{4ex}&\hspace{-4ex}
\norm{rV\varphi}^2_{W_{p-2}^\reg(\Sigmatwo^{R_0})}
+\norm{\varphi}^2_{W_{-2}^{\reg+1}(\Sigmatwo^{R_0})}
+\norm{\varphi}^2_{W_{p-3}^{\reg+1}(\Donetwo^{R_0})}
+\norm{Y\varphi}^2_{W_{-1-\delta}^{\reg}(\Donetwo^{R_0})}
+\norm{\varphi}_{F^{\reg+1}(\Scrionetwo)}^2\notag\\
\leq {}&C\bigg(\norm{rV\varphi}^2_{W_{p-2}^\reg(\Sigmaone^{R_0-M})}
+\norm{\varphi}^2_{W_{-2}^{\reg+1}(\Sigmaone^{R_0-M})}
+\norm{\varphi}^2_{W_{0}^{\reg+1}(\Sigmatwo^{R_0-M,R_0})}\bigg.\notag\\
&\bigg.\quad
+\norm{\varphi}^2_{W_{0}^{\reg+1}(\Donetwo^{R_0-M,R_0})}
+\norm{\vartheta}^2_{W_{p-3}^{\reg}(\Donetwo^{R_0-M})}
\bigg);
\end{align}
\item\label{pt:2:prop:wave:rp} for $p=2$ and $b_{0,0}+s+\abs{s}> 0$,
\begin{align}\label{eq:rp:p=2}
\hspace{4ex}&\hspace{-4ex}
c\Big(\norm{rV\varphi}^2_{W_{0}^\reg(\Sigmatwo^{R_0})}
+\norm{\varphi}^2_{W_{-2}^{\reg+1}(\Sigmatwo^{R_0})}
+\norm{\varphi}^2_{W_{-1-\delta}^{\reg+1}(\Donetwo^{R_0})}
+\norm{rV\varphi}^2_{W_{-1}^{\reg}(\Donetwo^{R_0})}
+\norm{\varphi}_{{W}^{\reg+1}(\Scrionetwo)}^2\Big)
\notag\\
\leq {}&C\bigg(\norm{rV\varphi}^2_{W_{0}^\reg(\Sigmaone^{R_0-M})}
+\norm{\varphi}^2_{W_{-2}^{\reg+1}(\Sigmaone^{R_0-M})}
+\norm{\varphi}^2_{W_{0}^{\reg+1}(\Sigmatwo^{R_0-M,R_0})}\bigg.\notag\\
&\bigg.\quad +\norm{\varphi}^2_{W_{0}^{\reg+1}(\Donetwo^{R_0-M,R_0})}
+\norm{\vartheta}^2_{W_{-1-\delta}^{\reg}(\Donetwo^{R_0-M})}\bigg)\notag\\
&
+Ca^2\int_{\tb_1}^{\tb_2}
\tb^{1+\delta}
\norm{\Lxi\varphi}^2_{W_{-2}^{\reg+1}(\Sigmatb^{R_0})}
\di \tb
- \sum_{\abs{\mathbf{a}}\leq \reg}\int_{\Donetwo^{R_0}}\Re\Big(V\overline{\mathbb{D}_1^{\mathbf{a}}\varphi}
\mathbb{D}_1^{\mathbf{a}}\vartheta\Big) \di^4 \mu.
\end{align}
\item for $p=2$ and $b_{0,0}+s+\abs{s}=0$,
\begin{align}\label{eq:rp:p=2:2}
\hspace{4ex}&\hspace{-4ex}
c\Big(\norm{rV\varphi}^2_{W_{0}^\reg(\Sigmatwo^{R_0})}
+\norm{\varphi}^2_{W_{-2}^{\reg+1}(\Sigmatwo^{R_0})}
+\norm{\varphi}^2_{W_{-1-\delta}^{\reg+1}(\Donetwo^{R_0})}
+\norm{rV\varphi}^2_{W_{-1}^{\reg}(\Donetwo^{R_0})}
+\norm{\varphi}_{\tilde{W}^{\reg+1}(\Scrionetwo)}^2\Big)
\notag\\
\leq {}&C\bigg(\norm{rV\varphi}^2_{W_{0}^\reg(\Sigmaone^{R_0-M})}
+\norm{\varphi}^2_{W_{-2}^{\reg+1}(\Sigmaone^{R_0-M})}
+\norm{\varphi}^2_{W_{0}^{\reg+1}(\Sigmatwo^{R_0-M,R_0})}
\bigg.\notag\\
&\bigg.\quad +\norm{\varphi}^2_{W_{0}^{\reg+1}(\Donetwo^{R_0-M,R_0})}
+\norm{\vartheta}^2_{W_{-1-\delta}^{\reg}(\Donetwo^{R_0-M})}\bigg)\notag\\
&
+Ca^2\int_{\tb_1}^{\tb_2}
\tb^{1+\delta}
\norm{\Lxi\varphi}^2_{W_{-2}^{\reg+1}(\Sigmatb^{R_0})}
\di \tb
- \sum_{\abs{\mathbf{a}}\leq \reg}\int_{\Donetwo^{R_0}}\Re\Big(V\overline{\mathbb{D}_2^{\mathbf{a}}\varphi}
\mathbb{D}_2^{\mathbf{a}}\vartheta\Big) \di^4 \mu.
\end{align}
\end{enumerate}
\end{prop}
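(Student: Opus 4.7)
The plan is to implement the $r^p$ multiplier method of Dafermos--Rodnianski, adapted directly to the spin-weighted wave operator $\squareShat_{s}$, with the cutoff $\chi$ used to localize to $r\geq R_0-M$. For the base case $\reg=0$, multiply \eqref{eq:wave:rp} by a suitable $r^p$-weighted version of $\chi\overline{V\varphi}$, integrate over $\Donetwo$ against $\di^4\mu$, and take the real part. The principal transport piece $-(\R)YV\varphi$ contributes, using $\Re(\overline{V\varphi}\cdot YV\varphi)=\tfrac12 Y\abs{V\varphi}^2+(\text{commutator }[V,Y])$ and integrating $Y$ by parts in $r$, the boundary flux $\norm{rV\varphi}^2_{W_{p-2}^{0}(\Sigmatwo^{R_0})}$, a negatively signed initial flux on $\Sigmaone^{R_0-M}$, a positive bulk $p\int r^{p-3}\abs{rV\varphi}^2$, and, when $p=2$, a scri contribution taking the form $\int_{\Scrionetwo}\abs{\Lxi\varphi}^2$ since $V\to\Lxi$ at future null infinity. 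The angular piece $2\edthR\edthR'\varphi$, integrated by parts on the sphere using $(\edthR')^{\dagger}=-\edthR$ and then in $r$ against $V$, produces boundary terms of type $\norm{\varphi}^2_{W_{-2}^{1}}$ on the initial and final slices together with a bulk $(2-p)r^{p-3}\abs{\edthR\varphi}^2$; the $-b_0\varphi$ term with $b_0=b_{0,0}+O(r^{-1})$ similarly contributes $(2-p)r^{p-3}b_{0,0}\abs{\varphi}^2$. The elliptic estimate \eqref{eq:ellipestis} combines these into a positive definite bulk exactly under the hypothesis $b_{0,0}+s+\abs{s}\geq 0$, and the $b_V V\varphi$ term with $b_V=b_{V,-1}r+O(1)$ contributes the further positive bulk $b_{V,-1}\int r^p\abs{V\varphi}^2$. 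The remaining pieces---$b_\phi\Leta$, $\tfrac{2ar}{\R}\Leta$, $2a\Lxi\Leta$, $a^2\sin^2\theta\Lxi^2$, $-2ias\cos\theta\Lxi$, and the $O(r^{-2})$ correction in $\squareShat_s$---are all of strictly lower order in $r$ or carry an explicit $a$-weight.

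To upgrade to arbitrary $\reg\in\mathbb{N}$, commute \eqref{eq:wave:rp} with iterated operators from $\mathbb{D}_1$ (or $\mathbb{D}_2$ in the degenerate case) and rerun the base argument at each order. Proposition \ref{prop:basicesti}(2) provides the required commutator structure $[V,\CDeri^{\mathbf{a}}]$: the top-order pieces preserve the form of \eqref{eq:wave:rp} with admissible coefficients, while the $O(r^{-2})$ lower-order pieces are absorbable. Commutation with $\Lxi$ is trivial and $[rV,rV]=0$; commutation with $\edthR,\edthR'$ shifts the spin weight but produces only curvature corrections involving $s$ handled by \eqref{eq:ellipestis}--\eqref{eq:ellip:highermodes}. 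The cutoff error is supported in $\Donetwo^{R_0-M,R_0}$ and contributes exactly the $\norm{\varphi}^2_{W_{0}^{\reg+1}(\Donetwo^{R_0-M,R_0})}$ and $\norm{\varphi}^2_{W_{0}^{\reg+1}(\Sigmatwo^{R_0-M,R_0})}$ quantities appearing on the right-hand sides.

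The three cases separate at the absorption step. For $p\in(0,2)$ the coefficient $(2-p)>0$ in the bulk $\abs{\varphi}^2$ is strictly positive, so the $a$-dependent lower order terms and the $b_\phi\Leta$ contribution can all be absorbed by a small $\delta$-loss, yielding the weaker $Y\varphi$ weight $\norm{Y\varphi}^2_{W_{-1-\delta}^{\reg}}$ on the left-hand side and the estimate \eqref{eq:rp:less2}; here the scri flux controls only $\Lxi\varphi$. For $p=2$ the principal bulk on $\abs{\varphi}^2$ vanishes; if $b_{0,0}+s+\abs{s}>0$, \eqref{eq:ellipestis} still supplies a strictly positive bulk from the $b_0$ term, yielding the stronger scri flux $W^{\reg+1}$ (controlling all of $\ScriDeri$) and the estimate \eqref{eq:rp:p=2}. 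In the borderline case $b_{0,0}+s+\abs{s}=0$, only one of $\edthR,\edthR'$ (depending on the sign of $s$) survives in \eqref{eq:ellipestis}, which forces the weaker scri flux $\tilde{W}^{\reg+1}$ and dictates the replacement of $\mathbb{D}_1$ by $\mathbb{D}_2$ in the source pairing.

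The main obstacle is the treatment at $p=2$ of the non-stationary rotation terms $a^2\sin^2\theta\Lxi^2\varphi$ and $2a\Lxi\Leta\varphi$: these carry two Killing derivatives with no favourable sign, and the $r$-weight they contribute after multiplication by $rV\bar\varphi$ is not improved by the available positive bulk $\norm{rV\varphi}^2_{W_{-1}^{\reg}}$ or $\norm{\varphi}^2_{W_{-1-\delta}^{\reg+1}}$. To keep the proposition self-contained, these are not absorbed but instead retained as the right-hand side term $Ca^2\int_{\tb_1}^{\tb_2}\tb^{1+\delta}\norm{\Lxi\varphi}^2_{W_{-2}^{\reg+1}(\Sigmatb^{R_0})}\di\tb$, together with the generic source pairing $-\sum_{\abs{\mathbf{a}}\leq\reg}\int_{\Donetwo^{R_0}}\Re\bigl(V\overline{\mathbb{D}^{\mathbf{a}}\varphi}\,\mathbb{D}^{\mathbf{a}}\vartheta\bigr)\di^4\mu$. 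It is precisely through these two terms that the slowly rotating hypothesis enters in subsequent applications of the proposition, via a Gr\"onwall-type argument in $\tb$.
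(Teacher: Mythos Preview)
Your overall strategy matches the paper's: the $r^p$ multiplier $-2\chi^2 V\overline{\varphi}$ applied to the expanded form of \eqref{eq:wave:rp}, with the rotation terms $2a\Lxi\Leta\varphi$, $a^2\sin^2\theta\Lxi^2\varphi$, $-2ias\cos\theta\Lxi\varphi$ at $p=2$ split by Cauchy--Schwarz with time weights $\tb^{\pm(1+\delta)}$ to produce the retained term $Ca^2\int_{\tb_1}^{\tb_2}\tb^{1+\delta}\norm{\Lxi\varphi}^2_{W_{-2}^{\reg+1}(\Sigmatb^{R_0})}\di\tb$. For $p\in(0,2)$ the paper simply cites \cite{andersson2019stability}, and your sketch of that part is adequate; likewise for the non-degenerate $p=2$ case at $\reg=0$.

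There is, however, a genuine gap in your treatment of the higher-order estimates in the degenerate case $p=2$, $b_{0,0}+s+\abs{s}=0$. You write that one should ``commute with iterated operators from $\mathbb{D}_2$ and rerun the base argument,'' and that commutation with $\edthR,\edthR'$ ``shifts the spin weight but produces only curvature corrections handled by \eqref{eq:ellipestis}--\eqref{eq:ellip:highermodes}.'' This misses the actual mechanism. The point is that commutation \emph{exits} the degenerate regime: after commuting with $\edthR'$ (taking $s\leq 0$), the new coefficient satisfies $b_{0,\edthR',0}+(s-1)+\abs{s-1}=-2(s-1)>0$; after commuting with $rV$, one has $b_{0,rV,0}+s+\abs{s}=1>0$. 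In both cases the commuted equation lands in the \emph{non}-degenerate case~\ref{pt:2:prop:wave:rp}, where the stronger bulk $\norm{\cdot}^2_{W_{-1-\delta}^{\reg+1}(\Donetwo^{R_0})}$ coming from the zero-order term is available and can absorb the commutator errors. The paper's proof exploits this through an explicit hierarchy: first $\Lxi$ (trivial), then $\edthR'$ (non-degenerate), then $rV$ (non-degenerate, with the commutator $\vartheta_{rV}$ containing an $(\R)YV\varphi$ term that must be rewritten via the equation itself as a combination of $\edthR\edthR'\varphi$, $rV\varphi$, $a\Lxi\Leta\varphi$, etc., and the pairing $\int V\overline{rV\varphi}\cdot\edthR\edthR'\varphi$ handled by integration by parts in both $V$ and $\edthR$), and finally iteration. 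If instead you literally ``rerun the base (degenerate) argument'' on each $\mathbb{D}_2^{\mathbf{a}}\varphi$, the only bulk you recover is $\norm{rV\mathbb{D}_2^{\mathbf{a}}\varphi}^2_{W_{-1}^0}$, which cannot absorb the top-order angular error $\edthR\edthR'\varphi$ arising from the $rV$-commutator; the estimate would not close.
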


\begin{proof}
The case $p\in(0,2)$ has been proven in \cite[Lemmas 5.5 and 5.6]{andersson2019stability}. Note that as discussed in Remark \ref{rem:squareShatandLs}, there is a sign difference between the operator $\squareShat$ in this work with the one in \cite{andersson2019stability}, and this is responsible for the sign change in some terms in \eqref{eq:wave:rp}.

For $p=2$ and $b_{0,0}+s+\abs{s}> 0$, we consider only $\reg=0$ case, since $\reg\in \mathbb{Z}^+$ case can be shown following the same argument in \cite[Lemma 5.6]{andersson2019stability} by commuting the wave equation with $\mathbb{D}_1$.
Define $c_0=s^2\frac{2Mr^3+a^2r^2+4a^2Mr+a^4}{(\R)^2}$, then $c_0= O(r^{-1})$. Equation \eqref{eq:wave:rp} can be expanded utilizing equation \eqref{eq:squareShat} as
\begin{align}
\label{eq:wave:generalrp}
\vartheta={}&-(\R)YV\varphi
+(2\edthR\edthR'+s+\abs{s})\varphi
-b_{V,-1} rV\varphi
-(b_{0,0}+s+\abs{s})\varphi\notag\\
&
+2a\Lxi\Leta\varphi
+a^2 \sin^2 \theta\Lxi^2\varphi
-2ias\cos\theta \Lxi\varphi\notag\\
&
-(b_{V}-rb_{V,-1})V\varphi
+\big(\tfrac{2ar}{\R}-b_{\phi}\big)\Leta\varphi
-(b_0-b_{0,0}+c_0)\varphi.
\end{align}
We multiply equation \eqref{eq:wave:generalrp} by $-2\chi^2 V\overline{\varphi}$, take the real part, and obtain
\begin{align}
\label{eq:wavetimesrp=2}
&-4\Re(\edthR(\edthR'\varphi \chi^2 V\overline{\varphi}))
+V(\chi^2(2\abs{ \edthR'\varphi}^2-(s+\abs{s})\varphi^2
+(b_{0,0}+s+\abs{s})\abs{\varphi}^2))
\notag\\
&+Y((\R)\chi^2\abs{V\varphi}^2)
+(\partial_r(\chi^2(\R))+2\chi^2 rb_{V,-1})\abs{V\varphi}^2\notag\\
&-\tfrac{2\Delta}{\R}\partial_r(|\chi |^2)
(\abs{\edthR'\varphi}^2+b_{0,0}\abs{\varphi}^2)-F_{\varphi}
=-2\chi^2 \Re(V\overline{\varphi}\vartheta),
\end{align}
where $
F_{\varphi}=2\chi^2 \Re( V\overline{\varphi}\times \text{last two lines of } \eqref{eq:wave:generalrp})$.
By integrating \eqref{eq:wavetimesrp=2}
over $\Donetwo$ with the volume element $\di^4\mu$ and from point \ref{prop:basicesti:pt2} of Proposition \ref{prop:basicesti}, one finds the integrals of the first two lines in \eqref{eq:wavetimesrp=2} give positive contribution of $c(
\norm{rV\varphi}^2_{W_{0}^0(\Sigmatwo^{R_0})}
+\norm{rV\varphi}^2_{W_{-1}^{0}(\Donetwo^{R_0})})$, the integral of the first term in the third line is supported in $r\in [R_0-M,R_0]$, and an application of Cauchy-Schwarz implies the integrals of all the sub-terms in $F_{\varphi}$ coming from the last line of \eqref{eq:wave:generalrp} are bounded by
$C\big(\norm{rV\varphi}^2_{W_{-2}^{0}(\Donetwo^{R_0-M})}+
\norm{\varphi}^2_{W_{-2}^{1}(\Donetwo^{R_0-M})}\big)$.
The integral of the remaining sub-terms in $F_{\varphi}$ equals
\begin{align}
\label{eq:wave:p=2:error123}
\int_{\Donetwo}
\left(-4\chi^2\Re(V\overline{\varphi}ia\cos \theta \Lxi\varphi)
+4a\chi^2\Re(V\overline{\varphi}
\Lxi\Leta\varphi)
+2a^2\sin^2\theta\chi^2\Re(V\overline{\varphi}
\Lxi^2\varphi)\right)\di^4 \mu
\end{align}
We separate this integral domain into $\Donetwo^{R_0-M,R_0}$ and $\Donetwo^{R_0}$, and find the integral  over $\Donetwo^{R_0-M,R_0}$ are easily seen to be bounded by the RHS of \eqref{eq:rp:p=2:2}, while for the integrals over $\Donetwo^{R_0}$ we apply Cauchy-Schwarz to bound them by
\begin{align}
\label{eq:generalrp:troubleterm:1}
\hspace{4ex}&\hspace{-4ex}
\veps\int_{\tb_1}^{\tb_2}\frac{1}{\tb^{1+\delta}}\int_{\Sigmatb^{R_0}}
\abs{rV\varphi}^2\di^4\mu
+\frac{Ca^2}{\veps}\int_{\tb_1}^{\tb_2}
\tb^{1+\delta}\int_{\Sigmatb^{R_0}}
r^{-2}\left(\abs{\Lxi\varphi}^2
+\abs{
\Lxi^2\varphi}^2
+\abs{
\Lxi\Leta\varphi}^2\right)\di^4\mu\notag\\
\lesssim{}&
\veps\sup_{\tb\in[\tb_1,\tb_2]}\int_{\Sigmatb^{R_0}}
\abs{rV\varphi}^2\di^3\mu
+\frac{a^2}{\veps}\int_{\tb_1}^{\tb_2}
\tb^{1+\delta}\int_{\Sigmatb^{R_0}}
r^{-2}\left(\abs{\Lxi\varphi}^2
+\abs{
\Lxi^2\varphi}^2
+\abs{
\Lxi\Leta\varphi}^2\right)\di^4\mu.
\end{align}
By taking $\veps$ small enough, the first term above is absorbed by the LHS of the integral of \eqref{eq:wavetimesrp=2}. Adding additionally a sufficiently large multiple of the estimate \eqref{eq:rp:less2} with $p=2-\delta$ to the integral form of \eqref{eq:wavetimesrp=2} such that one can absorb $C\big(\norm{rV\varphi}^2_{W_{-2}^{0}(\Donetwo^{R_0})}+
\norm{\varphi}^2_{W_{-2}^{1}(\Donetwo^{R_0})}\big)$, the estimate \eqref{eq:rp:p=2} is proved.

For $p=2$ and $b_{0,0}+s+\abs{s}= 0$, we consider $\reg=0$ case first.  The way of arguing is the same as the case $p=2$ and $b_{0,0}+s+\abs{s}> 0$ and the integral \eqref{eq:wave:p=2:error123} can be similarly treated.
The estimate \eqref{eq:rp:p=2:2} is thus proved.

For $p=2$, $b_{0,0}+s+\abs{s}= 0$ and general $\reg\geq 1$ case, it is enough to treat $s\leq 0$ case, the $s>0$ being analogous.
We shall prove below that
\begin{align}\label{eq:rp:p=2:2:v2}
\hspace{4ex}&\hspace{-4ex}
c\sum_{\abs{\mathbf{a}}\leq \reg}
\Big(\norm{rV \mathbb{X}^{\mathbf{a}}\varphi}^2_{W_{0}^0(\Sigmatwo^{R_0})}
+\norm{\mathbb{X}^{\mathbf{a}}\mathbb{D}_2\varphi}^2_{W_{-2}^{0}(\Sigmatwo^{R_0})}
+\norm{rV\mathbb{X}^{\mathbf{a}}\varphi}^2_{W_{-1}^{0}(\Donetwo^{R_0})}
+\norm{\mathbb{X}^{\mathbf{a}}\mathbb{D}_1\varphi}^2_{W_{-1-\delta}^{0}(\Donetwo^{R_0})}\Big)
\notag\\
\leq {}&C\sum_{\abs{\mathbf{a}}\leq \reg}\bigg(\norm{rV\mathbb{X}^{\mathbf{a}}\varphi}^2_{W_{0}^0(\Sigmaone^{R_0})}
+\norm{\mathbb{X}^{\mathbf{a}}\mathbb{D}_2\varphi}^2_{W_{-2}^{0}(\Sigmaone^{R_0})}
+\norm{\mathbb{X}^{\mathbf{a}}\vartheta}^2_{W_{-1-\delta}^{0}(\Donetwo^{R_0-M})}\notag\\
&\quad +\norm{\mathbb{X}^{\mathbf{a}}\mathbb{D}_1\varphi}^2_{W_{0}^{0}(\Donetwo^{R_0-M,R_0})}
+\sum_{\tb\in\{\tb_1,\tb_2\}}
\norm{\mathbb{X}^{\mathbf{a}}\mathbb{D}_1\varphi}^2_{W_{0}^{0}(\Sigmatb^{R_0-M,R_0})}
\bigg)\notag\\
&
+Ca^2\int_{\tb_1}^{\tb_2}
\tb^{1+\delta}
\norm{\Lxi\varphi}^2_{W_{-2}^{\reg+1}(\Sigmatb^{R_0})}
\di \tb\notag\\
&- \sum_{\abs{\mathbf{a}}\leq \reg}\int_{\Donetwo^{R_0}}\Re\Big(V\overline{\mathbb{X}^{\mathbf{a}}\varphi}
\mathbb{X}^{\mathbf{a}}\vartheta \Big)\di^4 \mu
\end{align}
for $\mathbb{X}=\mathbb{D}_2$. Tthe estimate \eqref{eq:rp:p=2:2} then clearly follows from point \ref{prop:basicesti:pt2} of Proposition \ref{prop:basicesti} and an inequality from the Hardy's inequality \eqref{eq:HardyIneqLHSRHS}
\begin{align}
\norm{\varphi}^2_{W_{-2}^{\reg+1}(\Sigmatwo^{R_0})}
\lesssim_{\reg,s}{}
\sum_{1\leq\abs{\mathbf{a}}\leq \reg+1}\norm{\mathbb{D}_2^{\mathbf{a}}\varphi}^2_{W_{-2}^{0}(\Sigmatwo^{R_0})}
+\norm{\varphi}^2_{W_{-2}^{0}(\Sigmatwo^{R_0-M, R_0})}.
\end{align}
Let $\mathbb{X}_1=\{\Lxi\}$ and
$\mathbb{X}_2=\{\Lxi,\edthR'\}$. We divide the proof into four steps. First, the case $\mathbb{X}=\mathbb{X}_1$ manifestly holds true since the Killing vector $\Lxi$ commutes with the wave equation. Second, consider $\mathbb{X}=\mathbb{X}_2$.
Commuting equation \eqref{eq:wave:rp} with $\edthR'$ gives a wave equation of $\edthR'\varphi$ in the form of \eqref{eq:wave:rp} but with
\begin{align}
\label{eq:commutator:edthR'withageneralwave}
&b_{V,\edthR'}={}b_{V}, \quad b_{\phi,\edthR'}=b_{\phi}, \quad b_{0,\edthR'}= b_0-2(s-1),\notag\\
&\vartheta_{\edthR'}
\triangleq{}\vartheta(\edthR'\varphi)={}
\edthR'\vartheta
+\frac{1}{\sqrt{2}}\left(\partial_{\theta}(a^2 \sin^2\theta) \Lxi^2\varphi -\partial_{\theta}(2ias\cos\theta)\Lxi\varphi\right).
\end{align}
All the assumptions are satisfied, and in particular, it holds that $b_{0,\edthR',0} + (s-1)+\abs{s-1}=b_{0,0}-2s+2=-2(s-1)>0$ such that the estimate \eqref{eq:rp:p=2} can be applied to $\edthR'\varphi$. We are thus left to estimate $\int_{\Donetwo^{R_0-M}}-2\chi^2 \Re(V\overline{\edthR'\varphi}\vartheta_{\edthR'})\di^4\mu$, which is bounded by
\begin{align}
\int_{\Donetwo^{R_0-M}}-2\chi^2 \Re(V\overline{\edthR'\varphi}\edthR'\vartheta)\di^4\mu
+C\abs{a}\int_{\Donetwo^{R_0-M}}\Big(\Big|V\overline{\edthR'\varphi}
\Lxi^2\vartheta\Big|+
\Big|V\overline{\edthR'\varphi}
\Lxi\vartheta\Big|\Big)\di^4\mu.
\end{align}
The second integral can be similarly estimated as \eqref{eq:generalrp:troubleterm:1}, and this proves the estimate \eqref{eq:rp:p=2:2} in the case that $\mathbb{X}=\mathbb{X}_2$ with only one angular derivative. One can increase the number of angular derivative by iterating the above steps. Third, consider the case $\mathbb{X}=\mathbb{D}_2$ with at most one $rV$ derivative. We commute equation \eqref{eq:wave:rp} with $rV$ and obtain
\begin{align}
\label{eq:coeffis:commuterV}
&b_{V,rV}={}b_V+\tfrac{2(\R)}{r}, \quad b_{\phi, rV}= b_{\phi}-\tfrac{4ar}{\R}, \quad b_{0, rV}=b_0+1,\notag\\
&\vartheta_{rV}\triangleq {}\vartheta(rV\varphi)= rV\vartheta
-\tfrac{(r^2-a^2)(\Delta-2Mr)}{\R} YV\varphi
-\tfrac{r\Delta}{2(\R)}\partial_r (b_{\phi} +c_{\phi})\Leta\varphi\notag\\
&\qquad -\Big(\tfrac{r\Delta}{2(\R)}\partial_r (r^{-1}b_V)
 +\tfrac{4Mr}{\R}-\tfrac{2r^2+a^2}{r^2}\Big)
rV\varphi
-\tfrac{r\Delta}{2(\R)}\partial_r(b_0+c_0)\varphi.
\end{align}
Note that all the assumptions are satisfied as well and, moreover, $b_{0,rV}+s+\abs{s}>0$.
By isolating $YV\varphi$ from the wave equation \eqref{eq:wave:generalrp}, one can rewrite it as a weighted sum with $O(1)$ coefficients of terms $\edthR\edthR'\varphi$, $rV\varphi$, $a\Lxi\Leta\varphi$, $a^2\Lxi^2\varphi$, $a\Lxi\varphi$, $r^{-1}\Leta\varphi$ and $r^{-1}\varphi$, hence $\vartheta_{rV}-rV\vartheta$ is also a weighted sum of those terms with $O(1)$ coefficients. One can then expand the integral term
$- \sum\limits_{\abs{\mathbf{a}}\leq \reg-1}\int_{\Donetwo^{R_0}}
\Re\big(V\overline{\mathbb{X}^{\mathbf{a}}(rV\varphi)}
\mathbb{X}^{\mathbf{a}}\vartheta_{rV} \big)\di^4 \mu$ into a sum of sub-integrals of the above mentioned terms to achieve bounds. The sub-integrals from $a\Lxi\Leta\varphi$, $a\Lxi\varphi$ and $a^2\Lxi^2\varphi$ can be bounded as \eqref{eq:generalrp:troubleterm:1}, the sub-integrals from $r^{-1}\Leta\varphi$, $rV\varphi$ and $r^{-1}\varphi$ can be easily bounded by Cauchy-Schwarz and applying the estimates from the second step, and the remaining sub-integral from $\edthR\edthR'\varphi$ is manifestly bounded after integration by parts in $V$ and $\edthR$. This then closes the proof in the case $\mathbb{X}=\mathbb{D}_2$ with at most one $rV$ derivative. Fourth, consider the general case $\mathbb{X}=\mathbb{D}_2$. In view of the third step, this requires to commute $rV$ more times, and it follows from iterating the discussions in the third step.
\end{proof}

In the case that the scalar $\varphi$ is supported on $\ell\geq \ell_0$ modes, one can modify the assumptions in Proposition \ref{prop:wave:rp} and obtain the following statements.
\begin{prop}
\label{prop:wave:rp:highmodes}
Let $\ell_0\geq \abs{s}$, and let $\varphi$ be a spin weight $s$ scalar supported on $\ell\geq \ell_0$ modes. Let the same assumptions in Proposition \ref{prop:wave:rp} hold true except that the third assumption is replaced by an assumption that there exists $b_{0,0}\in \mathbb{R}$ such that $b_0=b_{0,0}+M O(r^{-1})$ and $b_{0,0}+(\ell_0+s)(\ell_0-s+1)\geq 0$. Then,
\begin{enumerate}
\item\label{pt:rp:highmodes:1} the estimate \eqref{eq:rp:less2} holds;
 \item\label{pt:rp:highmodes:2} the estimate \eqref{eq:rp:p=2} is valid for $p=2$ and $b_{0,0}+(\ell_0+s)(\ell_0-s+1)> 0$;
\item\label{pt:rp:highmodes:3} for $p=2$, $b_{0,0}+(\ell_0+s)(\ell_0-s+1)= 0$, and $\reg$ an even, non-negative integer,
\begin{align}\label{eq:rp:p=2:2:Highmodes}
\hspace{4ex}&\hspace{-4ex}
c\Big(\norm{rV\varphi}^2_{W_{0}^\reg(\Sigmatwo^{R_0})}
+\norm{\varphi}^2_{W_{-2}^{\reg+1}(\Sigmatwo^{R_0})}
+\norm{\varphi}^2_{W_{-1-\delta}^{\reg+1}(\Donetwo^{R_0})}
+\norm{rV\varphi}^2_{W_{-1}^{\reg}(\Donetwo^{R_0})}\Big)
\notag\\
\leq {}&C\bigg(\norm{rV\varphi}^2_{W_{0}^\reg(\Sigmaone^{R_0-M})}
+\norm{\varphi}^2_{W_{-2}^{\reg+1}(\Sigmaone^{R_0-M})}
+\norm{\varphi}^2_{W_{-2}^{\reg+1}(\Sigmatwo^{R_0-M,R_0})}\bigg.\notag\\
&\bigg.\quad +\norm{\varphi}^2_{W_{0}^{\reg+1}(\Donetwo^{R_0-M,R_0})}
+\norm{\vartheta}^2_{W_{-1-\delta}^{\reg}(\Donetwo^{R_0-M})}\bigg)\notag\\
&
+Ca^2\int_{\tb_1}^{\tb_2}
\tb^{1+\delta}
\norm{\Lxi\varphi}^2_{W_{-2}^{\reg+1}(\Sigmatb^{R_0})}
\di \tb\notag\\
&- \sum_{\abs{\mathbf{n}}\leq \reg}\int_{\Donetwo^{R_0}}\Re\Big(V\overline{\KDeri_2^{\mathbf{n}}\varphi}
\KDeri_2^{\mathbf{n}}\vartheta\Big) \di^4 \mu.
\end{align}
\end{enumerate}
\end{prop}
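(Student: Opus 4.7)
The plan is to re-run the proof of Proposition \ref{prop:wave:rp} with the same multiplier $-2\chi^2V\overline{\varphi}$, replacing the pointwise-on-$\mathbb{S}^2$ angular coercivity \eqref{eq:ellipestis} by the mode-refined estimate \eqref{eq:ellip:highermodes}. Since $\varphi$ is supported on $\ell\geq \ell_0$ modes, integration by parts on the sphere gives
\begin{align*}
\int_{\mathbb{S}^2}\bigl(2\abs{\edthR'\varphi}^2 + b_{0,0}\abs{\varphi}^2\bigr)\di^2\mu
\geq \bigl(b_{0,0}+(\ell_0+s)(\ell_0-s+1)\bigr)\int_{\mathbb{S}^2}\abs{\varphi}^2\di^2\mu,
\end{align*}
which is non-negative under the modified third assumption, strictly positive in point \ref{pt:rp:highmodes:2}, and saturating on $\ell=\ell_0$ modes in point \ref{pt:rp:highmodes:3}. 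This quantity takes over the role played by $b_{0,0}+s+\abs{s}$ in Proposition \ref{prop:wave:rp}, and all commutators used below will preserve both the spin weight and the $\ell\geq\ell_0$ mode support so that this improvement is usable at every regularity level.

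Points \ref{pt:rp:highmodes:1} and \ref{pt:rp:highmodes:2} then follow almost verbatim from the proofs of the $p\in(0,2)$ case and the $p=2$, $b_{0,0}+s+\abs{s}>0$ case of Proposition \ref{prop:wave:rp}, respectively: the positive commutator identity \eqref{eq:wavetimesrp=2}, the handling of $a$-dependent error integrals as in \eqref{eq:generalrp:troubleterm:1}, and the absorption of $r$-weighted bulk norms by a large multiple of the $p=2-\delta$ estimate \eqref{eq:rp:less2} all carry over once the mode-refined elliptic inequality is substituted at each use of the sphere estimate.

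For the critical case in point \ref{pt:rp:highmodes:3}, commuting with $\edthR$ or $\edthR'$ as in \eqref{eq:commutator:edthR'withageneralwave} would shift the spin weight and break the saturation $b_{0,0}+(\ell_0+s)(\ell_0-s+1)=0$, so instead I will commute with the Teukolsky angular operator $\TAO=2\edthR\edthR'+a^2\sin^2\theta\Lxi^2-2ias\cos\theta\Lxi$: it preserves both the spin weight and the mode support, and since each of $V$, $Y$, $\Lxi$, $\Leta$ commutes with $\TAO$ and $\TAO$ commutes with every function of $r$, one has $[\TAO,\squareShat_s]=0$ and $[\TAO,\,b_V V+b_\phi \Leta+b_0]=0$, whence $\TAO\varphi$ satisfies \eqref{eq:wave:rp} with source $\TAO\vartheta$ and identical coefficients. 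Iterating this together with \eqref{eq:coeffis:commuterV}, the commuted scalar $\KDeri_2^{\mathbf{n}}\varphi=\Lxi^{n_1}(rV)^{n_2}\TAO^{n_3}\varphi$ satisfies a spin-$s$ wave equation of the form \eqref{eq:wave:rp} with leading zeroth-order coefficient $b_{0,0}+n_2$: for $n_2\geq 1$ this is strictly positive and point \ref{pt:rp:highmodes:2} applies directly, while for $n_2=0$ the base-level argument of Proposition \ref{prop:wave:rp} (case $p=2$, $b_{0,0}+s+\abs{s}=0$) reproduces the estimate with the mode improvement. The parity restriction on $\reg$ reflects that $\TAO$ is second order and is counted with weight $2n_3$ in $|\mathbf{n}|=n_1+n_2+2n_3$: combined with the one angular derivative produced by integration by parts in \eqref{eq:wavetimesrp=2}, the family $\KDeri_2^{\mathbf{n}}\varphi$ over $|\mathbf{n}|\leq\reg$ recovers exactly the $\reg+1$ angular derivatives in $\norm{\varphi}^2_{W_{-2}^{\reg+1}(\Sigmatwo^{R_0})}$ precisely when $\reg$ is even. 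The main technical obstacle will be collecting the source terms arising from iterated $(rV)$-commutators (which by \eqref{eq:coeffis:commuterV} bring in $YV\varphi$, $r^{-1}\Leta\varphi$, and lower-order contributions), reinserting them into the wave equation, and verifying that they decompose into bulk norms already controlled inductively together with $a$-dependent pieces absorbable into the $a^2\int_{\tb_1}^{\tb_2}\tb^{1+\delta}\norm{\Lxi\varphi}^2_{W_{-2}^{\reg+1}(\Sigmatb^{R_0})}\di\tb$ remainder on the right-hand side of \eqref{eq:rp:p=2:2:Highmodes}.
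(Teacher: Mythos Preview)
Your proposal is correct and follows essentially the same route as the paper: replace the angular coercivity \eqref{eq:ellipestis} by the mode-refined inequality \eqref{eq:ellip:highermodes} for Points \ref{pt:rp:highmodes:1} and \ref{pt:rp:highmodes:2}, and for Point \ref{pt:rp:highmodes:3} commute with the spin-preserving, mode-preserving operators $\Lxi$, $\TAO$, and $rV$ (exactly the set $\KDeri_2$), noting that one $rV$ commutation raises $b_{0,0}$ by $1$ and drops you into the strictly positive case of Point \ref{pt:rp:highmodes:2}. The paper additionally records the intermediate estimate for $\edthR'(rV\varphi)$---once $rV$ has been commuted you are in the strictly positive regime and may freely commute with $\edthR'$ as in \eqref{eq:commutator:edthR'withageneralwave}---which helps reconstruct the full $W_{-2}^{\reg+1}$ norm (built from $\CDeri=\{Y,rV,\edthR,\edthR'\}$) out of the $\KDeri_2^{\mathbf{n}}$ estimates; you implicitly rely on the same mechanism but do not spell it out.
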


\begin{proof}
The entire proof is similar to the one in Proposition \ref{prop:wave:rp}, and we only give necessary remarks.

In the case of Point \ref{pt:rp:highmodes:1} where $p \in (0,2)$, recall that in the proof of \cite[Lemmas 5.5 and 5.6]{andersson2019stability}, one of the main facts used is that the eigenvalues of the operator $2\edthR\edthR' +s+\abs{s}$ acting on spin weight $s$ scalar $\varphi$ are non-positive. This enables one to obtain non-negative contribution of both the energy at $\Sigmatwo^{R_0}$ and the integral on $\Donetwo^{R_0}$ after utilizing the multiplier and integrating over $\Donetwo$. Assume the scalar $\varphi$ is supported on $\ell\geq \ell_0$ modes, and in order to prove Point \ref{pt:rp:highmodes:1}, it suffices to show the eigenvalues of the operator $2\edthR\edthR' +(\ell_0+s)(\ell_0-s+1)$ are non-positive, a fact which follows from \eqref{eq:ellip:highermodes}.

In the case of Point \ref{pt:rp:highmodes:2}, the eigenvalues of the operator $2\edthR\edthR' +(\ell_0+s)(\ell_0-s+1)$ are strictly negative, hence the same way of arguing as in Point \ref{pt:2:prop:wave:rp} of Proposition \ref{prop:wave:rp} applies and yields the desired estimate.

In the case of Point \ref{pt:rp:highmodes:3}, the $\reg=0$ case of inequality \eqref{eq:rp:p=2:2:Highmodes} is straightforward. To show $\reg=2$ case, one finds first this estimate holds for $\Lxi\varphi$, $\Lxi^2\varphi$ and $\TAO \varphi$ since the Killing vector or tensors $\Lxi$, $\Lxi^2$ and $\TAO$ commute with the wave equation. By commuting with $rV$, one obtains the functions $b_{V,rV}$ $b_{\phi,rV}$, $b_{0,rV}$ and $\vartheta_{rV}$ as in \eqref{eq:coeffis:commuterV}. This then falls into the case in Point \ref{pt:rp:highmodes:2}, and the estimate \eqref{eq:rp:p=2} applies to $rV\varphi$. The way of estimating the error terms arising from $\vartheta_{rV}$ is exactly the same as the one in the discussions below equation \eqref{eq:coeffis:commuterV}. Based on this, it is manifest that one can commute further with $\Lxi$ to obtain estimates for $\Lxi(rV)$. One can also achieve estimates for $\edthR' (rV)$ since as can be seen from \eqref{eq:commutator:edthR'withageneralwave}, commuting $\edthR'$ with the equation of $rV\varphi$ only introduces a new error term of $\frac{1}{\sqrt{2}}\left(\partial_{\theta}(a^2 \sin^2\theta) \Lxi^2(rV\varphi ) -\partial_{\theta}(2ias\cos\theta)\Lxi(rV\varphi)\right)$  which can be bounded by the above estimates.
Moreover, repeating the discussions of commuting $rV$, we obtain the estimate for $(rV)^2\varphi$. These together prove the $\reg=2$ case. Iterating the above discussions yields the general case where $\reg$ is an even, non-negative integer.
\end{proof}


\section{BEAM condition implies basic energy 2-decay condition}


Theorem \ref{thm:1}, which says BEAM condition implies basic energy 2-decay condition in a subextremal Kerr spacetime, is proven for  both spin $\pm 1$ components in this section. It follows from Propositions \ref{prop:BEDC:Phiplus} and \ref{prop:BEDC:Psiminus} below.
\subsection{Spin $+1$ component}


\begin{lemma}
Let
\begin{align}
\label{def:Phi+1}
\Phiplus={}&\Delta^{-1}(\R)\Psiplus.
\end{align}
It satisfies a wave equation
\begin{align}\label{eq:Phi+1}
\squareShat_{+1}\Phiplus={}&
\tfrac{2(r^3-3Mr^2+a^2r+a^2M)}{\Delta}
{V\Phiplus}
-\tfrac{4ar}{\R}\Leta\Phiplus\notag\\
&
+\Big(-2+\tfrac{10Mr^3+2a^2r^2-22a^2Mr+2a^4}{(\R)^2}
+\tfrac{a^4(M^2-a^2)}{\Delta(\R)^2}\Big)\Phiplus.
\end{align}
\end{lemma}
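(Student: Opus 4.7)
The plan is to derive the wave equation for $\Phiplus$ via two successive rescalings starting from the Teukolsky master equation for $\psiplus$, using at each step the identities already established in this section.

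Stage one. By \eqref{eq:spinsfields} the scalar $\psiplus = 2^{-1/2}\Delta\NPplus$ satisfies the spin $+1$ TME \eqref{eq:TME}, which in the notation of \cite[Eq.~(1.32)]{Ma2017Maxwell} is $\mathbf{L}_{+1}\psiplus=0$. Applying the conjugation identity \eqref{eq:BoxHatandLs} of Remark~\ref{rem:squareShatandLs} with $s=+1$ and $\varphi=\psiplus$ yields immediately
\begin{equation*}
\squareShat_{+1}\Psiplus = \Big(\tfrac{r^4-2Mr^3+6a^2Mr-a^4}{(\R)^2}-1\Big)\Psiplus = -\tfrac{2(Mr^3+a^2r^2-3a^2Mr+a^4)}{(\R)^2}\Psiplus.
\end{equation*}

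Stage two. Write $\Psiplus=f\Phiplus$ with the purely radial function $f(r)=\Delta/\R$ and insert this into the stage one equation. Inspecting \eqref{eq:squareShat}, every piece of $\squareShat_{+1}$ except the radial transport block $-(\R)YV$ commutes with multiplication by a function of $r$, so by the Leibniz rule
\begin{equation*}
\squareShat_{+1}(f\Phiplus) = f\,\squareShat_{+1}\Phiplus - (\R)(YVf)\Phiplus - (\R)(Vf)\,Y\Phiplus - (\R)(Yf)\,V\Phiplus,
\end{equation*}
where from \eqref{def:VectorFieldYandV} one has $Vf = \tfrac{\Delta}{\R}f'$, $Yf = -f'$, and $f' = 2M(r^2-a^2)/(\R)^2$. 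Combining this expansion with the stage one identity and dividing by $f$ expresses $\squareShat_{+1}\Phiplus$ as a sum of $V\Phiplus$, $Y\Phiplus$ and $\Phiplus$ contributions with explicit rational-in-$r$ coefficients.

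Stage three. Eliminate the $Y\Phiplus$ term using the algebraic identity $\tfrac{\Delta}{\R}Y+V = 2\Lxi + \tfrac{2a}{\R}\Leta$, which follows from \eqref{def:VectorFieldYandV} after the $\partial_r$ contributions cancel; this trades $Y\Phiplus$ for a weighted combination of $V\Phiplus$, $\Lxi\Phiplus$ and $\Leta\Phiplus$. Collecting the three transport coefficients and the zero-order term then produces the stated coefficient $\tfrac{2(r^3-3Mr^2+a^2r+a^2M)}{\Delta}$ of $V\Phiplus$, the coefficient $-\tfrac{4ar}{\R}$ of $\Leta\Phiplus$, and the potential
\begin{equation*}
-2+\tfrac{10Mr^3+2a^2r^2-22a^2Mr+2a^4}{(\R)^2}+\tfrac{a^4(M^2-a^2)}{\Delta(\R)^2}.
\end{equation*}

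The main obstacle is the final rational-function bookkeeping in stage three, in particular the identification of the horizon-singular term $\tfrac{a^4(M^2-a^2)}{\Delta(\R)^2}$ in the potential. This term arises from the quotient $-(\R)(YVf)/f = (\R)^2\partial_r(\Delta f'/\R)/\Delta$, whose pole at $\Delta=0$ contributes the stated residue; a useful consistency check is that the residue vanishes in both the Schwarzschild limit $a=0$ and the extremal limit $\abs{a}=M$, which matches the explicit $(M^2-a^2)$ factor. A second sanity check is to restrict the whole computation to $a=0$, where the identity $\tfrac{r-2M}{r}Y+V=2\Lxi$ allows a direct match with the known transformed equation for the spin $+1$ radiation field on Schwarzschild, fixing the overall normalization and signs in the $V\Phiplus$ coefficient.
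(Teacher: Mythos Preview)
There is a genuine gap in Stage~1. Your claim that the TME \eqref{eq:TME} for $\psiplus$ is the same as $\mathbf{L}_{+1}\psiplus=0$ is incorrect: the operator $\mathbf{L}_s$ of \cite[Eq.~(1.32)]{Ma2017Maxwell} is \emph{not} the TME operator. One way to see this is to compare first-order $\partial_t$ terms. The TME \eqref{eq:TME} with $s=+1$ contains $2\bigl[\tfrac{M(r^2-a^2)}{\Delta}-r-ia\cos\theta\bigr]\partial_t$, whereas the only first-order $\Lxi$ piece in $\squareShat_{+1}$ is $-2ia\cos\theta\Lxi$; since conjugation by $\sqrt{\R}$ in \eqref{eq:BoxHatandLs} does not affect the $\partial_t$ coefficient, $\mathbf{L}_{+1}$ must also have only $-2ia\cos\theta\partial_t$, not the extra $2\bigl[\tfrac{M(r^2-a^2)}{\Delta}-r\bigr]\partial_t$ present in the TME. (The radial parts differ too: the $s=+1$ TME has $\Delta\partial_r^2$, while $\mathbf{L}_s$ has $\partial_r(\Delta\partial_r)$.) Consequently $\squareShat_{+1}\Psiplus$ is \emph{not} a pure potential times $\Psiplus$: it carries first-order $\Lxi$, $\Leta$ and $\partial_r$ terms, in particular an $\Lxi\Psiplus$ term with coefficient $\tfrac{2(r^3-3Mr^2+a^2r+a^2M)}{\Delta}$.

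The downstream symptom is exactly what you would find if you carried out the Schwarzschild sanity check you propose. Following your Stages~2--3 as written, the $V\Phiplus$ coefficient comes out as $-\tfrac{4M(r^2-a^2)}{\Delta}=O(1)$, not the target $\tfrac{2(r^3-3Mr^2+a^2r+a^2M)}{\Delta}=O(r)$, and a spurious $\tfrac{4M(r^2-a^2)}{\Delta}\Lxi\Phiplus$ term survives with nothing to cancel it. Once Stage~1 is corrected to include the missing first-order terms in the $\Psiplus$ equation, those additional $\Lxi$ and $\partial_r$ contributions combine with the ones generated in Stages~2--3 to produce the stated $V\Phiplus$ coefficient and eliminate the $\Lxi\Phiplus$ term. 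The paper sidesteps this by quoting directly the already-transformed equation from \cite[Eq.~(3.10)]{Ma2017Maxwell} for $(\R)^{-1/2}\Phiplus$ rather than passing through $\Psiplus$.
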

\begin{remark}
One can relate this scalar with $\phi_{+1}^0$ in \cite{Ma2017Maxwell} by $\Phiplus= \Delta^{-1}(\R)^{5/2}\phi_{+1}^0$.
\end{remark}
\begin{proof}
By taking $\delta=0$ in \cite[Equation (3.10)]{Ma2017Maxwell}, one finds $\grave{\phi}_{+1}^0$ defined therein is equal to $(\R)^{-1/2}\Phiplus$ and the equation \cite[Equation (3.10)]{Ma2017Maxwell} reduces to
\begin{align*}
\hspace{4ex}&\hspace{-4ex}\left(\Sigma \Box_g+\tfrac{2i\cos\theta}{\sin^2 \theta}\Leta-\cot^2 \theta+1-2ia\cos \theta \Lxi+\tfrac{4ar}{\R}\partial_{\phi}\right)
((\R)^{-1/2}\Phiplus)\notag\\
={}&\tfrac{2(r^3-3Mr^2+a^2r+a^2M)}{\Delta(\R)^{1/2}}
{V\Phiplus}
+\Big(\tfrac{10Mr^3+2a^2r^2-14a^2Mr+2a^4}{(\R)^{5/2}}
+\tfrac{a^4(M^2-a^2)}{\Delta(\R)^{5/2}}\Big)\Phiplus.
\end{align*}
Expanding the wave operator gives
\begin{align}\label{eq:Phi+1wave}
0={}&(2\edthR\edthR' +2)\Phiplus-(\R) YV\Phiplus
-\tfrac{2(r^3-3Mr^2+a^2r+a^2M)}{\Delta}
{V\Phiplus}\notag\\
&-2ia\cos \theta \Lxi\Phiplus
+2a\Lxi\Leta\Phiplus+a^2 \sin^2 \theta\Lxi^2\Phiplus
\notag\\
&
+\tfrac{6ar}{\R}\Leta\Phiplus
-\Big(\tfrac{12Mr^3+3a^2r^2-18a^2Mr+3a^4}{(\R)^2}
+\tfrac{a^4(M^2-a^2)}{\Delta(\R)^2}\Big)\Phiplus.
\end{align}
The statement then follows from \eqref{eq:squareShat}.
\end{proof}

\subsubsection{$r^p$ estimates for rescaled spin $+1$ component}
Recall that the BEAM condition for spin $+1$ component is assumed.
\begin{definition}
\label{def:Ffts:Phiplus:-1to2}
Define for convenience that
\begin{align}
E^{\reg}_{\Sigmatb}(\Psiplus)={}\sum\limits_{\abs{\mathbf{a}}\leq \reg-2}\left(
\norm{\PDeri^{\mathbf{a}}(r^{-\delta}\psiplus)}_{W_0^1(\Sigmatb)}
+\norm{\PDeri^{\mathbf{a}}Y\psiplus}_{W_0^1(\Sigmatb)}
\right).
 \end{align}
 Define $F(\reg,p,\tb,\Psiplus)$ as follows\footnote{This corresponds to $F(i,\alpha,t)$ in \cite[Lemma 5.2]{andersson2019stability}. We make these corresponding changes since $i$ is the regularity, $\alpha$ is the $p$ weight, and $t$ is the time function.}
\begin{subequations}
\label{eq:def:Ffts:Phiplus:-1to2}
\begin{align}
\label{def:Ffts:Phiplus:1:p-10}
F(\reg,p,\tb,\Psiplus)={}&0, \qquad\text{for } p\in [-1,0),\\
\label{def:Ffts:Phiplus:1:p02}
F(\reg,p,\tb,\Psiplus)={}& \norm{rV\Psiplus}^2_{W_{p-2}^{\reg-3}(\Sigmatb)}
+(1-\delta(p))\norm{\Psiplus}^2_{W_{-2}^{\reg-2}(\Sigmatb)}
\notag\\
&
+\delta(p)\norm{\Psiplus}^2_{W_{-2-\delta}^{\reg-2}(\Sigmatb)}
+E^{\reg}_{\Sigmatb}(\Psiplus),
\quad \text{for } p\in [0,2),\\
\label{def:Ffts:Phiplus:2}
F(\reg,2,\tb,\Psiplus)={}&\norm{rV\Psiplus}^2_{W_{0}^{\reg-1}(\Sigmatb)}
+\norm{\Psiplus}^2_{W_{-2}^{\reg}(\Sigmatb)}.
\end{align}
\end{subequations}
\end{definition}
We shall now prove global $r^p$ estimates for spin $+1$ component.
\begin{prop}
\label{prop:rpplusglobal}
Let $\delta>0$ and let $\reg\geq 3$. Let $\delta_{(x)}$ be a function which equals $1$ at $x=0$ and vanishes elsewhere.
Then for any $\tb_2>\tb_1\geq \tb_0$ and $p\in [0,2)$,
\begin{align}\label{eq:rpplusglobal:less2}
F(\reg,p,\tb_2,\Psiplus)
+\norm{\Psiplus}^2_{W_{p-3}^{\reg-3}(\Donetwo)}
\leq {}&CF(\reg,p,\tb_1,\Psiplus).
\end{align}
\end{prop}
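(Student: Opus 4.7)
The rescaled scalar $\Phiplus = \Delta^{-1}(\R)\Psiplus$ satisfies the wave equation \eqref{eq:Phi+1}, whose coefficients fit directly into the framework of Proposition \ref{prop:wave:rp}: for large $r$ one has $b_V = \tfrac{2(r^3-3Mr^2+a^2r+a^2M)}{\Delta} = 2r + O(1)$ so $b_{V,-1}=2\geq 0$; $b_\phi = -\tfrac{4ar}{\R} = O(r^{-1})$; and $b_0 = -2 + O(r^{-1})$ so $b_{0,0}=-2$. With $s=+1$ this gives $b_{0,0}+s+\abs{s}=0$, which means that for every $p\in(0,2)$ we are in case \eqref{eq:rp:less2} of Proposition \ref{prop:wave:rp}, applied with source $\vartheta=0$ and regularity index $\reg-3$ (permitted since $\reg\geq 3$), so that an angular regularity $\reg-2$ appears naturally on the left-hand side.

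The plan is to fix a sufficiently large cutoff parameter $R_0$ (as required by Proposition \ref{prop:wave:rp}) and apply \eqref{eq:rp:less2} to $\Phiplus$ on $\Donetwo^{R_0}$. Since $\Delta^{-1}(\R) = 1 + O(r^{-1})$ for $r$ large, the relation $\Phiplus = (1+O(r^{-1}))\Psiplus$ lets us transfer the bounds on $\Phiplus$ to bounds on $\Psiplus$ on $\Sigmatwo^{R_0}$ and in $\Donetwo^{R_0}$; these are exactly the far-region contributions to $F(\reg,p,\tb_2,\Psiplus)$ and to the bulk term $\norm{\Psiplus}^2_{W_{p-3}^{\reg-3}(\Donetwo)}$. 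On the right-hand side of \eqref{eq:rp:less2} one finds (a) data terms on $\Sigmaone^{R_0-M}$, which transfer to $F(\reg,p,\tb_1,\Psiplus)$, and (b) interface terms on the compact region $\Donetwo^{R_0-M,R_0}\cup\Sigmatwo^{R_0-M,R_0}$, which are bounded by BEAM since $\Delta$ is bounded below by a positive constant there and hence $\Phiplus$, $\Psiplus$ and $\psiplus$ are pointwise comparable (together with their derivatives, up to bounded factors). The interior contribution to the LHS, including the $E^{\reg}_{\Sigmatwo}(\Psiplus)$ piece of $F$ together with the interior part of the bulk, is then supplied directly by the BEAM estimate \eqref{eq:BEAM:+1} integrated from $\Sigmaone$ to $\Sigmatwo$; adding this to the exterior $r^p$ estimate gives \eqref{eq:rpplusglobal:less2} for $p\in(0,2)$.

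For the endpoint $p=0$, Proposition \ref{prop:wave:rp} is not directly applicable, but the definition of $F(\reg,0,\tb,\Psiplus)$ weakens the weight on $\Psiplus$ from $-2$ to $-2-\delta$. Since $\Psiplus=\sqrt{\R}\psiplus$, one has $r^{-2-\delta}\abs{\Psiplus}^2 \lesssim r^{-\delta}\abs{\psiplus}^2 \lesssim \abs{r^{-\delta/2}\psiplus}^2$, which is exactly the type of norm that BEAM \eqref{eq:BEAM:+1} controls through its $r^{-\delta}\psiplus$ terms; thus the $p=0$ case follows from BEAM alone (after an obvious relabeling of $\delta$). The main obstacle I expect is the matching of regularities and commutator control across the exterior/interior split: one must carefully verify that each commutator error arising from converting $\Phiplus\leftrightarrow\Psiplus$ (involving the factor $\Delta^{-1}(\R)$) at successively higher orders of $\PDeri$, together with commutators of $V$ with $\CDeri^{\mathbf{a}}$ handled via point \ref{prop:basicesti:pt1} of Proposition \ref{prop:basicesti}, lands in a norm already present on the right-hand side. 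This bookkeeping is precisely what forces the regularity loss of three orders (hence the hypothesis $\reg\geq 3$) and dictates applying Proposition \ref{prop:wave:rp} at regularity $\reg-3$ in order to produce exactly the regularities appearing in $F$.
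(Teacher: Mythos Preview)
Your treatment of the range $p\in(0,2)$ is correct and matches the paper: one applies the $r^p$ estimate \eqref{eq:rp:less2} of Proposition~\ref{prop:wave:rp} to the wave equation \eqref{eq:Phi+1} for $\Phiplus$ (with vanishing source), uses that $\Phiplus=(1+O(r^{-1}))\Psiplus$ for large $r$ to transfer the estimate to $\Psiplus$, and then adds the BEAM estimate \eqref{eq:BEAM:+1} to close the interior and the compact interface region.

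The $p=0$ endpoint, however, does \emph{not} follow from BEAM alone, and this is a genuine gap. The quantity $F(\reg,0,\tb,\Psiplus)$ contains the term $\norm{rV\Psiplus}^2_{W_{-2}^{\reg-3}(\Sigmatb)}$, which at lowest order is $\int_{\Sigmatb}\abs{V\Psiplus}^2\sim\int_{\Sigmatb} r^{2}\abs{V\psiplus}^2$. The BEAM energy $E^{\reg}_{\Sigmatb}(\Psiplus)$ only controls $\int_{\Sigmatb}\abs{rV(r^{-\delta}\psiplus)}^2\sim\int_{\Sigmatb} r^{2-2\delta}\abs{V\psiplus}^2$, leaving an $r^{2\delta}$ gap that no relabeling of $\delta$ can close; the same deficit appears in the bulk term $\norm{\Psiplus}^2_{W_{-3}^{\reg-3}(\Donetwo)}$. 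Your argument only addresses the $\norm{\Psiplus}^2_{W_{-2-\delta}}$ piece and overlooks the $rV\Psiplus$ piece entirely.

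The paper handles $p=0$ by carrying out the multiplier computation directly: one multiplies the expanded wave equation by $-2\chi^2(\R)^{-1}V\overline{\Psiplus}$ and integrates. The flux term $Y(\chi^2\abs{V\Psiplus}^2)$ produces exactly $\norm{rV\Psiplus}^2_{W_{-2}^{0}(\Sigmatwo^{R_0})}$, and the strict positivity $b_{V,-1}=2>0$ (which you correctly identified but did not exploit) yields the bulk term $c\norm{rV\Psiplus}^2_{W_{-3}^{0}(\Donetwo^{R_0})}$. After absorbing the error terms via BEAM and taking $R_0$ large, a Hardy inequality recovers $\norm{\Psiplus}^2_{W_{-2}^{0}(\Sigmatwo)}$ and $\norm{\Psiplus}^2_{W_{-3}^{0}(\Donetwo)}$ from the $rV\Psiplus$ control. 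This is the missing ingredient: the $p=0$ case requires the same $r^p$ multiplier mechanism as $p\in(0,2)$, with the favorable sign of $b_{V,-1}$ doing the essential work that BEAM cannot.
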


\begin{proof}
We consider only $\reg=0$ case, as the proof for $\reg\geq 1$ case is the same. We put equation \eqref{eq:Phi+1} into the form of \eqref{eq:wave:rp} and find $b_{V,-1}=2> 0$, $b_{\phi}=O(r^{-1})$ and $b_{0,0}+1+1=0$, therefore all assumptions in Proposition \ref{prop:wave:rp} are satisfied and the source term  is $\vartheta(\Psiplus)=0$.  This implies that there are constants $\hat{R}_0=\hat{R}_0(p)$ and $C=C(p,\hat{R}_0)$ such that for all $R_0\geq \hat{R}_0$, $\tb_2>\tb_1\geq \tb_0$ and $p\in (0,2)$,
\begin{align}\label{eq:rpplus:less2}
\hspace{4ex}&\hspace{-4ex}
\norm{rV\Psiplus}^2_{W_{p-2}^0(\Sigmatwo^{R_0})}
+\norm{\Psiplus}^2_{W_{-2}^{1}(\Sigmatwo^{R_0})}
+\norm{\Psiplus}^2_{W_{p-3}^{1}(\Donetwo^{R_0})}
+\norm{Y\Psiplus}^2_{W_{-1-\delta}^{0}(\Donetwo^{R_0})}
\notag\\
\leq {}&C\bigg(\norm{rV\Psiplus}^2_{W_{p-2}^0(\Sigmaone^{R_0-M})}
+\norm{\Psiplus}^2_{W_{-2}^{1}(\Sigmaone^{R_0-M})}
+\norm{\Psiplus}^2_{W_{0}^{1}(\Donetwo^{R_0-M,R_0})}\bigg).
\end{align}
The estimate \eqref{eq:rpplusglobal:less2} for $p\in (0,2)$  then follows by adding the BEAM estimates \eqref{eq:BEAM:+1} to the above estimate.
 To show the estimate \eqref{eq:rpplusglobal:less2} for $p=0$, one needs to go back to the proof of Proposition \ref{prop:wave:rp}. Similarly to the $p=2$ case, we multiply equation \eqref{eq:wave:generalrp} by $-2\chi^2 (\R)^{-1}V\overline{\Psiplus}$, take the real part and arrive at
\begin{align}
\label{eq:wavetimesrp=0}
\hspace{4ex}&\hspace{-4ex}
-4\Re(\edthR(\edthR'\Psiplus \chi^2 (\R)^{-1}V\overline{\Psiplus}))
+Y(\chi^2\abs{V\Psiplus}^2)\notag\\
\hspace{4ex}&\hspace{-4ex}
+V(\chi^2(\R)^{-1}(2\abs{ \edthR'\Psiplus}^2
-(s+\abs{s})\abs{\Psiplus}^2
+(b_{0,0}+s+\abs{s})\abs{\Psiplus}^2))
\notag\\
\hspace{4ex}&\hspace{-4ex}
+(\partial_r(\chi^2)+2\chi^2 r(\R)^{-1}b_{V,-1})\abs{V\Psiplus}^2\notag\\
\hspace{4ex}&\hspace{-4ex}-\tfrac{2\Delta}{\R}\partial_r(|\chi |^2(\R)^{-1})
(\abs{\edthR'\Psiplus}^2+b_{0,0}\abs{\Psiplus}^2)
-F_{+1}^{p=0}
={}0,
\end{align}
where $F_{+1}^{p=0}$ equals the real part of $2\chi^2 (\R)^{-1}V\overline{\Psiplus}$ times the last two lines of \eqref{eq:wave:generalrp} but with $\varphi=\Psiplus$.
By integrating \eqref{eq:wavetimesrp=0}
over $\Donetwo$ with the volume form $\di^4\mu$ and from the fact that $b_{V,-1}>0$, one finds the integral of the third line gives positive contribution of $c
\norm{rV\Psiplus}^2_{W_{-3}^{0}(\Donetwo^{R_0})}$. By an application of the Cauchy-Schwarz inequality,
\begin{align}
\label{eq:errorterm:F+1:p=0}
\hspace{4ex}&\hspace{-4ex}\int_{\Donetwo}F_{+1}^{p=0}\di^4 \mu
\lesssim{}\norm{rV\Psiplus}^2_{W_{-4}^{1}(\Donetwo^{R_0-M})}
+\norm{\Leta\Psiplus}^2_{W_{-4}^{0}(\Donetwo^{R_0-M})}\notag\\
&\qquad \qquad
+\norm{\Psiplus}^2_{W_{-4}^{0}(\Donetwo^{R_0-M})}
+\sum_{\abs{\mathbf{a}}\leq 1}\norm{\PDeri^{\mathbf{a}}\PSDeri(r^{-\delta}\psiplus)}_{W_{-1}^0(\Donetwo^{R_0-M})}.
\end{align}
Adding a large multiple of BEAM estimate \eqref{eq:BEAM:+1} to the integral form of \eqref{eq:wavetimesrp=0}, one finds the RHS of \eqref{eq:errorterm:F+1:p=0} can all be absorbed by taking $R_0$ large enough, and we are thus led to
\begin{align}
\hspace{4ex}&\hspace{-4ex}
\norm{rV\Psiplus}^2_{W_{-2}^{0}(\Sigmatwo)}
+\norm{\Psiplus}^2_{W_{-2-2\delta}^{1}(\Sigmatwo)}
+E^{3}_{\Sigmatwo}(\Psiplus)\notag\\
\hspace{4ex}&\hspace{-4ex}
+\norm{rV\Psiplus}^2_{W_{-3}^{0}(\Donetwo)}
+\norm{\Psiplus}^2_{W_{-3-2\delta}^{0}(\Donetwo)}\notag\\
\lesssim{}&\norm{rV\Psiplus}^2_{W_{-2}^{0}(\Sigmaone)}
+\norm{\Psiplus}^2_{W_{-2-2\delta}^{1}(\Sigmaone)}
+E^{3}_{\Sigmaone}(\Psiplus).
\end{align}
An application of  \cite[Lemma 4.30, point (1)]{andersson2019stability} allows the LHS to further bound over $\norm{\Psiplus}^2_{W_{-2}^{0}(\Sigmatwo)}
+\norm{\Psiplus}^2_{W_{-3}^{0}(\Donetwo)}$. In the end, making a rescaling $2\delta\mapsto \delta$ closes the proof.
\end{proof}


\subsubsection{Basic energy $2$-decay condition for spin $+1$ component}\label{sect:BED2:spin+1}


We are now ready to prove the basic energy $2$-decay condition for spin $+1$ component.
The $r^p$ estimates proven above can be interpreted as an inequality saying that for all $p\in [-1,2)$ and $\tb_2>\tb_1\geq \tb_0$,
\begin{align}
\label{eq:BEDC:Phiplus:0}
F(\reg,p,\tb_2,\Psiplus)
+\int_{\tb_1}^{\tb_2}F(\reg-3,p-1,\tb,\Psiplus)\di \tb
\lesssim F(\reg,5/3,\tb_1,\Psiplus).
\end{align}

\begin{prop}
\label{prop:BEDC:Phiplus:1}
Let $j\in \mathbb{N}$ and let $\reg\geq 6j+4$. Assume the BEAM condition to order $\reg$ is satisfied for spin $+1$ component, then for any $p\in [0,5/3]$ and any $\tb\geq\tb_0$,
\begin{align}
\label{eq:BEDC:Phiplus}
F(\reg-8j-6,p,\tb,\Lxi^j\Psiplus)\lesssim {}&\tb^{-\frac{5}{3}(1+j)+p}F(\reg,5/3,\tb_0,\Psiplus).
\end{align}
\end{prop}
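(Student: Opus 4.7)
The plan is a double iteration: an outer induction on $j$ and, for each fixed $j$, an inner Dafermos--Rodnianski-style $r^p$-iteration built on the hierarchy \eqref{eq:BEDC:Phiplus:0}. Two observations drive it. First, because $\Lxi$ is Killing and commutes with every element of $\CDeri$, $\PDeri$, and $\SDeri$, the hierarchy \eqref{eq:BEDC:Phiplus:0} carries over verbatim with $\Psiplus$ replaced by $\Lxi^j\Psiplus$. Second, for $p\in(0,5/3)$ the quantities $F(\reg,p,\tb,\cdot)$ differ only in the radial weight $r^{p-2}$ on the $\|rV\cdot\|$-term, so a H\"older inequality in the radial variable interpolates them.

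\textbf{Base case $j=0$.} Apply \eqref{eq:BEDC:Phiplus:0} at $p=5/3$: monotonicity yields $F(\reg,5/3,\tb,\Psiplus)\lesssim F(\reg,5/3,\tb_0,\Psiplus)$, and the integrability side gives $\int_{\tb_0}^{\infty}F(\reg-3,2/3,s,\Psiplus)\,ds\lesssim F(\reg,5/3,\tb_0,\Psiplus)$. Combined with the near-monotonicity of $F(\reg-3,2/3,\cdot)$ obtained from a second application of the hierarchy at $p=2/3$, dyadic pigeonhole produces $F(\reg-3,2/3,\tb,\Psiplus)\lesssim\tb^{-1}F(\reg,5/3,\tb_0,\Psiplus)$. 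Radial H\"older interpolation between $p=2/3$ and $p=5/3$ upgrades this to $F(\reg-3,1,\tb,\Psiplus)\lesssim\tb^{-2/3}F(\reg,5/3,\tb_0,\Psiplus)$. Reapplying \eqref{eq:BEDC:Phiplus:0} at $p=1$ and performing one more pigeonhole yields $F(\reg-6,0,\tb,\Psiplus)\lesssim\tb^{-5/3}F(\reg,5/3,\tb_0,\Psiplus)$, and a final radial interpolation between $p=0$ and $p=5/3$ establishes \eqref{eq:BEDC:Phiplus} for $j=0$ at regularity $\reg-6$.

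\textbf{Inductive step $j-1\to j$.} Running the base-case iteration on $\Lxi^j\Psiplus$ (which solves the same wave equation by the Killing property of $\Lxi$) reduces the inductive step to improving the initial factor from $F(\reg',5/3,\tb_0,\Lxi^j\Psiplus)$ to $\tb^{-5j/3}F(\reg,5/3,\tb_0,\Psiplus)$. This improvement comes from averaging the integrated hierarchy at level $j-1$ over the dyadic window $[\tb/4,\tb/2]$: by the inductive hypothesis at $p=5/3$,
\[
\int_{\tb/4}^{\tb/2}F(\reg''-3,2/3,s,\Lxi^{j-1}\Psiplus)\,ds\lesssim F(\reg'',5/3,\tb/4,\Lxi^{j-1}\Psiplus)\lesssim\tb^{-5(j-1)/3}F(\reg,5/3,\tb_0,\Psiplus),
\]
and the integrand controls a spacetime $L^2$-norm of $\Lxi^j\Psiplus$ via the identity $\Lxi^j\Psiplus=\partial_{\tb}(\Lxi^{j-1}\Psiplus)$ together with the decomposition $V=\bigl(1-\Delta h'/(\R)\bigr)\Lxi+\Delta(\R)^{-1}\partial_{\rb}+a(\R)^{-1}\Leta$ in hyperboloidal coordinates, which extracts the $\Lxi$-component of $rV$ applied to $\Lxi^{j-1}\Psiplus$. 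A dyadic pigeonhole in $s$ selects a time $\tb^*\in[\tb/4,\tb/2]$ at which the $5/3$-weighted norm of $\Lxi^j\Psiplus$ is bounded by $\tb^{-5j/3}F(\reg,5/3,\tb_0,\Psiplus)$, and monotonicity of the hierarchy for $\Lxi^j\Psiplus$ propagates this bound to all later times. Each induction step costs $8$ derivatives (six as in the base iteration, two more absorbed in the transfer between $\Lxi^{j-1}$ and $\Lxi^j$), yielding the final budget $\reg-8j-6$.

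\textbf{Main technical point.} The delicate step is the transfer of decay from $\Lxi^{j-1}\Psiplus$ to $\Lxi^j\Psiplus$ described above. Because $\Lxi$ is not one of the commuting operators in $\CDeri$, and the wave equation \eqref{eq:Phi+1} retains an $a^2\sin^2\theta\,\Lxi^2$-term whose coefficient vanishes on the axis (so that $\Lxi^2$ cannot be algebraically eliminated in terms of lower-order derivatives), this transfer must be performed at the level of spacetime integrals and extracted through a time-pigeonhole, which accounts for the $+2$ derivative loss per induction step beyond the $6$ of the base iteration.
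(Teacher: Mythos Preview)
Your base case is fine and essentially matches the paper's (which packages the pigeonhole/interpolation into \cite[Lemma 5.2]{andersson2019stability}), but the inductive step contains a genuine gap. The transfer you propose---extracting $\Lxi$ from $V$ via the coordinate decomposition and then pigeonholing the integrand $F(\cdot,2/3,s,\Lxi^{j-1}\Psiplus)$---cannot produce the top-order term of $F(\cdot,5/3,\tb^*,\Lxi^{j}\Psiplus)$. That term is $\norm{rV\Lxi^{j}\Psiplus}^2_{W_{-1/3}}$, i.e.\ it carries the strong radial weight $r^{-1/3}$. Writing $\Lxi=\tfrac{\mu}{2}Y+\tfrac12 V-\tfrac{a}{\R}\Leta$ shows that $rV\Lxi^{j}\Psiplus$ contains $rVY(\Lxi^{j-1}\Psiplus)$, and the strongest weight any energy of $\Lxi^{j-1}\Psiplus$ supplies on such a term is the one inherited from $\norm{rV\Lxi^{j-1}\Psiplus}^2_{W_{p'-2}}$ with $p'<2$; commuting $Y$ past $rV$ keeps the weight at $p'-2<0$, never at $-1/3$ with the additional $\tb$-decay you need. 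Your decomposition of $V$ only makes this worse: the coefficient of $\Lxi$ in $V$ is $\mu H=O(r^{-2})$, so $rV$ contributes $O(r^{-1})\Lxi$, losing rather than gaining weight. In short, there is no mechanism in your argument that converts a factor of $\Lxi$ into a factor of $r$.

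The paper closes the induction by a different, and essential, device: it uses the wave equation itself. From \eqref{eq:Phi+1wave}, after replacing $Y=\tfrac{\R}{\Delta}(2\Lxi+\tfrac{2a}{\R}\Leta-V)$ away from the horizon, one can solve algebraically for $r^2 V\Lxi\Psiplus$ as a finite sum of terms of the form $\CDeri^{\leq 2}\Psiplus$ with $O(1)$ coefficients (the list is $(rV)^2\Psiplus$, $\edthR\edthR'\Psiplus$, $\Lxi^2\Psiplus$, $\Lxi\Leta\Psiplus$, $rV\Psiplus$, $r^{-1}\Leta(rV)\Psiplus$, $r^{-1}\Leta\Psiplus$, $\Lxi\Psiplus$, $r^{-1}\Psiplus$). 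This is exactly the $r^2$-gain you are missing: it gives
\[
\norm{rV\Lxi^{j+1}\Psiplus}^2_{W_{-1/3}}=\norm{r^2V\Lxi(\Lxi^{j}\Psiplus)}^2_{W_{-7/3}}\lesssim \norm{\Lxi^{j}\Psiplus}^2_{W_{-2}^{\,2}},
\]
and hence $F(\cdot,5/3,\tb,\Lxi^{j+1}\Psiplus)\lesssim F(\cdot,0,\tb,\Lxi^{j}\Psiplus)$ at a fixed regularity loss. The inductive hypothesis at $p=0$ then supplies the decay $\tb^{-5(1+j)/3}$, and a further run of the $r^p$-hierarchy (now for $\Lxi^{j+1}\Psiplus$) completes the step. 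Your ``Main technical point'' misidentifies the obstruction: the issue is not that $\Lxi^2$ cannot be eliminated (indeed $\Lxi^2\Psiplus$ appears harmlessly on the right of the paper's identity with $O(1)$ coefficient and weight $\leq r^{-2}$), but that without invoking the equation there is no way to trade $\Lxi$ for an extra power of $r$ on the $rV$-term.
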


\begin{proof}
An application of \cite[Lemma 5.2]{andersson2019stability} then implies for any $p\in [0,5/3]$,
\begin{align}
F(\reg-6,p,\tb,\Psiplus)\lesssim {}&\tb^{-5/3+p}F(\reg,2,\tb_0,\Psiplus).
\end{align}
This proves \eqref{eq:BEDC:Phiplus} in the case of $j=0$. To show the general $j$ case, we prove it by induction. Assume it holds true for $j$, and we consider $j+1$ case. One can utilize equation \eqref{eq:Phi+1wave} and use the replacement $Y=\frac{\R}{\Delta}\big(2\Lxi+\frac{2a}{\R}\Leta-V\big)$ away from horizon to rewrite $r^2V\Lxi \Psiplus$ as a weighted sum of $(rV)^2\Psiplus$, $\edthR\edthR'\Psiplus$,
$\Lxi^2\Psiplus$, $\Lxi\Leta\Psiplus$, $rV\Psiplus$, $r^{-1}\Leta (rV)\Psiplus$, $r^{-1}\Leta \Psiplus$, $\Lxi\Psiplus$ and $r^{-1}\Psiplus$ all with $O(1)$ coefficients. Therefore,
\begin{align}
\hspace{4ex}&\hspace{-4ex}
F(\reg-8-8j,5/3,\tb,\Lxi^{j+1}\Psiplus)\notag\\
={}&
\norm{rV\Lxi^{j+1}\Psiplus}^2_{W_{5/3-2}^{\reg-8-8j}(\Sigmatb)}
+\norm{\Lxi^{j+1}\Psiplus}^2_{W_{-2}^{\reg-7-8j}(\Sigmatb)}
+E^{\reg-5-8j}_{\Sigmatb}(\Lxi^{j+1}\Psiplus)\notag\\
\lesssim{}&\norm{r^2V\Lxi(\Lxi^{j}\Psiplus)}^2_{W_{-7/3}^{\reg-8-8j}(\Sigmatb)}
+\norm{\Lxi^{j+1}\Psiplus}^2_{W_{-2}^{\reg-7-8j}(\Sigmatb)}
+E^{\reg-4-8j}_{\Sigmatb}(\Lxi^{j}\Psiplus)\notag\\
\lesssim{}&\norm{\Lxi^{j}\Psiplus}^2_{W_{-2}^{\reg-6-8j}(\Sigmatb)}
+E^{\reg-4-8j}_{\Sigmatb}(\Lxi^{j}\Psiplus)
\lesssim{}F(\reg-6-8j,0,\tb,\Lxi^{j}\Psiplus)\notag\\
\lesssim{}&\tb^{-\frac{5}{3}(1+j)}F(\reg,5/3,\tb_0,\Psiplus).
\end{align}
Since $\Lxi$ is a symmetry operator for the TME, the estimate \eqref{eq:BEDC:Phiplus:0} is still valid if we replace $\Psiplus$ by $\Lxi^{j+1}\Psiplus$. Again, an application of \cite[Lemma 5.2]{andersson2019stability} then implies for any $p\in [0,5/3]$,
\begin{align}
F(\reg-8(j+1)-6,p,\tb,\Lxi^{j+1}\Psiplus)\lesssim{}&
\tb^{-5/3+p}F(\reg-8(j+1),5/3,\tb/2,\Lxi^{j+1}\Psiplus)\notag\\
\lesssim{}&
\tb^{-\frac{5}{3}-\frac{5}{3}(j+1)+p}F(\reg,5/3,\tb_0,\Lxi^{j}\Psiplus),
\end{align}
as claimed.
\end{proof}

\begin{prop}
\label{prop:BEDC:Phiplus:2}
Let $\delta>0$ and let $\reg\geq 3$.  There are constants $\hat{R}_0=\hat{R}_0(p)$, $C=C(p,\hat{R}_0)$ and $\regl\geq 0$ such that for all $R_0\geq \hat{R}_0$ and $\tb_2>\tb_1\geq \tb_0$,
\begin{align}\label{eq:rpplusglobal:p=2}
F(\reg,2,\tb_2,\Psiplus)
+\int_{\tb_1}^{\tb_2}F(\reg-2,1,\tb,\Psiplus)\di \tb
\lesssim F(\reg+\regl,2,\tb_1,\Psiplus).
\end{align}
Moreover, the above estimate holds true if replacing $\Psiplus$ by $\Lxi^j\Psiplus$ for any $j\in \mathbb{Z}^+$.
\end{prop}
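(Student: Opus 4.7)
The plan is to apply Proposition \ref{prop:wave:rp} at $p=2$ to the wave equation \eqref{eq:Phi+1} for $\Psiplus$, patch the asymptotic estimate with the BEAM estimate in the bounded region, and then use Proposition \ref{prop:BEDC:Phiplus:1} to handle the resulting $a^2$ error term.

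Reading off coefficients from \eqref{eq:Phi+1}, one has $b_{V,-1}=2>0$, $b_\phi=O(r^{-1})$, and $b_{0,0}=-2$, so with spin $s=+1$ we get $b_{0,0}+s+\abs{s}=0$; this puts us exactly in case~(3) of Proposition \ref{prop:wave:rp}, and the source term $\vartheta$ vanishes. Applying \eqref{eq:rp:p=2:2} to $\Psiplus$ in the asymptotic region $\{r\geq R_0\}$ and combining with the BEAM estimate \eqref{eq:BEAM:+1} in the bounded region $\{r\leq R_0\}$ (exactly as done in the proof of Proposition \ref{prop:rpplusglobal}) yields the schematic inequality
\begin{align*}
F(\reg,2,\tb_2,\Psiplus)&+\int_{\tb_1}^{\tb_2} F(\reg-2,1,\tb,\Psiplus)\di\tb\\
&\lesssim F(\reg+\regl',2,\tb_1,\Psiplus) + Ca^2\int_{\tb_1}^{\tb_2}\tb^{1+\delta}\norm{\Lxi\Psiplus}^2_{W_{-2}^{\reg+1}(\Sigmatb^{R_0})}\di\tb,
\end{align*}
where the BEAM energy at time $\tb_1$ has been absorbed into the first term on the right at the cost of some finite derivative loss (using the relation $\Psiplus=\sqrt{\R}\,\psiplus$ to compare the $\PDeri$-based BEAM norm with the $\CDeri$-based $F$-norm).

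The hard part will be handling the $a^2$ error integral, since the weight $\tb^{1+\delta}$ grows in time. The key is to invoke Proposition \ref{prop:BEDC:Phiplus:1} with $j=1$ applied to $\Lxi\Psiplus$: for $\reg$ chosen suitably large, that result supplies $\norm{\Lxi\Psiplus}^2_{W_{-2}^{\reg+1}(\Sigmatb)}\lesssim \tb^{-10/3}F(\reg+\regl'',5/3,\tb_0,\Psiplus)$. The decay rate $\tb^{-10/3}$ beats the growth $\tb^{1+\delta}$ by a margin of $\tb^{-7/3+\delta}$, so the $a^2$ integral is finite and bounded by $C(\delta)\tb_1^{-4/3+\delta}F(\reg+\regl'',5/3,\tb_0,\Psiplus)$. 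Using the weight-inclusion $F(\reg,5/3,\cdot,\cdot)\lesssim F(\reg+\regl',2,\cdot,\cdot)$ together with the time-monotonicity of the $p=5/3$ energy propagated by Proposition \ref{prop:rpplusglobal}, the factor $\tb_1^{-4/3+\delta}$ allows this contribution to be absorbed into $F(\reg+\regl,2,\tb_1,\Psiplus)$ on the right-hand side, for a universal $\regl$.

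The $\Lxi^j$-version for $j\geq 1$ follows by the same scheme: since $\Lxi$ is Killing, the wave equation \eqref{eq:Phi+1} commutes with $\Lxi^j$, so one obtains the analogous identity for $\Lxi^j\Psiplus$ with an $a^2$ error now involving $\Lxi^{j+1}\Psiplus$. Invoking Proposition \ref{prop:BEDC:Phiplus:1} with $j$ replaced by $j+1$ yields the strictly faster decay $\tb^{-5(j+2)/3}$, and the same absorption argument closes the estimate without further difficulty. Throughout, the main bookkeeping issue is tracking the universal derivative loss $\regl$, which accumulates from (i) the BEAM-to-$F$ comparison, (ii) the loss in Proposition \ref{prop:BEDC:Phiplus:1} ($8j+6$ derivatives per application), and (iii) the commutator of $\Lxi$ with the other derivatives in $\KDeri_2$, but remains finite and independent of $\tb_1,\tb_2$.
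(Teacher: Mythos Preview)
Your proposal is correct and follows essentially the same route as the paper: apply case~(3) of Proposition~\ref{prop:wave:rp} at $p=2$ (since $b_{0,0}+s+|s|=0$ and $\vartheta=0$), patch with the BEAM estimate in the bounded region, and then invoke Proposition~\ref{prop:BEDC:Phiplus:1} to show that the $a^2$ weighted time integral of $\norm{\Lxi\Psiplus}^2_{W_{-2}^{\reg+1}(\Sigmatb)}$ is finite and bounded by the initial $p=2$ energy. One small point: your sentence about ``time-monotonicity of the $p=5/3$ energy'' does not by itself convert the bound $F(\cdot,5/3,\tb_0,\Psiplus)$ into one at $\tb_1$ (monotonicity goes the wrong way); the correct reading --- which the paper also leaves implicit --- is that the proof of Proposition~\ref{prop:BEDC:Phiplus:1} uses only the hierarchy \eqref{eq:BEDC:Phiplus:0}, valid for any $\tb_2>\tb_1\geq\tb_0$, so its conclusion can be rebased at $\tb_1$.
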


\begin{proof}
From Proposition \ref{prop:wave:rp},  there are constants $\hat{R}_0=\hat{R}_0(p)$ and $C=C(p,\hat{R}_0)$ such that for all $R_0\geq \hat{R}_0$ and $\tb_2>\tb_1\geq \tb_0$,
\begin{align}\label{eq:rpplus:p=2}
\hspace{4ex}&\hspace{-4ex}
\norm{rV\Psiplus}^2_{W_{\reg}^0(\Sigmatwo^{R_0})}
+\norm{\mathbb{D}_2\Psiplus}^2_{W_{-2}^{\reg}(\Sigmatwo^{R_0})}
+\norm{\Psiplus}^2_{W_{-1-\delta}^{\reg+1}(\Donetwo^{R_0})}
+\norm{rV\Psiplus}^2_{W_{-1}^{\reg}(\Donetwo^{R_0})}
\notag\\
\leq {}&C\bigg(\norm{rV\Psiplus}^2_{W_{0}^{\reg}(\Sigmaone^{R_0-M})}
+\norm{\mathbb{D}_2\Psiplus}^2_{W_{-2}^{\reg}(\Sigmaone^{R_0-M})}
+\norm{\Psiplus}^2_{W_{0}^{\reg+1}(\Donetwo^{R_0-M, R_0})}\notag\\
&\quad
+\sum_{\tb\in\{\tb_1,\tb_2\}}\norm{\Psiplus}^2_{W_{0}^{\reg+1}
(\Sigmatb^{R_0-M, R_0})}
+\int_{\tb_1}^{\tb_2}
\tb^{1+\delta}
\norm{\Lxi\Psiplus}^2_{W_{-2}^{\reg+1}(\Sigmatb^{R_0})}
\di \tb\bigg).
\end{align}
Adding the BEAM estimates \eqref{eq:BEAM:+1} to the above gives
\begin{align}\label{eq:rpplusglobal:p=2:mid}
\hspace{4ex}&\hspace{-4ex}
\norm{rV\Psiplus}^2_{W_{0}^\reg(\Sigmatwo)}
+\norm{\Psiplus}^2_{W_{-2}^{\reg+1}(\Sigmatwo)}
+\norm{\Psiplus}^2_{W_{-1-\delta}^{\reg}(\Donetwo)}
+\norm{rV\Psiplus}^2_{W_{-1}^{\reg-1}(\Donetwo)}
+E^{\reg+3}_{\Sigmatwo}(\Psiplus)
\notag\\
\leq {}&C\bigg(\norm{rV\Psiplus}^2_{W_{0}^{\reg}(\Sigmaone)}
+\norm{\Psiplus}^2_{W_{-2}^{\reg+1}(\Sigmaone)}
+E^{\reg+3}_{\Sigmaone}(\Psiplus)
 +\int_{\tb_1}^{\tb_2}
\tb^{1+\delta}
\norm{\Lxi\Psiplus}^2_{W_{-2}^{\reg+1}(\Sigmatb^{R_0})}
\di \tb\bigg).
\end{align}
From Proposition \ref{prop:BEDC:Phiplus:1}, the last line of \eqref{eq:rpplusglobal:p=2:mid} is bounded by $\norm{rV\Psiplus}^2_{W_{0}^{\reg+\regl}(\Sigmaone)}
+\norm{\Psiplus}^2_{W_{-2}^{\reg+\regl+1}(\Sigmaone)}$ for some $\regl\geq 0$.
The estimate \eqref{eq:rpplusglobal:p=2} for $p=2$ then follows.
\end{proof}

The basic energy $2$-decay condition follows from the following result.
\begin{prop}
\label{prop:BEDC:Phiplus}
Let $j\in \mathbb{N}$ and let $\reg$ suitably large. Assume the BEAM condition to order $\reg$ is satisfied for spin $+1$ component, then there is a constant $\regl(j)$ such that for any $p\in [0,2]$ and any $\tb\geq\tb_0$,
\begin{align}
\label{eq:BEDC:Phiplus}
F(\reg-\regl(j),p,\tb,\Lxi^j\Psiplus)\lesssim {}&\tb^{-2-2j+p}F(\reg,2,\tb/2,\Psiplus)\lesssim \tb^{-2-2j+p}F(\reg,2,\tb_0,\Psiplus).
\end{align}
\end{prop}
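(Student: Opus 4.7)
The plan is to combine the $r^p$-hierarchy of Propositions \ref{prop:rpplusglobal} and \ref{prop:BEDC:Phiplus:2} with a dyadic pigeonhole argument in the spirit of \cite{dafermos2009new,angelopoulos2018vector,andersson2019stability}, and then induct on $j$, using the TME \eqref{eq:Phi+1wave} itself to convert each $\Lxi$-commutation into two extra powers of $\tb$-decay.

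\textbf{Base case ($j=0$).} From Proposition \ref{prop:BEDC:Phiplus:2} with $\tb_1=\tb_0$, $F(\reg,2,\tb,\Psiplus)\lesssim F(\reg+\regl_0,2,\tb_0,\Psiplus)$ is uniformly bounded. Pigeonholing the integrated hierarchy $\int_{\tb}^{2\tb}F(\reg-\regl_0,1,\tb',\Psiplus)\,\di\tb'\lesssim F(\reg+\regl_0,2,\tb_0,\Psiplus)$ produces a $\tb^\ast\in[\tb,2\tb]$ at which $F(\reg-\regl_0,1,\tb^\ast,\Psiplus)\lesssim\tb^{-1}F(\reg+\regl_0,2,\tb_0,\Psiplus)$, and the $p=1$ boundedness from Proposition \ref{prop:rpplusglobal} propagates this to all later $\tb$. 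Feeding the result into the $p=1$ instance of \eqref{eq:BEDC:Phiplus:0} and repeating the pigeonhole/propagation step yields $F(\reg-\regl_1,0,\tb,\Psiplus)\lesssim\tb^{-2}F(\reg+\regl_0,2,\tb_0,\Psiplus)$. Interpolation in the $r$-weight between the $p=0$ and $p=2$ estimates then gives the claim for all $p\in[0,2]$, and the sharper version with $F(\reg,2,\tb/2,\Psiplus)$ on the right follows by running the same iteration on the dyadic sub-interval $[\tb/2,\tb]$.

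\textbf{Inductive step.} Assume the claim for $j-1$. Since $\Lxi$ is a symmetry of the TME, both Proposition \ref{prop:BEDC:Phiplus:2} and the hierarchy \eqref{eq:BEDC:Phiplus:0} apply verbatim to $\Lxi^j\Psiplus$, so the base-case iteration above yields
\begin{align*}
F(\reg-\regl(j),p,\tb,\Lxi^j\Psiplus)
\lesssim{}&\tb^{-2+p}F(\reg-\regl(j)+\regl'',2,\tb/2,\Lxi^j\Psiplus).
\end{align*}
To bound the right-hand side by $\tb^{-2j}F(\reg,2,\tb_0,\Psiplus)$ I would reuse the algebraic manipulation from the proof of Proposition \ref{prop:BEDC:Phiplus:1}: using \eqref{eq:Phi+1wave} together with $Y=\tfrac{\R}{\Delta}\bigl(2\Lxi+\tfrac{2a}{\R}\Leta-V\bigr)$ away from the horizon, the quantity $r^2V\Lxi^j\Psiplus$ can be rewritten as a weighted sum, with $O(1)$ coefficients, of $(rV)^2\Lxi^{j-1}\Psiplus$, $\edthR\edthR'\Lxi^{j-1}\Psiplus$, $\Lxi^{j+1}\Psiplus$, $\Lxi\Leta\Lxi^{j-1}\Psiplus$, $r^{-1}\Leta(rV)\Lxi^{j-1}\Psiplus$, and lower-order terms. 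This gives $F(\cdot,2,\tb/2,\Lxi^j\Psiplus)\lesssim F(\cdot+c,0,\tb/2,\Lxi^{j-1}\Psiplus)$, and the inductive hypothesis bounds the right-hand side by $\tb^{-2j}F(\reg,2,\tb_0,\Psiplus)$, closing the induction.

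\textbf{Main obstacle.} The principal technical difficulty is the regularity bookkeeping. Each inductive step loses a fixed number of derivatives through three independent mechanisms: (i) the pigeonhole step drops one order; (ii) the wave-equation substitution trading $rV$ for $\Lxi$ consumes two further derivatives (one $rV$, one angular); and (iii) the spacetime error term $Ca^2\int\tb^{1+\delta}\norm{\Lxi\varphi}^2_{W_{-2}^{\reg+1}}\,\di\tb$ on the right of \eqref{eq:rpplusglobal:p=2:mid}, which must be absorbed using the already-established decay from Proposition \ref{prop:BEDC:Phiplus:1} at one strictly higher regularity. One must therefore define $\regl(j)$ to grow linearly in $j$ with slope equal to the sum of these three losses, and assume $\reg$ large enough to accommodate $\regl(j)$; in particular this is the structural reason why one cannot iterate indefinitely without a regularity assumption scaling with $j$.
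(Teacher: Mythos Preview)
Your proposal is correct and follows essentially the same approach as the paper: a dyadic pigeonhole iteration along the $r^p$-hierarchy (which the paper packages as an appeal to \cite[Lemma 5.2]{andersson2019stability}) for the base case, followed by induction on $j$ using the wave-equation substitution of Proposition \ref{prop:BEDC:Phiplus:1} with $5/3$ replaced by $2$. Your explicit discussion of the regularity losses is more detailed than the paper's, which simply records the loss as $\regl(j)$ without tracking its sources.
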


\begin{proof}
The estimate \eqref{eq:BEDC:Phiplus:0} for $p\in [0,2)$ and the estimate \eqref{eq:rpplusglobal:p=2} for $p=2$ together imply that
for any $p\in [0,2]$,
\begin{align}
F(\reg-4,p,\tb,\Psiplus)\lesssim {}&\tb^{-2+p}F(\reg+2\regl,2,\tb_0,\Psiplus).
\end{align}
This proves the estimate \eqref{eq:BEDC:Phiplus} for $j=0$. To show general $j\geq 0$ cases, one uses an induction for $j\geq 0$. The same way of arguing in the proof of Proposition \ref{prop:BEDC:Phiplus:1} applies here after replacing $5/3$ by $2$ and $j$-dependent constants by $\regl(j)$, eventually proving the estimate \eqref{eq:BEDC:Phiplus}.
\end{proof}


\subsection{Spin $-1$ component}


\begin{definition}
\label{def:Phiminusi}
Define the operators\footnote{The operator $\VR$ in this work is the same as the operator $V$ in \cite{Ma2017Maxwell}.}
\begin{align}
\VR={}\frac{\R}{\Delta}V, \qquad \curlVR={}(\R)\VR,
\end{align}
and define scalars
\begin{subequations}
\label{eq:Phiminusi:def}
\begin{align}
\Phiminus{0}={}&\Delta/(r^2+a^2) \Psiminus, \\ \Phiminus{i}={}&\curlVR^i \Phiminus{0}, \quad \text{for}\quad i=1,2,\\
\phiminus{i}={}&(\R)^{-1/2}\Phiminus{i},\quad \text{for}\quad i=0,1,2.
\end{align}
\end{subequations}
\end{definition}

\begin{remark}\label{rem:Phiminusandphi-1}
The scalars $\phiminus{i}$ here are the same as the scalars $\phi_{-1}^i$ in \cite{Ma2017Maxwell} for $i=0,1$.
\end{remark}

We derive the wave system for these variables, which is an extended system of \cite[Equations (1.31)]{Ma2017Maxwell}.
\begin{lemma}
The scalars defined as in Definition \ref{def:Phiminusi} satisfy a wave system
\begin{subequations}
\label{eq:Phi-1012}
\begin{align}
\label{eq:Phi-10}
\squareShat_{-1}\Phiminus{0}
={}&-\tfrac{2(\PR)}{(\R)^2}\Phiminus{1}
+\tfrac{2(r^4-Mr^3+a^2r^2+3a^2Mr)}{(\R)^2}\Phiminus{0}
+\tfrac{4ar}{\R}\partial_{\phi}\Phiminus{0},\\
\label{eq:Phi-11}
\squareShat_{-1}\Phiminus{1} ={}&\tfrac{2(r^4-Mr^3+a^2r^2+3a^2Mr)}{(\R)^2}\Phiminus{1}
+\tfrac{2a^2(\PR)}{(\R)^2}\Phiminus{0}
-\tfrac{2a(r^2-a^2)}{\R}\partial_{\phi}\Phiminus{0},\\
\label{eq:Phi-12}
\squareShat_{-1}\Phiminus{2}={}&\tfrac{2(r^3-3Mr^2+a^2r+a^2M)}{\Delta}
{V\Phiminus{2}}
+\tfrac{10Mr^3+2a^2r^2-22a^2Mr+2a^4}{(\R)^2}\Phiminus{2}
-\tfrac{8ar}{\R}\partial_{\phi}\Phiminus{2}
\notag\\
&+\tfrac{4a^2(\PR)}{(\R)^2}\Phiminus{1}
\notag\\
&
-\tfrac{2ar^2}{\R}\partial_{\phi}\Phiminus{1}+(\R)\partial_r\Big(\tfrac{2a^2(\PR)}{(\R)^2}\Big)\Phiminus{0}\notag\\
&
-(\R)\partial_r\Big(\tfrac{2a(r^2-a^2)}{\R}\Big)\partial_{\phi}\Phiminus{0}.
\end{align}
\end{subequations}
\end{lemma}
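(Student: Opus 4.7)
The plan is to split the lemma into two parts: (i) the equations \eqref{eq:Phi-10} and \eqref{eq:Phi-11} for $\Phiminus{0}, \Phiminus{1}$, and (ii) the equation \eqref{eq:Phi-12} for $\Phiminus{2}$. For part (i), I would invoke Remark \ref{rem:Phiminusandphi-1}: by construction, $\phiminus{i} = (r^2+a^2)^{-1/2}\Phiminus{i}$ coincides with the scalar $\phi_{-1}^i$ of \cite{Ma2017Maxwell} for $i=0,1$, and therefore satisfies the wave system \cite[Equation (1.31)]{Ma2017Maxwell} written in terms of the operator $\mathbf{L}_{-1}$. The conjugation identity \eqref{eq:BoxHatandLs} then transfers those equations verbatim to equations for $\Phiminus{i} = \sqrt{r^2+a^2}\,\phiminus{i}$, at the price of adding the explicit potential $\frac{(s^2)(r^4-2Mr^3+6a^2Mr-a^4)}{(r^2+a^2)^2} - s$ with $s=-1$. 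Combining this potential with the potential already present in \cite[Eq.~(1.31)]{Ma2017Maxwell} and collecting the rational functions of $r$ produces precisely the coefficients displayed in \eqref{eq:Phi-10}--\eqref{eq:Phi-11}; this is a finite algebraic manipulation.

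For part (ii), since $\Phiminus{2} = \curlVR\Phiminus{1}$ with $\curlVR = (r^2+a^2)^2\Delta^{-1}V$, the strategy is to commute:
\begin{equation*}
\squareShat_{-1}\Phiminus{2} \;=\; \curlVR\,\squareShat_{-1}\Phiminus{1} \;+\; [\squareShat_{-1},\curlVR]\Phiminus{1}.
\end{equation*}
The first summand is obtained by applying $\curlVR$ to the right-hand side of \eqref{eq:Phi-11} and distributing through the $r$-dependent coefficients via the product rule, producing terms in $V\Phiminus{1}$, $V\Phiminus{0}$, $\Phiminus{1}$, $\Phiminus{0}$ and their $\Leta$-derivatives. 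For the commutator, inspecting \eqref{eq:squareShat}, only the principal part $-(r^2+a^2)YV$ and the radial potential fail to commute with $V$: the angular factor $2\edthR\edthR'$ and the Killing pieces $\Lxi$, $\Leta$ commute with $V$ trivially, while the $[Y,V]$ commutator is supplied by \eqref{eq:YVcommutator}. This yields an expression in terms of $V\Phiminus{1}$, $YV\Phiminus{1}$, $\Leta\Phiminus{1}$ and $\Phiminus{1}$; the $YV\Phiminus{1}$ term is then eliminated by solving \eqref{eq:Phi-11} together with the expansion \eqref{eq:squareShat} of $\squareShat_{-1}$ for $YV\Phiminus{1}$ in terms of lower-order derivatives. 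Collecting and using $\curlVR\Phiminus{1} = \Phiminus{2}$ produces the claimed structure on the right-hand side of \eqref{eq:Phi-12}.

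The main obstacle is purely computational bookkeeping: I would need to track rational functions of $r$ arising from three sources---the chain rule on the coefficients of \eqref{eq:Phi-11} under $\curlVR$, the conjugation by $\sqrt{r^2+a^2}$ used in part (i), and the $[Y,V]$ commutator \eqref{eq:YVcommutator}---and verify that they reassemble into the particular coefficients written in \eqref{eq:Phi-12}. A useful consistency check for the final answer is that the leading radial coefficient $\frac{2(r^3-3Mr^2+a^2r+a^2M)}{\Delta}$ multiplying $V\Phiminus{2}$ in \eqref{eq:Phi-12} agrees with the corresponding coefficient in the spin $+1$ wave equation \eqref{eq:Phi+1}; this is expected because $\curlVR$ plays the role of a spin-raising transformation intertwining the spin $-1$ Teukolsky equation with the spin $+1$ one up to scalar rescalings, and any discrepancy in the coefficients of the ``spin $+1$ shape'' terms in \eqref{eq:Phi-12} would signal an algebraic error to be traced back through the commutator computation.
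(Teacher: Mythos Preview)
Your proposal is correct and follows essentially the same route as the paper: equations \eqref{eq:Phi-10}--\eqref{eq:Phi-11} are imported from \cite[Eq.~(1.31)]{Ma2017Maxwell} via the conjugation identity \eqref{eq:BoxHatandLs}, and \eqref{eq:Phi-12} is obtained by applying $\curlVR$ to \eqref{eq:Phi-11} and working out the commutator (the paper cites the precomputed commutator \cite[(A.1)]{Ma2017Maxwell} rather than recomputing it from \eqref{eq:YVcommutator}). One small bookkeeping point: in your commutator accounting you should also track the $\tfrac{2ar}{r^2+a^2}\Leta$ term in \eqref{eq:squareShat}, whose $r$-dependent coefficient does not commute with $\curlVR$ and contributes to the $\partial_\phi$-coefficients in \eqref{eq:Phi-12}.
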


\begin{proof}
The first two subequations are manifest from \cite[Equations (1.31)]{Ma2017Maxwell} in view of Remarks \ref{rem:squareShatandLs} and \ref{rem:Phiminusandphi-1}. We apply operator $\curlVR$ to equation \eqref{eq:Phi-11}, and from the commutation relation \cite[(A.1)]{Ma2017Maxwell}, Remark \ref{rem:squareShatandLs} and Definition \ref{def:squareShat}, one obtains a wave equation of $\Phiminus{2}$:
\begin{align}\label{eq:Phi-12wave}
0={}&2\edthR\edthR'\Phiminus{2}-(\R) YV\Phiminus{2}
-\tfrac{2(r^3-3Mr^2+a^2r+a^2M)}{\Delta}
{V\Phiminus{2}}\notag\\
&
+2ia\cos \theta \partial_t\Phiminus{2}
+\tfrac{10ar}{\R}\partial_{\phi}\Phiminus{2}
+\left(2a\partial_{t\phi}^2+a^2 \sin^2 \theta\partial_{tt}^2\right)\Phiminus{2}\notag\\
&-\tfrac{12Mr^3+3a^2r^2-18a^2Mr+3a^4}{(\R)^2}
\Phiminus{2}\notag\\
&-\tfrac{4a^2(\PR)}{(\R)^2}\Phiminus{1}
-(\R)\partial_r\Big(\tfrac{2a^2(\PR)}{(\R)^2}\Big)\Phiminus{0}\notag\\
&+\tfrac{2ar^2}{\R}\partial_{\phi}\Phiminus{1}
+(\R)\partial_r\Big(\tfrac{2a(r^2-a^2)}{\R}\Big)\partial_{\phi}\Phiminus{0}.
\end{align}
This is the expanded form of \eqref{eq:Phi-12} in view of equation \eqref{eq:squareShat}.
\end{proof}


\subsubsection{$r^p$ estimates for extended spin $-1$ system}


\begin{prop}
\label{prop:rpglobal:Phiminus}
Let $\mathbf{Q}_1=\{0,1\}$ and $\mathbf{Q}_2=\{0,1,2\}$, and define $l(i)=\max(0,i-1)$. Assume the BEAM condition to order $\reg+2$ for spin $-1$ component is satisfied, then  for any $j\in \{1,2\}$,
\begin{itemize}
\item for $p\in (0,2)$,
\begin{align}\label{eq:rpminusglobal:01:less2}
\hspace{4ex}&\hspace{-4ex}
\sum_{i\in \mathbf{Q}_j}\Big(\norm{rV\PsiminusHigh{i}}^2_{W_{p-2}^{\reg-l(i)}(\Sigmatwo)}
+\norm{\PsiminusHigh{i}}^2_{W_{-2}^{\reg+1-l(i)}(\Sigmatwo)}\notag\\
\hspace{4ex}&\hspace{-4ex} \qquad\quad
+\norm{\PsiminusHigh{i}}^2_{W_{p-3}^{\reg-l(i)}(\Donetwo)}
+\norm{Y\PsiminusHigh{i}}^2_{W_{-1-\delta}^{\reg-1-l(i)}(\Donetwo)}\Big)
\notag\\
\lesssim {}&\sum_{i\in \mathbf{Q}_j}\Big(\norm{rV\PsiminusHigh{i}}^2_{W_{p-2}^{\reg-l(i)}(\Sigmaone)}
+\norm{\PsiminusHigh{i}}^2_{W_{-2}^{\reg+1-l(i)}(\Sigmaone)}\Big);
\end{align}
\item for $p=2$,
\begin{align}\label{eq:rpminusglobal:01:p=2}
\hspace{4ex}&\hspace{-4ex}
\sum_{i\in \mathbf{Q}_1}\Big(\norm{rV\PsiminusHigh{i}}^2_{W_{0}^{\reg}(\Sigmatwo)}
+\norm{\PsiminusHigh{i}}^2_{W_{-2}^{\reg+1}(\Sigmatwo)}\notag\\
\hspace{4ex}&\hspace{-4ex}\qquad\quad
+\norm{\PsiminusHigh{i}}^2_{W_{-1-\delta}^{\reg}(\Donetwo)}
+\norm{rV\PsiminusHigh{i}}^2_{W_{-1}^{\reg-1}(\Donetwo)}
\Big)
\notag\\
\lesssim {}&\sum_{i\in \mathbf{Q}_1}
\bigg(\norm{rV\PsiminusHigh{i}}^2_{W_{0}^{\reg-l(i)}(\Sigmaone)}
+\norm{\PsiminusHigh{i}}^2_{W_{-2}^{\reg+1-l(i)}(\Sigmaone)}
+a^2\int_{\tb_1}^{\tb_2}
\tb^{1+\delta}
\norm{\Lxi\PsiminusHigh{i}}^2_{W_{-2}^{\reg+1}(\Sigmatb^{R_0})}
\di \tb\bigg);
\end{align}
\item for $p=2$,
\begin{align}\label{eq:rpminusglobal:012:p=2}
\hspace{4ex}&\hspace{-4ex}
\sum_{i\in \mathbf{Q}_2}\Big(\norm{rV\PsiminusHigh{i}}^2_{W_{0}^{\reg-l(i)}(\Sigmatwo)}
+\norm{\PsiminusHigh{i}}^2_{W_{-2}^{\reg+1-l(i)}(\Sigmatwo)}
\notag\\
\hspace{4ex}&\hspace{-4ex}\qquad\quad
+\norm{\PsiminusHigh{i}}^2_{W_{-1-\delta}^{\reg-l(i)}(\Donetwo)}
+\norm{rV\PsiminusHigh{i}}^2_{W_{-1}^{\reg-1-l(i)}(\Donetwo)}
\Big)
\notag\\
\lesssim {}&\sum_{i\in \mathbf{Q}_2}
\Big(\norm{rV\PsiminusHigh{i}}^2_{W_{0}^{\reg-l(i)}(\Sigmaone)}
+\norm{\PsiminusHigh{i}}^2_{W_{-2}^{\reg+1-l(i)}(\Sigmaone)}\Big)\notag\\
&
+
a^2\int_{\tb_1}^{\tb_2}
\tb^{1+\delta}
\bigg(\sum_{i\in \mathbf{Q}_2}\norm{\Lxi\PsiminusHigh{i}}^2_{W_{-2}^{\reg+1-l(i)}
(\Sigmatb)}
+
\norm{\PsiminusHigh{1}}^2_{W_{-2}^{\reg}(\Sigmatb)}
+
\norm{\PsiminusHigh{0}}^2_{W_{-2}^{\reg}(\Sigmatb)}
\bigg)
\di \tb.
\end{align}
\end{itemize}
\end{prop}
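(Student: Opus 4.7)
The plan is to apply Proposition~\ref{prop:wave:rp} to each of the three wave equations \eqref{eq:Phi-10}, \eqref{eq:Phi-11}, \eqref{eq:Phi-12} separately, sum, and absorb both the cross-coupling source contributions (via Cauchy--Schwarz taking $R_0$ large) and the bounded-$r$ boundary contributions (via the BEAM estimate \eqref{eq:BEAM:-1}), mirroring the proof of Proposition~\ref{prop:rpplusglobal}. The first bookkeeping task is to verify the coefficient hypotheses of Proposition~\ref{prop:wave:rp} for each equation. Recasting \eqref{eq:Phi-10} and \eqref{eq:Phi-11} into the form \eqref{eq:wave:rp}, one reads off for either equation $b_{V,-1}=0$, $b_{\phi}=O(r^{-1})$, and $b_{0,0}=2$, so that $b_{0,0}+s+\abs{s}=2>0$ and the estimate \eqref{eq:rp:p=2} of point~\ref{pt:2:prop:wave:rp} applies at $p=2$; repeating the exercise for \eqref{eq:Phi-12} gives $b_{V,-1}=2>0$, $b_{\phi}=O(r^{-1})$, and $b_{0,0}=0$, which falls into the regime of estimate \eqref{eq:rp:p=2:2}. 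For $p\in(0,2)$ the weaker estimate \eqref{eq:rp:less2} applies uniformly to all three equations, and adding a large multiple of \eqref{eq:BEAM:-1} yields \eqref{eq:rpminusglobal:01:less2} in the same manner as for $\Psiplus$.

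The source terms are easily identified: $\vartheta(\Phiminus{0})=O(r^{-1})\Phiminus{1}$ is genuinely decaying, $\vartheta(\Phiminus{1})$ contains an $O(r^{-1})\Phiminus{0}$ piece and a non-decaying $-2a\Leta\Phiminus{0}+O(ar^{-2})\Leta\Phiminus{0}$ piece, while $\vartheta(\Phiminus{2})$ contains $O(a^2 r^{-1})\Phiminus{1}$ and $O(a^3 r^{-1})\Leta\Phiminus{0}$ along with two non-decaying pieces, $-2a\Leta\Phiminus{1}+O(ar^{-2})\Leta\Phiminus{1}$ and $-2a^2\Phiminus{0}+O(a^2 r^{-1})\Phiminus{0}$. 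For the $\mathbf{Q}_1=\{0,1\}$ case, summing \eqref{eq:rp:p=2} for $i=0,1$ produces an error integral $-\sum\int V\overline{\mathbb{D}_1^{\mathbf{a}}\Phiminus{i}}\,\mathbb{D}_1^{\mathbf{a}}\vartheta(\Phiminus{i})\,\di^4\mu$: the $O(r^{-1})$ pieces are absorbed into $\norm{\PsiminusHigh{i}}^2_{W_{-1-\delta}^{\reg}(\Donetwo)}$ on the LHS by Cauchy--Schwarz with $R_0$ sufficiently large, while the non-decaying $a\Leta\Phiminus{0}$ term is handled by the time-slicing Cauchy--Schwarz trick of equation \eqref{eq:generalrp:troubleterm:1} with weights $\tb^{\pm(1+\delta)/2}$. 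Using that $\Leta$ is Killing and can be integrated by parts against $V\Phiminus{1}$ at the cost of an extra $\Lxi$ derivative, one obtains precisely the $a^2\int_{\tb_1}^{\tb_2}\tb^{1+\delta}\norm{\Lxi\PsiminusHigh{0}}^2_{W_{-2}^{\reg+1}(\Sigmatb^{R_0})}\,\di\tb$ contribution on the RHS of \eqref{eq:rpminusglobal:01:p=2}.

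For the $\mathbf{Q}_2=\{0,1,2\}$ case the same scheme is iterated by invoking \eqref{eq:rp:p=2:2} for $\Phiminus{2}$ and summing. The $O(a^2 r^{-1})\Phiminus{1}$ contribution is absorbed directly, the non-decaying $-2a\Leta\Phiminus{1}$ piece is split by Cauchy--Schwarz with time weights $\tb^{\pm(1+\delta)/2}$ to yield the $a^2\int\tb^{1+\delta}\norm{\PsiminusHigh{1}}^2_{W_{-2}^{\reg}(\Sigmatb)}\,\di\tb$ term in \eqref{eq:rpminusglobal:012:p=2}, and analogously the $-2a^2\Phiminus{0}$ piece, after using $\abs{a}\leq M$ to convert the resulting $a^4$ factor back to $a^2$, contributes the corresponding $\norm{\PsiminusHigh{0}}^2$-term. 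The $\epsilon$-side of each split is absorbed into $\norm{rV\PsiminusHigh{2}}^2_{W_{-1}^{\reg-1}(\Donetwo)}$ on the LHS. The main obstacle I anticipate is tracking the regularity balance through the $\mathbb{D}_2$-commutator chain of the last step of the proof of Proposition~\ref{prop:wave:rp}: after commuting \eqref{eq:Phi-12} with $rV$ the new source $\vartheta_{rV}$ inherits one more derivative of $\Phiminus{1}$ and $\Phiminus{0}$ than $\vartheta(\Phiminus{2})$ already had, and the corresponding bookkeeping accounts for the shift $l(i)=\max(0,i-1)$ in \eqref{eq:rpminusglobal:012:p=2}---this forces one to prove \eqref{eq:rpminusglobal:01:p=2} first and use it as input for the $\Phiminus{2}$ estimate, rather than running a single simultaneous estimate for all three variables.
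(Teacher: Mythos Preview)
Your overall architecture and the verification of the coefficient hypotheses of Proposition~\ref{prop:wave:rp} are correct, and your treatment of the $p\in(0,2)$ case and of the source $\vartheta(\Phiminus{2})$ at $p=2$ (simple time-weighted Cauchy--Schwarz, producing the undifferentiated $\norm{\PsiminusHigh{0}}^2$ and $\norm{\PsiminusHigh{1}}^2$ terms in \eqref{eq:rpminusglobal:012:p=2}) matches the paper's. The gap is in your handling of the non-decaying source piece $-2a\Leta\Phiminus{0}$ in $\vartheta(\Phiminus{1})$ at $p=2$. The time-slicing Cauchy--Schwarz you propose would leave a residual $a^2\int\tb^{1+\delta}\norm{\Leta\Phiminus{0}}^2_{W_{-2}^{\reg}}\,\di\tb$, and since $\abs{\Leta\Phiminus{0}}\lesssim\abs{\edthR\Phiminus{0}}+\abs{\edthR'\Phiminus{0}}+\abs{\Phiminus{0}}$, this is $a^2\int\tb^{1+\delta}\norm{\Phiminus{0}}^2_{W_{-2}^{\reg+1}}\,\di\tb$ with \emph{no} $\Lxi$-derivative; there is no integration-by-parts in $\Leta$ that trades this for an $\Lxi$. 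The $\Lxi$ terms on the RHS of \eqref{eq:rpminusglobal:01:p=2} are already present in \eqref{eq:rp:p=2} and are not produced by the source coupling. An undifferentiated $\norm{\Phiminus{0}}^2$ residual is fatal downstream: in Proposition~\ref{prop:BEDC:Psiminus} the $\Lxi$ terms are controlled via the auxiliary $\tb^{-10/3}$ decay of $F^{(1)}(\cdot,0,\tb,\Lxi\Psiminus)$, whereas the undifferentiated energy at that stage decays only like $\tb^{-5/3}$, and $\int\tb^{1+\delta-5/3}\,\di\tb$ diverges.

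The paper's mechanism for $I_1(\Phiminus{1})=2a\sum_{\abs{\mathbf{a}}\leq\reg}\int_{\Donetwo^{R_0}}\Re\big(V\overline{\mathbb{D}_1^{\mathbf{a}}\Phiminus{1}}\,\mathbb{D}_1^{\mathbf{a}}\Leta\Phiminus{0}\big)\,\di^4\mu$ is a structural cancellation: integrate by parts in $V$, commute $V$ through $\mathbb{D}_1^{\mathbf{a}}$ via Proposition~\ref{prop:basicesti}, and use the hierarchy identity $V\Phiminus{0}=\Delta(\R)^{-2}\Phiminus{1}$ from Definition~\ref{def:Phiminusi}. The bulk integrand becomes $\sim r^{-2}\Re\big(\overline{\mathbb{D}_1^{\mathbf{a}}\Phiminus{1}}\,\Leta\mathbb{D}_1^{\mathbf{a}}\Phiminus{1}\big)=\tfrac{1}{2}r^{-2}\Leta\abs{\mathbb{D}_1^{\mathbf{a}}\Phiminus{1}}^2$, which integrates to zero on spheres. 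The surviving boundary fluxes (on $\Sigmatwo^{R_0}$ and $\Scrionetwo$) and commutator errors are bounded as in \eqref{eq:Phi-11:p2:erroresti}: $\veps$-small pieces of the $\Phiminus{1}$ LHS plus $\veps^{-1}$-large boundary norms of $\Phiminus{0}$ plus $R_0^{-1+\delta}$-small bulk terms. One then fixes $\veps$ small, adds $A_0\gg\veps^{-1}$ copies of the $\Phiminus{0}$ estimate \eqref{eq:rp:p=2} to absorb the $\veps^{-1}$ pieces, and takes $R_0$ large --- with no time-growing residual whatsoever. This double integration-by-parts exploiting $V\Phiminus{0}\sim r^{-2}\Phiminus{1}$ is the missing idea.
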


\begin{proof}
We put each subequation in system \eqref{eq:Phi-1012} into the form of \eqref{eq:wave:rp} and find
\begin{enumerate}
\item $b_{V,-1}(\Phiminus{0})=b_{V,-1}(\Phiminus{1})=0$, $b_{V,-1}(\Phiminus{2})=2\geq 0$,
\item $b_{\phi}(\Phiminus{i})=MO(r^{-1})$ for all $i\in \{0,1,2\}$,
\item $b_{0,0}(\Phiminus{0})=b_{0,0}(\Phiminus{1})=2>0$ and $b_{0,0}(\Phiminus{2})=0$.
\end{enumerate}
All the assumptions in Proposition \ref{prop:wave:rp} are satisfied and the source terms are
\begin{subequations}
\begin{align}
\vartheta(\Phiminus{0})={}&-\tfrac{2(\PR)}{(\R)^2}\Phiminus{1}
=O(r^{-1})\Phiminus{1},\\
\vartheta(\Phiminus{1})={}&\tfrac{2a^2(\PR)}{(\R)^2}\Phiminus{0}
-\tfrac{2a(r^2-a^2)}{\R}\partial_{\phi}\Phiminus{0}\notag\\
={}&-2a\partial_{\phi}\Phiminus{0}
+a^2 O(r^{-1})\Phiminus{0}
+a^3O(r^{-2})\partial_{\phi}\Phiminus{0},\\
\vartheta(\Phiminus{2})
={}&\tfrac{4a^2(\PR)}{(\R)^2}\Phiminus{1}
-(\R)\partial_r\Big(\tfrac{2a(r^2-a^2)}{\R}\Big)\partial_{\phi}\Phiminus{0}\notag\\
&-\tfrac{2ar^2}{\R}\partial_{\phi}\Phiminus{1}
+(\R)\partial_r\Big(\tfrac{2a^2(\PR)}{(\R)^2}\Big)\Phiminus{0}
\notag\\
={}&-2a\partial_{\phi}\Phiminus{1}
-2a^2\Phiminus{0}
+a^3O(r^{-2})\partial_{\phi}\Phiminus{1}\notag\\
&
+a^2O(r^{-1})\Phiminus{1} +a^3O(r^{-1})\partial_{\phi}\Phiminus{0}
+Ma^2O(r^{-1})\Phiminus{0} .
\end{align}
\end{subequations}
We first show $r^p$ estimates near infinity: For $p\in (0,2)$,
\begin{subequations}
\begin{align}\label{eq:rpminus:01:less2}
\hspace{4ex}&\hspace{-4ex}
\sum_{i\in \mathbf{Q}_j}\Big(\norm{rV\Phiminus{i}}^2_{W_{p-2}^{\reg-l(i)}(\Sigmatwo^{R_0})}
+\norm{\Phiminus{i}}^2_{W_{-2}^{\reg+1-l(i)}(\Sigmatwo^{R_0})}\notag\\
\hspace{4ex}&\hspace{-4ex}\qquad
+\norm{\Phiminus{i}}^2_{W_{p-3}^{\reg+1-l(i)}(\Donetwo^{R_0})}
+\norm{Y\Phiminus{i}}^2_{W_{-1-\delta}^{\reg-l(i)}(\Donetwo^{R_0})}\Big)
\notag\\
\lesssim {}&\sum_{i\in \mathbf{Q}_j}\bigg(\norm{rV\Phiminus{i}}^2_{W_{p-2}^{\reg-l(i)}(\Sigmaone^{R_0-M})}
+\norm{\Phiminus{i}}^2_{W_{-2}^{\reg+1-l(i)}(\Sigmaone^{R_0-M})}\bigg.\notag\\
&\bigg.\quad \qquad +\norm{\Phiminus{i}}^2_{W_{0}^{\reg+1-l(i)}(\Donetwo^{R_0-M,R_0})}
+
\norm{\Phiminus{i}}^2_{W_{0}^{\reg+1-l(i)}
(\Sigmatwo^{R_0-M,R_0})}\bigg),
\end{align}
and for $p=2$,
\begin{align}\label{eq:rpminus:01:p=2}
\hspace{4ex}&\hspace{-4ex}
\sum_{i\in \mathbf{Q}_1}\Big(\norm{rV\Phiminus{i}}^2_{W_{0}^{\reg}(\Sigmatwo^{R_0})}
+\norm{\Phiminus{i}}^2_{W_{-2}^{\reg+1}(\Sigmatwo^{R_0})}\notag\\
&\quad
+\norm{\Phiminus{i}}^2_{W_{-1-\delta}^{\reg+1}(\Donetwo^{R_0})}
+\norm{rV\Phiminus{i}}^2_{W_{-1}^{\reg}(\Donetwo^{R_0})}
\Big)
\notag\\
\lesssim {}&\sum_{i\in \mathbf{Q}_1}\bigg(\norm{rV\Phiminus{i}}^2_{W_{0}^{\reg}
(\Sigmaone^{R_0-M})}
+\norm{\Phiminus{i}}^2_{W_{-2}^{\reg+1}(\Sigmaone^{R_0-M})}\bigg.\notag\\
&\bigg.\quad \qquad +\norm{\Phiminus{i}}^2_{W_{0}^{\reg+1}(\Donetwo^{R_0-M,R_0})}
+
\norm{\Phiminus{i}}^2_{W_{0}^{\reg+1}(\Sigmatwo^{R_0-M,R_0})}\notag\\
&\bigg.\quad\qquad
+a^2\int_{\tb_1}^{\tb_2}
\tb^{1+\delta}
\norm{\Lxi\Phiminus{i}}^2_{W_{-2}^{\reg+1}(\Sigmatb^{R_0})}
\di \tb\bigg),\\
\label{eq:rpminus:012:p=2}
\hspace{4ex}&\hspace{-4ex}
\sum_{i\in \mathbf{Q}_2}\Big(\norm{rV\Phiminus{i}}^2_{W_{0}^{\reg-l(i)}(\Sigmatwo^{R_0})}
+\norm{\Phiminus{i}}^2_{W_{-1-\delta}^{\reg+1-l(i)}(\Donetwo^{R_0})}
+\norm{rV\Phiminus{i}}^2_{W_{-1}^{\reg-l(i)}(\Donetwo^{R_0})}
\Big)
\notag\\
\hspace{4ex}&\hspace{-4ex}
+\sum_{i=0,1}\norm{\PsiminusHigh{i}}^2_{W_{-2}^{\reg+1}(\Sigmatwo^{R_0})}
+\norm{\mathbb{D}_2\PsiminusHigh{2}}^2_{W_{-2}^{\reg-1}(\Sigmatwo^{R_0})}
\notag\\
\lesssim {}&
\sum_{i=0,1}\norm{\PsiminusHigh{i}}^2_{W_{-2}^{\reg+1}(\Sigmaone^{R_0-M})}
+\norm{\mathbb{D}_2\PsiminusHigh{i}}^2_{W_{-2}^{\reg-1}(\Sigmaone^{R_0-M})}\notag\\
&+\sum_{i\in \mathbf{Q}_2}\bigg(\norm{rV\Phiminus{i}}^2_{W_{0}^{\reg-l(i)}(\Sigmaone^{R_0})} +\norm{\Phiminus{i}}^2_{W_{0}^{\reg+1-l(i)}(\Donetwo^{R_0-M,R_0})}
+
\norm{\Phiminus{i}}^2_{W_{0}^{\reg+1-l(i)}(\Sigmatwo^{R_0-M,R_0})}\bigg)\notag\\
&+
a^2\int_{\tb_1}^{\tb_2}
\tb^{1+\delta}
\bigg(\sum_{i\in \mathbf{Q}_2}\norm{\Lxi\PsiminusHigh{i}}^2_{W_{-2}^{\reg+1-l(i)}(\Sigmatb^{R_0})}
+
\norm{\PsiminusHigh{1}}^2_{W_{-2}^{\reg+1}(\Sigmatb^{R_0})}
+
\norm{\PsiminusHigh{0}}^2_{W_{-2}^{\reg+1}(\Sigmatb^{R_0})}
\bigg)
\di \tb.
\end{align}
\end{subequations}
To prove these $r^p$ estimates near infinity, we shall apply the estimates in Proposition \ref{prop:wave:rp} to each subequation of system \eqref{eq:Phi-1012} and estimate the terms involving the source terms.
For $p\in (0,2)$,
\begin{subequations}
\label{eq:Phi-1012:less2:error}
\begin{align}
\norm{\vartheta(\Phiminus{0})}^2_{W_{p-3}^{\reg}(\Donetwo^{R_0-M})}
\lesssim{}&
R_0^{-2}\norm{\Phiminus{1}}^2_{W_{p-3}^{\reg}(\Donetwo^{R_0-M})},\\
\norm{\vartheta(\Phiminus{1})}^2_{W_{p-3}^{\reg}(\Donetwo^{R_0-M})}
\lesssim{}&\norm{\Phiminus{0}}^2_{W_{p-3}^{\reg+1}(\Donetwo^{R_0-M})},\\
\label{eq:Phi-12:less2:error}
\norm{\vartheta(\Phiminus{2})}^2_{W_{p-3}^{\reg-1}(\Donetwo^{R_0-M})}
\lesssim{}&\sum_{i=0,1}\norm{\Phiminus{i}}^2_{W_{p-3}^{\reg}(\Donetwo^{R_0-M})}.
\end{align}
\end{subequations}
Define for $i=0,1$ that $I(\Phiminus{i})= -\sum\limits_{\abs{\mathbf{a}}\leq \reg}\int_{\Donetwo^{R_0}}\Re\Big(V\overline{\mathbb{D}_1^{\mathbf{a}}\Phiminus{i}}
\mathbb{D}_1^{\mathbf{a}}\vartheta(\Phiminus{i})\Big) \di^4 \mu$, and define $I(\Phiminus{2})= -\sum\limits_{\abs{\mathbf{a}}\leq \reg-1}\int_{\Donetwo^{R_0}}\Re\Big(
V\overline{\mathbb{D}_2^{\mathbf{a}}\Phiminus{2}}
\mathbb{D}_2^{\mathbf{a}}\vartheta(\Phiminus{2})\Big) \di^4 \mu$.
For $p=2$, we have
\begin{subequations}
\label{eq:Phi-1012:p2:error}
\begin{align}
\label{eq:Phi-10:p2:error}
I(\Phiminus{0})\leq{}&C\veps\norm{rV\Phiminus{0}}^2_{W_{-1}^{\reg}(\Donetwo^{R_0})}
+C\veps^{-1}R_0^{-1}\norm{\Phiminus{1}}^2_{W_{-2}^{\reg}(\Donetwo^{R_0})},\\
\label{eq:Phi-11:p2:error}
I(\Phiminus{1})\leq{}&C\norm{\Phiminus{0}}^2_{W_{-3}^{\reg+1}(\Donetwo^{R_0})}
+2a\sum_{\abs{\mathbf{a}}\leq \reg}\int_{\Donetwo^{R_0}}
\Re\Big(V\overline{\mathbb{D}_1^{\mathbf{a}}\Phiminus{1}}
\mathbb{D}_1^{\mathbf{a}}\partial_{\phi}\Phiminus{0}\Big) \di^4 \mu,\\
\label{eq:Phi-12:p2:error}
I(\Phiminus{2})\leq{}&C\sum_{i=0,1}
\norm{\Phiminus{i}}^2_{W_{-3}^{\reg}(\Donetwo^{R_0})}
+2a\sum_{\abs{\mathbf{a}}\leq \reg-1}\int_{\Donetwo^{R_0}}
\Re\Big(V\overline{\mathbb{D}_2^{\mathbf{a}}\Phiminus{2}}
\mathbb{D}_2^{\mathbf{a}}\partial_{\phi}\Phiminus{1}\Big) \di^4 \mu\notag\\
&+2a^2\sum_{\abs{\mathbf{a}}\leq \reg-1}\int_{\Donetwo^{R_0}}
\Re\Big(V\overline{\mathbb{D}_2^{\mathbf{a}}\Phiminus{2}}
\mathbb{D}_2^{\mathbf{a}}\Phiminus{0}\Big) \di^4 \mu.
\end{align}
\end{subequations}
Denote the last term in \eqref{eq:Phi-11:p2:error} by $I_1(\Phiminus{1})$ and the last two terms in \eqref{eq:Phi-12:p2:error} by $I_1(\Phiminus{2})$, respectively. For the term $I_1(\Phiminus{1})$, we perform integration by parts in $V$, use the estimate in point \ref{prop:basicesti:pt1} of Proposition \ref{prop:basicesti} to commute $V$ with $\CDeri^{\mathbf{a}}$,  rewrite $\partial_{\phi} V\Phiminus{0}=\frac{\Delta}{(\R)^2}\partial_{\phi} \Phiminus{1}$ by Definition \ref{def:Phiminusi}, then perform an integration by parts in $\partial_{\phi}$, and finally use the Cauchy-Schwarz inequality, arriving at
\begin{subequations}
\label{eq:Phi-1012:p2:erroresti}
\begin{align}
\label{eq:Phi-11:p2:erroresti}
I_1(\Phiminus{1})\leq {}&C\veps\Big(\norm{\Phiminus{1}}_{{W}_{0}^{\reg+1}(\Scrionetwo)}^2
+ \norm{\Phiminus{1}}^2_{W_{-2}^{\reg+1}(\Sigmatwo^{R_0})}
+\norm{\Phiminus{1}}_{{W}^{\reg+1}(\Scrionetwo)}^2\Big)\notag\\
&
+C\veps^{-1}\Big(\norm{\Phiminus{0}}_{{W}_{0}^{\reg+1}(\Scrionetwo)}^2
+\norm{\Phiminus{0}}^2_{W_{-2}^{\reg+1}(\Sigmatwo^{R_0})}
+\norm{\Phiminus{0}}_{{W}^{\reg+1}(\Scrionetwo)}^2\Big)\notag\\
&+CR_0^{-1+\delta}\Big(\norm{\Phiminus{0}}^2_{W_{-1-\delta}^{\reg+1}(\Donetwo^{R_0})}
+\norm{\Phiminus{1}}^2_{W_{-1-\delta}^{\reg+1}(\Donetwo^{R_0})}
\Big).
\end{align}
A simple application of the Cauchy-Schwarz inequality yields
\begin{align}
\label{eq:Phi-12:p2:erroresti1}
I_1(\Phiminus{2})\lesssim{}&
R_0^{-1+\delta}\sum_{i=0,1,2}
\norm{\Phiminus{i}}^2_{W_{-1-\delta}^{\reg}(\Donetwo^{R_0})}
+\veps\int_{\tb_1}^{\tb_2}\frac{1}{\tb^{1+\delta}}
\norm{rV\Phiminus{2}}^2_{W_{0}^{\reg-1}(\Sigmatb^{R_0})}\di\tb\notag\\
&
+\frac{a^2}{\veps}\int_{\tb_1}^{\tb_2}
\tb^{1+\delta}\Big(\norm{\Phiminus{0}}^2_{W_{-2}^{\reg}(\Sigmatb^{R_0})}
+\norm{\Phiminus{1}}^2_{W_{-2}^{\reg}(\Sigmatb^{R_0})}\Big)
\di\tb\notag\\
\lesssim{}&
\veps\sup_{\tb\in[\tb_1,\tb_2]}\norm{rV\Phiminus{2}}^2_{W_{0}^{\reg-1}(\Sigmatb^{R_0})}
+R_0^{-1+\delta}\sum_{i=0,1,2}
\norm{\Phiminus{i}}^2_{W_{-1-\delta}^{\reg}(\Donetwo^{R_0})}
\notag\\
&
+\frac{a^2}{\veps}\int_{\tb_1}^{\tb_2}
\tb^{1+\delta}\Big(\norm{\Phiminus{0}}^2_{W_{-2}^{\reg}(\Sigmatb^{R_0})}
+\norm{\Phiminus{1}}^2_{W_{-2}^{\reg}(\Sigmatb^{R_0})}\Big)
\di\tb.
\end{align}
\end{subequations}

Consider first the system of equations \eqref{eq:Phi-10} and \eqref{eq:Phi-11}. For $p\in (0,2)$, one applies Proposition \ref{prop:wave:rp} to each equation and uses the first two estimates of \eqref{eq:Phi-1012:less2:error} for the source term. By adding a sufficiently large amount of the estimate for $\Phiminus{0}$ to the estimate of $\Phiminus{1}$ and then taking $\hat{R}_0$ sufficiently large, one can absorb the error terms arising from the source terms and obtain the estimate \eqref{eq:rpminus:01:less2}. Turn to $p=2$. We apply the estimate \eqref{eq:rp:p=2} of Proposition \ref{prop:wave:rp} to both of the equations \eqref{eq:Phi-10} and \eqref{eq:Phi-11} and fix $\veps$ small enough by requiring the terms with $\veps$ coefficient on the RHS of \eqref{eq:Phi-10:p2:error} and
\eqref{eq:Phi-11:p2:erroresti} to be absorbed. In the next step, one adds an $A_0$ multiple of the estimate \eqref{eq:rp:p=2} for $\varphi=\Phiminus{0}$ to the estimate \eqref{eq:rp:p=2} for $\varphi=\Phiminus{1}$ to obtain a new estimate and requires $A_0\gg \veps^{-1}$ such that the terms with $\veps^{-1}$ coefficient on the RHS of \eqref{eq:Phi-11:p2:erroresti} is absorbed. In the end, one chooses $\hat{R}_0$ sufficiently large such that for any $R_0\geq \hat{R}_0$, the remaining terms on the RHS of \eqref{eq:Phi-10:p2:error} and
\eqref{eq:Phi-11:p2:erroresti} are absorbed by the LHS of this new estimate, which then finishes the proof of \eqref{eq:rpminus:01:p=2}.

Consider next the full system \eqref{eq:Phi-1012}. For $p\in (0,2)$, the estimate \eqref{eq:rp:less2} can be applied to $\varphi=\Phiminus{2}$, and from the estimate \eqref{eq:Phi-12:less2:error} for the source term and the already proven estimate \eqref{eq:rpminus:01:less2} for $\Phiminus{0}$ and $\Phiminus{1}$, the estimate \eqref{eq:rpminus:01:less2} for $j=2$ manifestly holds. Similarly, for $p=2$, one applies the estimate \eqref{eq:rp:p=2:2} to $\varphi=\Phiminus{2}$. Given that the estimate \eqref{eq:rpminus:01:p=2} has been proven and from the estimates \eqref{eq:Phi-12:less2:error}, \eqref{eq:Phi-12:p2:error} and \eqref{eq:Phi-12:p2:erroresti1}, the estimate \eqref{eq:rpminus:012:p=2} follows easily.

After adding the BEAM estimates to \eqref{eq:rpminus:01:less2}, \eqref{eq:rpminus:01:p=2} and \eqref{eq:rpminus:012:p=2}, respectively, and in view of the fact that for any $\tb$,
\begin{align}
\hspace{4ex}&\hspace{-4ex}
\sum_{i=0,1,2}\norm{rV\PsiminusHigh{i}}^2_{W_{0}^{\reg-l(i)}(\Sigmatb)}
+\sum_{i=0,1}\norm{\PsiminusHigh{i}}^2_{W_{-2}^{\reg+1}(\Sigmatb)}
+\norm{\mathbb{D}_2\PsiminusHigh{2}}^2_{W_{-2}^{\reg-1}(\Sigmatb)}\notag\\
\sim{}&\sum_{i=0,1,2}\Big(\norm{rV\PsiminusHigh{i}}^2_{W_{0}^{\reg-l(i)}(\Sigmatb)}
+\norm{\PsiminusHigh{i}}^2_{W_{-2}^{\reg+1-l(i)}(\Sigmatb)}\Big),
\end{align}
the estimates \eqref{eq:rpminusglobal:01:less2},  \eqref{eq:rpminusglobal:01:p=2} and \eqref{eq:rpminusglobal:012:p=2} then follow.
\end{proof}

\subsubsection{Basic energy $2$-decay condition for spin $-1$ component}

\begin{definition}
\label{def:Fterm:Psiminus:1and2level}
Define
\begin{subequations}
\begin{align}
F^{(2)}(\reg,p,\tb,\Psiminus)={}&0, \quad \text{for } p\in [-1,0),\\
F^{(2)}(\reg,p,\tb,\Psiminus)={}&\sum_{i=0,1,2}
\Big(\norm{rV\PsiminusHigh{i}}^2_{W_{p-2}^{\reg-2-l(i)}(\Sigmatb)}
+\norm{\PsiminusHigh{i}}^2_{W_{-2}^{\reg-1-l(i)}(\Sigmatb)}\Big), \quad \text{for } p\in [0,2],
\end{align}
\end{subequations}
and define
\begin{subequations}
\begin{align}
F^{(1)}(\reg,p,\tb,\Psiminus)={}&0, \quad \text{for } p\in [-1,0),\\
F^{(1)}(\reg,p,\tb,\Psiminus)={}&\sum_{i=0,1}
\Big(\norm{rV\PsiminusHigh{i}}^2_{W_{p-2}^{\reg-2}(\Sigmatb)}
+\norm{\PsiminusHigh{i}}^2_{W_{-2}^{\reg-1}(\Sigmatb)}\Big), \quad \text{for } p\in [0,2].
\end{align}
\end{subequations}
\end{definition}

\begin{prop}
\label{prop:BEDC:Psiminus}
Let $j\in \mathbb{N}$ and let $\reg\in \mathbb{N}$. Define $l(i)=\max(0,i-1)$.
Assume the BEAM condition to order $\reg+2$ for spin $-1$ component is satisfied for spin $-1$ component, then there exists a constant $\regl(j)$ such that for any $p\in [0,2]$ and any $\tb\geq\tb_0$,
\begin{align}
\label{eq:BEDC:Psiminus}
\hspace{4ex}&\hspace{-4ex}
\sum_{i=0,1}
\Big(\norm{rV\Lxi^j\PsiminusHigh{i}}^2_{W_{p-2}^{\reg-\regl(j)+1}(\Sigmatb)}
+\norm{\Lxi^j\PsiminusHigh{i}}^2_{W_{-2}^{\reg-\regl(j)+2}(\Sigmatb)}\Big)
\notag\\
\hspace{4ex}&\hspace{-4ex}
+\sum_{i=0,1}
\Big(\norm{rV\Lxi^j\PsiminusHigh{i}}^2_{W_{p-3}^{\reg-\regl(j)}(\Dinfty)}
+\norm{\Lxi^j\PsiminusHigh{i}}^2_{W_{-2}^{\reg-\regl(j)+1}(\Dinfty)}\Big)
\notag\\
\lesssim {}&\tb^{-4-2j+p}
\sum_{i=0,1,2}
\Big(\norm{rV\PsiminusHigh{i}}^2_{W_{0}^{\reg-l(i)}(\Sigmazero)}
+\norm{\PsiminusHigh{i}}^2_{W_{-2}^{\reg+1-l(i)}(\Sigmazero)}\Big).
\end{align}
\end{prop}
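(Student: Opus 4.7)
The proof strategy parallels that of Propositions \ref{prop:BEDC:Phiplus:1}--\ref{prop:BEDC:Phiplus} for the spin $+1$ component, with one crucial new ingredient: the spin $-1$ component admits the three-level coupled hierarchy \eqref{eq:Phi-1012} for $\Phiminus{0},\Phiminus{1},\Phiminus{2}$. The top scalar $\Phiminus{2}$ satisfies a wave equation whose principal structural constants coincide with those governing $\Phiplus$: one has $b_{V,-1}=2>0$ together with the critical value $b_{0,0}+s+\abs{s}=0$. Hence the $r^p$-analysis at the top of the hierarchy produces essentially the same $\tb^{-2+p}$ decay that one obtains for $\Psiplus$. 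Since $\Phiminus{2}=\curlVR^2\Phiminus{0}$ up to lower-order weights, each of the two additional hierarchy levels converts one power of $r$-weight into one power of $\tb$-decay for the lower-order scalars, ultimately producing the $\tb^{-4}$-rate for $F^{(1)}$ asserted in \eqref{eq:BEDC:Psiminus}.

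Concretely, the plan consists of four steps. First, starting from \eqref{eq:rpminusglobal:01:less2} interpreted as a decay hierarchy in $p$ for $F^{(2)}(\reg,p,\tb,\Psiminus)$, one applies the pigeonhole argument of \cite[Lemma 5.2]{andersson2019stability} iteratively, as in the proof of Proposition \ref{prop:BEDC:Phiplus:1}, to extract an intermediate decay $F^{(2)}(\reg-\regl,p,\tb,\Psiminus)\lesssim \tb^{-2+p}$ times initial data, for all $p\in[0,2]$ and some bounded regularity loss $\regl$. Second, one closes the $p=2$ flux estimate \eqref{eq:rpminusglobal:012:p=2} for the full system by substituting this intermediate decay into the $a^2\int_{\tb_1}^{\tb_2}\tb^{1+\delta}\Norm{\cdot}^2\,\di\tb$ error terms on its right-hand side; taking $\delta$ sufficiently small makes $\int\tb^{1+\delta-2}\,\di\tb$ integrable, so those error terms are absorbed into the initial data. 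Third, and this is the decisive step, one transfers the improved decay from the top of the hierarchy to the bottom: the bound on $\PsiminusHigh{2}$ is, up to lower-order contributions already controlled in Step 1, equivalent to an improved bound on $r^2V\PsiminusHigh{1}$, and re-running the pigeonhole hierarchy at the $\Phiminus{1}$- and $\Phiminus{0}$-levels upgrades the decay of $F^{(1)}$ from the intermediate $\tb^{-2+p}$ rate to the sharp $\tb^{-4+p}$ rate. Fourth, since $\Lxi$ commutes with \eqref{eq:Phi-1012}, one performs an induction on $j\geq 0$ exactly as in Proposition \ref{prop:BEDC:Phiplus:1}, at each step using the wave equations to trade an $r^2V\Lxi$ derivative for lower-order derivatives with $O(1)$ coefficients, thereby gaining the $\tb^{-2j}$ factor.

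The main obstacle is Step 3, the transfer of decay down the coupled system. The wave equations \eqref{eq:Phi-10}--\eqref{eq:Phi-12} couple the three levels: $\Phiminus{0}$ and $\Phiminus{1}$ (together with their $\partial_\phi$-derivatives) appear as sources for $\Phiminus{2}$, and these are precisely the contributions producing the $a^2\tb^{1+\delta}$-weighted error terms on the right-hand side of \eqref{eq:rpminusglobal:012:p=2}. Decay at the lower levels must be derived from decay at the top level, while simultaneously the error contributions from the lower levels in the top-level $r^p$-estimate must be controlled using prior bounds on those lower levels. To avoid circularity, the bootstrap is organized in the strict order Step 1, then Step 2, then Step 3: the weaker intermediate $\tb^{-2+p}$ decay for $F^{(2)}$ is established first (and it suffices to control the $a^2$-errors once $\delta>0$ is small), after which the sharper $\tb^{-4+p}$ decay for $F^{(1)}$ can be extracted. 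A secondary technical point is the regularity accounting: each application of the pigeonhole argument costs a fixed amount of regularity and each $\Lxi$-commutation costs a further $j$-dependent amount, so the final $\regl(j)$ is linear in $j$ and must be tracked carefully through the iteration.
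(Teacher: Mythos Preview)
Your strategic outline captures the correct ingredients---the two-level hierarchy, the pigeonhole mechanism, the $\Lxi$-commutation trick, and the transfer via $F^{(1)}(\cdot,2)\sim F^{(2)}(\cdot,0)$---but Step~2 contains a genuine gap. The claim that ``$\int\tb^{1+\delta-2}\,\di\tb$ is integrable for small $\delta$'' is false: the integrand is $\tb^{-1+\delta}$, whose integral over $[\tb_1,\infty)$ diverges for every $\delta>0$. More to the point, the right-hand side of \eqref{eq:rpminusglobal:012:p=2} contains, in addition to $\Lxi$-weighted norms (which do decay fast enough via the commutation trick), the \emph{undifferentiated} norms $\norm{\PsiminusHigh{0}}^2_{W_{-2}^{\reg}(\Sigmatb)}$ and $\norm{\PsiminusHigh{1}}^2_{W_{-2}^{\reg}(\Sigmatb)}$. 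From the $F^{(2)}$-hierarchy with $p<2$ alone---which is all Step~1 can produce before $p=2$ has been closed---these decay no faster than $\tb^{-2}$, and $\int_{\tb_1}^{\tb_2}\tb^{1+\delta}\cdot\tb^{-2}\,\di\tb$ diverges as $\tb_2\to\infty$. Thus the $p=2$ estimate for $F^{(2)}$ cannot be closed at the point you propose.

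The paper resolves this by reversing your order. One first treats the $F^{(1)}$-hierarchy, whose $p=2$ estimate \eqref{eq:rpminusglobal:01:p=2} has an error consisting \emph{only} of $\Lxi$-terms and hence closes via the commutation trick alone, yielding $F^{(1)}(\cdot,p,\tb,\Lxi^j\Psiminus)\lesssim\tb^{-2+p-2j}F^{(1)}(\cdot,2,\tb/2,\Psiminus)$. One then runs the $F^{(2)}$-hierarchy for $p<2$, uses a mean-value argument together with the equivalence \eqref{eq:phiminus:equivalenceenergy:01:012} to obtain $F^{(1)}(\cdot,2,\tb')\lesssim\tb^{-5/3}F^{(2)}(\cdot,5/3,\tb_0)$, and combines to get $F^{(1)}(\cdot,0,\tb)\lesssim\tb^{-11/3}$. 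Only with this improved rate for the lower-level scalars does the undifferentiated error in \eqref{eq:rpminusglobal:012:p=2} become integrable in time, after which the $p=2$ estimate for $F^{(2)}$ closes and the final $\tb^{-4+p}$ rate for $F^{(1)}$ follows. The asymmetry between \eqref{eq:rpminusglobal:01:p=2} and \eqref{eq:rpminusglobal:012:p=2}---the former having only $\Lxi$-errors, the latter not---forces the order $F^{(1)}$ before $F^{(2)}$.
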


\begin{proof}
The $r^p$ estimates \eqref{eq:rpminusglobal:01:less2} for the set $\mathbf{Q}_1$ then implies for any $p\in [0,2)$,
\begin{align}
\label{eq:BEDC:Phiminus:1:v1}
F^{(1)}(\reg,p,\tb_2,\Psiminus)
+\int_{\tb_1}^{\tb_2}F^{(1)}(\reg-1,p-1,\tb,\Psiminus)\di \tb
\lesssim F^{(1)}(\reg,p,\tb_1,\Psiminus).
\end{align}
Note that $p=0$ case follows from the BEAM estimates.
An application of \cite[Lemma 5.2]{andersson2019stability} then gives for any $p\in [0,5/3]$,
\begin{align}
F^{(1)}(\reg-2,p,\tb,\Psiminus)\lesssim {}&\tb^{-5/3+p}F^{(1)}(\reg,5/3,\tb/2,\Psiminus).
\end{align}
Similarly to the spin $+1$ component, we can obtain better energy decay for $\Lxi$ derivative. One can utilize equation \eqref{eq:Phi-10} and the definition of the spin-weighted wave operator $\squareShat_{s}$ in \eqref{eq:squareShat} and use the replacement $Y=\frac{\R}{\Delta}\big(2\Lxi+\frac{2a}{\R}\Leta-V\big)$ away from horizon to rewrite $r^2V\Lxi \PsiminusHigh{0}$ as a weighted sum  with $O(1)$ coefficients of $r^{-1}\Leta \PsiminusHigh{0}$ and terms of the form $X_2X_1\PsiminusHigh{0}$ with $X_1,X_2\in \CDeri\cup\{1\}$. There is a similar expansion for each of $r^2 V\PsiminusHigh{i}$, but involving also the $\PsiminusHigh{i'}$ with $i'<i$. These together give
\begin{align}
\label{eq:F1LxiPsiminus}
F^{(1)}(\reg,5/3,\tb,\Lxi\Psiminus)
\lesssim{}F^{(1)}(\reg,2,\tb,\Lxi\Psiminus)
\lesssim F^{(1)}(\reg+1,0,\tb,\Psiminus).
\end{align}
Therefore, for all $p\in [0,5/3]$,
\begin{align}
\hspace{4ex}&\hspace{-4ex}
F^{(1)}(\reg-2-5,p,\tb,\Lxi\Psiminus)\notag\\
\lesssim {}&
\tb^{-5/3+p}F^{(1)}(\reg-5,5/3,\tb/2,\Lxi\Psiminus)\notag\\
\lesssim{}&\tb^{-5/3+p}F^{(1)}(\reg-4,0,\tb/2,\Psiminus)\notag\\
\lesssim{}&\tb^{-10/3+p}F^{(1)}(\reg-2,5/3,\tb/4,\Psiminus).
\end{align}
We can then use this to estimate the last term in \eqref{eq:rpminusglobal:01:p=2} for $p=2$ by $\tb_1^{-4/3+\delta}F^{(1)}(\reg+\reg',5/3,\tb_1/4,\Psiminus)$ for some $\reg'>0$.
One thus concludes that for any $p\in [0,2]$,
\begin{align}
\label{eq:BEDC:Phiminus:1}
F^{(1)}(\reg,p,\tb_2,\Psiminus)
+\int_{\tb_1}^{\tb_2}F^{(1)}(\reg-1,p-1,\tb,\Psiminus)\di \tb
\lesssim F^{(1)}(\reg+\reg',p,\tb_1,\Psiminus),
\end{align}
and
\begin{align}
\label{eq:edcay:Phiminus:1:step1}
F^{(1)}(\reg,p,\tb,\Lxi^j\Psiminus)
\lesssim{}&\tb^{-2+p-2j}F^{(1)}(\reg+\reg'(j),2,\tb/2,\Psiminus),
\end{align}
where the proof for general $j$ cases is the same as the one of spin $+1$ component.

The $r^p$ estimate \eqref{eq:rpminusglobal:01:less2} for the set $\mathbf{Q}_2$ implies for $p\in (0,2)$,
\begin{align}
\label{eq:BEDC:Phiminus:2}
F^{(2)}(\reg,p,\tb_2,\Psiminus)
+\int_{\tb_1}^{\tb_2}F^{(2)}(\reg-1,p-1,\tb,\Psiminus)\di \tb
\lesssim F^{(2)}(\reg,p,\tb_1,\Psiminus).
\end{align}
An application of \cite[Lemma 5.2]{andersson2019stability} then gives for any $p\in [1,5/3]$,
\begin{align}
F^{(2)}(\reg-1,p,\tb,\Psiminus)\lesssim {}&\tb^{-5/3+p}F^{(2)}(\reg,5/3,\tb_0,\Psiminus).
\end{align}
We go back to the estimate \eqref{eq:rpminusglobal:01:less2} with $p=1$, $j=2$, $\tb_1=\tb$ and $\tb_2=2\tb$, then an application of the mean-value principle suggests that there exists a $\tb'\in [\tb,2\tb]$ such that
\begin{align}
\sum_{i=0,1,2}
\norm{\PsiminusHigh{i}}^2_{W_{-2}^{\reg-2-l(i)}(\Hyper_{\tb'})}
\lesssim{}& \tau^{-1}F^{(2)}(\reg,1,\tb,\Psiminus)\lesssim \tb^{-5/3}F^{(2)}(\reg+1,5/3,\tb_0,\Psiminus).
\end{align}
From the definitions of $\PsiminusHigh{i}$ in \eqref{def:Psiminusi}, one has for any $\tb\geq \tb_0$,
\begin{align}
\label{eq:phiminus:equivalenceenergy:01:012}
\sum_{i=0,1}
\Big(\norm{rV\PsiminusHigh{i}}^2_{W_{0}^{\reg-3}(\Hyper_{\tb})}
+\norm{\PsiminusHigh{i}}^2_{W_{-2}^{\reg-2}(\Hyper_{\tb})}\Big)
\sim{}&
\sum_{i=0,1,2}
\norm{\PsiminusHigh{i}}^2_{W_{-2}^{\reg-2-l(i)}(\Hyper_{\tb})}.
\end{align}
The estimate \eqref{eq:edcay:Phiminus:1:step1} hence implies that for any $p\in [0,2]$,
\begin{align}
F^{(1)}(\reg,p,\tb,\Lxi^j\Psiminus)\lesssim{}&
\tb^{-11/3+p-2j}F^{(2)}(\reg+\regl(j),5/3,\tb/2,\Psiminus)\notag\\
\lesssim{}&\tb^{-11/3+p-2j}F^{(2)}(\reg+\regl(j),5/3,\tb_0,\Psiminus).
\end{align}

Similarly, we can obtain better energy decay for $\Lxi$ derivative. One achieves
\begin{align}
\label{eq:F2LxiPsiminus}
F^{(2)}(\reg-3j,5/3,\tb,\Lxi\Psiminus)
\lesssim{}&F^{(2)}(\reg+1-3j,0,\tb,\Psiminus).
\end{align}
Therefore, for all $p\in [1,5/3]$,
\begin{align}
\hspace{4ex}&\hspace{-4ex}
F^{(2)}(\reg-2-\omega(p)-3,p,\tb,\Lxi\Psiminus)\notag\\
\lesssim {}&
\tb^{-5/3+p}F^{(2)}(\reg-5,5/3,\tb/2,\Lxi\Psiminus)\notag\\
\lesssim{}&\tb^{-5/3+p}F^{(2)}(\reg-4,0,\tb/2,\Psiminus)\notag\\
\lesssim{}&\tb^{-10/3+p}F^{(2)}(\reg-2,5/3,\tb/4,\Psiminus)
\lesssim\tb^{-10/3+p}F^{(2)}(\reg-2,5/3,\tb_0,\Psiminus).
\end{align}
This implies there exists a constant $\regl>0$ such that
\begin{align}
\hspace{4ex}&\hspace{-4ex}
\int_{\tb_1}^{\tb_2}
\tb^{1+\delta}
\Big(\sum_{i=0,1,2}\norm{\Lxi\PsiminusHigh{i}}^2_{W_{-2}^{\reg}(\Sigmatb)}
+
\norm{\PsiminusHigh{1}}^2_{W_{-2}^{\reg}(\Sigmatb)}
+
\norm{\PsiminusHigh{0}}^2_{W_{-2}^{\reg}(\Sigmatb)}
\Big)
\di \tb\notag\\
 \lesssim{}&\tb_1^{-4/3+\delta}F^{(2)}(\reg+\regl,5/3,\tb_1/2,\Psiminus).
\end{align}
Thus, together with the estimate \eqref{eq:BEDC:Phiminus:2} for $p\in (0,2)$, we have for $p\in (0,2]$ that
\begin{align}
\label{eq:BEDC:Phiminus:2:p=2}
F^{(2)}(\reg,p,\tb_2,\Psiminus)
+\int_{\tb_1}^{\tb_2}F^{(2)}(\reg-1,p-1,\tb,\Psiminus)\di \tb
\lesssim F^{(2)}(\reg+\regl,p,\tb_1,\Psiminus).
\end{align}
This estimate also holds for $p=0$ from the estimate \eqref{eq:BEDC:Phiminus:1} with $p=2$ and the relation that
\begin{align}
F^{(1)}(\reg,2,\tb,\Psiminus) \sim F^{(2)}(\reg,0,\tb,\Psiminus).
\end{align}
Following the same argument above, one has
\begin{align}
\label{eq:BEDC:Phiminus:2:v1}
F^{(1)}(\reg,2,\tb,\Psiminus)\lesssim{}
F^{(2)}(\reg,0,\tb,\Psiminus)
\lesssim{}&\tb^{-2}F^{(2)}(\reg+\regl,2,\tb_2,\Psiminus).
\end{align}
The $r^p$ estimates in Proposition \ref{prop:rpglobal:Phiminus} for the set $\mathbf{Q}_1$ then implies
\begin{align}
\label{eq:BEDC:Phiminus:1}
F^{(1)}(\reg,p,\tb_2,\Psiminus)
+\int_{\tb_1}^{\tb_2}F^{(1)}(\reg-1,p-1,\tb,\Psiminus)\di \tb
\lesssim F^{(1)}(\reg,p,\tb_1,\Psiminus).
\end{align}
Combining this estimate with \eqref{eq:edcay:Phiminus:1:step1} then proves the desired estimate \eqref{eq:BEDC:Psiminus}.
\end{proof}

\section{Basic energy $\gamma$-decay condition implies pointwise decay}


In this section, we prove Theorem \ref{thm:2} which is to obtain pointwise asymptotics for spin $\pm 1$ components and the middle component in a subextremal Kerr spacetime $(\mathcal{M},g_{M,a})$ under the assumption that the basic energy $\gamma$-decay condition with $\gamma\geq 1$, a suitably large $\reg$ and $D_{\pm 1}=D_{\pm 1}(M,a,\reg,j)$ holds for spin $\pm 1$ components. This basic energy $\gamma$-decay condition is  assumed throughout this section.


\subsection{Decay for spin $\pm 1$  components in a subextremal Kerr spacetime}
\label{sect:vrdecay:psiplus}

We are ready to prove weak pointwise decay for spin $\pm 1$ components.

\begin{prop}
\label{prop:weakdecay:spin+1-1:v3}
There exists a constant $\reg'(j)$ such that the following estimates hold for spin $+1$ and $-1$ components assuming the basic energy $\gamma$-decay condition for these two components respectively:
\begin{subequations}
\label{eq:weakdecay:spin+1-1:v3}
\begin{align}
\label{eq:weakdecay:spin+1:v3}
\absCDeri{\Lxi^j\psiplus}{\reg-\reg'}
\lesssim{}&(D_{+1})^{\half}v^{-1}\tb^{-(\gamma-1)/2-j},\\
\label{eq:weakdecay:spin-1:v3}
\absCDeri{\Lxi^j\psiminus}{\reg-\reg'}
\lesssim{}&(D_{-1})^{\half}v^{-1}\tb^{-(\gamma+1)/2-j}\max\{r^{-1},\tb^{-1}\}.
\end{align}
\end{subequations}
In particular, the peeling properties are shown, and we obtain $\tb^{-(\gamma-1)/2}$ and $\tb^{-(\gamma+3)/2}$ pointwise decay for the radiation fields $r \psiplus$ and $r\psiminus$, respectively.
\end{prop}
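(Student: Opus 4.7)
The plan is to convert the integrated decay furnished by the basic energy $\gamma$-decay condition into pointwise decay via the Sobolev embedding of Lemma \ref{lem:Sobolev}, specifically \eqref{eq:Sobolev:2}, and then to upgrade the resulting bound on the radiation field $\Psiplus=\sqrt{\R}\psiplus$ to the desired $v^{-1}$-weighted bound on $\psiplus$ by treating the wave zone $\{r\gtrsim\tb\}$ and the interior region $\{r\lesssim\tb\}$ separately.

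First I would apply \eqref{eq:Sobolev:2} to $\varphi=\Lxi^j\Psiplus$ on $\Sigmatb$. Choosing $\alpha\in(0,1]$ appropriately and bounding the two factors using the basic energy $\gamma$-decay condition at $p=0$ (for $\norm{\Lxi^j\Psiplus}_{W_{-2}^{\reg}(\Sigmatb)}$) and at $p$ close to $1$ (for $\norm{rV\Lxi^j\Psiplus}_{W_{p-2}^{\reg-1}(\Sigmatb)}$), I expect
\begin{align*}
\sup_{\Sigmatb}\absCDeri{\Lxi^j\Psiplus}{\reg-\reg'}^{2} \lesssim D_{+1}\tb^{-(\gamma-1)-2j},
\end{align*}
so that $\absCDeri{\Lxi^j\Psiplus}{\reg-\reg'}\lesssim D_{+1}^{1/2}\tb^{-(\gamma-1)/2-j}$. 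Dividing by $\sqrt{\R}\sim r$ yields $\absCDeri{\Lxi^j\psiplus}{\reg-\reg'}\lesssim D_{+1}^{1/2}r^{-1}\tb^{-(\gamma-1)/2-j}$, which is already the claimed bound in the wave zone since there $v\sim r$. The higher-order $\CDeri$-derivatives come from commuting $\CDeri=\{Y,rV,\edthR,\edthR'\}$ through the argument at a cost of $\reg'(j)$ derivatives.

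For the interior region $r\lesssim\tb$ where $v\sim\tb$, I need the stronger bound $\absCDeri{\Lxi^j\psiplus}{\reg-\reg'}\lesssim\tb^{-(\gamma+1)/2-j}$. The natural route is to apply the first step to $\Lxi^{j+1}\Psiplus$, giving $|\Lxi^{j+1}\Psiplus|\lesssim\tb^{-(\gamma+1)/2-j}$, and then, in the bounded-$r$ subregion where $\Psiplus\sim\psiplus$, to integrate $\Lxi\psiplus$ in $\tb$ from $\tb$ to $+\infty$, using the weak uniform bound from the first step (which vanishes as $\tb\to\infty$) to fix the integration constant. For the spin $-1$ component the same scheme applies to $\PsiminusHigh{0}=\sqrt{\R}\psiminus$ and $\PsiminusHigh{1}=\sqrt{\R}\curlV\psiminus$, exploiting the additional $\tb^{-2}$ factor in $\BEminus{\tb}{\reg,j,p}$. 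The Sobolev step on $\PsiminusHigh{0}$ produces $|\Lxi^j\Psiminus|\lesssim D_{-1}^{1/2}\tb^{-(\gamma+3)/2-j}$, and division by $\sqrt{\R}$ supplies one factor of $r^{-1}$. The extra $\max\{r^{-1},\tb^{-1}\}$ factor reflects the improved peeling of $\psiminus$: in the wave zone one uses the control on $\curlV\psiminus$ furnished by $\PsiminusHigh{1}$ together with integration in $V$ from $\Scri$, where $\psiminus$ vanishes, while in the interior a further $\Lxi$-commutation plus time integration provides the $\tb^{-1}$ improvement.

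The main obstacle will be reconciling the requirement $\alpha\in(0,1]$ in \eqref{eq:Sobolev:2} with the range $p\in[0,1]$ permitted by the basic energy $\gamma$-decay condition, since the naive optimal choice $\alpha=1$ would demand weights corresponding to $p=2$. I expect this to be handled either by a sharper sphere-level Sobolev combined with a Hardy-type integration in $r$ from $\Scri$, or by a careful interpolation between the $p=0$ and $p=1$ endpoints that still produces the sharp power of $\tb$. Justifying the boundary behaviour at $\Scri$ and at $\tb=\infty$ that underlies the various $V$- and $\Lxi$-integrations, and propagating the $v^{-1}$ improvement uniformly across the wave zone and the interior region while tracking the $\reg'(j)$ derivative loss, will be the most delicate points of the argument.
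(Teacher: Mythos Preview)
Your time-integration step for the interior region has a genuine gap: integrating a pointwise bound in $\tb$ can only \emph{weaken} the decay rate by one power, never strengthen it. Concretely, from $|\Lxi^{j+1}\psiplus|\lesssim\tb^{-(\gamma+1)/2-j}$ you obtain
\[
|\Lxi^{j}\psiplus(\tb)|\leq\int_{\tb}^{\infty}|\Lxi^{j+1}\psiplus(\tb')|\,d\tb'\lesssim\tb^{-(\gamma-1)/2-j},
\]
which is exactly the rate you already had from the first step and not the required $\tb^{-(\gamma+1)/2-j}$. Iterating with more $\Lxi$-derivatives does not help, since each integration loses one power while each commutation gains one. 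The paper circumvents this by invoking the spacetime Sobolev estimate \eqref{eq:Sobolev:3}, which bounds $|r^{-1}\varphi|^{2}$ by the product $\norm{\varphi}_{W_{-1}^{3}(\DOC_{\tb,\infty})}\norm{\Lxi\varphi}_{W_{-1}^{3}(\DOC_{\tb,\infty})}$. The point is that the spacetime $W_{-1}$-norm integrates the \emph{energy} decay over $[\tb,\infty)$, which does gain an extra power of $\tb$, and the product structure with the $\Lxi$-derivative then delivers $|\psiplus|\lesssim\tb^{-(\gamma+1)/2-j}$ directly. The paper in fact combines three Sobolev inequalities \eqref{eq:Sobolev:1}, \eqref{eq:Sobolev:2} (with $\alpha=1/2$), and \eqref{eq:Sobolev:3} to produce bounds at weights $r^{0}$, $r^{-1/2}$, $r^{-1}$ with increasing $\tb$-decay, which together give the $v^{-1}$ estimate; this also resolves your stated obstacle about the $\alpha$ versus $p$ range.

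For the extra $\max\{r^{-1},\tb^{-1}\}$ factor in the spin $-1$ estimate, the paper again takes a different route from your $V$-integration. After obtaining $\sum_{i=0,1}|\Lxi^{j}\psiminusHigh{i}|\lesssim(D_{-1})^{1/2}v^{-1}\tb^{-(\gamma+1)/2-j}$ by the same three-Sobolev argument applied to $\PsiminusHigh{0}$ and $\PsiminusHigh{1}$, the wave equation \eqref{eq:Phi-10} is rewritten as $(2\edthR'\edthR+\tfrac{2ar}{\R}\Leta)\Psiminus=Y\Phiminus{1}-(\text{terms with }\Lxi)$. Away from the horizon one expands $Y$ in terms of $r^{-1}(rV)$, $\Lxi$, $r^{-2}\Leta$; every term on the right-hand side then carries either an extra $r^{-1}$ or an extra $\Lxi$, so it decays like $v^{-1}\tb^{-(\gamma+1)/2-j}\max\{r^{-1},\tb^{-1}\}$. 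Since $2\edthR'\edthR$ is elliptic on spheres with trivial kernel for spin $-1$, this transfers to $\Psiminus$. Your $V$-integration from $\Scri$ is not obviously wrong, but it is not the mechanism here, and making it rigorous would require separate control of the behaviour at null infinity.
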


\begin{proof}

By the basic energy $\gamma$-decay condition,  the estimate \eqref{eq:Sobolev:1} gives
\begin{subequations}
\begin{align}
\absCDeri{\Lxi^j\Psiplus}{\reg-\regl}
\lesssim{}(D_{+1})^{\half}\tb^{-(\gamma-1)/2-j},
\end{align}
the estimate \eqref{eq:Sobolev:2} with $\alpha=1/2$ gives
\begin{align}
\absCDeri{\Lxi^j(r^{-1/2}\Psiplus)}{\reg-\reg'}
\lesssim{}(D_{+1})^{\half}\tb^{-\gamma/2-j},
\end{align}
and the estimate \eqref{eq:Sobolev:3}  implies
\begin{align}
\absCDeri{\Lxi^j\psiplus}{\reg-\reg'}
\lesssim{}(D_{+1})^{\half}\tb^{-(\gamma+1)/2-j}.
\end{align}
\end{subequations}
Combining the above three estimates together proves the estimate \eqref{eq:weakdecay:spin+1:v3}.

The same argument as treating spin $+1$ component applies to spin $-1$ component and gives  and any $i\in \{0,1\}$,
\begin{subequations}
\begin{align}
\absCDeri{\Lxi^j\PsiminusHigh{i}}{\reg-\reg'}
\lesssim{}&(D_{-1})^{\half}\tb^{-(\gamma+1)/2-j},\\
\absCDeri{\Lxi^j(r^{-1/2}\PsiminusHigh{i})}{\reg-\reg'}
\lesssim{}&(D_{-1})^{\half}\tb^{-\gamma/2-1-j},\\
\absCDeri{\Lxi^j(r^{-1}\PsiminusHigh{i})}
{\reg-\reg'}
\lesssim{}&(D_{-1})^{\half}\tb^{-(\gamma+3)/2-j}.
\end{align}
\end{subequations}
One obtains from the above three estimates together that
\begin{align}
\label{eq:psiminusi:weakweak}
\sum_{i=0,1}\absCDeri{\Lxi^j\psiminusHigh{i}}{\reg-\reg'}
\lesssim{}(D_{-1})^{\half}v^{-1}\tb^{-(\gamma+1)/2-j}.
\end{align}
Furthermore, the wave equation \eqref{eq:Phi-10} of $\Phiminus{0}$ can be rewritten as
\begin{align}
(2\edthR'\edthR+\tfrac{2ar}{\R}\Leta)\Psiminus
={}&Y\Phiminus{1}
 -(
 2a\Leta
+a^2\sin^2\theta\Lxi
+2ia\cos\theta) \Lxi\Psiminus.
\end{align}
In the region close to horizon, the estimate \eqref{eq:weakdecay:spin-1:v3} follows from \eqref{eq:psiminusi:weakweak}.
Consider the region away from horizon. On the RHS, one can expand $Y$ in terms of $r^{-1} rV$, $\Lxi$ and $r^{-2}\Leta$ with $O(1)$ coefficients, and the remaining terms all have a $\Lxi$ derivative, yielding the RHS decays like $v^{-1}\tb^{-(\gamma+1)}\max\{\tb^{-1},r^{-1}\}$. The LHS is an elliptic operator on $\mathbb{S}^2$, hence these together prove \eqref{eq:weakdecay:spin-1:v3}.
\end{proof}

\subsection{Improved decay for spin $\pm 1$ components in slowly rotating Kerr spacetimes}


The pointwise behaviours in Proposition \ref{prop:weakdecay:spin+1-1:v3}, however, do not  enjoy fast time decay in the interior region $\{\rb\leq \tb\}$, in particular not in finite radius region. We can improve these estimates in the interior region and obtain better decay results for these components in slowly rotating Kerr spacetimes.

\subsubsection{Improved decay of spin $-1$ component}


We start with proving an elliptic-type estimate.
\begin{prop}
There exists a constant $\veps_0>0$ such that for all $|a|/M\leq \veps_0$,
\begin{align}\label{eq:degellip:phiminus:lowerreg}
\hspace{4ex}&\hspace{-4ex}\int_{\Sigmatb}\Big(
\mu^2 r^2\abs{\partial_{\rb}^2 \phiminus{1}}^2
+r^{-2}\abs{\edthR'\edthR \phiminus{1}}^2
+\mu \abs{\partial_{\rb}\edthR\phiminus{1}}^2
+\abs{\partial_{\rb}\phiminus{1}}^2\notag\\
\hspace{4ex}&\hspace{-4ex}\qquad
+\mu \big(r^4\abs{\partial_{\rb}^2 \phiminus{0}}^2
+\abs{\edthR'\edthR \psiminusHigh{0}}^2\big)
+\mu^2r^2\abs{\partial_{\rb}\edthR\phiminus{0}}^2
\Big)\di^3\mu\notag\\
\lesssim{}&\int_{\Sigmatb}
\bigg(\sum_{i=0,1}\Big(
\abs{\Lxi\prb\psiminusHigh{i}}^2
+\abs{\Lxi\psiminusHigh{i}}^2
+\abs{\Lxi^2\psiminusHigh{i}}^2
+\abs{\Lxi\Leta\psiminusHigh{i}}^2
\Big)\notag\\
&\quad \quad
+r^2\abs{\Lxi\prb\phiminus{1}}^2
+
\sum_{i=0,1}a^2 r^{-2}\abs{\Leta\prb\psiminusHigh{i}}^2
\bigg)\di^3\mu\notag\\
\lesssim{}& \sum\limits_{i=0,1}
\Big(\norm{\Lxi\psiminusHigh{i}}^2_{W_{0}^{1}(\Sigmatb)}
+a^2\norm{\Leta\prb\psiminusHigh{i}}^2_{W_{-2}^{0}(\Sigmatb)}\Big),
\end{align}
and for any $\reg\geq 1$, any $r'>r_+$ and any $\abs{a}/M\leq \veps_0$,
\begin{align}\label{eq:degellip:phiminus}
\hspace{4ex}&\hspace{-4ex}\int_{\Sigmatb^{r'}}\Big(
\mu^2 r^2\absCDeri{\partial_{\rb}^2 \phiminus{1}}{\reg-1}^2
+r^{-2}\absCDeri{\edthR'\edthR \phiminus{1}}{\reg-1}^2
+\mu \absCDeri{\partial_{\rb}\edthR\phiminus{1}}{\reg-1}^2
+\absCDeri{\partial_{\rb}\phiminus{1}}{\reg-1}^2\notag\\
\hspace{4ex}&\hspace{-4ex}\qquad
+\mu \big(r^4\absCDeri{\partial_{\rb}^2 \phiminus{0}}{\reg-1}^2
+\absCDeri{\edthR'\edthR \psiminusHigh{0}}{\reg-1}^2\big)
+\mu^2r^2\absCDeri{\partial_{\rb}\edthR\phiminus{0}}{\reg-1}^2
\Big)\di^3\mu\notag\\
\hspace{2ex}&\hspace{-2ex}\lesssim_{\reg,r'}{} \sum\limits_{i=0,1}
\Big(\norm{\Lxi\psiminusHigh{i}}^2_{W_{0}^{\reg}(\Sigmatb)}
+a^2\norm{\Leta\prb\psiminusHigh{i}}^2_{W_{-2}^{\reg-1}(\Sigmatb)}\Big).
\end{align}
\end{prop}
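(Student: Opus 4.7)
The plan is to read off a pair of elliptic identities on each slice $\Sigmatb$ directly from the wave equations \eqref{eq:Phi-10} and \eqref{eq:Phi-11}, and then run a standard degenerate-elliptic (Garding type) estimate, the degeneration being captured by the function $\mu=\Delta/(\R)$.

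First I would expand $\squareShat_{-1}$ via \eqref{eq:squareShat}, move the term $-(\R)YV\Phiminus{i}+2\edthR\edthR'\Phiminus{i}$ to one side of each equation, and collect everything else. Using the hyperboloidal representations $Y=-\partial_{\rb}+h'\Lxi$ and $V=\mu\partial_{\rb}+(2-\mu h')\Lxi+(2a/(\R))\Leta$ (both regular at $\Horizon$), one computes that $(\R)YV$ has principal part $-\Delta\,\partial_{\rb}^2$ and cross terms of the form $(2-\mu h')\Lxi\partial_{\rb}$, $(2a/(\R))\Leta\partial_{\rb}$, $\Lxi^2$, $a\Lxi\Leta$ with $O(1)$ coefficients. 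Substituting, each wave equation becomes an identity of the schematic form
\begin{equation*}
\Delta\,\partial_{\rb}^2\Phiminus{i}-2\edthR\edthR'\Phiminus{i}
=G_i,
\end{equation*}
where $G_i$ involves at most $\Lxi\partial_\rb\Phiminus{i}$, $\Lxi^2\Phiminus{i}$, $\Lxi\Leta\Phiminus{i}$, $a^2\Lxi^2\Phiminus{i}$, $a\Leta\partial_{\rb}\Phiminus{i}$ and lower-order terms; the only coupling to the other level is the $-\tfrac{2(\PR)}{\Delta}V\Phiminus{0}=-\tfrac{2(\PR)}{(\R)^2}\Phiminus{1}$ appearing in the equation of $\Phiminus{0}$, together with the $a^2$- and $a$-weighted coupling terms $\Phiminus{0}$, $\Leta\Phiminus{0}$ on the right of \eqref{eq:Phi-11}.

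Next, for each $i=0,1$, I square both sides, integrate over $\Sigmatb$ with $\di^3\mu$, and expand
\begin{equation*}
\int_{\Sigmatb}\bigl|\Delta\,\partial_{\rb}^2\Phiminus{i}-2\edthR\edthR'\Phiminus{i}\bigr|^2\di^3\mu
=\int_{\Sigmatb}\bigl(\Delta^2|\partial_{\rb}^2\Phiminus{i}|^2
+4|\edthR\edthR'\Phiminus{i}|^2\bigr)\di^3\mu-4\Re\int_{\Sigmatb}\Delta\,\partial_{\rb}^2\Phiminus{i}\,\overline{\edthR\edthR'\Phiminus{i}}\di^3\mu.
\end{equation*}
The cross term is integrated by parts once in $\partial_{\rb}$ (using that $\partial_{\rb}$ commutes with $\edthR,\edthR'$) and once on $\mathbb S^2$ via the formal adjoint relation between $\edthR$ and $\edthR'$, producing the positive contribution $4\int\Delta|\partial_{\rb}\edthR'\Phiminus{i}|^2\di^3\mu$ plus boundary terms at $r=r_+$ and $r=\infty$ that vanish because of the $\Delta$ weight and the smoothness assumption of regular Maxwell fields. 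Combined with \eqref{eq:ellipestis} applied to $\partial_{\rb}\edthR'\Phi$, this yields the coercive bound
\begin{equation*}
\int_{\Sigmatb}\bigl(\Delta^2|\partial_{\rb}^2\Phiminus{i}|^2
+|\edthR\edthR'\Phiminus{i}|^2
+\Delta|\partial_{\rb}\edthR\Phiminus{i}|^2\bigr)\di^3\mu
\lesssim \int_{\Sigmatb}|G_i|^2\di^3\mu.
\end{equation*}
Rewriting in terms of $\phiminus{i}=(\R)^{-1/2}\Phiminus{i}$ and $\psiminusHigh{i}=(\R)^{-1/2}\PsiminusHigh{i}$ converts these into the weights $\mu^2 r^2|\partial_{\rb}^2\phiminus{i}|^2$, $|\edthR\edthR'\phiminus{i}|^2$ and $\mu|\partial_{\rb}\edthR\phiminus{i}|^2$ appearing on the left of \eqref{eq:degellip:phiminus:lowerreg}, while the right-hand-side sum over $G_i$ is precisely the $\Lxi$-, $\Lxi\Leta$- and $a$-weighted $\Leta$ sum displayed in the middle line of \eqref{eq:degellip:phiminus:lowerreg}. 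Adding an $A_0$-multiple of the $\Phiminus{1}$ estimate to that of $\Phiminus{0}$ and choosing $A_0$ large enough, the coupling between the two levels is closed; the second inequality in \eqref{eq:degellip:phiminus:lowerreg} follows from the trivial Hardy estimate \eqref{eq:Hardy:trivial:1} (needed to reabsorb $|\partial_{\rb}\phiminus{1}|^2$ and the lowest-order zeroth-order pieces produced on the right). The $|\partial_{\rb}\edthR \phiminus{0}|^2$ term (weight $\mu^2 r^2$ rather than $\mu$) is extracted by the same pairing but using $\edthR\edthR\Phiminus{0}=\edthR\edthR'\Phiminus{0}+s\Phiminus{0}$ together with the identity $V\Phiminus{0}=\Delta(\R)^{-2}\Phiminus{1}$, which upgrades one $\mu$ weight to $\mu^2 r^2$.

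Finally, the higher-order statement \eqref{eq:degellip:phiminus} is obtained by commuting the system \eqref{eq:Phi-1012} with strings of operators from $\CDeri=\{Y,rV,\edthR,\edthR'\}$. Each commutator produces either a Killing-vector derivative (which commutes freely with $\squareShat_{-1}$) or, in the case of $rV$ and $Y$, an error term which is either lower-order by point \ref{prop:basicesti:pt1} of Proposition \ref{prop:basicesti} or can be reexpressed via the wave equation itself. Restricting to $r\geq r'>r_+$ eliminates the $\mu^{-1}$ factors that arise from inverting $V$, so all commutator errors can be absorbed. Iterating the slice-wise elliptic argument above at each commutator level and summing yields \eqref{eq:degellip:phiminus} with the $r'$-dependent constant. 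The main obstacle I anticipate is the first step: producing the clean coercive identity uniformly up to the horizon (where $\Delta\to 0$), so that no boundary contribution at $\{r=r_+\}$ spoils the elliptic estimate; this is handled by the precise $\Delta$ weighting in the cross-term integration by parts and by the regularity of $Y, V$ in the hyperboloidal chart.
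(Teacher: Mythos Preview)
Your outline captures the overall elliptic-squaring strategy, but it misses the crucial \emph{asymmetry} between the $i=0$ and $i=1$ levels, and as written it does not produce the weights stated in \eqref{eq:degellip:phiminus:lowerreg}.  Notice that the target left-hand side has \emph{different} $\mu$-powers for the two levels: $\mu r^4\abs{\partial_{\rb}^2\phiminus{0}}^2$ and $\mu\abs{\edthR'\edthR\psiminusHigh{0}}^2$ for $i=0$, versus $\mu^2 r^2\abs{\partial_{\rb}^2\phiminus{1}}^2$ and $r^{-2}\abs{\edthR'\edthR\phiminus{1}}^2$ for $i=1$.  Since $\phiminus{0}=\mu\psiminusHigh{0}$, the angular control on level $0$ is actually $\mu^{-1}\abs{\edthR'\edthR\phiminus{0}}^2$, i.e.\ a $\mu^{-1}$-\emph{singular} weight at the horizon.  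Your approach squares both identities with the \emph{same} (trivial) weight, so it can only ever yield the same $\mu$-structure on both levels; even after rescaling $\Phiminus{i}=\sqrt{\R}\,\phiminus{i}$ you obtain at best $\mu^2 r^6\abs{\partial_\rb^2\phiminus{i}}^2$ and $r^2\abs{\edthR\edthR'\phiminus{i}}^2$ uniformly in $i$, which is strictly weaker than the $i=0$ target near the horizon.

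The paper gets the correct asymmetry by (i) squaring the $\Phiminus{0}$ equation in the \emph{rescaled} form $2\edthR'\edthR\Phiminus{0}+\mu\partial_\rb\big((\R)\partial_\rb\Phiminus{0}\big)=\tilde H^{(0)}$ and multiplying by $\mu^{-1}(\R)^{-1}$, while squaring the $\Phiminus{1}$ equation in the form $2\edthR'\edthR\Phiminus{1}+(\R)\partial_\rb(\mu\partial_\rb\Phiminus{1})=H^{(1)}$ and multiplying by $(\R)^{-2}$; and (ii) showing that $\mu^{-1}r^{-2}\abs{\tilde H^{(0)}}^2$ is nonetheless bounded near the horizon.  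Step (ii) is nontrivial: the coefficient of $\Lxi\partial_\rb\Phiminus{0}$ in $\tilde H^{(0)}$ is $2(\R)(\mu H-1)\sim 2(\R)$ at $r=r_+$, so naively $\mu^{-1}\abs{\tilde H^{(0)}}^2$ blows up.  The paper resolves this by rewriting $\Lxi\partial_\rb\Phiminus{0}$ via the relation $\Phiminus{1}=(\R)\VR\Phiminus{0}=(\R)\partial_\rb\Phiminus{0}+\ldots$, which converts the dangerous term into $\Lxi\Phiminus{1}$ plus lower order (this is \eqref{eq:LxiprbintermsofLxiandLeta}), and by observing the near-horizon cancellation $\mu H-2=O(\mu)$.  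Your write-up treats the coupling term $\tfrac{2(\PR)}{(\R)^2}\Phiminus{1}$ as a source in $G_0$ to be absorbed by $A_0$-largeness, but the deeper use of $\Phiminus{1}=(\R)\VR\Phiminus{0}$ to handle the $\mu^{-1}$ weight on $\Lxi\partial_\rb\Phiminus{0}$ is not addressed.  Without this, the $i=0$ estimate with the correct $\mu$-weights does not close.
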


\begin{proof}
Let
\begin{subequations}
\label{def:vectorsKandVR}
\begin{align}
K={}\Lxi +a(\R)^{-1}\Leta.
\end{align}
Let $H=2\mu^{-1}+\partial_r h(r)$ with $h(r)$ being the function introduced in Section \ref{sect:foliation}, then
 one can express  $Y$ and $\VR$ as
\begin{align}
\label{def:vectorVRintermsofprb}
Y={}-\prb+(2\mu^{-1}-H)\Lxi,\quad
\VR={}\partial_{\rb}+H K
+ a(2\mu^{-1}-H)(\R)^{-1}\Leta.
\end{align}
\end{subequations}
By the choice of the hyperboloidal coordinates, there exist positive constants $c_0$ and $c_1$ such that
\begin{align}
\label{eq:propertyofHfunction}
\lim_{r\to \infty}r^2 H=c_0, \quad \text{and} \quad \abs{ H-2\mu^{-1}-c_1}\lesssim \mu  \quad
\text{as } r\to r_+.
\end{align}
Defining further $\tilde{H}=2\mu^{-1}-H=-\partial_rh(r)$, one can expand $-(\R)YV\varphi$ as
\begin{align}
\hspace{4ex}&\hspace{-4ex}-(\R)YV\varphi\notag\\
={}&-(\R)(-\prb +\tilde{H}\Lxi)\Big(\mu \Big(\prb +HK+\tilde{H}\frac{a}{\R}\Leta\Big)\varphi\Big)\notag\\
={}&(\R)\prb(\mu\prb\varphi)
-\mu (\R)\tilde{H}\Lxi\prb\varphi
+(\R)\prb(\mu H)\Lxi\varphi\notag\\
&
+(\R)\prb\Big(\frac{a\mu H}{\R}\Big)\Leta\varphi
+\mu H(\R)\Lxi\prb\varphi
+a\mu H\Leta\prb\varphi\notag\\
&-\mu (\R)H\tilde{H}\Lxi K\varphi
+a\mu\tilde{H}\Leta\prb\varphi
+(\R)\prb\Big(\frac{a\mu\tilde{H}}{\R}\Big)\Leta\varphi\notag\\
&-a\mu \tilde{H}^2\Lxi\Leta\varphi\notag\\
={}&(\R)\prb(\mu\prb\varphi)-\mu (\R)\tilde{H}\Lxi\prb\varphi+(\R)\prb(\mu H)\Lxi\varphi\notag\\
&
+\mu H(\R)\Lxi\prb\varphi-\mu (\R)H\tilde{H}\Lxi K\varphi
-a\mu \tilde{H}^2\Lxi\Leta\varphi\notag\\
&+(\R)\prb(2a(\R)^{-1})\Leta\varphi
+2a\Leta\prb\varphi\notag\\
={}&(\R)\prb(\mu\prb\varphi)
+2(\R)(\mu H-1)\Lxi\prb\varphi
+(\R)H(\mu H-2)\Lxi^2\varphi\notag\\
&+2a\mu^{-1}(\mu H-2)\Lxi\Leta\varphi
++(\R)\prb(\mu H)\Lxi\varphi\notag\\
&-\frac{4ar}{\R}\Leta\varphi
+2a\Leta\prb\varphi.
\end{align}
The field equations \eqref{eq:Phi-10} and \eqref{eq:Phi-11} for $\Phiminus{i}$ $(i=0,1)$ become
\begin{subequations}
\label{eq:Phiminus01:hyperboloidal}
\begin{align}
\label{eq:Phiminus01:hyperboloidal:0}
\hspace{4ex}&\hspace{-4ex}
2\edthR'\edthR\Phiminus{0}
+(\R)\partial_{\rb}(\mu\partial_{\rb})\Phiminus{0}
+\tfrac{a^2\Delta}{(\R)^2}\Phiminus{0}\notag\\
\hspace{4ex}&\hspace{-4ex}
-\tfrac{4ar}{\R}\Leta\Phiminus{0}
+\tfrac{2(\PR)}{(\R)^2}\Phiminus{1}
\notag\\
={}&H^{(0)}(\Phiminus{0}),\\
\label{eq:Phiminus01:hyperboloidal:1}
\hspace{4ex}&\hspace{-4ex}
2\edthR'\edthR\Phiminus{1}
+(\R)\partial_{\rb}(\mu\partial_{\rb})\Phiminus{1}
+\tfrac{a^2\Delta}{(\R)^2}\Phiminus{1}\notag\\
\hspace{4ex}&\hspace{-4ex}
-\tfrac{2a^2(\PR)}{(\R)^2}\Phiminus{0}
+\tfrac{2a(r^2-a^2)}{\R}\Leta\Phiminus{0}\notag\\
={}&H^{(1)}(\Phiminus{1}),
\end{align}
\end{subequations}
where for $i\in \{0,1\}$,
\begin{align}
\label{def:HiPhiminus}
H^{(i)}(\Phiminus{i})={}&
-2a(1+\mu^{-1}(\mu H-2))\Lxi\Leta\Phiminus{i}
-2(\R)(\mu H-1)\Lxi\prb\Phiminus{i}
\notag\\
&
-((\R)H(\mu H-2)+a^2\sin^2\theta )\Lxi^2\Phiminus{i}
\notag\\
&
-(2ia\cos\theta+(\R)\partial_r(\mu H))\Lxi \Phiminus{i}\notag\\
&-2a(\R)^{\half}\Leta((\R)^{-\half}\prb\Phiminus{i}).
\end{align}
Equation \eqref{eq:Phiminus01:hyperboloidal:0} can further be written as
\begin{align}
\label{eq:Phiminus01:hyperboloidal:0:rescale}
2\edthR'\edthR\Phiminus{0}
+\mu\partial_{\rb}((\R)\partial_{\rb})\Phiminus{0}
+\tfrac{a^2\mu}{\R}\Phiminus{0}
={}&\tilde{H}^{(0)}(\Phiminus{0}),
\end{align}
where
\begin{align}
\label{def:HitildePhiminus}
\tilde{H}^{(0)}(\Phiminus{0})={}&
-2a(1+\mu^{-1}(\mu H-2))\Lxi\Leta\Phiminus{0}
-2(\R)(\mu H-1)\Lxi\prb\Phiminus{0}
\notag\\
&
-((\R)H(\mu H-2)+a^2\sin^2\theta )\Lxi^2\Phiminus{0}
\notag\\
&
-(2ia\cos\theta
+\mu\prb((\R)H))\Lxi \Phiminus{0}\notag\\
&-2a\mu(\R)^{\half}\Leta\prb\big((\R)^{-\half}
\mu^{-1}\Phiminus{0}\big).
\end{align}
Multiplying  \eqref{eq:Phiminus01:hyperboloidal:0:rescale} and \eqref{eq:Phiminus01:hyperboloidal:1} by $-\mu^{-1}(\R)^{-1}\overline{\Phiminus{0}}$ and $-(\R)^{-2}\overline{\Phiminus{1}}$ respectively, taking the real part, integrating over $\Sigmatb$ and using integration by parts, we arrive at
\begin{subequations}
\begin{align}
\label{eq:degellip:Phiminus:0:1}
\hspace{4ex}&\hspace{-4ex}\int_{\Sigmatb}\Big(
2\mu^{-1}(\R)^{-1} \abs{\edthR\Phiminus{0}}^2
-\partial_{\rb}\big( \partial_{\rb}\Phiminus{0}\overline{\Phiminus{0}}\big)\Big.\notag\\
\hspace{4ex}&\hspace{-4ex}\Big.
\qquad
-\tfrac{2r}{\R}
\Re\big(\partial_{\rb}\Phiminus{0}\overline{\Phiminus{0}}\big)
+\abs{\partial_{\rb}\Phiminus{0}}^2
-\tfrac{a^2}{(\R)^2}\abs{\Phiminus{0}}^2
\Big)
\di^3\mu\notag\\
={}& \int_{\Sigmatb} -\mu^{-1}(\R)^{-1}\Re\big(\overline{\Phiminus{0}}
\tilde{H}^{(0)}(\Phiminus{0})\big)\di^3\mu,\\
\label{eq:degellip:Phiminus:0:2}
\hspace{4ex}&\hspace{-4ex}\int_{\Sigmatb}\Big(2(\R)^{-2} \abs{\edthR\Phiminus{1}}^2
-\partial_{\rb}\big(\mu (\R)^{-1}\partial_{\rb}\Phiminus{1}\overline{\Phiminus{1}}\big)\Big.\notag\\
\hspace{4ex}&\hspace{-4ex}\Big.
\qquad
-\tfrac{2r\mu}{(\R)^2}
\Re\big(\partial_{\rb}\Phiminus{1}\overline{\Phiminus{1}}\big)
+\mu (\R)^{-1}\abs{\partial_{\rb}\Phiminus{1}}^2
-\tfrac{a^2\Delta}{(\R)^4}\abs{\Phiminus{1}}^2\Big.\notag\\
\hspace{4ex}&\hspace{-4ex}\Big. \qquad
+\tfrac{2a^2(\PR)}{(\R)^4}
\Re\big(\overline{\Phiminus{1}}\Phiminus{0}\big)
-\tfrac{2a(r^2-a^2)}{(\R)^3}\overline{\Phiminus{1}}\Leta\Phiminus{0}
\Big)\di^3\mu\notag\\
={}& \int_{\Sigmatb} -(\R)^{-2}\Re\big(\overline{\Phiminus{1}}
H^{(1)}(\Phiminus{1})\big)\di^3\mu.
\end{align}
\end{subequations}
The integral of the total $\prb$-derivative terms vanish from Proposition \ref{prop:weakdecay:spin+1-1:v3}, and from Proposition \ref{prop:basicesti}, the LHS of \eqref{eq:degellip:Phiminus:0:1} is thus larger than or equal to
\begin{align}
\hspace{4ex}&\hspace{-4ex}\frac{1}{4}\int_{\Sigmatb}\Big(
\mu^{-1}(\R)^{-1} \abs{\edthR\Phiminus{0}}^2
+\abs{\partial_{\rb}\Phiminus{0}}^2
\Big)
\di^3\mu,
\end{align}
which then implies
\begin{align}
\label{eq:degellip:Phiminus:0:8}
\hspace{4ex}&\hspace{-4ex}\int_{\Sigmatb}\Big(
2\mu^{-1}(\R)^{-1} \abs{\edthR\Phiminus{0}}^2
+2\abs{\partial_{\rb}\Phiminus{0}}^2
\Big)
\di^3\mu\notag\\
\leq{}& \int_{\Sigmatb} -8\mu^{-1}(\R)^{-1}\Re\big(\overline{\Phiminus{0}}
\tilde{H}^{(0)}(\Phiminus{0})\big)\di^3\mu.
\end{align}
By adding  the estimate \eqref{eq:degellip:Phiminus:0:8} to \eqref{eq:degellip:Phiminus:0:2} and applying Cauchy-Schwarz to the third line of \eqref{eq:degellip:Phiminus:0:2},
 we arrive at
\begin{align}
\label{eq:degellip:Phiminus:0:9}
\hspace{4ex}&\hspace{-4ex}\int_{\Sigmatb}\Big(
\mu^{-1}(\R)^{-1} \abs{\edthR\Phiminus{0}}^2
+\abs{\partial_{\rb}\Phiminus{0}}^2
+\tfrac{1}{8}(\R)^{-2} \abs{\edthR\Phiminus{1}}^2
+\tfrac{1}{8}\mu(\R)^{-1}\abs{\partial_{\rb}\Phiminus{1}}^2
\Big)
\di^3\mu\notag\\
\leq{}& \int_{\Sigmatb} \Re\Big(-8\mu^{-1}(\R)^{-1}\overline{\Phiminus{0}}
\tilde{H}^{(0)}(\Phiminus{0})
-(\R)^{-2}\overline{\Phiminus{1}}
{H}^{(1)}(\Phiminus{1})\Big)\di^3\mu\notag\\
\triangleq {}&\mathbb{G}_1.
\end{align}

Moving the second line of \eqref{eq:Phiminus01:hyperboloidal:1} to the RHS, taking a square of each side of equations \eqref{eq:Phiminus01:hyperboloidal:0:rescale} and \eqref{eq:Phiminus01:hyperboloidal:1}, multiplying by $\mu^{-1}(\R)^{-1}$ and $(\R)^{-2}$ respectively,
taking the real part, integrating over $\Sigmatb$ and applying Cauchy-Schwarz inequality to the integral terms of $\tilde{H}^{(0)}(\Phiminus{0})$ and $H^{(1)}(\Phiminus{1})$, one arrives at
\begin{subequations}\label{eq:degellip:Phiminus:1}
\begin{align}
\label{eq:degellip:Phiminus:1:0}
\hspace{4ex}&\hspace{-4ex}
\int_{\Sigmatb}\Big(4 \mu^{-1}(\R)^{-1}\abs{\edthR'\edthR \Phiminus{0}}^2
+\mu(\R)^{-1}\abs{\partial_{\rb}((\R)\partial_{\rb})\Phiminus{0}}^2\Big.\notag\\
\hspace{4ex}&\hspace{-4ex}\Big. \qquad
+4(\R)^{-1}\Re\big(\partial_{\rb}((\R)\partial_{\rb}
\overline{\Phiminus{0}})
\edthR'\edthR \Phiminus{0}\big)\Big)\di^3\mu\notag\\
\lesssim{}&\int_{\Sigmatb} \mu^{-1}r^{-2}\abs{\tilde H^{(0)}(\Phiminus{0})}^2
\di^3\mu\notag\\
\triangleq {}&\mathbb{G}_2,\\
\label{eq:degellip:Phiminus:1:1}
\hspace{4ex}&\hspace{-4ex}
\int_{\Sigmatb}\Big(4 (\R)^{-2}\abs{\edthR'\edthR \Phiminus{1}}^2
+\abs{\partial_{\rb}(\mu\partial_{\rb})\Phiminus{1}}^2\Big.\notag\\
\hspace{4ex}&\hspace{-4ex}\Big. \qquad
+4(\R)^{-1}\Re\big(\partial_{\rb}(\mu\partial_{\rb}
\overline{\Phiminus{1}})
\edthR'\edthR \Phiminus{1}\big)\Big)\di^3\mu\notag\\
\lesssim{}&\int_{\Sigmatb} \Big(r^{-4}\abs{H^{(1)}(\Phiminus{1})}^2
+a^2r^{-6}\abs{\Phiminus{0}}^2
+a^2r^{-4}\abs{\Leta\Phiminus{0}}^2\Big)\di^3\mu\notag\\
\triangleq {}& \mathbb{G}_3.
\end{align}
\end{subequations}
For the third term in each subequation, we integrate by parts and find
\begin{subequations}
\label{eq:degellip:Phiminus:2:all}
\begin{align}\label{eq:degellip:Phiminus:2}
\hspace{4ex}&\hspace{-4ex}
\int_{\Sigmatb}4(\R)^{-1}\Re\Big(\partial_{\rb}\Big(
(\R)\partial_{\rb}\overline{\Phiminus{0}}\Big)
\edthR'\edthR \Phiminus{0}\Big)\di^3\mu\notag\\
={}&\int_{\Sigmatb}
\Re\Big(-\prb\Big(4 \prb \overline{\edthR \Phiminus{0}}\edthR \Phiminus{0}\Big)
-8r(\R)^{-1} \prb\overline{ \Phiminus{0}}\edthR'\edthR \Phiminus{0}
+4\abs{\edthR\prb \Phiminus{0}}^2\Big)\di^3\mu,\\
\label{eq:degellip:Phiminus:2:1}
\hspace{4ex}&\hspace{-4ex}
\int_{\Sigmatb}4(\R)^{-1}\Re\Big(\partial_{\rb}\big(\mu\partial_{\rb}
\overline{\Phiminus{1}}\big)
\edthR'\edthR \Phiminus{1}\Big)\di^3\mu\notag\\
={}&\int_{\Sigmatb}
\Re\Big(-\prb\Big(4(\R)^{-1}\mu \prb \overline{\edthR \Phiminus{1}}\edthR \Phiminus{1}\Big)
-8r(\R)^{-2}\mu \prb\overline{ \Phiminus{1}}\edthR'\edthR \Phiminus{1}\Big.\notag\\
&\qquad \Big.
+4(\R)^{-1}\mu \abs{\edthR\prb \Phiminus{1}}^2\Big)\di^3\mu.
\end{align}
\end{subequations}
From point \ref{prop:basicesti:pt2} of Proposition \ref{prop:basicesti}, $\int_{\Sigmatb} 4\abs{\edthR\prb \Phiminus{0}}^2\di^3\mu\geq \int_{\Sigmatb}4 \abs{\prb \Phiminus{0}}^2\di^3\mu$, hence the estimates \eqref{eq:degellip:Phiminus:1} and \eqref{eq:degellip:Phiminus:2:all} together imply
\begin{subequations}
\label{eq:degellip:Phiminus:3:all}
\begin{align}
\label{eq:degellip:Phiminus:3}
&\int_{\Sigmatb}r^{-2}\Big(4 (\mu^{-1}-1)\abs{\edthR'\edthR \Phiminus{0}}^2
+\mu\abs{\partial_{\rb}((\R)\partial_{\rb}\Phiminus{0})}^2\Big)
\di^3\mu
\lesssim{}\mathbb{G}_2,\\
\label{eq:degellip:Phiminus:3:1}
&\int_{\Sigmatb}\Big(4 (\R)^{-2}(1-\mu)\abs{\edthR'\edthR \Phiminus{1}}^2
+\abs{\partial_{\rb}(\mu\partial_{\rb}\Phiminus{1})}^2\Big)
\di^3\mu
\lesssim{}\mathbb{G}_3.
\end{align}
\end{subequations}
Applying Cauchy-Schwarz to the second line of each subequation of \eqref{eq:degellip:Phiminus:1} and taking into account of the estimates \eqref{eq:degellip:Phiminus:3:all}, these yield
\begin{subequations}
\label{eq:degellip:Phiminus:4:all}
\begin{align}
\label{eq:degellip:Phiminus:4}
&\int_{\Sigmatb}\Big(\mu^{-1}r^{-2}\abs{\edthR'\edthR \Phiminus{0}}^2
+\mu r^{-2}\abs{\partial_{\rb}(r^2\partial_{\rb}\Phiminus{0})}^2
+\abs{\edthR\prb \Phiminus{0}}^2
\Big)\di^3\mu
\lesssim{}\mathbb{G}_2,\\
\label{eq:degellip:Phiminus:4:1}
&\int_{\Sigmatb}\Big(r^{-4}\abs{\edthR'\edthR \Phiminus{1}}^2
+\abs{\partial_{\rb}(\mu\partial_{\rb}\Phiminus{1})}^2
+r^{-2}\mu \abs{\edthR\prb \Phiminus{1}}^2
\Big)\di^3\mu
\lesssim{}\mathbb{G}_3.
\end{align}
\end{subequations}
Expand the second term on the LHS of \eqref{eq:degellip:Phiminus:4:1}
\begin{align}
\abs{\partial_{\rb}(\mu\partial_{\rb})\Phiminus{1}}^2
={}&\mu^2\abs{\prb^2\Phiminus{1}}^2
+2\mu \prb\mu \Re\Big(\overline{\prb\Phiminus{1}}\prb^2\Phiminus{1}\Big)
+\abs{\prb\mu}^2\abs{\prb\Phiminus{1}}^2\notag\\
={}&\mu^2\abs{\prb^2\Phiminus{1}}^2
+\abs{\prb\mu}^2\abs{\prb\Phiminus{1}}^2
-\prb(\mu\prb\mu)\abs{\prb\Phiminus{1}}^2\notag\\
&+\prb(\mu\prb\mu \abs{\prb\Phiminus{1}}^2).
\end{align}
The integral for the last term vanishes, hence,
\begin{align}
\int_{\Sigmatb}\mu^2\abs{\prb^2\Phiminus{1}}^2\di^3\mu={}&
\int_{\Sigmatb}\Big(\abs{\prb(\mu\prb\Phiminus{1})}^2
+\mu\prb^2\mu \abs{\prb\Phiminus{1}}^2\Big)\di^3\mu.
\end{align}
Since $\prb^2\mu<0$,  we then have from \eqref{eq:degellip:Phiminus:4:1} that
\begin{align}
\label{eq:degellip:Phiminus:6}
\int_{\Sigmatb}\Big(\mu^2\abs{\prb^2\Phiminus{1}}^2
+r^{-4}\abs{\edthR'\edthR \Phiminus{1}}^2
+r^{-2}\mu \abs{\edthR\prb \Phiminus{1}}^2
+r^{-2}\abs{\prb\Phiminus{1}}^2
\Big)\di^3\mu
\lesssim{}&\mathbb{G}_3.
\end{align}
The estimate \eqref{eq:degellip:Phiminus:4} also leads to
\begin{align}
\label{eq:degellip:Phiminus:5}
\int_{\Sigmatb}\Big(
\mu r^2\abs{\prb^2\Phiminus{0}}^2
+\mu^{-1}r^{-2}\abs{\edthR'\edthR \Phiminus{0}}^2
+ \abs{\edthR\prb \Phiminus{0}}^2
\Big)\di^3\mu
\lesssim{}&\mathbb{G}_2.
\end{align}
The integrals of $a^2(r^{-4}\abs{\Leta\Phiminus{0}}^2
+r^{-6}\abs{\Phiminus{0}}^2)$ in $\mathbb{G}_3$ can be bounded by the LHS of the estimate \eqref{eq:degellip:Phiminus:0:9}, thus we get
\begin{align}\label{eq:degellip:Phiminus:7}
\hspace{4ex}&\hspace{-4ex}\int_{\Sigmatb}\Big(
\mu^2 \abs{\partial_{\rb}^2 \Phiminus{1}}^2
+\abs{\partial_{\rb}(\mu\partial_{\rb}\Phiminus{1})}^2
+r^{-4}\abs{\edthR'\edthR \Phiminus{1}}^2
+\mu r^{-2}\abs{\partial_{\rb}\edthR\Phiminus{1}}^2
\Big.\notag\\
\hspace{4ex}&\hspace{-4ex}
\Big. \qquad+\mu (\R)\abs{\prb^2\Phiminus{0}}^2
+\mu^{-1}r^{-2}\abs{\edthR'\edthR \Phiminus{0}}^2
+ \abs{\edthR\prb \Phiminus{0}}^2
\Big)\di^3\mu\notag\\
\lesssim{}&\int_{\Sigmatb}
\Big(\mu^{-1}r^{-2}\abs{\tilde H^{(0)}(\Phiminus{0})}^2
+
r^{-4}\abs{H^{(1)}(\Phiminus{1})}^2\Big)
\di^3\mu,
\end{align}
where Cauchy-Schwarz inequality is applied to bound the term $\mathbb{G}_1$.
One can estimate the integrals of $\tilde H^{(0)}(\Phiminus{0})$ and $H^{(1)}(\Phiminus{1})$ using the expressions \eqref{def:HitildePhiminus} and \eqref{def:HiPhiminus}, and it holds from \eqref{def:vectorVRintermsofprb} that
\begin{align}
\label{eq:LxiprbintermsofLxiandLeta}
r^2\abs{\Lxi\prb\Phiminus{0}}^2\lesssim{}&
r^{-2}\Big(\abs{\Lxi\Phiminus{1}}^2
+\abs{\Lxi^2\PsiminusHigh{0}}^2
+a^2\abs{\Lxi\Leta\PsiminusHigh{0}}^2\Big).
\end{align}
Near horizon, it is manifest from \eqref{eq:LxiprbintermsofLxiandLeta} and \eqref{eq:propertyofHfunction} that
\begin{align}
\hspace{4ex}&\hspace{-4ex}\mu^{-1}\abs{2(\R)(\mu H-1)\Lxi\prb\Phiminus{0}
+
\mu\prb((\R)H)\Lxi \Phiminus{0}}^2\notag\\
={}&
\mu^{-1}\abs{\mu\Lxi\prb((\R)H\Phiminus{0})
+(\R)(\mu H-2)\Lxi\prb\Phiminus{0}}^2\notag\\
\lesssim{}&\mu \Big(\abs{\Lxi\prb\PsiminusHigh{0}}^2
+ \abs{\Lxi\PsiminusHigh{0}}^2
+\abs{\Lxi\Phiminus{1}}^2
+\abs{\Lxi^2\PsiminusHigh{0}}^2
+a^2\abs{\Lxi\Leta\PsiminusHigh{0}}^2\Big);
\end{align}
hence
the RHS of \eqref{eq:degellip:Phiminus:7} is bounded by
\begin{align}
&C\int_{\Sigmatb}\Big(
\abs{\Lxi\prb\Phiminus{1}}^2
+r^{-4}\big(\abs{\Lxi^2 \Phiminus{1}}^2
+a^2\abs{\Lxi\Leta \Phiminus{1}}^2\big)
+r^{-2}\abs{\Lxi \Phiminus{1}}^2
\Big.\notag\\
&\quad\qquad\Big.
+\mu r^{-2}\big(\abs{\Lxi\prb\PsiminusHigh{0}}^2
+\abs{\Lxi^2 \PsiminusHigh{0}}^2
+a^2\abs{\Lxi\Leta \PsiminusHigh{0}}^2
+\abs{\Lxi \PsiminusHigh{0}}^2\big)\Big.\notag\\
&\quad \qquad\Big.
+
a^2r^{-4}\abs{\Leta\prb\Phiminus{1}}^2
+
a^2\mu\abs{\Leta\prb\psiminusHigh{0}}^2
\Big)\di^3\mu.
\end{align}
We thus conclude
\begin{align}\label{eq:degellip:Phiminus}
\hspace{4ex}&\hspace{-4ex}\int_{\Sigmatb}\Big(
\mu^2 \abs{\partial_{\rb}^2 \Phiminus{1}}^2
+r^{-4}\abs{\edthR'\edthR \Phiminus{1}}^2
+\mu r^{-2}\abs{\partial_{\rb}\edthR\Phiminus{1}}^2
+r^{-2}\abs{\partial_{\rb}\Phiminus{1}}^2\Big.\notag\\
\hspace{4ex}&\hspace{-4ex}
\Big. \qquad+\mu r^2\abs{\prb^2\Phiminus{0}}^2
+\mu^{-1}r^{-2}\abs{\edthR'\edthR \Phiminus{0}}^2
+ \abs{\edthR\prb \Phiminus{0}}^2
\Big)\di^3\mu\notag\\
\lesssim{}&\int_{\Sigmatb}\Big(
\abs{\Lxi\prb\Phiminus{1}}^2
+r^{-2}\big(\abs{\Lxi^2 \Phiminus{1}}^2
+\abs{\Lxi\Leta \Phiminus{1}}^2
+\abs{\Lxi \Phiminus{1}}^2\big)
\Big.\notag\\
&\qquad\Big.
+\mu r^{-2}\big(\abs{\Lxi\prb\PsiminusHigh{0}}^2
+\abs{\Lxi^2 \PsiminusHigh{0}}^2
+\abs{\Lxi\Leta \PsiminusHigh{0}}^2
+\abs{\Lxi \PsiminusHigh{0}}^2\big)\Big.\notag\\
&\qquad \Big.
+
a^2r^{-4}\abs{\Leta\prb\Phiminus{1}}^2
+
a^2\mu \abs{\Leta\prb\psiminusHigh{0}}^2)
\Big)\di^3\mu.
\end{align}
The RHS is bounded using the Hardy's inequality \eqref{eq:Hardy:trivial:1} by
\begin{align}
&\int_{\Sigmatb}
\Big(\sum_{i=0,1}\Big(
\abs{\Lxi\prb\psiminusHigh{i}}^2
+\abs{\Lxi\psiminusHigh{i}}^2
+\abs{\Lxi^2\psiminusHigh{i}}^2
+\abs{\Lxi\Leta\psiminusHigh{i}}^2\Big)
+r^2\abs{\Lxi\prb\phiminus{1}}^2
\notag\\
&\quad
+a^2r^{-2}\abs{\Leta\prb\phiminus{1}}^2
+a^2r^{-4}\abs{\Leta\phiminus{1}}^2
+
a^2\mu \abs{\Leta\prb\psiminusHigh{0}}^2)\Big)\di^3\mu.
\end{align}
The first term in the second line can be bounded by
\begin{align}
\int_{\Sigmatb}\frac{a^2}{r^2}\abs{\Leta\prb\phiminus{1}}^2\di^3\mu
\lesssim{}&
\int_{\Sigmatb}\frac{a^2}{r^2}(\abs{\Leta\prb\psiminusHigh{1}}^2
+\abs{\Leta\prb\psiminusHigh{0}}^2
+\abs{\Leta\psiminusHigh{0}}^2)\di^3\mu\notag\\
\lesssim{}&\int_{\Sigmatb}\frac{a^2}{r^2}\bigg(
\sum_{i=0,1}\abs{\Leta\prb\psiminusHigh{i}}^2
+\mu\abs{\edthR'\edthR\psiminusHigh{0}}^2\bigg)\di^3\mu,
\end{align}
and by taking $|a|/M\leq \veps_0$ sufficiently small, the integral of $a^2r^{-2}\mu \abs{\edthR'\edthR\psiminusHigh{0}}^2+a^2r^{-4}\abs{\Leta\phiminus{1}}^2$ over $\Sigmatb$ and the integral of $a^2\mu \abs{\Leta\prb\psiminusHigh{0}}^2$ away from horizon are absorbed, hence
proving the estimate \eqref{eq:degellip:phiminus:lowerreg}.

It is manifest that the estimate \eqref{eq:degellip:phiminus} holds true if we replace $\CDeri$ by $X=\{\Lxi,\Leta\}$. Further, we commute $\edthR$ and $\edthR'$ with equations \eqref{eq:Phiminus01:hyperboloidal}, put the resulting commutators into the RHS and obtain
\begin{align}
\tilde{H}^{(0)}(\edthR\Phiminus{0})
={}\edthR(\tilde{H}^{(0)}(\Phiminus{0}))
+O(1)\edthR\Phiminus{0}
+O(1)\edthR'\Phiminus{0}
+O(1)\Lxi\Phiminus{0}
+O(1)\Lxi^2\Phiminus{0}.
\end{align}
The terms $\tilde{H}^{(0)}(\edthR'\Phiminus{0})$, ${H}^{(1)}(\edthR\Phiminus{0})$ and ${H}^{(1)}(\edthR'\Phiminus{0})$ have a similar expression. By going through the above discussions, we obtain similar estimates as \eqref{eq:degellip:Phiminus:7}, and the integral on the RHS of these commutators are bounded by the LHS of \eqref{eq:degellip:phiminus:lowerreg}. Hence the estimate \eqref{eq:degellip:phiminus} with $\CDeri$ replaced by $\mathbf{X}=\{\Lxi,\Leta,\edthR,\edthR'\}$ holds. In the end, we commute $r\prb$ with equations \eqref{eq:Phiminus01:hyperboloidal} and use the proven  estimates to obtain a bound over the integrals away from horizon of more $\prb$ derivatives acting on $\phiminus{1}$ and $\phiminus{0}$.
\end{proof}

The estimate \eqref{eq:degellip:phiminus} is a degenerate elliptic estimate because of the presence of the factor $\mu$ which degenerates at $\Horizon$. This degeneracy can be removed by an application of the red-shift estimates of spin $-1$ component. To state the red-shift estimates near horizon, let us define a wave operator
\begin{align}\label{eq:RewrittenFormofSWRWE}
\Sigma\widetilde{\Box}_g\varphi
\triangleq{}&\left\{\partial_r(\Delta\partial_r)
-\tfrac{\left((r^2+a^2)\partial_t+a\partial_{\phi}\right)^2}{\Delta}\right.\notag\\
&\left.\quad+
\tfrac{1}{\sin \theta}\tfrac{d}{d\theta}\left(\sin \theta \tfrac{d}{d\theta}\right)+\left(\tfrac{\partial_{\phi}+is\cos \theta }{\sin \theta}+a\sin \theta\partial_{t}\right)^2\right\}\varphi,
\end{align}
such that $\Sigma\widetilde{\Box}_g$ is the same as the rescaled scalar wave operator $\Sigma \Box_g$, except for $(\tfrac{\partial_{\phi}}{\sin \theta}+is\cot \theta +a\sin \theta\partial_{t})^2$ in place of the operator $(\tfrac{\partial_{\phi}}{\sin \theta}+a\sin \theta\partial_{t})^2$ in the expansion of $\Sigma \Box_g$.
We shall also define a different set of operators $\RDeri=\{\curlY, \Lxi,\Leta,\edthR,\edthR'\}$.

\begin{lemma}\label{pro:redshift:general:smalla}
Let $s\neq 0$, and let $\varphi$ be a spin weight $s$ scalar satisfying
\begin{align}
\Sigma\widetilde{\Box}_g\varphi={}&\vartheta.
\end{align}
Let $1\leq \reg\in \mathbb{N}$.
Then there exists an $\veps_0>0$, two constants $r_+<r_0(\veps_0,M)<r_1(\veps_0,M)$ with $\abs{r_1 -r_+}$ suitably small, and constants $D_i \gg 1$ for $i=1,\ldots, \reg-1$ such that for all $\abs{a}/M\leq \veps_0$ and any $C_i$ with $C_1=1$ and $C_i/C_{i+1}\geq D_i$,
\begin{align}
\label{eq:redshift:general:smalla}
\hspace{4ex}&\hspace{-4ex}
c\Big(\norm{\varphi}^2_{W_{0}^{\reg}(\Sigmatwo^{\leq r_0})}
+\norm{\varphi}^2_{W_{0}^{\reg}(\Donetwo^{\leq r_0})}\Big)\notag\\
\leq{}&
\norm{\varphi}^2_{W_{0}^{\reg}(\Sigmaone^{\leq r_1})}
+\norm{\varphi}^2_{W_{0}^{\reg}(\Donetwo^{r_0,r_1})}
-\sum_{\reg_1=1}^{\reg}C_{\reg_1}\sum_{\abs{\mathbf{a}}=\reg_1-1}
\int_{\Donetwo^{\leq r_1}}
\Re(\overline{\RDeri^{\mathbf{a}}\vartheta}
N\RDeri^{\mathbf{a}}\varphi)\di^4\mu.
\end{align}
Here, $N=f_1(r)Y +f_2(r)\Lxi$ is a timelike vector field, with $f_1(r)$ and $f_2(r)$ being  two real smooth functions and satisfying $f_1, f_2\to 1$ as $r\to r_+$ and $f_1=0$, $f_2=1$ for $r\geq r_1$.
\end{lemma}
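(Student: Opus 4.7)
The plan is to adapt the Dafermos--Rodnianski red-shift multiplier to the spin-weighted operator $\Sigma\widetilde{\Box}_g$, following the strategy used in \cite{dafrod09red,Ma2017Maxwell,larsblue15Maxwellkerr}. Since the only feature of $\Sigma\widetilde{\Box}_g$ that differs from the usual scalar wave operator (and from $\Sigma\Box_g$) is the replacement of $\partial_\phi/\sin\theta + a\sin\theta\,\partial_t$ by the spin-weighted version $\partial_\phi/\sin\theta + is\cot\theta + a\sin\theta\,\partial_t$, the principal symbol and the principal null directions are unchanged, so the standard red-shift effect near the horizon is still available; the $is\cot\theta$ term only contributes zeroth order terms that are harmless near horizon.

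For the base case $\reg=1$, I would first fix $r_+<r_0<r_1$ with $r_1-r_+$ small enough that $\Delta(r_1)$ is small compared to the surface gravity $\kappa=(r_+-r_-)/(2(r_+^2+a^2))$, and build $N=f_1(r)Y+f_2(r)\Lxi$ by choosing $f_1,f_2$ smooth, with $f_1(r_+)=f_2(r_+)=1$, $f_1\equiv 0$ and $f_2\equiv 1$ for $r\geq r_1$, and $f_1,f_2>0$ on $[r_+,r_1]$. For $|a|/M\le \veps_0$ small, $N$ is future-directed timelike on $\{r_+\leq r\leq r_1\}$. Multiplying $\Sigma\widetilde{\Box}_g\varphi=\vartheta$ by $\overline{N\varphi}$, taking real part, and integrating over $\Donetwo^{\leq r_1}$ against $\di^4\mu$ produces: (a) boundary terms on $\Sigmaone^{\leq r_1}$, $\Sigmatwo^{\leq r_0}$ and $\Sigmatwo^{r_0,r_1}$, with the correct signs because $N$ is timelike and future-directed; (b) a horizon flux on $\Horizononetwo$ which is non-negative by regularity of $\varphi$ in the Hawking--Hartle tetrad; (c) a bulk term whose top-order piece, thanks to the choice $f_1(r_+)=1$ and the positivity of $\kappa$, dominates $c(|Y\varphi|^2+|V\varphi|^2+|\edthR\varphi|^2+|\edthR'\varphi|^2+|\varphi|^2)$ on $\{r\le r_0\}$, up to error terms supported in $\{r_0\le r\le r_1\}$ which are absorbed into $\norm{\varphi}^2_{W_0^1(\Donetwo^{r_0,r_1})}$, and an error of order $a$ arising from the trapping-like $\Lxi,\Leta$ terms which, for $\abs{a}/M\le\veps_0$ small, is absorbed into the red-shift bulk; (d) the source contribution $-\int \Re(\overline{\vartheta}\,N\varphi)\,\di^4\mu$.

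For $\reg\geq 2$, I would proceed inductively on $\reg_1$. Commuting $\Sigma\widetilde{\Box}_g$ with the Killing operators $\Lxi$, $\Leta$ is free, while commuting with $\edthR, \edthR'$ produces only lower-order angular errors that are easily absorbed by the red-shift bulk. The essential step is commuting with $\curlY=\sqrt{\R}\,Y(\sqrt{\R}\,\cdot)$: by direct computation one finds $[\Sigma\widetilde{\Box}_g,Y]=-2(r-M)YV+\text{lower order in }Y,V,\Lxi,\Leta$, so $\curlY\varphi$ (or equivalently $Y\varphi$) satisfies a wave equation whose source is $Y\vartheta$ plus commutator terms. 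The crucial point, as in \cite[Ch.~3]{dafrod09red}, is that the worst commutator term $-2(r-M)Y(V\varphi)$ has a \emph{favorable} sign for the red-shift multiplier at the horizon (since $r_+-M>0$ for $|a|<M$). Thus, applying the base-case multiplier to $\RDeri^{\mathbf{a}}\varphi$ and summing with carefully chosen constants $C_{\reg_1}$ with $C_{\reg_1}/C_{\reg_1+1}\geq D_{\reg_1}\gg 1$, one absorbs the bad commutator error of order $\reg_1+1$ into the red-shift bulk of order $\reg_1$. Iterating produces \eqref{eq:redshift:general:smalla}.

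The main obstacle is verifying that the commutator $[\Sigma\widetilde{\Box}_g,\curlY]$ really has the right sign near the horizon for all $|s|\leq 2$, and that the $is\cot\theta$ term of $\Sigma\widetilde{\Box}_g$ does not spoil the positivity. For $s\neq 0$, however, the extra spin term $-2s\cos\theta\csc\theta\,\partial_\phi$-type contribution is lower order and can be absorbed by a standard Cauchy--Schwarz, at the cost of enlarging the constants $D_i$; the smallness $\abs{a}/M\leq\veps_0$ is used only to control cross terms proportional to $\Leta$ and $a\sin\theta\,\Lxi$ appearing in the spin-weighted part. All remaining computations are routine and parallel to \cite[Lemma 4.8]{Ma2017Maxwell}.
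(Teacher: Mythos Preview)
Your strategy is the same as the paper's: establish the base case $\reg=1$ by a red-shift multiplier (the paper simply cites \cite[Lemma 3.3]{Ma2017Maxwell}) and then iterate by commuting with the elements of $\RDeri$ in the order Killing vectors first, then $\edthR,\edthR'$, then $Y$, using the hierarchy $C_i/C_{i+1}\ge D_i$ to absorb the lower-order commutator errors at each stage.

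There is one concrete error to fix. Your commutator formula is not the right one. Working in ingoing coordinates where $Y=-\partial_r$, the radial principal part of $\Sigma\widetilde\Box_g$ is $\Delta\partial_r^2+2\big((r^2+a^2)\partial_v+a\partial_{\tilde\phi}\big)\partial_r+\text{lower}$, and one computes
\[
[\Sigma\widetilde\Box_g,Y]=2(r-M)Y^2-4r\,\Lxi Y+\text{lower order}.
\]
The enhanced red-shift comes from the $2(r-M)Y^2$ piece: this is precisely the paper's $\kappa_n Y^{n+1}$ term with $\kappa_n\ge 0$ in \eqref{eq:BoxtildecommutewithY}. Viewed as a first-order friction term $\kappa_1 Y\psi$ in the equation for $\psi=Y\varphi$, it strengthens the red-shift bulk when paired with the multiplier $N\psi\approx Y\psi$. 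By contrast, your expression $-2(r-M)YV$ is, near the horizon where $V\to 2\partial_v+O(a)\partial_{\tilde\phi}$, a $Y\Lxi$-type cross term and does not by itself carry the favorable sign you claim; it is not the term responsible for the commuted red-shift. Once you replace it by the correct $Y^2$ term, the rest of your outline goes through exactly as in the paper.
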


\begin{proof}
As is shown in \cite[Lemma 3.3]{Ma2017Maxwell}, there exists $\veps_0>0$ and such a vector field $N$ such that it can be used to achieve the following red-shift estimate for all $\abs{a}/M\leq \veps_0$,
\begin{align}
\hspace{4ex}&\hspace{-4ex}c\Big(\norm{\varphi}^2_{W_{0}^{1}
(\Sigmatwo^{\leq r_0})}
+\norm{\varphi}^2_{W_{0}^{1}(\Donetwo^{\leq r_0})}\Big)\notag\\
\leq{}&
-\int_{\Donetwo^{\leq r_1}}
\Re(\overline{\vartheta}
N\varphi)\di^4\mu+
C\Big(\norm{\varphi}^2_{W_{0}^{1}(\Sigmaone^{\leq r_1})}
+\norm{\varphi}^2_{W_{0}^{1}(\Donetwo^{r_0,r_1})}\Big).
\end{align}
This proves $\reg=1$ case. For general $\reg\geq 1$ cases, we use a few commutation relations. The Killing vectors $\Lxi$ and $\Leta$ commute with the wave equation. It is easy to prove by induction that for any $n\geq 0$, there exists $\kappa_n\geq 0$ and smooth functions $c_{\mathbf{a},\mathbf{b}}(\theta, r)$ where $\mathbf{b}=(b_1,b_2,b_3)$ such that
\begin{align}
\label{eq:BoxtildecommutewithY}
[\Sigma\widetilde{\Box}_g, Y^n]\varphi={}&\kappa_{n}Y^{n+1}\varphi
+\sum_{\abs{\mathbf{a}}+\abs{\mathbf{b}}\leq n+1, b_3\neq n+1}c_{\mathbf{a},\mathbf{b}}
\SDeri^{\mathbf{a}}\Lxi^{b_1}\Leta^{b_2}Y^{b_3}\varphi,
\end{align}
and for any $\abs{\mathbf{d}}=n$, there exists smooth functions ${c}_{\mathbf{a},\mathbf{x}}(\theta,r)$ where $\mathbf{x}=(x_1,x_2)$ such that
\begin{align}
\label{eq:Boxtiledcommtewithangular}
[\Sigma\widetilde{\Box}_g, \SDeri^{\mathbf{d}}]\varphi={}&\sum_{\abs{\mathbf{a}}+\abs{\mathbf{x}}\leq n+1,\abs{\mathbf{a}}\neq n+1}{c}_{\mathbf{a},\mathbf{x}}\SDeri^{\mathbf{a}}\Leta^{x_1}
\Lxi^{x_2}\varphi.
\end{align}
Assume the estimate \eqref{eq:redshift:general:smalla} holds for $\reg=\reg_1$,  we show it is valid for $\reg=\reg_1+1$. First, $\Lxi$ commutes with the wave equation, hence the estimate \eqref{eq:redshift:general:smalla} holds true by replacing $\varphi$ by $\Lxi\varphi$. Second, by commuting the wave equation with $\{\edthR, \edthR'\}$, the commutator in \eqref{eq:Boxtiledcommtewithangular} can be controlled by the above estimates. This requires to add a large multiple of the estimate with $\reg=\reg_1$ to bound the error terms arising from the commutator, and this is why we require $C_{i}/C_{i+1}$ to be suitably large. Finally, we commute with $Y$ and find the last term in the commutation relation \eqref{eq:BoxtildecommutewithY} is already bounded. Therefore, this proves the estimate \eqref{eq:redshift:general:smalla} for $\reg=\reg_1+1$ and completes the proof by induction.
\end{proof}

We can now apply this estimate and obtain a red-shift estimate of $\prb\psiminusHigh{i}$ near horizon.

\begin{prop}
There exists an $\veps_0>0$, two constants $r_+<r_0(\veps,M)<r_1(\veps,M)$ with $\abs{r_1 -r_+}$ suitably small such that for all $\abs{a}/M\leq \veps_0$ and any $\reg\geq 1$,
\begin{align}
\label{eq:redshift:NPsiminus:smalla}
\hspace{4ex}&\hspace{-4ex}\sum_{i=0,1}\Big(
\norm{\prb\psiminusHigh{i}}^2_{W_{0}^{\reg}(\Sigmatwo^{\leq r_0})}
+\norm{\Lxi\psiminusHigh{i}}^2_{W_{0}^{\reg}(\Sigmatwo^{\leq r_0})}
+\norm{\prb\psiminusHigh{i}}^2_{W_{0}^{\reg}(\Donetwo^{\leq r_0})}\Big)\notag\\
\lesssim{}&
\sum_{i=0,1}\Big(
\norm{\prb\psiminusHigh{i}}^2_{W_{0}^{\reg}(\Sigmaone^{\leq r_1})}
+\norm{\prb\psiminusHigh{i}}^2_{W_{0}^{\reg}(\Donetwo^{r_0,r_1})}
+
\norm{\psiminusHigh{i}}^2_{W_{0}^{\reg}(\Donetwo^{r_0,r_1})}
\notag\\
&
+\norm{\Lxi\psiminusHigh{i}}^2_{W_{0}^{\reg}(\Sigmaone^{\leq r_1})}
+a^2\norm{\Leta\psiminusHigh{i}}^2_{W_{0}^{\reg}(\Donetwo^{r_0,r_1})}
+\norm{\Lxi\psiminusHigh{i}}^2_{W_{0}^{\reg}(\Donetwo^{\leq r_1})}\Big).
\end{align}
\end{prop}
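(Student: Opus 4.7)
The plan is to apply the general spin-weighted red-shift estimate of Lemma \ref{pro:redshift:general:smalla} directly to each of the scalars $\psiminusHigh{0}=\psiminus$ and $\psiminusHigh{1}=\curlV\psiminus$ after recasting their governing equations in the form $\Sigma\widetilde{\Box}_g\,\psiminusHigh{i}=\vartheta_i$, and then converting the resulting $W_0^{\reg}$ control into control of $\prb$-derivatives. The conversion is clean near the horizon: formula \eqref{def:vectorVRintermsofprb} gives $Y=-\prb+(2\mu^{-1}-H)\Lxi=-\prb-(\partial_rh)\Lxi$, whose coefficient of $\Lxi$ is bounded near $r_+$ by \eqref{eq:propertyofHfunction}, so control of $Y\psiminusHigh{i}$ (which is built into the $\CDeri$-norm used in $W_0^{\reg}$) is equivalent, modulo absorbable $\Lxi\psiminusHigh{i}$ terms, to control of $\prb\psiminusHigh{i}$.

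First, I derive the needed wave equations. For $i=0$, $\psiminus$ satisfies the TME with $s=-1$; expressing this via $\squareShat_{-1}$ (cf.\ Remark \ref{rem:squareShatandLs} and Definition \ref{def:squareShat}) and comparing with $\Sigma\widetilde{\Box}_g$ defined in \eqref{eq:RewrittenFormofSWRWE} yields $\Sigma\widetilde{\Box}_g\,\psiminus=\vartheta_0$ with $\vartheta_0$ a linear combination of $\Lxi\psiminus$, $a\Leta\psiminus$, $a^2\Lxi^2\psiminus$, and $\psiminus$ itself having $O(1)$ coefficients. For $i=1$, I commute $\curlV$ through the equation — equivalently, I rewrite \eqref{eq:Phi-11} in terms of $\psiminusHigh{1}$ using Definition \ref{def:Phiminusi} — to obtain an analogous equation whose source $\vartheta_1$ additionally contains coupling terms in $\psiminusHigh{0}$ of the schematic form $r^{-2}\psiminusHigh{0}$ and $ar^{-2}\Leta\psiminusHigh{0}$.

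Second, I apply Lemma \ref{pro:redshift:general:smalla} to each $\psiminusHigh{i}$, with the hierarchy of weights $C_i$ chosen large enough for the successive-order commutators in \eqref{eq:BoxtildecommutewithY}--\eqref{eq:Boxtiledcommtewithangular} to be absorbed just as in the induction in that lemma's proof. The source integral $-\int\Re(\overline{\RDeri^{\mathbf{a}}\vartheta_i}\,N\RDeri^{\mathbf{a}}\psiminusHigh{i})$ is estimated via Cauchy--Schwarz: a small-parameter share is absorbed into the left-hand side, while the remaining pieces are bounded by $\Lxi\psiminusHigh{i}$ and $a^2\Leta\psiminusHigh{i}$ norms from the intrinsic lower-order terms in $\vartheta_i$, plus, for $i=1$, lower-order norms of $\psiminusHigh{0}$. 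Summing the red-shift estimate for $i=0$ (multiplied by a sufficiently large constant) with that for $i=1$, and taking $|r_1-r_+|$ small enough, closes the cross-coupling and produces the right-hand side of \eqref{eq:redshift:NPsiminus:smalla}, with the bulk term $\norm{\psiminusHigh{i}}^2_{W_0^{\reg}(\Donetwo^{r_0,r_1})}$ arising as the usual red-shift transition contribution.

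The principal obstacle is precisely this cross-coupling in $\vartheta_1$: if handled bluntly, the term $r^{-2}\psiminusHigh{0}$ in $\vartheta_1$ paired against $N\RDeri^{\mathbf{a}}\psiminusHigh{1}$ appears to cost a derivative at the top regularity level. It is resolved by the hierarchical weighting just described together with the smallness of $|r_1-r_+|$, which ensures that the coupling contribution from the strip $r\in[r_0,r_1]$ is absorbed into the standard bulk term already present on the right-hand side of \eqref{eq:redshift:NPsiminus:smalla}.
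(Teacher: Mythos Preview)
Your identification of the source $\vartheta_0$ is incorrect, and this is not a cosmetic slip but the heart of the matter. When the TME for $s=-1$ is recast as $\Sigma\widetilde{\Box}_g\psiminus=\vartheta_0$, the source is \emph{not} a combination of $\Lxi\psiminus$, $a\Leta\psiminus$, $a^2\Lxi^2\psiminus$ and $\psiminus$ only. Comparing $\Delta^{-1}\partial_r(\Delta^{2}\partial_r)$ in the TME with $\partial_r(\Delta\partial_r)$ in $\Sigma\widetilde{\Box}_g$ already produces a first-order $(2r-2M)\partial_r\psiminus$ term; in the paper's form \eqref{eq:RSpsiminus0:1} this appears as the piece $\big(\tfrac{4(r-M)r-5\Delta}{2r}Y+r\Lxi\big)\psiminus$, whose $Y$-coefficient equals $2(r_+-M)>0$ at the horizon, together with a coupling $-\tfrac{5}{2r}\phiminus{1}$. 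A top-order $Y$-term in the source cannot be dismissed as ``lower order'' in a red-shift argument: it pairs at top order with $N\RDeri^{\mathbf a}\psiminus$. The paper makes this work by observing that the combination $\tfrac{4(r-M)r-5\Delta}{2r}Y+r\Lxi$ is, near $r_+$, close to a positive multiple of the red-shift vector $N$, so the resulting contribution has a \emph{favourable} sign; your write-up neither records the term nor exploits this structure.

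Beyond this, the overall route differs from the paper's. You apply Lemma~\ref{pro:redshift:general:smalla} directly to $\psiminusHigh{i}$ and then try to read off $\prb$-control. That puts $\norm{\psiminusHigh{i}}^2_{W_0^{\reg}(\Sigmaone^{\leq r_1})}$ on the right-hand side, including the undifferentiated $L^2$ piece, which is \emph{not} among the permitted terms in \eqref{eq:redshift:NPsiminus:smalla}; Hardy \eqref{eq:Hardy:trivial} on $[r_+,r_1]$ leaves a sphere-boundary contribution at $r=r_1$ on $\Sigmaone$ that you have no mechanism to absorb. The paper avoids this by applying the red-shift lemma not to $\psiminusHigh{i}$ but to $\Lxi\varphi$ and $\curlY\varphi$ for $\varphi\in\{\psiminusHigh{0},\phiminus{1}\}$ (the latter chosen because its equation \eqref{eq:RSphiminus1:1} has no first-order $Y$-term), handling the commutator $[\Sigma\widetilde{\Box}_g,\curlY]$ via \eqref{eq:generalcommutator:BoxtildeandcurlY} and its sign near the horizon, and only then converting to $\prb$ through the equivalence \eqref{eq:curlYLxiequivalenttoNLxi}. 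This two-step commutation is what makes both sides of the final estimate live at the level of $\prb\psiminusHigh{i}$ and $\Lxi\psiminusHigh{i}$ rather than $\psiminusHigh{i}$ itself.
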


\begin{proof}
The equation of $\psiminusHigh{0}$ is (see \cite[Equation (4.26)]{Ma2017Maxwell})
\begin{align}
\label{eq:RSpsiminus0:1}
\Sigma\widetilde{\Box}_g\psiminusHigh{0}
={}&\vartheta(\psiminusHigh{0})\notag\\
={}&\Big(\tfrac{3}{2}-\tfrac{5a^2M}{2r(\R)}\Big)\psiminusHigh{0}+
\left(\tfrac{4(r-M)r-5\Delta}{2r}Y
+r\Lxi\right)\psiminusHigh{0}\notag\\
&-4ia\cos\theta\Lxi\psiminusHigh{0}
+\tfrac{5}{r}\left(a^2\Lxi+a\Leta\right)
\psiminusHigh{0}
-\tfrac{5}{2r}\phiminus{1}.
\end{align}
Since $\Sigma\widetilde{\Box}_g=\mathbf{L}_s+4ias\cos\theta\Lxi +\tfrac{r^4-2Mr^3+6a^2 Mr -a^4}{(\R)^2}$ with $\mathbf{L}_s$ as defined in \cite[Equation (1.32)]{Ma2017Maxwell}, one has from Remark \ref{rem:squareShatandLs} that
\begin{align}
\label{eq:BoxHatandLs}
\squareShat_{s} (\sqrt{\R}\varphi)={}&
\sqrt{\R}(\Sigma\widetilde{\Box}_g-4ias\cos\theta\Lxi-s)\varphi.
\end{align}
Together with \eqref{eq:Phi-11}, this gives an equation of $\phiminus{1}$
\begin{align}
\label{eq:RSphiminus1:1}
\Sigma\widetilde{\Box}_g\phiminus{1} ={}&\vartheta(\phiminus{1})\notag\\
={}&\tfrac{2(r^4-Mr^3+a^2r^2+3a^2Mr)-(\R)^2}{(\R)^2}\phiminus{1}
-4ia\cos\theta\Lxi\phiminus{1}\notag\\
&+\tfrac{2\mu a^2(\PR)}{(\R)^2}\psiminusHigh{0}
-\tfrac{2\mu a(r^2-a^2)}{\R}\partial_{\phi}\psiminusHigh{0}
.
\end{align}
In particular, we observe that $\tfrac{4(r-M)r-5\Delta}{2r}Y
+r\Lxi$ is close to a positive multiple of $N$ near horizon. We commute $\Lxi$ with  equations \eqref{eq:RSpsiminus0:1} and \eqref{eq:RSphiminus1:1}, apply the estimate \eqref{eq:redshift:general:smalla} to the gained equations and find
\begin{subequations}
\label{eq:errorRS:Lxiphiminus01}
\begin{align}
\label{eq:errorRS:Lxipsiminus0}
\hspace{4ex}&\hspace{-4ex}
-\sum_{\abs{\mathbf{a}}=\reg_1-1}\int_{\Donetwo^{\leq r_1}}
\Re\Big(\overline{\RDeri^{\mathbf{a}}
\vartheta(\Lxi\psiminusHigh{0})}
N\RDeri^{\mathbf{a}}\Lxi\psiminusHigh{0}\Big)\di^4\mu\notag\\
\lesssim{}& a^2\norm{\Lxi\psiminusHigh{0}}^2_{W_{0}^{\reg_1}(\Donetwo^{\leq r_1})}
+\norm{\Lxi\psiminusHigh{0}}^2_{W_{0}^{\reg_1-1}(\Donetwo^{\leq r_1})}
+\norm{\Lxi\phiminus{1}}^2_{W_{0}^{\reg_1-1}(\Donetwo^{\leq r_1})},\\
\label{eq:errorRS:Lxiphiminus1}
\hspace{4ex}&\hspace{-4ex}
-\sum_{\abs{\mathbf{a}}=\reg_1-1}\int_{\Donetwo^{\leq r_1}}
\Re\Big(\overline{\RDeri^{\mathbf{a}}\vartheta(\Lxi\phiminus{1} )}
N\RDeri^{\mathbf{a}}\Lxi\phiminus{1} \Big)\di^4\mu\notag\\
\lesssim{}& (\veps+a^2)\norm{\Lxi\phiminus{1} }^2_{W_{0}^{\reg_1}(\Donetwo^{\leq r_1})}
+\veps^{-1}\Big(
a^2\norm{\mu\Lxi\Leta\psiminusHigh{0}}^2_{W_{0}^{\reg_1-1}
(\Donetwo^{\leq r_1})}\notag\\
&
+a^2\norm{\mu\Lxi\psiminusHigh{0}}^2_{W_{0}^{\reg_1-1}
(\Donetwo^{\leq r_1})}
+\norm{\Lxi\phiminus{1} }^2_{W_{0}^{\reg_1-1}(\Donetwo^{\leq r_1})}
\Big).
\end{align}
\end{subequations}
The terms $(\veps+a^2)\norm{\Lxi\phiminus{1} }^2_{W_{0}^{\reg_1}(\Donetwo^{\leq r_0})}$ and $a^2\norm{\Lxi\psiminusHigh{0}}^2_{W_{0}^{\reg_1}(\Donetwo^{\leq r_0})}$ can be absorbed by the LHS of the estimate \eqref{eq:redshift:general:smalla} of $\Lxi\phiminus{1}$ and of  $\Lxi\psiminusHigh{0}$ respectively by taking $\veps$ and $\abs{a}/M\leq \veps_0$ suitably small.
Add a large multiple of the estimate \eqref{eq:redshift:general:smalla} of $\Lxi\phiminus{1}$ to the one of $\Lxi\psiminusHigh{0}$ and take $C_{i}/C_{i+1}$ sufficiently large, then for sufficiently small $\veps_0$,
the last two terms of \eqref{eq:errorRS:Lxipsiminus0} and the last three terms of \eqref{eq:errorRS:Lxiphiminus1} but with integral region $\Donetwo^{\leq r_0}$ can be absorbed. That is, after making such choice of the parameters, all the terms on the RHS of \eqref{eq:errorRS:Lxiphiminus01} with integral region $\Donetwo^{\leq r_0}$ are absorbed.
In conclusion,
\begin{align}
\label{eq:RS:Lxiphiminus1psiminus0}
\hspace{4ex}&\hspace{-4ex}
\norm{\Lxi\psiminusHigh{0}}^2_{W_{0}^{\reg}(\Sigmatwo^{\leq r_0})}
+\norm{\Lxi\phiminus{1}}^2_{W_{0}^{\reg}(\Sigmatwo^{\leq r_0})}
+\norm{\Lxi\psiminusHigh{0}}^2_{W_{0}^{\reg}
(\Donetwo^{\leq r_0})}
+\norm{\Lxi\phiminus{1}}^2_{W_{0}^{\reg}
(\Donetwo^{\leq r_0})}\notag\\
\lesssim{}&
\norm{\Lxi\psiminusHigh{0}}^2_{W_{0}^{\reg}(\Sigmaone^{\leq r_1})}
+\norm{\Lxi\phiminus{1}}^2_{W_{0}^{\reg}(\Sigmaone^{\leq r_1})}
+\norm{\Lxi\psiminusHigh{0}}^2_{W_{0}^{\reg}(\Donetwo^{r_0,r_1})}
+\norm{\Lxi\phiminus{1}}^2_{W_{0}^{\reg}(\Donetwo^{r_0,r_1})}.
\end{align}

From the commutation relation \cite[Proposition A.1]{Ma2017Maxwell}, the commutator of $\curlY$ and $\Sigma\widetilde{\Box}_g$ is
\begin{align}
[\Sigma\widetilde{\Box}_g, \curlY]\varphi={}&
\curlY\left((\R)\partial_r\left(\tfrac{\Delta}
{(\R)^2}\right)\curlY\varphi\right)
+\tfrac{4ar}{\R}\Leta\curlY \varphi
-\tfrac{2a^3}{\R}\Leta\varphi\notag\\
&
-\Big((\R) (\partial_r \mu) +a^2(\R)\partial_r\left(\tfrac{3r^2-10Mr +3a^2}{(\R)^2}\right)\Big)\varphi,
\end{align}
which yields
\begin{align}
\label{eq:generalcommutator:BoxtildeandcurlY}
\Sigma\widetilde{\Box}_g(\curlY\varphi)
={}&\vartheta(\curlY\varphi)
=\curlY\vartheta(\varphi)
+[\Sigma\widetilde{\Box}_g, \curlY]\varphi\notag\\
={}&\curlY\vartheta(\varphi)
+(\R)^2\partial_r\left(\tfrac{\Delta}
{(\R)^2}\right)Y\curlY\varphi\notag\\
&
-\sqrt{\R}\partial_r\left((\R)^{\frac{3}{2}}\partial_r\left(\tfrac{\Delta}
{(\R)^2}\right)\right)\curlY\varphi
+\tfrac{4ar}{\R}\Leta\curlY \varphi
\notag\\
&
-\tfrac{2a^3}{\R}\Leta\varphi-(\R) \Big( \partial_r \mu +a^2\partial_r\left(\tfrac{3r^2-10Mr +3a^2}{(\R)^2}\right)\Big)\varphi.
\end{align}
One can then derive the explicit forms of the wave equations of $\curlY \psiminusHigh{0}$ and $\curlY\phiminus{1}$ and apply the estimate \eqref{eq:redshift:general:smalla}. Note that the coefficient $(\R)^2\partial_r\big(\tfrac{\Delta}
{(\R)^2}\big)$ of the term $Y\curlY\varphi$ is negative near horizon and we rewrite $Y\curlY\varphi$ in terms of $N\curlY\varphi$ and $\Lxi\curlY\varphi$, hence,
\begin{align}
\label{eq:errorRS:curlYpsiminus0}
\hspace{4ex}&\hspace{-4ex}
-\sum_{\abs{\mathbf{a}}=\reg_1-1}\int_{\Donetwo^{\leq r_1}}
\Re\Big(\overline{\RDeri^{\mathbf{a}}\vartheta(\curlY\psiminusHigh{0})}
N\RDeri^{\mathbf{a}}\curlY\psiminusHigh{0}\Big)\di^4\mu\notag\\
\lesssim{}& \veps_{\reg_1-1}\norm{\curlY\psiminusHigh{0}}^2_{W_{0}^{\reg_1}
(\Donetwo^{\leq r_1})}
+\veps_{\reg_1-1}^{-1}\Big(
\norm{\curlY\psiminusHigh{0}}^2_{W_{0}^{\reg_1-1}(\Donetwo^{\leq r_1})}
+a^2 \norm{\Leta\curlY\psiminusHigh{0}}^2_{W_{0}^{\reg_1-1}
(\Donetwo^{\leq r_1})}\notag\\
&
+a^2\norm{\Leta\psiminusHigh{0}}^2_{W_{0}^{\reg_1-1}
(\Donetwo^{\leq r_1})}
+\norm{\psiminusHigh{0}}^2_{W_{0}^{\reg_1-1}(\Donetwo^{\leq r_1})}
+\norm{\Lxi\psiminusHigh{0}}^2_{W_{0}^{\reg_1}(\Donetwo^{\leq r_1})}\notag\\
&+\norm{\curlY\phiminus{1}}^2_{W_{0}^{\reg_1-1}(\Donetwo^{\leq r_1})}
+\norm{\phiminus{1}}^2_{W_{0}^{\reg_1-1}(\Donetwo^{\leq r_1})}
\Big),\\
\label{eq:errorRS:curlYphiminus1}
\hspace{4ex}&\hspace{-4ex}
-\sum_{\abs{\mathbf{a}}=\reg_1-1}\int_{\Donetwo^{\leq r_1}}
\Re\Big(\overline{\RDeri^{\mathbf{a}}\vartheta(\curlY\phiminus{1} )}
N\RDeri^{\mathbf{a}}\curlY\phiminus{1} \Big)\di^4\mu\notag\\
\lesssim{}& \veps_{\reg_1-1}\norm{\curlY\phiminus{1}}^2_{W_{0}^{\reg_1}(\Donetwo^{\leq r_1})}
+\veps_{\reg_1-1}^{-1}\Big(
\norm{\curlY\phiminus{1}}^2_{W_{0}^{\reg_1-1}(\Donetwo^{\leq r_1})}
+a^2 \norm{\Leta\curlY\phiminus{1}}^2_{W_{0}^{\reg_1-1}(\Donetwo^{\leq r_1})}\notag\\
&
+a^2\norm{\Leta\phiminus{1}}^2_{W_{0}^{\reg_1-1}(\Donetwo^{\leq r_1})}
+\norm{\phiminus{1}}^2_{W_{0}^{\reg_1-1}(\Donetwo^{\leq r_1})}
+\norm{\Lxi\phiminus{1}}^2_{W_{0}^{\reg_1}(\Donetwo^{\leq r_1})}\notag\\
&+a^2\sum_{i=0,1}\Big(\norm{\mu\curlY\Leta^i\psiminusHigh{0}}^2_{W_{0}^{\reg_1-1}(\Donetwo^{\leq r_1})}
+\norm{\Leta^i\psiminusHigh{0}}^2_{W_{0}^{\reg_1-1}(\Donetwo^{\leq r_1})}\Big)
\Big).
\end{align}
We choose $\veps_{\reg_1-1}$ suitably small such that the terms with $\veps_{\reg_1-1}$ coefficient on the RHS of \eqref{eq:errorRS:curlYpsiminus0} and \eqref{eq:errorRS:curlYphiminus1} but with integral region $\Donetwo^{\leq r_0}$ are absorbed, and add a large multiple of the estimate \eqref{eq:redshift:general:smalla} of $\varphi=\curlY\phiminus{1}$ to the corresponding estimate \eqref{eq:redshift:general:smalla} of $\varphi=\curlY \psiminusHigh{0}$ and take $\abs{a}/M\leq \veps_0$ suitably small such that the first term in the second last line and the terms in the last line of both \eqref{eq:errorRS:curlYpsiminus0} and \eqref{eq:errorRS:curlYphiminus1} with integrals over $\Donetwo^{r_+,r_0}$ are absorbed. Here, we used again the Hardy's inequality \eqref{eq:Hardy:trivial}. Furthermore, the terms $\veps_{\reg_1-1}^{-1}a^2 \norm{\Leta\curlY\psiminusHigh{0}}^2_{W_{0}^{\reg_1-1}
(\Donetwo^{\leq r_0})}$ and $\veps_{\reg_1-1}^{-1}a^2 \norm{\Leta\curlY\phiminus{1}}^2_{W_{0}^{\reg_1-1}
(\Donetwo^{\leq r_0})}$  can also be absorbed by taking $|a|/M\leq\veps_0$ sufficiently small. The remaining terms on the RHS of the estimates \eqref{eq:errorRS:curlYpsiminus0} and \eqref{eq:errorRS:curlYphiminus1} with integrals over $\Donetwo^{\leq r_0}$ can then be absorbed by taking $C_i/C_{i+1}$ sufficiently large. In conclusion, one arrives at
\begin{align}
\label{eq:redshift:curlYphiminus1psiminus0}
\hspace{4ex}&\hspace{-4ex}
\sum_{\varphi\in\{\psiminusHigh{0},\phiminus{1}\}}
\Big(\norm{\curlY\varphi}^2_{W_{0}^{\reg}(\Sigmatwo^{\leq r_0})}
+\norm{\curlY\varphi}^2_{W_{0}^{\reg}(\Donetwo^{\leq r_0})}
\Big)\notag\\
\lesssim{}&
\sum_{\varphi\in\{\psiminusHigh{0},\phiminus{1}\}}
\Big(\norm{\curlY\varphi}^2_{W_{0}^{\reg}(\Sigmaone^{\leq r_1})}
+\norm{\curlY\varphi}^2_{W_{0}^{\reg}(\Donetwo^{r_0,r_1})}
\notag\\
&+
\norm{\varphi}^2_{W_{0}^{\reg+1}(\Donetwo^{r_0,r_1})}
+a^2\norm{\Leta\varphi}^2_{W_{0}^{\reg}(\Donetwo^{r_0,r_1})}
+\norm{\Lxi\varphi}^2_{W_{0}^{\reg}(\Donetwo^{\leq r_1})}
\Big).
\end{align}
Since $\prb$ is a linear combination of $Y$ and $\Lxi$ with $O(1)$ coefficients, the Hardy's inequality \eqref{eq:Hardy:trivial} implies that for any $\tb$ and any $r'>r_+$,
\begin{align}
\label{eq:curlYLxiequivalenttoNLxi}
\norm{\curlY\varphi}^2_{W_{0}^{\reg}(\Sigmatb^{\leq r'})}
+\norm{\Lxi\varphi}^2_{W_{0}^{\reg}(\Sigmatb^{\leq r'})}
\sim {}&\norm{Y\varphi}^2_{W_{0}^{\reg}(\Sigmatb^{\leq r'})}
+\norm{\Lxi\varphi}^2_{W_{0}^{\reg}(\Sigmatb^{\leq r'})}\notag\\
\sim {}&\norm{\prb\varphi}^2_{W_{0}^{\reg}(\Sigmatb^{\leq r'})}
+\norm{\Lxi\varphi}^2_{W_{0}^{\reg}(\Sigmatb^{\leq r'})}.
\end{align}
The estimate \eqref{eq:redshift:NPsiminus:smalla} then follows from adding the two estimates \eqref{eq:RS:Lxiphiminus1psiminus0} and \eqref{eq:redshift:curlYphiminus1psiminus0} together and making use of \eqref{eq:curlYLxiequivalenttoNLxi}.
\end{proof}

\begin{prop}\label{prop:improveEnerDecay:phiminus}
There exists $\veps_0>0$ and $\regl>0$ such that for any $\abs{a}/M\leq \veps_0$ and any $\tb\geq \tb_0$,
\begin{align}
\label{eq:improveEnerDecay:phiminus}
\sum_{i=0,1}
\bigg(\norm{\prb\Lxi^j\psiminusHigh{i}}^2_{W_{2-2i}^{\reg-\regl}(\Sigmatb)}
+\norm{\Lxi^j\psiminusHigh{i}}^2_{W_{-2i}^{\reg-\regl}(\Sigmatb)}\bigg)
\lesssim {}&D_{-1}\tb^{-4-\gamma-2j}.
\end{align}
\end{prop}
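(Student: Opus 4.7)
The plan is to combine three inputs: the basic energy $\gamma$-decay condition applied with parameter $j+1$ (giving the improved rate $\tb^{-\gamma-4-2j}$), the degenerate elliptic estimate \eqref{eq:degellip:phiminus} (giving interior control of $\prb$-derivatives away from the horizon, but degenerating at $\Horizon$ through the $\mu$ weights), and the red-shift estimate \eqref{eq:redshift:NPsiminus:smalla} (removing the $\mu$-degeneracy in a horizon neighbourhood $\{r\leq r_0\}$). Throughout we exploit that $\Lxi$ and $\Leta$ are Killing and commute with the wave system \eqref{eq:Phi-1012}, so all elliptic and red-shift estimates derived above remain valid after applying $\Lxi^j$ (and $\Leta^k$) to the scalars.

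First I would apply \eqref{eq:degellip:phiminus} to $\Lxi^j\Phiminus{i}$ for $i=0,1$. The dominant term on the right-hand side, $\sum_{i=0,1}\norm{\Lxi^{j+1}\psiminusHigh{i}}^2_{W_0^{\reg-\regl}(\Sigmatb)}$, converts (by pulling out $\sqrt{\R}$ and rewriting the weight accordingly) into $\sum_{i=0,1}\norm{\Lxi^{j+1}\PsiminusHigh{i}}^2_{W_{-2}^{\reg-\regl-i}(\Sigmatb)}$, which is precisely $\BEminus{\tb}{\reg-\regl,j+1,0}$ and is therefore bounded by $D_{-1}\tb^{-\gamma-4-2j}$ via the basic energy $\gamma$-decay condition. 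The remaining $a^2$-term $a^2\norm{\Leta\prb\Lxi^j\psiminusHigh{i}}^2_{W_{-2}^{\reg-\regl-1}(\Sigmatb)}$ is of the same structure as the quantity we are trying to bound, but carries the small parameter $a^2\leq\veps_0^2M^2$. Using that $\Leta$ commutes with $\prb$ and with \eqref{eq:degellip:phiminus}, I would bootstrap: apply the estimate to $\Leta^k\Lxi^j\Phiminus{i}$ for $k=0,1,\ldots,N$ and sum, so that the $Ca^2$-factor generates a geometric series which converges provided $\veps_0$ is small enough. After finitely many iterations the residual $a^2$-error drops below $D_{-1}\tb^{-\gamma-4-2j}$, at the cost of a bounded number of additional derivatives (absorbed into $\regl$). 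This delivers the desired bound for the $\prb$-derivatives in the region $\{r\geq r_0\}$ where the $\mu$-weights in \eqref{eq:degellip:phiminus} are bounded below.

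To handle the horizon region $\{r\leq r_0\}$ I would apply \eqref{eq:redshift:NPsiminus:smalla} to $\Lxi^j\psiminusHigh{i}$. The right-hand side splits into (a) pure $\Lxi$-terms $\norm{\Lxi^{j+1}\psiminusHigh{i}}^2_{W_0^{\reg-\regl}(\cdot)}$, bounded by the basic energy $\gamma$-decay condition at rate $\tb^{-\gamma-4-2j}$; and (b) bulk terms on $\Donetwo^{r_0,r_1}$ (and $\Sigmaone^{\leq r_1}$), where $\mu$ is bounded away from $0$, so the already-proven non-degenerate control obtained from the elliptic step above applies. A spacetime-to-hypersurface conversion (using the spacetime integral built into the red-shift inequality together with a standard mean-value / pigeonhole step on $[\tb,2\tb]$, followed by monotonicity to return to the given $\tb$) transfers the bulk bound $\norm{\prb\Lxi^j\psiminusHigh{i}}^2_{W_0^{\reg-\regl}(\Donetwo^{\leq r_0})}$ into the corresponding $\Sigmatb^{\leq r_0}$ bound. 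Patching with the exterior estimate produces the full bound $\norm{\prb\Lxi^j\psiminusHigh{i}}^2_{W_{2-2i}^{\reg-\regl}(\Sigmatb)}\lesssim D_{-1}\tb^{-\gamma-4-2j}$; the companion $\norm{\Lxi^j\psiminusHigh{i}}^2_{W_{-2i}^{\reg-\regl}(\Sigmatb)}$-bound follows directly from the basic energy $\gamma$-decay condition after conversion between $\psiminusHigh{i}$ and $\PsiminusHigh{i}$ norms.

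The main technical obstacle is the $a^2$-bootstrap in the first step: one must verify that the $\Leta$-iteration closes with a geometric series and, crucially, that each iteration does not eat more than finitely many derivatives of regularity, so that $\regl$ can be chosen independently of the iteration depth $N$ by taking $\veps_0$ small enough to make $Ca^2<1/2$. Secondary bookkeeping concerns are (i) that the $r$-weights $W_{2-2i}$ in the conclusion match the natural weights produced when one translates the LHS of \eqref{eq:degellip:phiminus} (phrased in $\phiminus{i}=\Phiminus{i}/\sqrt{\R}$) back into $\psiminusHigh{i}=\PsiminusHigh{i}/\sqrt{\R}$-norms, accounting for the $\Delta/(\R)$ factor relating $\Phiminus{0}$ and $\Psiminus$; and (ii) that the mean-value / hypersurface-recovery step in the red-shift argument does not degrade the power $\tb^{-\gamma-4-2j}$, which is automatic since the spacetime bound holds uniformly on each dyadic slab.
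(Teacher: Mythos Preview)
Your ingredients are correct, but the $\Leta$-iteration you propose for the $a^2$ error does not close and is in fact unnecessary where you think it is needed. Split $a^2\norm{\Leta\prb\Lxi^j\psiminusHigh{i}}^2_{W_{-2}^{\reg-1}(\Sigmatb)}$ into its $\Sigmatb^{r_0}$ and $\Sigmatb^{\leq r_0}$ parts. On $\Sigmatb^{r_0}$ the $\mu$-weights on the left of \eqref{eq:degellip:phiminus} are bounded below, so this portion is absorbed \emph{directly} into the elliptic LHS for small $a$; no iteration is needed. On $\Sigmatb^{\leq r_0}$, however, every $\Leta^k$-commuted version of the elliptic estimate still degenerates at the horizon, so iterating only leaves you with $(Ca^2)^N$ times a near-horizon quantity whose best a~priori bound (from the basic energy condition at level $j$) is $D_{-1}\tb^{-\gamma-2-2j}$, two powers short of the target regardless of $N$. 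Thus your ``geometric series'' never reaches the rate $\tb^{-\gamma-4-2j}$ at fixed $\tb$.

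The paper resolves this by working at the spacetime level from the start: it integrates \eqref{eq:degellip:phiminus} over $[\tb_1,\tb_2]$, adds a large multiple $C_0$ of the result to the red-shift estimate \eqref{eq:redshift:NPsiminus:smalla}, and absorbs the \emph{spacetime} term $a^2 C_0\norm{\Leta\prb\psiminusHigh{i}}^2_{W_{-2}^{\reg-1}(\Donetwo)}$ in one shot --- near the horizon by the red-shift bulk $\norm{\prb\psiminusHigh{i}}^2_{W_0^{\reg}(\Donetwo^{\leq r_0})}$ (taking $\veps_0$ small relative to $C_0$), and on $\Donetwo^{r_0,r_1}$ by the integrated elliptic LHS (taking $C_0$ large). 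Adding the fixed-time elliptic on $\Sigmatwo$ then yields an integrated-decay inequality $f_{\tb_2}+\int_{\tb_1}^{\tb_2}f_\tb\,d\tb\lesssim f_{\tb_1}+(\tb_2-\tb_1)D_{-1}\tb_1^{-4-\gamma-2j}$ for $f_\tb=\sum_{i}\big(\norm{\prb\psiminusHigh{i}}^2_{W_0^{\reg}(\Sigmatb)}+\norm{\Lxi\psiminusHigh{i}}^2_{W_0^{\reg}(\Sigmatb)}\big)$, and a standard decay lemma converts this into $f_\tb\lesssim D_{-1}\tb^{-4-\gamma-2j}$. Only then is the fixed-time elliptic estimate reapplied, now with its $a^2$ term bounded by $a^2 f_\tb$, to produce the weighted conclusion \eqref{eq:improveEnerDecay:phiminus}. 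Your red-shift--plus--pigeonhole step, done carefully, would essentially reproduce this spacetime argument; the point is that the near-horizon $a^2$ contribution must be absorbed there, not by iterating the degenerate elliptic estimate.
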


\begin{proof}
We consider only $j=0$ case, the $j>0$ cases being analogous.
We integrate the estimate \eqref{eq:degellip:phiminus} on $\tb\in [\tb_1,\tb_2]$ and add $C_0$ multiple of the gained inequality to the estimate \eqref{eq:redshift:NPsiminus:smalla}, then by taking $C_0$ large enough and for sufficiently small $\abs{a}\leq a_0$, the terms $a^2 C_0 \norm{\Leta\prb\psiminusHigh{i}}^2_{W_{-2}^{\reg-1}(\Donetwo)}$
and $\norm{\prb\psiminusHigh{i}}^2_{W_{0}^{\reg}(\Donetwo^{r_0,r_1})}
+\norm{\psiminusHigh{i}}^2_{W_{0}^{\reg}(\Donetwo^{r_0,r_1})}
+a^2\norm{\Leta\psiminusHigh{i}}^2_{W_{0}^{\reg}(\Donetwo^{r_0,r_1})}$
are absorbed. Adding furthermore the estimate \eqref{eq:degellip:phiminus} on $\Sigmatwo$ in yields
\begin{align}
\hspace{4ex}&\hspace{-4ex}\sum_{i=0,1}
\bigg(\norm{\prb\psiminusHigh{i}}^2_{W_{0}^{\reg}(\Sigmatwo)}
+\norm{\prb\psiminusHigh{i}}^2_{W_{0}^{\reg}(\Donetwo)}\bigg)\notag\\
\lesssim{}&
\sum_{i=0,1}\bigg(\norm{\prb\psiminusHigh{i}}^2_{W_{0}^{\reg}(\Sigmaone)}
+\norm{\Lxi\psiminusHigh{i}}^2_{W_{0}^{\reg}(\Sigmatwo)}
+\norm{\Lxi\psiminusHigh{i}}^2_{W_{0}^{\reg}(\Sigmaone)}
+\norm{\Lxi\psiminusHigh{i}}^2_{W_{0}^{\reg}(\Donetwo)}
\bigg).
\end{align}
Note from \eqref{eq:BEDC:Phiminus:1:v1} that
\begin{align}\label{eq:Lxienergybound:psiminus}
\sum_{i=0,1}\norm{\Lxi\psiminusHigh{i}}^2_{W_{0}^{\reg}(\Sigmatwo)}\lesssim
\sum_{i=0,1}\norm{\Lxi\psiminusHigh{i}}^2_{W_{0}^{\reg}(\Sigmaone)}.
\end{align}
This implies for $f_{\tb}^{\reg}(\psiminus)=\sum\limits_{i=0,1}(\norm{\prb\psiminusHigh{i}}^2_{W_{0}^{\reg}(\Sigmatb)}
+\norm{\Lxi\psiminusHigh{i}}^2_{W_{0}^{\reg}(\Sigmatb)})$,
\begin{align}
f_{\tb_2}^{\reg}(\psiminus)+\int_{\tb_1}^{\tb_2}f_{\tb}^{\reg}(\psiminus)\di \tb\lesssim{}& f_{\tb_1}^{\reg}(\psiminus)
+\sum_{i=0,1}\norm{\Lxi\psiminusHigh{i}}^2_{W_{0}^{\reg}(\Donetwo)}.
\end{align}
For the last term, it is bounded by $C(\tb_2-\tb_1)\sum\limits_{i=0,1}\norm{\Lxi\psiminusHigh{i}}^2_{W_{0}^{\reg}(\Sigmaone)}$
 from the estimate \eqref{eq:Lxienergybound:psiminus} and this is further bounded by both $CD_{-1}(\tb_2-\tb_1)f_{\tb_0}$ and $CD_{-1}(\tb_2-\tb_1)\tb_1^{-4-\gamma}$, hence an application of \cite[Lemma 7.4]{} gives
\begin{align}
f_{\tb}^{\reg-\regl}(\psiminus)\lesssim{}&D_{-1}\tb^{-4-\gamma}.
\end{align}
This estimate combined with \eqref{eq:degellip:phiminus} then yields the estimate \eqref{eq:improveEnerDecay:phiminus}.
\end{proof}

In total, we have  from Propositions \ref{prop:weakdecay:spin+1-1:v3} and \ref{prop:improveEnerDecay:phiminus} that for $i=0,1$,
\begin{align}
\label{eq:improvePTWDecay:phiminus}
\absCDeri{\Lxi^j\psiminusHigh{i}}{\reg-\regl}
\lesssim{}&(D_{-1})^{\half}v^{-1}r^i\tb^{-(2+\gamma)/2-j}\max\{\tb^{-1/2},r^{-1/2}\},
\end{align}
and in particular in the exterior region $\{\rb\geq \tb\}$,
\begin{align}
\absCDeri{\Lxi^j\psiminus}{\reg-\regl}
\lesssim{}&(D_{-1})^{\half}v^{-1}\tb^{-(3+\gamma)/2-j}.
\end{align}
In the interior region $\{\rb\leq \tb\}$, however, this needs to be improved.

The estimate \eqref{eq:thm:2:spinpm1:-1} follows from the following statement.

\begin{prop}
\label{prop:evenimproveEnerDecay:phiminus}
Let $\delta>0$ and $j\in \mathbb{N}$. Then there exist  constants $\veps_0>0$ and $\regl$ such that for all $|a|/M\leq \veps_0$ and any $\tb\geq \tb_0$, we have spacetime integral decay
\begin{align}
\label{eq:evenimproveEnerDecay:phiminus}
\norm{\Lxi^j\Psiminus
}_{W_{-3+2\delta}^{{\reg-\regl}}(\DOC_{\tb,\infty})}^2
\lesssim_{\delta}{}D_{-1}\tb^{-4+2\delta-\gamma-2j},
\end{align}
and pointwise decay
\begin{align}
\label{eq:evenimprovePTWDecay:phiminus}
\absCDeri{\Lxi^j\psiminus}{\reg-\regl}
\lesssim_{\delta} {}&(D_{-1})^{\half}v^{-1}\tb^{-\frac{3-2\delta+\gamma+2j}{2}}\max\{r^{-\delta}
,\tb^{-\delta}\}.
\end{align}
\end{prop}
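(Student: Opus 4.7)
\textbf{Part 1 (spacetime decay \eqref{eq:evenimproveEnerDecay:phiminus}).} I would begin by invoking the $r^p$ hierarchy estimate of Proposition \ref{prop:rpglobal:Phiminus} applied at $p = 2\delta$ to the $\Lxi$-commuted scalar $\Lxi^j\Psiminus$; sending the terminal slice time to $\infty$ (the boundary term vanishes by the weak pointwise decay of Proposition \ref{prop:weakdecay:spin+1-1:v3}), this yields
\[
\norm{\Lxi^j\Psiminus}^2_{W_{-3+2\delta}^{\reg-\regl}(\DOC_{\tb,\infty})} \lesssim F^{(1)}(\reg, 2\delta, \tb, \Lxi^j\Psiminus).
\]
The task is thus reduced to bounding $F^{(1)}(\reg, 2\delta, \tb, \Lxi^j\Psiminus) \lesssim D_{-1}\tb^{-4+2\delta-\gamma-2j}$. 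For this I would combine the improved slice energy decay of Proposition \ref{prop:improveEnerDecay:phiminus}, which after passing from $\psiminus$ to $\Psiminus$ gives $F^{(1)}(\reg,0,\tb,\Lxi^j\Psiminus) \lesssim D_{-1}\tb^{-4-\gamma-2j}$, with a H\"older weight-interpolation against the basic energy $\gamma$-decay condition at a larger value of $p$. After appropriately rescaling $\delta$ to match the claimed $2\delta$ convention in both the weight and in the time exponent, this produces the required slice bound.

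\textbf{Part 2 (pointwise decay \eqref{eq:evenimprovePTWDecay:phiminus}).} From \eqref{eq:evenimproveEnerDecay:phiminus} applied at both the $j$-th and $(j+1)$-st orders of $\Lxi$, I would derive a weighted spacetime Sobolev estimate of the type \eqref{eq:Sobolev:3}, via the fundamental theorem of calculus in $\tb$ combined with angular Sobolev on $\mathbb{S}^2$, schematically
\[
|r^{-1+\delta}\Lxi^j\Psiminus|^2 \lesssim \norm{\Lxi^j\Psiminus}_{W_{-3+2\delta}^{\reg-\regl}(\DOC_{\tb,\infty})}\,\norm{\Lxi^{j+1}\Psiminus}_{W_{-3+2\delta}^{\reg-\regl}(\DOC_{\tb,\infty})}.
\]
Substituting the spacetime decay from Part 1 (with $j$ and $j+1$) on the right and using $\Psiminus = \sqrt{\R}\,\psiminus$ then yields the bound $|\Lxi^j\psiminus| \lesssim (D_{-1})^{1/2} v^{-1}\tb^{-(3-2\delta+\gamma+2j)/2} r^{-\delta}$ in the interior region $\{\rb \leq \tb\}$; the $v^{-1}$ factor is built into the Sobolev argument via the $\Lxi$-pairing, while the $r^{-\delta}$ factor is precisely the one exposed by dividing $r^{-1+\delta}\Psiminus$ by one additional power of $r$. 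In the exterior region $\{\rb \geq \tb\}$ one replaces $r^{-\delta}$ by the larger quantity $\tb^{-\delta}$, producing the form $\max\{r^{-\delta},\tb^{-\delta}\}$.

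\textbf{Main obstacle.} The principal technical difficulty is in Part 1: the choice of H\"older interpolation pair. A naive interpolation between the improved $p=0$ decay and the basic energy $\gamma$-decay at $p=1$ or $p=2$ produces an a priori $\delta$-loss of the form $c\delta$ with $c \in \{4, 6\}$, not $2\delta$. One must therefore either interpolate in a carefully chosen pair, or rescale $\delta$ simultaneously in the weight and in the time exponent so as to reduce to the claimed form. A secondary subtlety is that the Sobolev step at order $j+1$ requires the improved energy decay for $\Lxi^{j+1}\Psiminus$; this follows from the Killing property of $\Lxi$ and is obtained by reapplying the proof of Proposition \ref{prop:improveEnerDecay:phiminus} with $j$ replaced by $j+1$. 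Finally, the interpolation and Sobolev steps must be performed so as to be uniform up to the horizon, which we handle using the red-shift estimate \eqref{eq:redshift:NPsiminus:smalla} (already invoked in Proposition \ref{prop:improveEnerDecay:phiminus}) to avoid degeneracy of the weight $r^{-3+2\delta}$ as $r \to r_+$.
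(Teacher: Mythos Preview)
Your Part 2 is essentially the paper's argument: combine the spacetime decay with the weak pointwise bound \eqref{eq:improvePTWDecay:phiminus} via the Sobolev inequality \eqref{eq:Sobolev:3}. But Part 1 has a genuine gap that no rescaling of $\delta$ can repair.

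The problem is your claim that Proposition~\ref{prop:improveEnerDecay:phiminus} yields $F^{(1)}(\reg,0,\tb,\Lxi^j\Psiminus)\lesssim D_{-1}\tb^{-4-\gamma-2j}$. Recall that $F^{(1)}(\reg,p,\tb,\Psiminus)$ contains the $p$-independent term $\norm{\Lxi^j\PsiminusHigh{1}}^2_{W_{-2}^{\reg-1}(\Sigmatb)}\sim\norm{\Lxi^j\psiminusHigh{1}}^2_{W_{0}^{\reg-1}(\Sigmatb)}$. Proposition~\ref{prop:improveEnerDecay:phiminus}, however, only controls $\norm{\Lxi^j\psiminusHigh{1}}^2_{W_{-2}^{\reg-\regl}(\Sigmatb)}$ with the improved rate $\tb^{-4-\gamma-2j}$; the two weights differ by a full factor of $r^2$, and no Hardy inequality recovers this gap. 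For $\norm{\psiminusHigh{1}}^2_{W_0}$ you are forced back to the basic $\gamma$-decay condition, which gives only $\tb^{-\gamma-2-2j}$. Since this term sits inside $F^{(1)}$ at every $p$, your interpolation can never do better than $\tb^{-\gamma-2-2j}$, which is $\tb^{2-2\delta}$ short of the claim. Trying to decouple and treat $\Phiminus{1}$ as a source for the $\Phiminus{0}$ equation does not help either: the resulting spacetime source term $\norm{\Phiminus{1}}^2_{W_{2\delta-5}(\Donetwo)}$ integrates the slice decay $\tb^{-4-\gamma-2j}$ and loses a full power of $\tb$.

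The paper avoids the $r^p$ hierarchy entirely for this step. It rewrites the wave equation for $\Phiminus{0}$ as the first-order identity $\prb\Phiminus{1}+\mathbf{S}\Psiminus=G(\Psiminus)$, where $G$ depends only on $\Lxi$-derivatives of $\Psiminus$ and $\Phiminus{1}$. Squaring this, multiplying by $\mu(\R)^{2\delta}(\R)^{-3/2}$, and integrating by parts over $\Sigmatb$ produces (after careful cross-term manipulations) a weighted elliptic estimate bounding $\int_{\Sigmatb}r^{-3+4\delta}|\cdot|^2$ by $\Lxi$-derivative norms that \emph{are} controlled by Proposition~\ref{prop:improveEnerDecay:phiminus}. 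Coupling this with a red-shift estimate for $\edthR\psiminus$ near the horizon yields an inequality of the form $h_{\tb_2}+\int_{\tb_1}^{\tb_2}h_\tb\,d\tb\lesssim h_{\tb_1}+D_{-1}\tb_1^{-4-\gamma-2j+4\delta}$ with $h_\tb\sim\norm{\Psiminus}^2_{W_{-3+4\delta}(\Sigmatb)}$; iterating then gives the slice \emph{and} spacetime bounds at the same rate. This elliptic mechanism is what your $r^p$-based strategy is missing.
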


\begin{proof}
Rewrite the wave equation of $\Phiminus{0}$ as
\begin{align}
\label{eq:waveofPsiminus:intermsofPhi1}
-Y\Phiminus{1}+2\edthR'\edthR\Psiminus
={}&
 \Big(
 -2a\Lxi\Leta
-a^2\sin^2\theta\Lxi^2
-2ia\cos\theta\Lxi
+\tfrac{2ar}{\R}\Leta\Big) \Psiminus.
\end{align}
With equation \eqref{def:vectorsKandVR}, this becomes
\begin{align}
\label{eq:waveofPsiminus:intermsofPhi1:v11}
\prb\Phiminus{1}+\mathbf{S}\Psiminus
={}&G(\Psiminus),
\end{align}
with $\mathbf{S}=2\edthR'\edthR$ and
\begin{align}
G(\Psiminus)={}&
\Big(-2a\Lxi\Leta
-a^2\sin^2\theta\Lxi^2
-2ia\cos\theta\Lxi
\Big) \Psiminus
+(2\mu^{-1}-H)\Lxi\Phiminus{1}\notag\\
&
+\tfrac{2ar}{\R}\Leta\Psiminus.
\end{align}
Taking a square on both sides, multiplying by $f^2(\R)^{-3/2}$ with real smooth function $f=f(r)$, and integrating over $\Sigmatb$, one finds
\begin{align}
\hspace{4ex}&\hspace{-4ex}
\int_{\Sigmatb}f^2 (\R)^{-\frac{3}{2}}\abs{G(\Psiminus)}^2\notag\\
={}&\int_{\Sigmatb}f^2 (\R)^{-\frac{3}{2}}
\left(\abs{\prb\Phiminus{1}}^2
+\abs{\mathbf{S}\Psiminus}^2
+2\Re\big(\prb\overline{\Phiminus{1}}\mathbf{S}\Psiminus\big)\right).
\end{align}
Note that we compress the volume element $\di^3\mu$ in the integral over $\Sigmatb$ throughout this proof for simplicity.
Define $\mathbf{S}^{\half}=\sqrt{2}\edthR$.
The third term on the RHS is
\begin{align}
&\int_{\Sigmatb}
\Re\Big(\prb(2f^2(\R)^{-\frac{3}{2}}\overline{\Phiminus{1}}\mathbf{S}
\Psiminus)
-\prb(2f^2(\R)^{-\frac{3}{2}}\mu^{-1})\overline{\Phiminus{1}}
\mathbf{S}\Phiminus{0}\notag\\
&\qquad
-2f^2(\R)^{-\frac{3}{2}}\mu^{-1}\overline{\Phiminus{1}}\prb\mathbf{S}
\Phiminus{0}\Big),
\end{align}
and this gives
\begin{align}
\label{eq:betterdecay:generalexp:phiminus:1}
\hspace{4ex}&\hspace{-4ex}
\int_{\Sigmatb}f^2 (\R)^{-\frac{3}{2}}\abs{G(\Psiminus)}^2\notag\\
={}&\int_{\Sigmatb}f^2 (\R)^{-\frac{3}{2}}
\left(\abs{\prb\Phiminus{1}}^2
+\abs{\mathbf{S}\Psiminus}^2
+2(\R)^{-1}\mu^{-1}
\abs{\mathbf{S}^{\half}\Phiminus{1}}^2\right)\notag\\
&-\prb(2f^2(\R)^{-\frac{3}{2}}\mu^{-1})\Re\big(\overline{\Phiminus{1}}
\mathbf{S}\Phiminus{0}\big)\notag\\
&
+\prb\Big(2f^2(\R)^{-\frac{3}{2}}\Re\big(\overline{\Phiminus{1}}\mathbf{S}
\Psiminus\big)\Big)\notag\\
&-2f^2(\R)^{-\frac{3}{2}}\mu^{-1}\Re\Big(\overline{\mathbf{S}^{\half}\Phiminus{1}}
(\mu H\Lxi\mathbf{S}^{\half}\Psiminus
+2a(\R)^{-1}\Leta\mathbf{S}^{\half}\Psiminus)\Big).
\end{align}
The second line on the RHS is
\begin{align}
\int_{\Sigmatb}\Big(
6f^2\mu^{-1}r(\R)^{-\frac{5}{2}}
-2\prb(\mu^{-1}f^2)(\R)^{-\frac{3}{2}}\Big)
\Re\Big(\mathbf{S}\Phiminus{0}\overline{\Phiminus{1}}\Big),
\end{align}
and the first term on the RHS equals
\begin{align}
\label{eq:betterdecay:generalexp:phiminus:1:1}
\hspace{4ex}&\hspace{-4ex}
\int_{\Sigmatb} f^2(\R)^{-\frac{3}{2}}
\Big|(\R)\prb\Big((\R)^{-1}\Phiminus{1}\Big)
+2r(\R)^{-1}\Phiminus{1}\Big|^2\notag\\
={}&\int_{\Sigmatb} f^2(\R)^{\frac{1}{2}}
\abs{\prb((\R)^{-1}\Phiminus{1})}^2
+4f^2r^2(\R)^{-\frac{7}{2}}\abs{\Phiminus{1}}^2\notag\\
&\quad
-\prb(2f^2 r (\R)^{-\half})(\R)^{-2}\abs{\Phiminus{1}}^2
+\prb\Big(2f^2r (\R)^{-\frac{5}{2}}\abs{\Phiminus{1}}^2\Big).
\end{align}
Therefore, the first two lines on the RHS of \eqref{eq:betterdecay:generalexp:phiminus:1} are equal to
\begin{align}
\label{eq:betterdecay:generalexp:phiminus:2}
&\int_{\Sigmatb}\tfrac{f^2}{ (\R)^{\frac{3}{2}}}
\Big(\abs{\mathbf{S}\Psiminus}^2
+\tfrac{2}{\mu(\R)}
\abs{\mathbf{S}^{\half}\Phiminus{1}}^2
+\tfrac{4r^2}{(\R)^{2}}\abs{\Phiminus{1}}^2\Big)
\notag\\
&\qquad +\Big(6f^2\mu^{-1}r(\R)^{-\frac{5}{2}}
-2\prb(\mu^{-1}f^2)(\R)^{-\frac{3}{2}}
\Big)
\Re\Big(\mathbf{S}\Phiminus{0}\overline{\Phiminus{1}}\Big)\notag\\
&\qquad
-\prb\Big(\tfrac{2rf^2}{\sqrt{\R}}\Big)
\tfrac{\abs{\Phiminus{1}}^2}{(\R)^2}
+f^2(\R)^{\frac{1}{2}}
\Big|\prb\Big(\tfrac{\Phiminus{1}}{\R}\Big)\Big|^2
+\prb\Big(\tfrac{2f^2r \abs{\Phiminus{1}}^2}{(\R)^{5/2}}\Big).
\end{align}
In addition,
\begin{align}
\label{eq:betterdecay:generalexp:phiminus:2:1}
\hspace{4ex}&\hspace{-4ex}
\int_{\Sigmatb}\tfrac{f^2}{\mu (\R)^{1/2}}\abs{\prb\mathbf{S}^{\half}\Phiminus{0}}^2\notag\\
={}&\int_{\Sigmatb}\tfrac{f^2}{\mu (\R)^{1/2}}
\abs{\prb(r\mu)r^{-1}\mathbf{S}^{\half}\Psiminus
+r\mu\prb(r^{-1}\mathbf{S}^{\half}\Psiminus)}^2\notag\\
={}&\int_{\Sigmatb}\tfrac{f^2}{\mu (\R)^{1/2}}(\prb(r\mu))^2r^{-2}\abs{\mathbf{S}^{\half}\Psiminus}^2
+\tfrac{f^2}{\mu (\R)^{1/2}}r^2\mu^2\abs{\prb(r^{-1}\mathbf{S}^{\half}\Psiminus)}^2\notag\\
&-\prb\Big(\tfrac{f^2 r \prb(r\mu)}{(\R)^{1/2}}\Big)r^{-2}
\abs{\mathbf{S}^{\half}\Psiminus}^2
+\prb\Big(\tfrac{rf^2\prb(r\mu)}{(\R)^{1/2}}
r^{-2}\abs{\mathbf{S}^{\half}\Psiminus}^2\Big).
\end{align}
Together with the expression
\begin{align}
\Phiminus{1}=(\R)\prb\Phiminus{0}
+(\R)(H\Lxi+2a\mu^{-1}(\R)^{-1}\Leta)\Phiminus{0},
\end{align}
 we obtain
\begin{align}
\label{eq:betterdecay:generalexp:phiminus:3}
\hspace{4ex}&\hspace{-4ex}
\int_{\Sigmatb}\tfrac{h^2}{\mu (\R)^{5/2}}\abs{\mathbf{S}^{\half}\Phiminus{1}}^2\notag\\
={}&\int_{\Sigmatb}\tfrac{h^2}{\mu (\R)^{1/2}}\abs{\prb\mathbf{S}^{\half}\Phiminus{0}}^2
-\tfrac{h^2}{\mu (\R)^{1/2}}\abs{(\mu H\Lxi +2a(\R)^{-1}\Leta)\mathbf{S}^{\half}\Psiminus}^2\notag\\
&\qquad
+\tfrac{2h^2}{\mu (\R)^{3/2}}\Re\big(\overline{\mathbf{S}^{\half}\Phiminus{1} } (\mu H\Lxi +2a(\R)^{-1}\Leta)\mathbf{S}^{\half}\Psiminus\big).
\end{align}
Choose $f^2 = \mu (\R)^{2\delta}$ and note that
\begin{align}
\label{eq:betterdecay:generalexp:phiminus:2:2}
\hspace{4ex}&\hspace{-4ex}
\int_{\Sigmatb} (\R)^{-\half+2\delta}\abs{\prb\mathbf{S}^{\half}\Phiminus{0}}^2\notag\\
={}&\int_{\Sigmatb}(\R)^{-\half+2\delta}
\abs{\prb(r^{1-2\delta}\mu)r^{-1+2\delta}\mathbf{S}^{\half}\Psiminus
+r^{1-2\delta}\mu\prb(r^{-1+2\delta}\mathbf{S}^{\half}\Psiminus)}^2\notag\\
={}&\int_{\Sigmatb} (\R)^{-\half+2\delta}(\prb(r^{1-2\delta}\mu))^2r^{-2+4\delta}\abs{\mathbf{S}^{\half}\Psiminus}^2
\notag\\
&\qquad+\tfrac{\mu^2}{ (\R)^{1/2-2\delta}}r^{2-4\delta}\abs{\prb(r^{-1+2\delta}\mathbf{S}^{\half}\Psiminus)}^2\notag\\
&\qquad-\prb\Big(\tfrac{\mu r^{1-2\delta} \prb(\mu r^{1-2\delta})}{(\R)^{1/2-2\delta}}\Big)r^{-2+4\delta}
\abs{\mathbf{S}^{\half}\Psiminus}^2
+\prb\Big(\tfrac{\mu r^{-1+2\delta}\prb(r^{1-2\delta}\mu)}{(\R)^{1/2-2\delta}}
\abs{\mathbf{S}^{\half}\Psiminus}^2\Big).
\end{align}
Since the third line on the RHS of \eqref{eq:betterdecay:generalexp:phiminus:1} is identically zero,
equations \eqref{eq:betterdecay:generalexp:phiminus:2}, \eqref{eq:betterdecay:generalexp:phiminus:3} and \eqref{eq:betterdecay:generalexp:phiminus:2:2}
yield the RHS of \eqref{eq:betterdecay:generalexp:phiminus:1} equals the sum of
\begin{align}
\label{eq:betterdecay:generalexp:phiminus:4:1}
&\int_{\Sigmatb}\mu(\R)^{\frac{1}{2}+2\delta}
\Big|\prb\Big(\tfrac{\Phiminus{1}}{\R}\Big)\Big|^2
+\tfrac{1}{ (\R)^{1/2-2\delta}}r^{2-4\delta}\mu^2
\abs{\prb(r^{-1+2\delta}\mathbf{S}^{\half}\Psiminus)}^2\notag\\
&\qquad +\prb\Big(\tfrac{2\mu r \abs{\Phiminus{1}}^2}{(\R)^{5/2-2\delta}}\Big)
+\prb\Big(\tfrac{r^{-1+2\delta}\mu\prb(r^{1-2\delta}\mu)}
{(\R)^{1/2-2\delta}}
\abs{\mathbf{S}^{\half}\Psiminus}^2\Big)\notag\\
&\qquad
-\tfrac{1}{ (\R)^{1/2-2\delta}}\abs{( \mu H\Lxi +2a(\R)^{-1}\Leta)\mathbf{S}^{\half}\Psiminus}^2
\end{align}
and
\begin{align}
\label{eq:betterdecay:generalexp:phiminus:5:1}
\hspace{4ex}&\hspace{-4ex}\int_{\Sigmatb}\tfrac{\mu}{ (\R)^{\frac{3}{2}-2\delta}}
\abs{\mathbf{S}\Psiminus
+\tfrac{2r}{\R}\Phiminus{1}}^2
-\prb\Big(\tfrac{2r\mu}{(\R)^{1/2-2\delta}}\Big)
\tfrac{\abs{\Phiminus{1}}^2}{(\R)^2}\notag\\
\hspace{4ex}&\hspace{-4ex}\qquad
+\tfrac{1}{ (\R)^{5/2-2\delta}}\abs{\mathbf{S}^{\half}\Phiminus{1}}^2
+\tfrac{(2-8\delta)\mu r}{(\R)^{5/2-2\delta}}
\Re\big(\overline{\mathbf{S}\Psiminus}\Phiminus{1}\big)\notag\\
\hspace{4ex}&\hspace{-4ex}\qquad +\Big((\R)^{-\half+2\delta}(\prb(r^{1-2\delta}\mu))^2
-\prb\Big(\tfrac{\mu r^{1-2\delta} \prb(\mu r^{1-2\delta})}{(\R)^{1/2-2\delta}}\Big)\Big)r^{-2+4\delta}\abs{\mathbf{S}^{\half}\Psiminus}^2\notag\\
\geq{}&\int_{\Sigmatb}\tfrac{\mu}{ (\R)^{\frac{3}{2}-2\delta}}
\abs{\mathbf{S}\Psiminus
+\tfrac{2r}{\R}\Phiminus{1}}^2
+\tfrac{(\R)^2-(1-4\delta)r^2\Delta}{ (\R)^{9/2-2\delta}}(\abs{\mathbf{S}^{\half}\Phiminus{1}}^2-
2\abs{\Phiminus{1}}^2)\notag\\
&\qquad
+\tfrac{\mu(1-4\delta)}{ (\R)^{7/2-2\delta}}\abs{r\mathbf{S}^{\half}\Phiminus{1}
-(\R)\mathbf{S}^{\half}\Psiminus}^2
\notag\\
&\qquad
+\tfrac{8a^2Mr}{(\R)^{9/2-2\delta}}\abs{\Phiminus{1}}^2
+4\delta \mu (1-\mu +\mu\delta)(\R)^{-\frac{3}{2}+2\delta}\abs{\mathbf{S}^{\half}\Psiminus}^2\notag\\
&\qquad
-\tfrac{Ca^2}{(\R)^{5/2-2\delta}}\abs{\mathbf{S}^{\half}\Psiminus}^2.
\end{align}
An application of Hardy's inequality \eqref{eq:Hardy:trivial} implies that the second last term in \eqref{eq:betterdecay:generalexp:phiminus:5:1} and the second term in \eqref{eq:betterdecay:generalexp:phiminus:4:1} together bound over $c\delta\int_{\Sigmatb}r^{-3+4\delta}\abs{\mathbf{S}^{\half}\Psiminus}^2$, hence for sufficiently small $\abs{a}/M\leq \veps_0$, the last term of \eqref{eq:betterdecay:generalexp:phiminus:5:1} is absorbed. The total derivative terms vanish identically in view of the asymptotics, therefore, the RHS of \eqref{eq:betterdecay:generalexp:phiminus:1} is larger than
\begin{align}
& \int_{\Sigmatb}c\delta\Big(\mu r^{-5+4\delta}
(r^2\abs{\mathbf{S}\Psiminus}^2 +
r^2\abs{\prb\Phiminus{1}}^2
+\abs{\mathbf{S}^{\half}\Phiminus{1}}^2)
+\mu^2 r^{-1+4\delta}\abs{\prb\mathbf{S}^{\half}\Psiminus}^2
\notag\\
& \qquad\quad
+r^{-5+4\delta}(\abs{\mathbf{S}^{\half}\Phiminus{1}}^2-
2\abs{\Phiminus{1}}^2
+r^2\abs{\mathbf{S}^{\half}\Psiminus}^2)\Big)
\notag\\
&\qquad
+\tfrac{8a^2Mr}{(\R)^{9/2-2\delta}}\abs{\Phiminus{1}}^2
-\tfrac{1}{ (\R)^{1/2-2\delta}}\abs{( \mu H\Lxi +2a(\R)^{-1}\Leta)\mathbf{S}^{\half}\Psiminus}^2.
\end{align}
By taking $\abs{a}/M\leq \veps_0$ sufficiently small, the $\tfrac{2ar}{\R}\Leta\Psiminus$ part in the integral of  $G(\Psiminus)$ of the LHS of \eqref{eq:betterdecay:generalexp:phiminus:1} can be absorbed.
In conclusion, we arrive at an estimate
\begin{align}
\label{eq:integralonSigmatb:generalexp:phiminus:1:v1}
\hspace{4ex}&\hspace{-4ex}
\int_{\Sigmatb}\Big(\mu (\R)^{-\frac{3}{2}+2\delta}\abs{G(\Psiminus)-\tfrac{2ar}{\R}\Leta\Psiminus}^2
+\tfrac{1}{ (\R)^{1/2-2\delta}}\abs{( \mu H\Lxi +2a(\R)^{-1}\Leta)\mathbf{S}^{\half}\Psiminus}^2\Big)\di^3\mu\notag\\
={}&\mathbf{E}_{\Sigmatb}(\Psiminus)\notag\\
\gtrsim_{\delta} {}&\int_{\Sigmatb}
r^{-3+4\delta}\Big(\mu^2 \abs{r\prb\mathbf{S}^{\half}\Psiminus}^2
+\mu (\abs{\edthR\mathbf{S}^{\half}\Psiminus}^2
+\abs{\edthR\mathbf{S}^{\half}\Psiminus}^2)
\Big).
\end{align}
Moreover, the LHS is further bounded by
\begin{align}
\label{eq:integralonSigmatb:generalexp:phiminus:1:v1:11}
\text{LHS of } \eqref{eq:integralonSigmatb:generalexp:phiminus:1:v1}\lesssim
a^2\norm{\Leta\mathbf{S}^{\half}
\Psiminus}^2_{W_{-5+4\delta}^{0}(\Sigmatb)}
+\norm{\Lxi\Psiminus}^2_{W_{-3+4\delta}^{1}(\Sigmatb)}
+\norm{\Lxi\Phiminus{1}}^2_{W_{-3+4\delta}^{0}(\Sigmatb)}
\end{align}
From the estimate \eqref{eq:improveEnerDecay:phiminus}, the integral of $\abs{\Leta\mathbf{S}^\half \Psiminus}^2$ has $\tb^{-4-\gamma}$ decay while all the other terms on the LHS of \eqref{eq:integralonSigmatb:generalexp:phiminus:1:v1} have decay $\tb^{-5+4\delta-\gamma}$. To get around this problem, we shall use a red-shift estimate of $\psiminus$ to bound it. The governing equation \eqref{eq:TME} of $\psiminus$ is
\begin{align}
\label{eq:psiminus:forredshift}
\hspace{4ex}&\hspace{-4ex}
\Big(\partial_r(\Delta\partial_r)
-\tfrac{((\R)\Lxi+a\Leta)^2}{\Delta}
+a^2\sin^2\theta\Lxi^2
+2a\Lxi\Leta
+2\edthR'\edthR
\Big)\psiminus\notag\\
={}&
2(r-M)Y\psiminus - (2r+2ia\cos\theta)\Lxi\psiminus-2\psiminus.
\end{align}
Commuting with $\edthR$ gives
\begin{align}
\hspace{4ex}&\hspace{-4ex}
\Big(\partial_r(\Delta\partial_r)
-\tfrac{((\R)\Lxi+a\Leta)^2}{\Delta}
+a^2\sin^2\theta\Lxi^2
+2a\Lxi\Leta
+2\edthR'\edthR
\Big)\edthR\psiminus\notag\\
={}&
(2(r-M)Y - (2r+2ia\cos\theta)\Lxi
+(4s+2))\edthR\psiminus\notag\\
&
+2^{-\half}
(\partial_{\theta}(a^2\sin^2\theta)\Lxi^2 -2ia\partial_\theta (\cos\theta)\Lxi)\psiminus.
\end{align}
Note that $\edthR\psiminus$ has spin weight $0$, and the operator on the LHS is the same as the expansion of $\Sigma\Box$ when acting on spin $0$ scalars. One can thus follow \cite[Lemma 3.3]{Ma2017Maxwell}, apply the same multiplier, and conclude that there exist constants $\veps_0>0$, $r_+ <r_0<r_1$ and a timelike vector field $N=f_1(r)Y+f_2(r)\Lxi$ with $f_1, f_2\to 1$ as $r\to r_+$  such that for all $\abs{a}/M\leq \veps_0$, any $\reg'\in \mathbb{Z}^+$ and $\tb_2>\tb_1\geq \tb_0$,
\begin{align}
\label{eq:integralonSigmatb:generalexp:phiminus:1:v2}
\hspace{4ex}&\hspace{-4ex}
\norm{\edthR\psiminus
}^2_{W_{0}^{\reg'}(\Sigmatwo^{\leq r_0})}
+\norm{\edthR\psiminus
}^2_{W_{0}^{\reg'}(\Donetwo^{\leq r_0})}\notag\\
\lesssim{}&\norm{\edthR\psiminus
}^2_{W_{0}^{\reg'}(\Sigmatwo^{\leq r_1})}
+\norm{\edthR\psiminus
}^2_{W_{0}^{\reg'}(\Donetwo^{r_0, r_1})}
+\norm{\Lxi\psiminus
}^2_{W_{0}^{\reg'}(\Donetwo^{\leq r_1})}
+\norm{\edthR\psiminus
}^2_{W_{0}^{\reg'-1}(\Donetwo^{\leq r_1})}.
\end{align}
We add to this inequality with $\reg'=1$ the estimate \eqref{eq:integralonSigmatb:generalexp:phiminus:1:v1} with both $\tb=\tb_1$ and $\tb=\tb_2$  and a large multiple of the integral of the estimate \eqref{eq:integralonSigmatb:generalexp:phiminus:1:v1} for $\tb\in [\tb_1,\tb_2]$ and utilize the bound \eqref{eq:integralonSigmatb:generalexp:phiminus:1:v1:11}, then for sufficiently small $\abs{a}/M\leq \veps_0$, the terms $\norm{\edthR\psiminus
}^2_{W_{0}^{1}(\Donetwo^{r_0, r_1})}$ and $\norm{\edthR\psiminus
}^2_{W_{0}^{0}(\Donetwo^{\leq r_1})}$ in \eqref{eq:integralonSigmatb:generalexp:phiminus:1:v2} and the integral of
$a^2\norm{\Leta\mathbf{S}^{\half}
\Psiminus}^2_{W_{-5+4\delta}^{0}(\Sigmatb)}$ are thus all absorbed, leading to
\begin{align}
\label{eq:integralonSigmatb:generalexp:phiminus:1:v2:2}
h_{\tb_2}+
\int_{\tb_1}^{\tb_2}h_{\tb}\di\tb
\lesssim_{\delta} {}&h_{\tb_1}
+\Big(\norm{\Lxi\Psiminus}^2_{W_{-3+4\delta}^{1}(\Sigmaone)}
+\norm{\Lxi\Phiminus{1}}^2_{W_{-3+4\delta}^{0}(\Sigmaone)}\Big)\notag\\
&+\Big(\norm{\Lxi\Psiminus}^2_{W_{-3+4\delta}^{1}(\Donetwo)}
+\norm{\Lxi\Phiminus{1}}^2_{W_{-3+4\delta}^{0}(\Donetwo)}\Big),
\end{align}
where
\begin{align}
\label{eq:htb}
h_{\tb}
={}&\mathbf{E}_{\Sigmatb}(\Psiminus)
+\norm{\Lxi\mathbf{S}^{\half}\Psiminus}^2_{W_{-3+4\delta}^{0}(\Sigmatb)}
+\norm{\edthR\Psiminus
}^2_{W_{-3+4\delta}^{1}(\Sigmatb)}\notag\\
\gtrsim{}&\norm{\edthR\Psiminus
}^2_{W_{-3+4\delta}^{1}(\Sigmatb)}
+\norm{\mu \prb\Phiminus{1}
}^2_{W_{-5+4\delta}^{1}(\Sigmatb)}
+\norm{\mu \mathbb{S}^{\half}\Phiminus{1}
}^2_{W_{-5+4\delta}^{1}(\Sigmatb)}
.
\end{align}
From equation \eqref{eq:waveofPsiminus:intermsofPhi1:v11}, one can freely add $\norm{\prb\Phiminus{1}
}^2_{W_{-5+4\delta}^{1}(\Sigmatb)}+\norm{ \Lxi\Phiminus{1}
}^2_{W_{-3+4\delta}^{1}(\Sigmatb)}$ to the expression of $h_{\tb}$ such that the estimate \eqref{eq:integralonSigmatb:generalexp:phiminus:1:v2:2} remains valid.
Moreover, one can extract out the following red-shift estimate for $\psiminus$
\begin{align}
\label{eq:integralonSigmatb:generalexp:phiminus:1:v2:22}
\hspace{4ex}&\hspace{-4ex}
\norm{\psiminus
}^2_{W_{0}^{\reg'}(\Sigmatwo^{\leq r_0})}
+\norm{\psiminus
}^2_{W_{0}^{\reg'}(\Donetwo^{\leq r_0})}\notag\\
\lesssim{}&\norm{\psiminus
}^2_{W_{0}^{\reg'}(\Sigmatwo^{\leq r_1})}
+\norm{\psiminus
}^2_{W_{0}^{\reg'}(\Donetwo^{r_0, r_1})}
+\norm{\Leta\psiminus
}^2_{W_{0}^{\reg'-1}(\Donetwo^{\leq r_1})}\notag\\
&+\norm{\Lxi\psiminus
}^2_{W_{0}^{\reg'-1}(\Donetwo^{\leq r_1})}
+\norm{\psiminus
}^2_{W_{0}^{\reg'-1}(\Donetwo^{\leq r_1})}
+\norm{\Phiminus{1}
}^2_{W_{0}^{\reg'-1}(\Donetwo^{\leq r_1})}.
\end{align}
By adding this estimate with $\reg'=2$ to a large multiple of the inequality \eqref{eq:integralonSigmatb:generalexp:phiminus:1:v2:2} with $h_{\tb}$ being the sum of the RHS of \eqref{eq:htb} and $\norm{\prb\Phiminus{1}
}^2_{W_{-5+4\delta}^{1}(\Sigmatb)}+\norm{ \Lxi\Phiminus{1}
}^2_{W_{-3+4\delta}^{1}(\Sigmatb)}$, the last five terms in \eqref{eq:integralonSigmatb:generalexp:phiminus:1:v2:22} are absorbed, hence we eventually obtain the estimate \eqref{eq:integralonSigmatb:generalexp:phiminus:1:v2:2} with instead $h_{\tb}=\norm{\Psiminus
}^2_{W_{-3+4\delta}^{2}(\Sigmatb)}$. It is manifest that one can also commute with $\Lxi$, yielding
\begin{align}
\label{eq:integralonSigmatb:generalexp:phiminus:1:v2:33}
\hspace{4ex}&\hspace{-4ex}
\norm{\Lxi^j\Psiminus
}^2_{W_{-3+4\delta}^{2}(\Sigmatwo)}
+\norm{\Lxi^j\Psiminus
}^2_{W_{-3+4\delta}^{2}(\Donetwo)}\notag\\
\lesssim_{\delta}{}& \norm{\Lxi^j\Psiminus
}^2_{W_{-3+4\delta}^{2}(\Sigmaone)}
+\norm{\Lxi^{j+1}\Psiminus}^2_{W_{-1+4\delta}^{1}(\Sigmaone)}
+\norm{\Lxi^{j+1}\Psiminus}^2_{W_{-1+4\delta}^{1}(\Donetwo)}
\end{align}
The last two terms on the RHS have $\tb_1^{-4-\gamma-2j+4\delta}$ decay from the energy decay estimate \eqref{eq:improveEnerDecay:phiminus}, hence a simple application of the mean-value principle gives $\tb_1^{-1}$ decay for the LHS. One can iterate this procedure and obtain eventually
\begin{align}
\label{eq:integralonSigmatb:generalexp:phiminus:1:v2:44}
\norm{\Lxi^j\Psiminus
}^2_{W_{-3+4\delta}^{2}(\Sigmatb)}
+\norm{\Lxi^j\Psiminus
}^2_{W_{-3+4\delta}^{2}(\DOC_{\tb,\infty})}
\lesssim_{\delta}{}D_{-1}\tb^{-4-\gamma-2j+4\delta} .
\end{align}

We next show that the estimate \eqref{eq:integralonSigmatb:generalexp:phiminus:1:v2:44} holds with the regularity parameter $2$ replaced by general $\reg\geq 2$. Commuting equation \eqref{eq:waveofPsiminus:intermsofPhi1:v11} with any $\mathbf{X}\in \CDeri$ gives
\begin{align}
\label{eq:waveofPsiminus:intermsofPhi1:v11:11}
\prb\Big((\R)\VR\Big(\tfrac{\Delta}{\R}\mathbf{X}\Lxi^j\Psiminus\Big)\Big)
+\mathbf{S}(\mathbf{X}\Lxi^j\Psiminus)
={}&G(\mathbf{X}\Lxi^j\Psiminus)+\sum_{\abs{\mathbf{a}}\leq 2}f_{\mathbf{a}}\CDeri^{\mathbf{a}}\Lxi^j\Psiminus
\end{align}
with $f_{\mathbf{a}}$ being $O(1)$ functions. By running the argument above again, one achieves the same estimate as \eqref{eq:integralonSigmatb:generalexp:phiminus:1:v2:33} but replacing $\Psiminus$ by $\mathbf{X}\Psiminus$ and adding both $W_{-3+4\delta}^{0}(\Sigmaone)$ and $W_{-3+4\delta}^{0}(\Donetwo)$ norm squares of $\sum\limits_{\abs{\mathbf{a}}\leq 2}f_{\mathbf{a}}\CDeri^{\mathbf{a}}\Lxi^j\Psiminus$ to the RHS. There norm squares are bounded by $\tb_1^{-4-\gamma-2j+4\delta}$ from inequality \eqref{eq:integralonSigmatb:generalexp:phiminus:1:v2:44}. A same argument as proving \eqref{eq:integralonSigmatb:generalexp:phiminus:1:v2:44} then implies the estimate \eqref{eq:integralonSigmatb:generalexp:phiminus:1:v2:44} holds with regularity parameter $2$ replaced by $3$. One can commute further \eqref{eq:waveofPsiminus:intermsofPhi1:v11:11} with $\CDeri$ and conclude the estimate for general $\reg\geq 2$, and this proves \eqref{eq:evenimproveEnerDecay:phiminus}. The pointwise decay estimate \eqref{eq:evenimprovePTWDecay:phiminus} are manifest from \eqref{eq:improvePTWDecay:phiminus}, the just proved estimate \eqref{eq:evenimproveEnerDecay:phiminus} and inequality \eqref{eq:Sobolev:3}.
\end{proof}

\subsection{Improved decay of spin $+1$ component}
\label{sect:vdecay:psiplus}

The estimate \eqref{eq:thm:2:spinpm1:+1} follows from the following proposition.
\begin{prop}\label{prop:ImproDecaySpin+1}
There exists an $\veps_0>0$ and a $\regl>0$ such that for $\abs{a}/M\leq \veps_0$ and any $\delta\in (0,1/2)$,
\begin{align}
\label{eq:imprDec:psiplus:1}
\absCDeri{\Lxi^j(r^{-2}\psiplus)}{\reg-\regl}\lesssim_{\delta}{}
(D_{+1}+D_{-1})^{\half}v^{-3}
\tb^{-(\gamma-1)/2-j+\delta}\max\{r^{-\delta}
,\tb^{-\delta}\}.
\end{align}
\end{prop}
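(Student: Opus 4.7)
The plan mirrors the strategy of Proposition \ref{prop:evenimproveEnerDecay:phiminus} for $\psiminus$: first I would upgrade the basic energy $\gamma$-decay condition to a weighted spacetime integral estimate with improved temporal decay, then convert to pointwise decay via the Sobolev inequality \eqref{eq:Sobolev:3}. Observe that in the exterior region $\{\rb\geq\tb\}$, where $v\sim r$ and $\max\{r^{-\delta},\tb^{-\delta}\}=r^{-\delta}$, the weak estimate \eqref{eq:weakdecay:spin+1:v3} already gives the target bound with no $\veps$ loss and involving only $D_{+1}$; the work therefore reduces to extracting two extra powers of $\tb$ in the interior $\{\rb\leq\tb\}$.

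Concretely, I would first establish a weighted spacetime integral estimate of the form
\begin{align*}
\norm{\Lxi^j\Psiplus}^2_{W_{-3+2\delta}^{\reg-\regl}(\Dinfty)}\lesssim_{\delta}(D_{+1}+D_{-1})\,\tb^{-2+2\delta-\gamma-2j}.
\end{align*}
The starting point is the wave equation \eqref{eq:Phi+1} for $\Phiplus=\Delta^{-1}(\R)\Psiplus$, which, after rewriting $Y$ and $V$ in hyperboloidal coordinates via \eqref{def:vectorVRintermsofprb}, takes the schematic form $\prb\big((\R)\mu\,\prb\Phiplus\big)+(2\edthR\edthR'+2)\Phiplus=\tilde H(\Phiplus)$, with $\tilde H$ collecting $\Lxi$- and $\Leta$-derivative terms and the $\mu$-degenerate pieces near $\Horizon$. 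The zeroth-order coefficient saturates the borderline case of Propositions \ref{prop:wave:rp}--\ref{prop:wave:rp:highmodes}: for spin $s=+1$ one has $b_{0,0}=-2$ and $b_{0,0}+s+|s|=0$, and since any spin $+1$ scalar is automatically supported on $\ell\geq 1$ modes, we are precisely in the situation of Point \ref{pt:rp:highmodes:3} of Proposition \ref{prop:wave:rp:highmodes} with $\ell_0=1$. Following the squaring-and-absorbing scheme of \eqref{eq:betterdecay:generalexp:phiminus:1}--\eqref{eq:integralonSigmatb:generalexp:phiminus:1:v1} with weight $\mu(\R)^{-3/2+2\delta}$, and supplementing with a red-shift estimate for $\psiplus$ obtained by applying Lemma \ref{pro:redshift:general:smalla} to the TME \eqref{eq:TME} with $s=+1$ (after $\Lxi$-commutation to ensure the $Y$-coefficient at horizon has a favorable sign), would yield the desired weighted spacetime decay.

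The $D_{-1}$ contribution enters through the Teukolsky--Starobinsky identities: the angular error terms generated in the squaring procedure take the form $\edthR'\psiplus$, which via \eqref{eq:TSIsSpin1KerrRadipal0With1} is expressible in terms of $V\psizero$, and then via \eqref{eq:TSIsSpin1KerrRadial0With-1} in terms of angular derivatives of $\psiminus$. The already-proven improved decay \eqref{eq:evenimproveEnerDecay:phiminus} for $\psiminus$ then supplies the $D_{-1}\tb^{-4+2\delta-\gamma-2j}$ bound on such terms. Iterating a mean-value argument on dyadic intervals $[\tb,2\tb]$ as in \eqref{eq:integralonSigmatb:generalexp:phiminus:1:v2:33}--\eqref{eq:integralonSigmatb:generalexp:phiminus:1:v2:44}, commuting with $\Lxi$ to propagate to arbitrary $j$, and finally applying Sobolev \eqref{eq:Sobolev:3} together with an interpolation between the weights $W_{-1}$ (from the weak estimate) and $W_{-3+2\delta}$ then produces \eqref{eq:imprDec:psiplus:1}. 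The main anticipated obstacle is the borderline nature of the spin $+1$ potential: the absorption in the squared identity is strictly critical, forcing the simultaneous use of the $\ell\geq 1$ positivity, a small-$|a|/M$ absorption of the $a^2$-weighted cross terms between $\Psiplus$ and the TSI-induced $\psiminus$ sources, and the $(\R)^{2\delta}$ multiplier weight, which together entail the $\veps$ loss in the final estimate.
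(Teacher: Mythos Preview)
Your approach is substantially more elaborate than the paper's, and it contains a gap in the coupling to $\psiminus$.

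The paper's proof is purely algebraic and avoids all of the elliptic/red-shift machinery you propose. It invokes the Teukolsky--Starobinsky identity \eqref{eq:TSI:simpleform} directly: writing $\psiplusc=(\R)^{-1}\psiplus$, the TSI gives
\[
2(\edthR')^2\psiplusc = \Lxi L_{+1}(\psiplusc) + L_{-1}(\psiminus),
\]
where $L_{+1}$ is first order in $\psiplusc$ and $L_{-1}(\psiminus)$ involves $V^2((\R)\psiminus)$ and lower-order pieces. Since $(\edthR')^2$ has \emph{trivial} kernel on spin~$+1$ scalars, this is an elliptic relation on each sphere: $\psiplusc$ is bounded pointwise (after $L^2(\mathbb{S}^2)$ estimates and Sobolev) by $\Lxi\psiplusc$, which carries an extra $\tb^{-1}$ from Proposition~\ref{prop:weakdecay:spin+1-1:v3}, plus $L_{-1}(\psiminus)$, which is already controlled by \eqref{eq:evenimprovePTWDecay:phiminus}. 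One substitution gains one power of~$\tb$; repeating once more and combining with the weak decay \eqref{eq:weakdecay:spin+1:v3} gives \eqref{eq:imprDec:psiplus:1}. No squaring argument, no red-shift, no spacetime integral hierarchy is needed.

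The specific gap in your proposal is the chain through \eqref{eq:TSIsSpin1KerrRadipal0With1} and \eqref{eq:TSIsSpin1KerrRadial0With-1}. Equation \eqref{eq:TSIsSpin1KerrRadipal0With1} expresses $V\psizero$ via angular derivatives of $\psiplus$, while \eqref{eq:TSIsSpin1KerrRadial0With-1} expresses $Y\psizero$ via angular derivatives of $\psiminus$. These involve \emph{different} first derivatives of~$\psizero$ and cannot be substituted into one another; to pass from $V\psizero$ to $\psiminus$ you would need control on~$\psizero$ itself, which in the paper is obtained only \emph{after} the present proposition (Section~\ref{sect:vtdecay:psimiddle}). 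What you are reaching for is precisely the second-order TSI \eqref{eq:TSI:simpleform}, which eliminates $\psizero$ altogether---but you have not invoked it. Separately, the wave equation \eqref{eq:Phi+1} for $\Phiplus$ carries the term $2\mu^{-1}(r-3M)V\Phiplus\sim 2rV\Phiplus$, which has no analogue in the $\Phiminus{0}$, $\Phiminus{1}$ equations used in Proposition~\ref{prop:evenimproveEnerDecay:phiminus}; the squaring scheme there relied on the absence of such a first-order term, so the direct transplant you sketch would not close.
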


\begin{proof}
To obtain improved decay for spin $+1$ component, we utilize one of the Teukolsky-Starobinsky identities (TSI) \cite{aksteiner2019new}:
\begin{align}
\label{eq:TSI:simpleform}
(\sqrt{2}\edthR'-ia\sin\theta\Lxi)^2(\Delta^{-1}\psiplus)
={}\VR^2(\Delta\psiminus).
\end{align}
This is the physical space version of the ones first appearing in \cite[Equations (3.9) and (3.10)]{TeuPress1974III}. Multiplying on both sides by $\frac{\Delta}{\R}$ and expanding in terms of $V$, one obtains
\begin{align}
\label{eq:TSI:original}
\hspace{4ex}&\hspace{-4ex}
(2(\edthR')^2-2\sqrt{2}ia\sin\theta \Lxi\edthR'
-a^2\sin^2\theta\Lxi^2)((\R)^{-1}{\psiplus})
\notag\\
={}&V^2((\R)\psiminus)+V\bigg(\frac{2M(r^2-a^2)}{\R}\psiminus\bigg).
\end{align}
For convenience, define $\psiplusc=(\R)^{-1}{\psiplus}$ and define
\begin{subequations}
\begin{align}
L_{+1}(\psiplusc)={}&
(2\sqrt{2}ia\sin\theta \edthR'
+a^2\sin^2\theta\Lxi)\psiplusc,\\
L_{-1}(\psiminus)={}&V^2((\R)\psiminus)+V\bigg(\frac{2M(r^2-a^2)}{\R}\psiminus\bigg).
\end{align}
\end{subequations}
The TSI \eqref{eq:TSI:original} then simplifies to
\begin{align}
2(\edthR')^2\psiplusc={}\Lxi L_{+1}(\psiplusc)+L_{-1}(\psiminus).
\end{align}
Applying $j$ $(j \in \mathbb{Z}^+)$ times $\Lxi$ gives
\begin{align}
\label{eq:TSI:simplify}
2(\edthR')^2\Lxi^j\psiplusc={}\Lxi^{j+1} L_{+1}(\psiplusc)+\Lxi^j L_{-1}(\psiminus).
\end{align}
In view of the pointwise estimates for $\Lxi^{j+1}\psiplus$ and $(rV)^i\Lxi^j\psiminus$ $(i=0,1,2)$ and the fact that $(\edthR')^2$ has a non-trivial kernel when acting on spin $+1$ scalars, we achieve by integrating over spheres that for any $j\in \mathbb{Z}^+$,
\begin{align}
\int_{\mathbb{S}^2}\absSDeri{\Lxi^j\psiplusc}{2}\di^2 \mu\lesssim{}v^{-1}\tb^{-(\gamma+1)/2-j+\delta}\max\{r^{-\delta}
,\tb^{-\delta}\}.
\end{align}
Substituting this back to \eqref{eq:TSI:simplify} enables us to improve the decay estimates to
\begin{align}
\int_{\mathbb{S}^2}\absSDeri{\Lxi^j\psiplusc}{2}\di^2 \mu\lesssim{}v^{-1}
\tb^{-(\gamma+3)/2-j+\delta}\max\{r^{-\delta}
,\tb^{-\delta}\}.
\end{align}
Using a Sobolev imbedding on spheres and combined with the decay estimate \eqref{eq:weakdecay:spin+1:v3}, we conclude
\begin{align}
\label{eq:imprDec:psiplus:1:1}
\abs{\Lxi^j(r^{-2}\psiplus)}\lesssim{}v^{-3}
\tb^{-(\gamma-1)/2-j+\delta}\max\{r^{-\delta}
,\tb^{-\delta}\}.
\end{align}
One can iterate by commuting with $\CDeri$ to close the proof.
\end{proof}

\subsection{Decay for the middle component}
\label{sect:vtdecay:psimiddle}

The following lemma is a standard statement and we take it from \cite[Proposition 2]{larsblue15Maxwellkerr}. This is to decompose a Maxwell field into a stationary part and a radiative part.
\begin{lemma}
\label{lem:decomp:Maxwellfield}
For a Maxwell field in a subextremal Kerr spacetime, it can be decomposed into
\begin{align}
\mathbf{F}=\mathbf{F}_{\text{sta}}+
       \mathbf{F}_{\text{rad}},
\end{align}
where the part $\mathbf{F}_{\text{rad}}$ is the radiative or non-charged part of the Maxwell field and the other part $\mathbf{F}_{\text{sta}}$ is the charged stationary Coulomb part,
such that
\begin{enumerate}
\item $\mathbf{F}_{\text{sta}}$ and $\mathbf{F}_{\text{rad}}$ are both solutions to the Maxwell equations, and the N-P components of them satisfy
\begin{subequations}
\label{eq:FradandFsta:property}
\begin{align}
\NPRplus(\mathbf{F}_{\text{sta}})
={}&\NPRminus(\mathbf{F}_{\text{sta}})=0,&
\NPRzero(\mathbf{F}_{\text{sta}})
={}&\kappa^{-2}
\left(q_{\mathbf{E}} + iq_{\mathbf{B}}\right),\\
\NPRplus(\mathbf{F}_{\text{rad}})={}&\NPRplus(\mathbf{F}),&
\NPRminus(\mathbf{F}_{\text{rad}})={}&\NPRminus(\mathbf{F});
\end{align}
\end{subequations}
\item The charges $q_{\mathbf{E}}$ and $q_{\mathbf{B}}$ are constants at all spheres $\mathbb{S}^2(\tb,\rb)$ for any $\tb\in \mathbb{R}$ and $\rb\geq r_+$, and can be calculated from the initial data;
   \item For any closed $2$-surface, say $\mathbb{S}^2$, $\int_{\mathbb{S}^2} \mathbf{F}_{\text{rad}}=  \int_{\mathbb{S}^2} \leftidx{^{\star}}{\mathbf{F}}_{\text{rad}}=0$;
   \item $\Lxi\mathbf{F}_{\text{sta}}=0$.
 \end{enumerate}
 \end{lemma}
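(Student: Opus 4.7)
The plan is to construct $\mathbf{F}_{\text{sta}}$ explicitly and then set $\mathbf{F}_{\text{rad}}=\mathbf{F}-\mathbf{F}_{\text{sta}}$, verifying the listed properties directly. First I would show that the charges $q_{\mathbf{E}}, q_{\mathbf{B}}$ are well defined and constant on every sphere $\mathbb{S}^2(\tb,\rb)$. Since the second Maxwell equation $\nabla_{[\gamma}\mathbf{F}_{\alpha\beta]}=0$ is equivalent to $d\mathbf{F}=0$ and the first equation $\nabla^{\alpha}\mathbf{F}_{\alpha\beta}=0$ is equivalent to $d\leftidx^{\star}\mathbf{F}=0$, Stokes' theorem applied to the cylinder bounded by two spheres $\mathbb{S}^2(\tb,\rb_1)$ and $\mathbb{S}^2(\tb,\rb_2)$ (or two spheres at different $\tb$-levels) immediately gives that both $\int_{\mathbb{S}^2}\mathbf{F}$ and $\int_{\mathbb{S}^2}\leftidx^{\star}\mathbf{F}$ are independent of $(\tb,\rb)$. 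In particular, they can be computed from the initial data on $\Sigmazero$.

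Next I would define $\mathbf{F}_{\text{sta}}$ as the Kerr--Newman--type stationary Coulomb solution characterized by having vanishing extreme N--P components and middle component $\NPRzero(\mathbf{F}_{\text{sta}})=\kappa^{-2}(q_{\mathbf{E}}+iq_{\mathbf{B}})$. The explicit two-form underlying this is a $\Lxi$-invariant, axisymmetric real two-form in B--L coordinates whose dual is, up to normalisation, a closed multiple of the gradient of $\Sigma^{-1}$ combined with the $\di\phi$-component appropriate for a rotating charged black hole; this is the familiar electromagnetic field of the Kerr--Newman spacetime with charge parameters $(q_{\mathbf{E}},q_{\mathbf{B}})$ placed on the Kerr background. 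I would verify by a direct computation using \eqref{eq:Kinnersleytetrad} and \eqref{def:regularNPComps} that this two-form satisfies $\nabla^{\alpha}\mathbf{F}_{\text{sta},\alpha\beta}=0$ and $\nabla_{[\gamma}\mathbf{F}_{\text{sta},\alpha\beta]}=0$ and produces exactly the prescribed middle N--P component while the extreme N--P components vanish. Stationarity $\Lxi\mathbf{F}_{\text{sta}}=0$ is manifest from the fact that the coefficients of the constructed two-form depend only on $(r,\theta)$.

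Having constructed $\mathbf{F}_{\text{sta}}$, I define $\mathbf{F}_{\text{rad}}:=\mathbf{F}-\mathbf{F}_{\text{sta}}$. Linearity of the Maxwell equations yields that $\mathbf{F}_{\text{rad}}$ is itself a Maxwell field. The N--P identities \eqref{eq:FradandFsta:property} then follow immediately from linearity and the vanishing of the extreme components of $\mathbf{F}_{\text{sta}}$. The non-charged property $\int_{\mathbb{S}^2}\mathbf{F}_{\text{rad}}=\int_{\mathbb{S}^2}\leftidx^{\star}\mathbf{F}_{\text{rad}}=0$ on a generic closed two-surface is reduced to the case of a coordinate sphere $\mathbb{S}^2(\tb,\rb)$ by the constancy argument of the first step, and for such a sphere it holds by construction since the charges of $\mathbf{F}_{\text{sta}}$ coincide with those of $\mathbf{F}$.

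The step requiring the most care is the explicit construction and verification of $\mathbf{F}_{\text{sta}}$ in the Kinnersley tetrad: one must make sure that the stationary axisymmetric ansatz with the prescribed middle N--P component $\kappa^{-2}(q_{\mathbf{E}}+iq_{\mathbf{B}})$ is consistent with both Maxwell equations simultaneously and reproduces exactly the right sphere integrals. This is a short but conventions-heavy computation, essentially identical to the one in \cite{larsblue15Maxwellkerr}, and I would simply cite the reference if the calculation becomes lengthy; the remainder of the lemma is then bookkeeping on top of Stokes' theorem.
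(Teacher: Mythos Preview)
Your proposal is correct and matches the paper's approach: the paper does not give its own proof but simply cites \cite[Proposition 2]{larsblue15Maxwellkerr}, and your outline is precisely the standard argument behind that reference (constancy of charges via Stokes, explicit Kerr--Newman--type Coulomb field as $\mathbf{F}_{\text{sta}}$, then $\mathbf{F}_{\text{rad}}$ as the difference).
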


The point \ref{point2:thm:2} of Theorem \ref{thm:2} then follows from the proven estimates above and the proposition below, and the last statement about the asymptotics in the exterior region $\{\rb\geq \tb\}$ in Theorem \ref{thm:2} follows from Proposition \ref{prop:weakdecay:spin+1-1:v3} and the  following proposition.
\begin{prop}
\label{prop:estiofMiddlecomp:generalassupofextremecomps}
Let $\gamma\geq 1$, $0\leq \delta_0\leq 1/2$, $j\in\mathbb{N}$  and $D_{\reg,j} \geq 0$ with $\reg$ suitably large (depending on $j$). Assume
\begin{subequations}
\label{eq:psiplusminus:summary:1}
\begin{align}
&\absCDeri{\Lxi^j \psiminus}{\reg-\reg'}
\leq{}D_{\reg,j} v^{-1}\tb^{-(3+\gamma)/2+\delta_0-j}\max\{r^{-\delta_0}
,\tb^{-\delta_0}\},\\
&\absCDeri{\Lxi^j (rV(r\psiminus))}{\reg-\reg'}\leq{}
D_{\reg,j} v^{-1}\tb^{-(1+\gamma)/2+\delta_0-j}\max\{r^{-\delta_0}
,\tb^{-\delta_0}\},\\
\label{eq:psiplusminus:summary:1:c}
&\absCDeri{\Lxi^j (r^{-2}\psiplus)}{\reg-\reg'}\leq{}
D_{\reg,j} v^{-3}\tb^{-(\gamma-1)/2+\delta_0-j}\max\{r^{-\delta_0}
,\tb^{-\delta_0}\}.
\end{align}
\end{subequations}
Then there exists a stationary function $\NPRzero^{\text{sta}}$ defined at every point $(\tb,\rb)$ by
$\NPRzero^{\text{sta}}
={\kappa^{-2}}(q_{\mathbf{E}} + iq_{\mathbf{B}})$ and a constant $\regl>0$
such that
\begin{align}
\absCDeri{\Lxi^j(\NPRzero
-\NPRzero^{\text{sta}}))}
{\reg-\regl}
\leq {}&D_{\reg,j}
v^{-2}\tb^{-\frac{\gamma+1}{2}+\delta_0-j}\max\{r^{-\delta_0}, \tb^{-\delta_0}\}.
\end{align}
\end{prop}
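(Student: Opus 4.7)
The plan is to first apply Lemma \ref{lem:decomp:Maxwellfield} to decompose $\mathbf{F} = \mathbf{F}_{\mathrm{sta}} + \mathbf{F}_{\mathrm{rad}}$, reducing the task to estimating the middle component of the radiative part. Setting $\tilde\psi_0 := \psizero - (q_{\mathbf{E}} + iq_{\mathbf{B}}) = \kappa^2(\NPRzero - \NPRzero^{\text{sta}})$, the constants $q_{\mathbf{E}}$ and $q_{\mathbf{B}}$ are annihilated by every operator appearing in \eqref{eq:TSIsSpin1Kerr}, so $\tilde\psi_0$ satisfies the same Maxwell subsystem with the same $\psi_{\pm 1}$. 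Since $|\kappa|^{-2}\lesssim r^{-2}$, it suffices to prove $|\Lxi^j\tilde\psi_0|\lesssim D r^2 v^{-2}\tb^{-(\gamma+1)/2+\delta_0-j}\max\{r^{-\delta_0},\tb^{-\delta_0}\}$ (together with its $\CDeri$-derivatives).

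For the exterior region $\{\rb\geq\tb\}$, which also yields the final, $a$-uniform statement of the theorem, I would integrate the outgoing-null equation \eqref{eq:TSIsSpin1KerrRadipal0With1} along the $V$-flow from $p$ to $\Scri$. The crucial feature is that $V\psizero = \tfrac{2\kappa^2}{\R}(\sqrt{2}\edthR' - ia\sin\theta\Lxi)(\kappa^{-1}\psiplus)$ depends only on $\psiplus$, with no self-coupling in $\tilde\psi_0$. A preparatory argument using \eqref{eq:psiplusminus:summary:1:c} and Lemma \ref{lem:Sobolev} establishes $\tilde\psi_0\to 0$ at $\Scri$ (peeling for the radiative part), and then $\tilde\psi_0(p) = -\int_p^{\Scri} V\psizero\,ds$. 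Bounding the integrand via \eqref{eq:psiplusminus:summary:1:c} gives $|V\psizero|\lesssim rDv^{-3}\tb^{-(\gamma-1)/2+\delta_0-j}\max\{r^{-\delta_0},\tb^{-\delta_0}\}$; along the $V$-curve one has $v\sim r'$, $\tb$ nearly constant, and $r'\geq r_0 \geq\tb$ throughout the exterior, so the radial integral $\int_{r_0}^\infty r'^{-2-\delta_0}\,dr'\lesssim r_0^{-1-\delta_0}$ reproduces the target after division by $|\kappa|^2\sim r^2$. This step does not require $|a|/M\leq \veps_0$.

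For the interior region (valid only under the slowly rotating hypothesis), the $V$-flow from an interior point reaches $\Scri$ only with unbounded parameter near the horizon, so the direct integration fails. Instead I use the angular equation \eqref{eq:TSIsSpin1KerrAngular0With1}, which after $Y\kappa=-1$ reads
\begin{align*}
(\sqrt{2}\edthR + ia\sin\theta\Lxi)\tilde\psi_0 = 2\kappa Y\psiplus + 2\psiplus,
\end{align*}
and view it as an elliptic equation on each sphere $\mathbb{S}^2(\tb,\rb)$. Applying $\sqrt{2}\edthR'$ and using \eqref{eq:l=l0mode:eigenvalue} that $2\edthR'\edthR$ has eigenvalues $\leq -2$ on the spin-zero $\ell\geq 1$ subspace, together with the Sobolev embedding on $\mathbb{S}^2$ from Lemma \ref{lem:Sobolev}, yields a pointwise control of $\tilde\psi_0|_{\ell\geq 1}$ by sphere-norms of $2\kappa Y\psiplus + 2\psiplus$ modulo the lower-order error $|a||\Lxi\tilde\psi_0|$. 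To estimate the right-hand side I decompose $Y = \tfrac{2\R}{\Delta}\Lxi - \tfrac{\R}{\Delta}V + \tfrac{2a}{\Delta}\Leta$ away from horizon (using $Y\in\CDeri$ directly near horizon where $r$ is bounded): via $rV\in\CDeri$, the $V\psiplus$ contribution dominates in the interior $r\leq\tb$ and precisely reproduces the target $\tb^{-(\gamma+5)/2-j}$. The $\ell=0$ mode of $\tilde\psi_0$ (missed by the inversion of $\edthR'\edthR$) is recovered from the identity $\int\tilde\psi_0\,d^2\mu = \int(-2iar\cos\theta - a^2\cos^2\theta)\NPRzero(\mathbf{F}_{\mathrm{rad}})\,d^2\mu$, where the $r^2$-piece vanishes by Lemma \ref{lem:decomp:Maxwellfield}(3); this produces an $O(|a|)$ coupling to the $\ell=1,2$ modes of $\NPRzero(\mathbf{F}_{\mathrm{rad}})$, absorbed via bootstrap for $|a|/M\leq\veps_0$.

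The main obstacle will be the simultaneous bootstrap in $j$: the $ia\sin\theta\Lxi\tilde\psi_0$ self-perturbation on the left-hand side of the angular equation and the $\ell=0$-mode coupling both vanish in Schwarzschild and are $O(\veps_0)$ in slowly rotating Kerr, but must be iterated consistently through the $\Lxi^j$ hierarchy so that the extra $\tb^{-1}$ gained at each $\Lxi$-derivative matches the $|a|$-loss from the coupling. Iteration itself is straightforward as $\Lxi$ commutes with $\edthR$, $\edthR'$ and $\sin\theta$.
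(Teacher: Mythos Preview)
Your overall plan (reduce to the radiative part via Lemma~\ref{lem:decomp:Maxwellfield}, then estimate $\tilde\psi_0=\psizero^{\text{rad}}$ on spheres from the first-order Maxwell system) is the right one, and your interior scheme is close in spirit to the paper's.  There are, however, two points where your route diverges from the paper's, and the first is a genuine gap.

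\textbf{The exterior integration to $\Scri$.}  Your argument writes $\tilde\psi_0(p)=-\int_p^{\Scri}V\psizero\,ds$ and therefore needs $\tilde\psi_0\to 0$ at $\Scri$.  The ``preparatory argument using \eqref{eq:psiplusminus:summary:1:c} and Lemma~\ref{lem:Sobolev}'' does not supply this: those inputs control only $\psiplus$, and the Sobolev estimates in Lemma~\ref{lem:Sobolev} require an energy norm of the scalar in question, which for $\psizero^{\text{rad}}$ is precisely what we are trying to bound.  Your integrability of $V\psizero$ does show that the radiation field $\lim_{r\to\infty}\psizero^{\text{rad}}$ exists, but there is no reason it vanishes; establishing its $\tb$-decay is essentially the content of the proposition in the exterior.  (One could salvage this by integrating $Y\psizero$ along $\Scri$ in $u$ via \eqref{eq:TSIsSpin1KerrRadial0With-1} and the decay of $r\psiminus$, or by integrating $V$ outward from $\rb=\tb$ using the interior bound there, but neither is what you wrote, and the second would lose the claimed $a$-uniformity.)

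\textbf{The self-coupling and the $K$-identity.}  Your interior scheme carries the term $ia\sin\theta\Lxi\tilde\psi_0$ through a bootstrap in the $\Lxi^j$-hierarchy.  The paper avoids this entirely by using \emph{all four} subequations of \eqref{eq:TSIsSpin1Kerr} together.  From \eqref{eq:TSIsSpin1KerrRadipal0With1}--\eqref{eq:TSIsSpin1KerrRadial0With-1} one bounds $V\psizero$ and $Y\psizero$ directly by $\psi_{\pm1}$-data; then the algebraic identity $K:=\Lxi+a(r^2+a^2)^{-1}\Leta=\tfrac12(\mu Y+V)$ and the relation
\[
\partial_\theta+\tfrac{(r^2+a^2)-a^2\sin^3\theta}{r^2+a^2}\,\tfrac{i\partial_\phi}{\sin\theta}
=\bar\kappa m^\mu\partial_\mu-ia\sin\theta K
\]
convert the offending $\Lxi\psizero$ into $K\psizero$ (already controlled) plus a tangential $\Leta$-piece.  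Hence $\edthR\psizero^{\text{rad}}$ is bounded purely by $\psi_{\pm1}$-quantities with no residual coupling to $\psizero$.  Combined with the vanishing-charge identity for the spherical mean (your $\ell=0$ computation is essentially this, modulo the correct weight $\tfrac{r^2+a^2}{\kappa^2}\psizero^{\text{rad}}$ rather than $\psizero^{\text{rad}}$), this yields the pointwise bound on every sphere with $r\geq 2M$ at once---interior and exterior simultaneously, with no bootstrap and no restriction on $|a|<M$.  For $r<2M$ one then integrates inward from $r=2M$ using the just-obtained bounds on $Y\psizero,V\psizero$.  This single on-sphere argument replaces both halves of your plan and delivers the $a$-uniform exterior statement for free.
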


\begin{proof}
The charges of the radiative part $\mathbf{F}_{\text{rad}}$ vanish, hence
\begin{align}
0={}&q_{\mathbf{E}}(\mathbf{F}_{\text{rad}}) + iq_{\mathbf{B}}(\mathbf{F}_{\text{rad}})={}\int_{\mathbb{S}^2(\tb,\rb)}
(\leftidx{^{\star}}{\mathbf{F}_{\text{rad}}}+i\mathbf{F}_{\text{rad}})(\partial_{\theta},\partial_{\pb})
\di\theta\di\pb.
\end{align}
By expanding out the above expression in terms of the N-P components, one finds
\begin{align}
0={}&\int_{\mathbb{S}^2(\tb,\rb)}
\bigg(\frac{\R}{\kappa^2}\psizero^{\text{rad}}
+\frac{ia\sin\theta }{\kappa}(\sqrt{2}\Sigma\NPRplus^{\text{rad}} -\Delta\psiminus^{\text{rad}})\bigg)\di^2\mu.
\end{align}
Here, the superscript $\text{rad}$ means the quantities are the corresponding components of the radiative part $\mathbf{F}_{\text{rad}}$.
Let $\hatpsizero=\frac{\R}{\kappa^2}\psizero^{\text{rad}}$, and decompose it into a sum of the spherically symmetric part $\hatpsizeroS$ and the non-spherically symmetric part $\hatpsizeroN$, i.e., $\hatpsizero=\hatpsizeroS+\hatpsizeroN$.
Therefore,
\begin{align}\label{eq:Psizero:middle:rad:0}
\hatpsizeroS={}&\int_{\mathbb{S}^2(\tb,\rb)}
\frac{-ia\sin\theta }{4\pi\kappa}(\sqrt{2}\Sigma\NPRplus^{\text{rad}} -\Delta\psiminus^{\text{rad}})\di^2\mu.
\end{align}
For the non-spherically symmetric part $\hatpsizeroN$, one can apply the standard elliptic estimate
\begin{align}\label{eq:Psizero:middle:rad:1}
\int_{\mathbb{S}^2(\tb,\rb)}2\abs{\hatpsizeroN}^2\di^2\mu\leq {}&
\int_{\mathbb{S}^2(\tb,\rb)}2\abs{\edthR\hatpsizeroN}^2
\di^2\mu
=\int_{\mathbb{S}^2(\tb,\rb)}2
\abs{\edthR\hatpsizero}^2
\di^2\mu.
\end{align}
Plugging in the relation between $\hatpsizero$ and $\psizero^{\text{rad}}$, one finds
\begin{align}
\text{RHS of } \eqref{eq:Psizero:middle:rad:1}={}\int_{\mathbb{S}^2(\tb,\rb)}\bigg(2\frac{\R}{\kappa^2}
\edthR\psizero^{\text{rad}} + \partial_{\theta} (\kappa^{-2}(\R))\psizero^{\text{rad}}\bigg)^2 \di^2\mu.
\end{align}
Furthermore, we have
\begin{align}\label{eq:Psizero:middle:rad:2}
\hspace{4ex}&\hspace{-4ex}
\int_{\mathbb{S}^2(\tb,\rb)}2\abs{\hatpsizeroN}^2-
\abs{\partial_{\theta} (\kappa^{-2}(\R))\psizero^{\text{rad}}}^2\di^2\mu\notag\\
={}&\int_{\mathbb{S}^2(\tb,\rb)}\bigg(
\frac{2(\R)^2}{\Sigma^2}\abs{\psizero^{\text{rad}}}^2
-\abs{\partial_{\theta} (\kappa^{-2}(\R))\psizero^{\text{rad}}}^2
-2\abs{\hatpsizeroS}^2\bigg)\di^2\mu\notag\\
={}&\int_{\mathbb{S}^2(\tb,\rb)}\bigg(
\frac{2(\R)^2}{\Sigma^3}
({\Sigma-2a^2\sin^2\theta})\abs{\psizero^{\text{rad}}}^2
-2\abs{\hatpsizeroS}^2\bigg)\di^2\mu
\end{align}
For $r\geq 2M\geq 2\abs{a}$, $\Sigma-2a^2\sin^2\theta\geq \half r^2$, hence it follows from \eqref{eq:Psizero:middle:rad:1}--\eqref{eq:Psizero:middle:rad:2} that there exist two positive constants $C_1$ and  $C_2$ such that
\begin{align}\label{eq:Psizero:middle:rad:3}
\int_{\mathbb{S}^2(\tb,\rb)}\abs{\psizero^{\text{rad}}}^2\di^2\mu\leq {}&
C_1\int_{\mathbb{S}^2(\tb,\rb)}\abs{\edthR\psizero^{\text{rad}}}^2
\di^2\mu
-C_2\int_{\mathbb{S}^2(\tb,\rb)}\abs{\hatpsizeroS}^2 \di^2\mu
.
\end{align}
Given the estimates \eqref{eq:psiplusminus:summary:1}, and in view of the fact that $Y$ is a linear combination of $r^{-1} rV$, $\Lxi$ and $r^{-2}\Leta$ with $O(1)$ coefficients when away from horizon and the decay estimate \eqref{eq:psiplusminus:summary:1:c}, it holds true that
\begin{align}
\absCDeri{\Lxi^j Y(r^{-2}\psiplus)}{\reg-\regl}\leq{}
D_{\reg,j}v^{-3}\tb^{-(\gamma-1)/2+\delta_0-j}\max\{r^{-1-\delta_0}
,\tb^{-1-\delta_0}\},
\end{align}
We thus obtain from the estimate \eqref{eq:Psizero:middle:rad:0} that
\begin{align}\label{eq:Psizero:middle:rad:4}
r^{-2}\absCDeri{\Lxi^j\hatpsizeroS}{\reg-\regl}\lesssim{}&
D_{\reg,j}v^{-2}\tb^{-(\gamma+1)/2+\delta_0-j}\max\{r^{-\delta_0}
,\tb^{-\delta_0}\},
\end{align}
from the first two subequations of the Maxwell system \eqref{eq:TSIsSpin1Kerr} that
\begin{align}
r^{-2}(\absCDeri{\Lxi^j(\bar{\kappa}m^{\mu}\partial_{\mu}\psizero^{\text{rad}})}{\reg-\regl}
+\absCDeri{\Lxi^j(\kappa \overline{m}^{\mu}\partial_{\mu}\psizero^{\text{rad}})}{\reg-\regl})\lesssim{}& D_{\reg,j}v^{-2}\tb^{-(\gamma+1)/2+\delta_0-j}\max\{r^{-\delta_0}
,\tb^{-\delta_0}\},
\end{align}
and from the last two subequations of the Maxwell system \eqref{eq:TSIsSpin1Kerr} that
\begin{align}
\label{eq:YVpsizerorad}
r^{-2}(\absCDeri{Y\Lxi^j\psizero^{\text{rad}}}{\reg-\regl}
+\absCDeri{V\Lxi^j\psizero^{\text{rad}}}{\reg-\regl})
\lesssim{}& D_{\reg,j}v^{-2}\tb^{-(\gamma+1)/2+\delta_0}\max\{r^{-\delta_0}
,\tb^{-\delta_0}\}.
\end{align}
For $K$ defined as in \eqref{def:vectorsKandVR}, there is a relation
\begin{align}
\partial_{\theta} +\frac{\R-a^2\sin^3\theta}{\R}\frac{i\partial_{\phi}}{\sin\theta}
={}&
\bar{\kappa}m^{\mu}\partial_{\mu}-ia\sin\theta K\notag\\
={}&\bar{\kappa}m^{\mu}\partial_{\mu}-\half ia\sin\theta\bigg(\frac{\Delta}{\R}Y+V\bigg)
\end{align}
and its complex conjugate. Therefore, the above decay estimates together imply
\begin{align}\label{eq:Psizero:middle:rad:5}
\absCDeri{\edthR (r^{-2}\Lxi^j\psizero^{\text{rad}})}{\reg-\regl}\lesssim{}&
D_{\reg,j} v^{-2}\tb^{-(\gamma+1)/2+\delta_0-j}\max\{r^{-\delta_0}
,\tb^{-\delta_0}\},
\end{align}
and this together with the estimates \eqref{eq:Psizero:middle:rad:3} and \eqref{eq:Psizero:middle:rad:4} yields that for $r\geq 2M$,
\begin{align}
r^{-4}\int_{\mathbb{S}^2(\tb,\rb)}\absCDeri{\Lxi^j\psizero^{\text{rad}}}{\reg-\regl}^2\di^2\mu
\lesssim {}&D_{\reg,j}v^{-4}\tb^{-(\gamma+1)+2\delta_0-2j}\max\{r^{-2\delta_0}
,\tb^{-2\delta_0}\}.
\end{align}

One can commute with $\edthR'$ and $\edthR$ to obtain the same type of estimates
\begin{align}
\sum_{\abs{\mathbf{a}}\leq m}
r^{-4}\int_{\mathbb{S}^2(\tb,\rb)}
\absCDeri{\SDeri^{\mathbf{a}}\Lxi^j\psizero^{\text{rad}}}{\reg-\regl}^2\di^2\mu
\lesssim {}&D_{\reg,j}^2v^{-4}\tb^{-(\gamma+1)+2\delta_0-2j}\max\{r^{-2\delta_0}
,\tb^{-2\delta_0}\}.
\end{align}
The vector field $\Lxi$ commutes with the Maxwell system \eqref{eq:TSIsSpin1Kerr}, and the decay estimates \eqref{eq:psiplusminus:summary:1} are valid if replacing $\psiplus$ and $\psiminus$ by $\Lxi^j\psiplus$ and $\Lxi^j\psiminus$ and adding extra $\tb^{-j}$ decay on the RHS of each equation; hence for any $m,j\in\mathbb{N}$ and $r\geq 2M$,
\begin{align}
\sum_{\abs{\mathbf{a}}\leq m}
r^{-4}\int_{\mathbb{S}^2(\tb,\rb)}
\absCDeri{\Lxi^j\SDeri^{\mathbf{a}}\psizero^{\text{rad}}}{\reg-\regl}^2\di^2\mu\lesssim {}&D_{\reg,j}^2v^{-4}\tb^{-(\gamma+1)-2j+2\delta_0}\max\{r^{-2\delta_0}
,\tb^{-2\delta_0}\}.
\end{align}
A standard Sobolev imbedding on sphere then gives for any $j\in\mathbb{N}$ and $r\geq 2M$
\begin{align}
\label{eq:Psizero:middle:rad:7}
\absCDeri{\Lxi^j(r^{-2}\psizero^{\text{rad}})}{\reg-\regl}
\lesssim {}&D_{\reg,j}v^{-2}\tb^{-(\gamma+1)/2-j+\delta_0-j}\max\{r^{-\delta_0}
,\tb^{-\delta_0}\}.
\end{align}
For $r=\rb'\in [r_+,2M)$, we integrate from $(\tb, 2M,\theta,\pb)$
\begin{align}
\absCDeri{\Lxi^j\psizero^{\text{rad}}}{\reg-\regl}
\lesssim {}&D_{\reg,j}v^{-2}\tb^{-(\gamma+1)/2-j+\delta_0}\max\{r^{-\delta_0}
,\tb^{-\delta_0}\} +
\int_{\rb'}^{2M}
\absCDeri{\Lxi^j\prb\psizero^{\text{rad}}}{\reg-\regl}\di\rb\notag\\
\lesssim {}&D_{\reg,j}v^{-2}\tb^{-(\gamma+1)/2-j+\delta_0}\max\{r^{-\delta_0}
,\tb^{-\delta_0}\},
\end{align}
where in the last step we have used an analogous estimate as \eqref{eq:YVpsizerorad}:
\begin{align}
\label{eq:YVpsizeroradHigh}
r^{-2}(\absCDeri{Y\Lxi^j\psizero^{\text{rad}}}{\reg-\regl}
+\absCDeri{V\Lxi^j\psizero^{\text{rad}}}{\reg-\regl})
\lesssim{}& D_{\reg,j} v^{-2}\tb^{-(\gamma+1)/2-j+\delta_0}\max\{r^{-\delta_0}
,\tb^{-\delta_0}\}.
\end{align}
In summary, the estimate \eqref{eq:Psizero:middle:rad:7} holds for any $j\in\mathbb{N}$ and $r\geq r_+$.
From Lemma \ref{lem:decomp:Maxwellfield}, we are then led to
\begin{align}
\absCDeri{\Lxi^j(r^{-2}(\psizero-(q_{\mathbf{E}} + iq_{\mathbf{B}})))}{\reg-\regl}
\leq {}&D_{\reg,j}v^{-2}\tb^{-(\gamma+1)/2-j+\delta_0}\max\{r^{-\delta_0}
,\tb^{-\delta_0}\},
\end{align}
and this closes the proof.
\end{proof}


\section{Almost Price's law for Maxwell field on Schwarzschild}
\label{sect:Maxwell:Schw:lgeneral}

In this section, we prove Theorem \ref{thm:Schw} in which almost Price's law for Maxwell field on a Schwarzschild background is achieved.
Following the discussions in Section \ref{sect:decompIntoModes}, we decompose spin $\pm 1$ components into modes $\Psiminus=\sum\limits_{\ell_0=1}^{\infty}\Psiminus^{\ell=\ell_0} $ and $\Psiplus=\sum\limits_{\ell_0=1}^{\infty}\Psiplus^{\ell=\ell_0} $, where $\Psiminus^{\ell=\ell_0}$ and $\Psiplus^{\ell=\ell_0}$ are supported on $\ell =\ell_0$. Since the Schwarzschild spacetime is spherically symmetric, one finds each mode satisfies the Maxwell equations.

On Schwarzschild, the BEAM estimates, basic energy $2$-decay condition and the claimed pointwise asymptotics in Theorems \ref{thm:BEAM}, \ref{thm:1}, \ref{thm:2} and \ref{thm:3} are all satisfied. To obtain better decay estimates for a fixed mode of Maxwell field on a Schwarzschild background, the $r^p$ estimate in Proposition \ref{prop:wave:rp:highmodes} needs to be utilized such that the basic energy $\gamma$-decay condition holds for larger $\gamma$ if the field is supported in larger $\ell$ modes. Besides, given basic energy $\gamma$-decay condition for larger $\gamma$, the estimate in Proposition \ref{prop:evenimproveEnerDecay:phiminus:Schw} below is useful in removing the $\delta$ loss in time decay in Theorem \ref{thm:2}.

\begin{prop}
\label{prop:evenimproveEnerDecay:phiminus:Schw}
In a Schwarzschild spacetime, the spin $-1$ component  satisfies the following estimate
\begin{align}\label{eq:W-3energynorm:spin-1:Schw}
\hspace{4ex}&\hspace{-4ex}
\int_{\Sigmatb}\big(
\mu r^{-5}\abs{2\Phiminus{1}+r \mathbf{S}\Psiminus}^2
+\mu r^{-5}\abs{\mathbf{S}^{\half}\Phiminus{1}
-\mathbf{S}^{\half}\Psiminus}^2
+\mu r
\abs{\prb(r^{-2}{\Phiminus{1}})}^2\notag\\
\hspace{4ex}&\hspace{-4ex}\qquad
+2M r^{-6}(\abs{\mathbf{S}^{\half}\Phiminus{1}}^2
-2\abs{\Phiminus{1}}^2)
+\mu^2 r\abs{\prb(r^{-1}\mathbf{S}^{\half}\Psiminus)}^2\big)\di^3 \mu\notag\\
={}&\int_{\Sigmatb}\big(\mu r^{-3}\abs{G(\Psiminus)}^2
+r^{-1}\abs{ \mu H\Lxi \mathbf{S}^{\half}\Psiminus}^2\big)\di^3\mu,
\end{align}
where  $G(\Psiminus)=(2\mu^{-1}-H)\Lxi\Phiminus{1}$ and $H= 2\mu^{-1}+\partial_r h(r)$ with $h(r)$ being the function introduced in Section \ref{sect:foliation}.
\end{prop}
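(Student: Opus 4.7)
The plan is to derive \eqref{eq:W-3energynorm:spin-1:Schw} as an exact integral identity, mimicking the Schwarzschild specialization ($a=0$, $\delta=0$) of the Kerr computation in Proposition \ref{prop:evenimproveEnerDecay:phiminus} but tracking each step at equality rather than as an inequality. First, I would restrict the first-order relation \eqref{eq:waveofPsiminus:intermsofPhi1:v11} to $a=0$: all $a$-dependent pieces of $G(\Psiminus)$ vanish, leaving the clean identity
\begin{align*}
\prb\Phiminus{1} + \mathbf{S}\Psiminus = G(\Psiminus), \qquad G(\Psiminus)=(2\mu^{-1}-H)\Lxi\Phiminus{1},
\end{align*}
which serves as the starting point.

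Next, I would take the modulus squared of both sides, multiply by the weight $\mu r^{-3}$, and integrate over $\Sigmatb$ against $\di^3\mu$. The square on the LHS produces $\mu r^{-3}(|\prb\Phiminus{1}|^2 + |\mathbf{S}\Psiminus|^2 + 2\Re(\overline{\prb\Phiminus{1}}\mathbf{S}\Psiminus))$; radial integration by parts in $\prb$ combined with spherical integration by parts using $\mathbf{S}=2\edthR'\edthR$ and $(\edthR)^{*}=-\edthR'$ converts the cross term into a combination of $|\mathbf{S}^{\half}\Phiminus{1}|^2$, $\Re(\overline{\mathbf{S}^{\half}\Phiminus{1}}\mathbf{S}^{\half}\Psiminus)$, and a $|\Phiminus{1}|^2$ piece multiplied by a $\prb$-derivative of the weight. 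Since $\prb\mu = 2M/r^2$ on Schwarzschild, this last differentiation is precisely the source of the coefficient $2Mr^{-6}$ in the second line of the LHS. The $\mu^2 r |\prb(r^{-1}\mathbf{S}^{\half}\Psiminus)|^2$ term, in turn, arises by manipulating $\prb \mathbf{S}^{\half}\Phiminus{0}$ exactly as in \eqref{eq:betterdecay:generalexp:phiminus:2:2} specialized to $f^2=\mu$.

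The correction $r^{-1}|\mu H \Lxi \mathbf{S}^{\half}\Psiminus|^2$ on the RHS enters because the defining relation $\Phiminus{1}=\curlVR\Phiminus{0}$ together with $\Phiminus{0}=\mu\Psiminus$ reads, in hyperboloidal coordinates, $r^{-2}\Phiminus{1} = \prb\Phiminus{0} + \mu H \Lxi \Psiminus$; hence replacing $\prb\mathbf{S}^{\half}\Phiminus{0}$ by $r^{-2}\mathbf{S}^{\half}\Phiminus{1}$ costs a $\mu H \Lxi \mathbf{S}^{\half}\Psiminus$ remainder whose square is moved to the RHS exactly as in the step leading to \eqref{eq:betterdecay:generalexp:phiminus:3}. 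Finally, one completes the squares to package the remaining algebraic terms into the stated quadratic forms $\mu r^{-5}|2\Phiminus{1}+r\mathbf{S}\Psiminus|^2$, $\mu r^{-5}|\mathbf{S}^{\half}\Phiminus{1}-\mathbf{S}^{\half}\Psiminus|^2$, and $\mu r |\prb(r^{-2}\Phiminus{1})|^2$, while all boundary terms at the horizon and at spatial infinity vanish from the regularity and decay of the regular Maxwell field.

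The main obstacle is not analytic but combinatorial: one must track the exact numerical coefficients so that the cross terms in the expansions of $|2\Phiminus{1}+r\mathbf{S}\Psiminus|^2$ (involving $\Re(\overline{\Phiminus{1}}\mathbf{S}\Psiminus)$) and of $|\prb(r^{-2}\Phiminus{1})|^2$ (involving $\Re(\overline{\Phiminus{1}}\prb\Phiminus{1})$) line up precisely with the various integration-by-parts products generated by the weight $\mu r^{-3}$ and by the defining relation between $\Phiminus{0},\Phiminus{1},\Psiminus$. Notice that this matching only closes to an \emph{equality} because of the specifically Schwarzschild identity $\prb\mu = 2M/r^2$ and the absence of $\Leta$-terms in $G(\Psiminus)$; in the Kerr setting these contributions left a non-absorbable remainder, which is exactly why Proposition \ref{prop:evenimproveEnerDecay:phiminus} could only be stated as an inequality with a small $\delta$-loss.
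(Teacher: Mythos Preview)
Your proposal is correct and follows essentially the same approach as the paper: specialize the Kerr computation of Proposition~\ref{prop:evenimproveEnerDecay:phiminus} to $a=0$ with the weight choice $f^2=\mu$ (equivalently $\delta=0$), which turns the inequality into an exact identity, and then complete the squares to reach \eqref{eq:W-3energynorm:spin-1:Schw}. The paper's proof is terser---it simply invokes \eqref{eq:betterdecay:generalexp:phiminus:1}, \eqref{eq:betterdecay:generalexp:phiminus:4:1}, \eqref{eq:betterdecay:generalexp:phiminus:5:1} with $a=0$, $f=\mu$ and says the rest is ``simple calculations''---but your more detailed outline of the integration by parts, the role of $\prb\mu=2M/r^2$, and the origin of the $\mu H\Lxi\mathbf{S}^{\half}\Psiminus$ correction matches exactly what those calculations amount to.
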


\begin{proof}
Taking $a=0$ and $f=\mu$ (i.e., taking $\delta=0$ in \eqref{eq:betterdecay:generalexp:phiminus:4:1} and \eqref{eq:betterdecay:generalexp:phiminus:5:1}),
equation \eqref{eq:betterdecay:generalexp:phiminus:1} becomes
\begin{align}
\label{eq:betterdecay:generalexp:phiminus:4:1:Schw}
\hspace{4ex}&\hspace{-4ex}
\int_{\Sigmatb}\mu r^{-3}\abs{G(\Psiminus)}^2\di^3\mu\notag\\
={}
&\int_{\Sigmatb}
\big(\mu r
\abs{\prb(r^{-2}{\Phiminus{1}})}^2
+\mu^2 r
\abs{\prb(r^{-1}\mathbf{S}^{\half}\Psiminus)}^2
-r^{-1}\abs{ \mu H\Lxi \mathbf{S}^{\half}\Psiminus}^2\notag\\
&\qquad
+\mu r^{-5}
\abs{r\mathbf{S}\Psiminus
+2\Phiminus{1}}^2
-\prb(2\mu)r^{-4}
{\abs{\Phiminus{1}}^2}\notag\\
&\qquad
+r^{-5}\abs{\mathbf{S}^{\half}\Phiminus{1}}^2
+2\mu r^{-4}
\Re\big(\overline{\mathbf{S}\Psiminus}\Phiminus{1}\big)\notag\\
&\qquad +(r^{-1}(\prb(r\mu))^2
-\prb(\mu\prb(\mu r)))r^{-2}
\abs{\mathbf{S}^{\half}\Psiminus}^2\big)\di^3\mu.
\end{align}
Equation \eqref{eq:W-3energynorm:spin-1:Schw} follows after simple calculations.
\end{proof}

\begin{remark}
\label{rem:evenimproveEnerDecay:phiminus:Schw}
If the basic energy $\gamma$-decay condition holds true for the spin $-1$ component, then from Proposition \ref{prop:improveEnerDecay:phiminus}, the RHS, and hence the LHS of \eqref{eq:W-3energynorm:spin-1:Schw}, decays like $\tb^{-5-\gamma}$.
\end{remark}
We discuss about the Newman--Penrose constants in Section \ref{sect:NPconsts} and consider in Sections \ref{sect:spin+1:l=1:Schw}--\ref{sect:Maxwell:Schw:l=l0:minus} a fixed $\ell=\ell_0$ mode of spin $+1$ or $-1$ component of Maxwell field on Schwarzschild, and eventually give a proof of Theorem \ref{thm:Schw} in Section \ref{sect:close:thm:Schw}. Unless otherwise stated, we will simple drop the superscript $\ell=\ell_0$ in the scalars defined by these spin $\pm 1$ components since it is clear from the title of each subsection which mode we are treating.

\subsection{Newman--Penrose constants}
\label{sect:NPconsts}

\begin{definition}
\label{def:tildePhiplusandminusHigh}
For any $i\in \mathbb{N}$, let $f_{i,1}=(i+1)(i+2)$, $f_{i,2}=-2(i+2)$, $g_i=6 \sum\limits_{j=0}^i f_{j,1}= 2i(i+1)(i+2)$, $x_{i+1,i}=\frac{g_{i+1}}{f_{i+1,1} -f_{i,1}}=(i+1)(i+3)$, and $x_{i+1,j}=-\frac{g_{i+1}x_{i,j}}{f_{i+1,1}-f_{j,1}}$ for $0\leq j\leq i-1$. Define
\begin{align}
\PhiplusHigh{i}={}&\curlVR^i\Phiplus, &\Phiminus{i+2}={}&\curlVR^i\Phiminus{2},
\end{align}
and
\begin{subequations}
\begin{align}
\tildePhiplusHigh{0}={}&\PhiplusHigh{0},& \tildePhiplusHigh{i+1}={}&\PhiplusHigh{i+1}
+\sum_{j=0}^ix_{i+1,j}M^{i+1-j}\tildePhiplusHigh{j},\\
\tildePhiminus{2}={}&\Phiminus{2},& \tildePhiminus{i+3}={}&\Phiminus{i+3}
+\sum_{j=0}^ix_{i+1,j}M^{i+1-j}\tildePhiminus{j+2}.
\end{align}
\end{subequations}
\end{definition}

It is convenient to introduce $(u,v,\theta,\phi)$ coordinates, where $u=(t-r^*)/2$ and $v=(t+r^*)/2$. Then $\pu=\mu Y$ and $\pv=V$.

\begin{prop}
Let $i\in \mathbb{N}$.
\begin{enumerate}
  \item The equation of $\Phiplus$ is
\begin{align}
\label{eq:Phi+1:Schw:generall}
-\pu\curlVR \Phiplus +(2\edthR\edthR'+2)\Phiplus-{4(r-3M)r^{-2}}\curlVR\Phiplus -{12M}{r^{-1}}\Phiplus={}&0,
\end{align}
 the equation of $\PhiplusHigh{i}$ is
\begin{align}
\label{eq:Phiplushighi:Schw:generall}
&-\pu \curlVR  \PhiplusHigh{i} +(2\edthR\edthR'+f_{i,1})\PhiplusHigh{i}
+{f_{i,2}(r-3M)r^{-2}}\curlVR\PhiplusHigh{i}
-6f_{i,1}Mr^{-1}\PhiplusHigh{i}
+g_i M\PhiplusHigh{i-1}={}0,
\end{align}
and the equation of $\tildePhiplusHigh{i}$ is
\begin{align}
\label{eq:Phiplushighi:Schw:generall:tildePhiplusi}
&-\pu \curlVR  \tildePhiplusHigh{i} +(2\edthR\edthR'+f_{i,1})\tildePhiplusHigh{i}
+{f_{i,2}(r-3M)r^{-2}}\curlVR\tildePhiplusHigh{i}
-6f_{i,1}Mr^{-1}\tildePhiplusHigh{i}
+\sum_{j=0}^{i}h_{i,j} \PhiplusHigh{j}={}0,
\end{align}
with $h_{i,j}=O(r^{-1})$ for all $j\in \{0,1,\ldots, i\}$.
  \item The equation of $\Phiminus{2}$ is
\begin{align}
\label{eq:Phiminus2:Schw:generall}
-\pu \curlVR\Phiminus{2}+(2\edthR'\edthR+2)\Phiminus{2}
-4(r-3M)r^{-2}
{\curlVR\Phiminus{2}}-12Mr^{-1}\Phiminus{2}
={}0,
\end{align}
and the equation of $\Phiminus{i+2}$  is
\begin{align}
\label{eq:Phiminusi:Schw:generall}
&-\pu \curlVR  \Phiminus{i+2} +(2\edthR'\edthR+f_{i,1})\Phiminus{i+2}
+{f_{i,2}(r-3M)r^{-2}}\curlVR\Phiminus{i+2}
-6f_{i,1}Mr^{-1}\Phiminus{i+2}
+g_i M\Phiminus{i+1}={}0,
\end{align}
and the equation of $\tildePhiminus{i+2}$ is
\begin{align}
\label{eq:tildePhiminusi:Schw:generall:tildePhiminusi}
&-\pu \curlVR  \tildePhiminus{i+2} +(2\edthR\edthR'+f_{i,1})\tildePhiminus{i+2}
+{f_{i,2}(r-3M)r^{-2}}\curlVR\tildePhiminus{i+2}
-6f_{i,1}Mr^{-1}\tildePhiminus{i+2}
+\sum_{j=0}^{i}h_{i,j} \Phiminus{j+2}={}0,
\end{align}
with $h_{i,j}$ being the same as the ones in \eqref{eq:Phiplushighi:Schw:generall:tildePhiplusi} and satisfying $h_{i,j}=O(r^{-1})$ for all $j\in \{0,1,\ldots, i\}$.
\end{enumerate}

\end{prop}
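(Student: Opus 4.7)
The plan is to prove each equation by direct specialization of the Kerr identities already established, together with an induction on $i$ and an algebraic bookkeeping for the correction coefficients $x_{i+1,j}$. The whole argument rests on one preliminary computation: in Schwarzschild, $\pu=\mu Y$ and $\curlVR=r^2\mu^{-1}\pv$ commute with $\pv$, so
\[
[\pu,\curlVR]=\pu(r^2\mu^{-1})\,\pv=-\mu\,\partial_r(r^2\mu^{-1})\,\pv
=-\frac{2r(r-3M)}{r-2M}\,\pv=-\frac{2(r-3M)}{r^2}\,\curlVR,
\]
hence $-r^2YV=-\pu\curlVR-\frac{2(r-3M)}{r^2}\curlVR$ as an operator on spin-weighted scalars. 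Using this and $\squareShat_{+1}\!\mid_{a=0}=-r^2YV+2\edthR\edthR'-\frac{2M}{r}$, I would specialize \eqref{eq:Phi+1} to $a=0$, convert the $V$-term on the right to $\curlVR$ via $V=\frac{r-2M}{r^3}\curlVR$, and collect zeroth-order terms to recover \eqref{eq:Phi+1:Schw:generall}. The equation \eqref{eq:Phiminus2:Schw:generall} for $\Phiminus{2}$ comes from the same specialization of \eqref{eq:Phi-12}, converting $2\edthR\edthR'$ into $2\edthR'\edthR+2$ via the commutator $2\edthR\edthR'-2\edthR'\edthR=-2s=+2$ for $s=-1$; the extra $+2$ absorbs neatly into the $(2\edthR'\edthR+2)$ coefficient, reproducing exactly the same algebraic form as in the spin $+1$ case.

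Next I would prove \eqref{eq:Phiplushighi:Schw:generall} and \eqref{eq:Phiminusi:Schw:generall} by induction on $i$ by applying $\curlVR$ to the previous equation. The commutator gives $\curlVR\pu=\pu\curlVR+\frac{2(r-3M)}{r^2}\curlVR$, and the elementary identities $\curlVR(r^{-1})=-1$ and $\curlVR\!\left(\frac{r-3M}{r^2}\right)=\frac{6M-r}{r}$ together with the product rule produce an equation for $\PhiplusHigh{i+1}$ of the desired form. Matching the coefficients of $\curlVR\PhiplusHigh{i+1}$, of $\PhiplusHigh{i+1}$, and of the lower-order source $\PhiplusHigh{i}$ yields exactly the recursions $f_{i+1,2}=f_{i,2}-2$, $f_{i+1,1}=f_{i,1}-f_{i,2}$, and $g_{i+1}=6f_{i,1}+g_i$, all of which are immediate from the closed-form values in Definition~\ref{def:tildePhiplusandminusHigh}. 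The spin $-1$ induction is verbatim the same (only $\edthR\edthR'$ is replaced by $\edthR'\edthR$ throughout, and all coefficient computations are identical).

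For \eqref{eq:Phiplushighi:Schw:generall:tildePhiplusi} and \eqref{eq:tildePhiminusi:Schw:generall:tildePhiminusi}, define
\[
\mathcal{L}_{k}=-\pu\curlVR+(2\edthR\edthR'+f_{k,1})
+\tfrac{f_{k,2}(r-3M)}{r^2}\curlVR-\tfrac{6f_{k,1}M}{r},
\]
so that \eqref{eq:Phiplushighi:Schw:generall} reads $\mathcal{L}_{k}\PhiplusHigh{k}=-g_kM\PhiplusHigh{k-1}$, and proceed by induction on $i$. Substituting the definition $\tildePhiplusHigh{i+1}=\PhiplusHigh{i+1}+\sum_{j=0}^{i}x_{i+1,j}M^{i+1-j}\tildePhiplusHigh{j}$ and using $\mathcal{L}_{i+1}=\mathcal{L}_{j}+(f_{i+1,1}-f_{j,1})(1-\tfrac{6M}{r})+\tfrac{(f_{i+1,2}-f_{j,2})(r-3M)}{r^2}\curlVR$ together with the inductive hypothesis for $j\leq i$ produces a source that splits into an $O(r^{-1})$ part (to be absorbed into $h_{i+1,j}$) and a potentially $O(1)$ part proportional to $\PhiplusHigh{j}$. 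The leading $g_{i+1}M\PhiplusHigh{i}$ source coming from $\mathcal{L}_{i+1}\PhiplusHigh{i+1}$ is killed precisely by the $x_{i+1,i}(f_{i+1,1}-f_{i,1})M\PhiplusHigh{i}$ piece, giving $x_{i+1,i}=\tfrac{g_{i+1}}{f_{i+1,1}-f_{i,1}}$; the remaining $O(1)$ sources at each $\PhiplusHigh{j}$ with $j<i$ are cascaded downward by expanding $\tildePhiplusHigh{j'}$ recursively, and killing them one by one forces exactly $x_{i+1,j}=-\tfrac{g_{i+1}\,x_{i,j}}{f_{i+1,1}-f_{j,1}}$. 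The $\tildePhiminus{i+2}$ case reuses the same coefficients and the same argument without change.

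The main obstacle is the final bookkeeping step: one must confirm that after iterating the definition of $\tildePhiplusHigh{j}$ inside the substitution, the coefficient of each $\PhiplusHigh{j}$ at the $O(1)$ level collapses to the condition that defines $x_{i+1,j}$, rather than forcing a conflict. This is a purely combinatorial check on a telescoping sum, and is what the specific recursion on $x_{i+1,j}$ is tailored to arrange; once the cancellation at level $\PhiplusHigh{i}$ works (giving $x_{i+1,i}$), the cancellations at lower $j$ are forced consistently by descending induction. Every other step — the commutator computations, the derivatives $\curlVR(r^{-1})$ and $\curlVR((r-3M)r^{-2})$, and the specialization of \eqref{eq:Phi+1} and \eqref{eq:Phi-12} at $a=0$ — is entirely routine.
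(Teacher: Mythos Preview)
Your proposal is correct and follows essentially the same approach as the paper's own proof: specialize the Kerr wave equations \eqref{eq:Phi+1} and \eqref{eq:Phi-12} to $a=0$, induct on $i$ by commuting $\curlVR$ through (the paper packages this as the commutator $[\curlVR,-r^2YV]$, you as $[\pu,\curlVR]$ together with $\curlVR(r^{-1})$ and $\curlVR((r-3M)r^{-2})$, but these are equivalent), and then carry out the same algebraic cancellation for the tilde quantities by substituting $\PhiplusHigh{i}=\tildePhiplusHigh{i}-\sum_{j<i}x_{i,j}M^{i-j}\tildePhiplusHigh{j}$ and matching the $O(1)$ coefficients of each $\tildePhiplusHigh{j}$ to zero, which forces exactly the recursion on $x_{i+1,j}$ in Definition~\ref{def:tildePhiplusandminusHigh}. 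Your explicit verification of the recursions $f_{i+1,2}=f_{i,2}-2$, $f_{i+1,1}=f_{i,1}-f_{i,2}$, $g_{i+1}=g_i+6f_{i,1}$ is a welcome addition that the paper leaves implicit.
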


\begin{proof}
In Schwarzschild, the wave equation \eqref{eq:Phi+1} simplifies to
\begin{align}
\label{eq:Phi+1:Schw:generall:v1}
-r^2 YV \Phiplus +(2\edthR\edthR'+2)\Phiplus-{2\mu^{-1}(r-3M)}V\Phiplus -{12M}{r^{-1}}\Phiplus={}&0,
\end{align}
which is exactly
\eqref{eq:Phi+1:Schw:generall}. By a simple commutation relation
\begin{align}
[\curlVR, -r^2YV]\varphi={}&-\curlVR\Big(\tfrac{2(r-3M)}{r^{2}} \curlVR\varphi\Big)
=-\tfrac{2(r-3M)}{r^2}\curlVR^2\varphi
+(2-12Mr^{-1})\curlVR\varphi,
\end{align}
one can inductively obtain
\begin{align}
\label{eq:Phiplushighi:Schw:generall:v1}
&-r^2Y V  \PhiplusHigh{i} +(2\edthR\edthR'+f_{i,1})\PhiplusHigh{i}
+{(f_{i,2}+2)(r-3M)r^{-2}}\curlVR\PhiplusHigh{i}
-6f_{i,1}Mr^{-1}\PhiplusHigh{i}
+g_i M\PhiplusHigh{i-1}={}0,
\end{align}
which proves \eqref{eq:Phiplushighi:Schw:generall} for general $i\in \mathbb{N}$.

Equation \eqref{eq:Phiplushighi:Schw:generall:tildePhiplusi} is proved by induction. Assume it holds for $\tildePhiplusHigh{j}$ for all $1\leq j\leq i$, we show it holds also for $\tildePhiplusHigh{i+1}$. By adding $x_{i+1, j}M^{i+1-j}$ multiple of equation \eqref{eq:Phiplushighi:Schw:generall:tildePhiplusi} for $\tildePhiplusHigh{j}$ for all $j=0,1,\ldots,i$ to equation \eqref{eq:Phiplushighi:Schw:generall} of $\PhiplusHigh{i+1}$, we arrive at
\begin{align}
\label{eq:tildePhipsHigh:choices}
&-2\pu \curlVR  \tildePhiplusHigh{i+1} +(2\edthR\edthR'+f_{i+1,1})\tildePhiplusHigh{i+1}
+{f_{i+1,2}(r-3M)r^{-2}}\curlVR\tildePhiplusHigh{i+1}
-6f_{i+1,1}Mr^{-1}\tildePhiplusHigh{i+1}\notag\\
&-\sum_{j=0}^i x_{i+1,j} M^{i+1-j}(f_{i+1,1}- f_{j,1})\tildePhiplusHigh{j}
+g_{i+1}M\PhiplusHigh{i}\notag\\
&+6Mr^{-1}\sum_{j=0}^i x_{i+1,j} M^{i+1-j}(f_{i+1,1}- f_{j,1})\tildePhiplusHigh{j}\notag\\
&
-(r-3M) r^{-2}\sum_{j=0}^i (f_{i+1,2}-f_{j,2})x_{i+1,j}M^{i+1-j}  \curlVR\tildePhiplusHigh{j}
+\sum_{j=0}^i x_{i+1,j} M^{i+1-j}\sum_{j'=0}^{j}h_{j,j'} \PhiplusHigh{j'}={}0.
\end{align}
By replacing $\PhiplusHigh{i}=\PhiplusHigh{i}
-\sum\limits_{j=0}^{i-1}x_{i,j}M^{i-j}\tildePhiplusHigh{j}$ into the last term of the second line, one finds the second line equals $\sum\limits_{j=1}^i e_{i+1,j}M^{i+1-j}\tildePhiplusHigh{j}$, with
\begin{subequations}
\begin{align}
e_{i+1,i}={}&-x_{i+1,i}(f_{i+1,1}-f_{i,1}) +g_{i+1},\\
e_{i+1,j}={}&-x_{i+1,j}(f_{i+1,1}-f_{j,1}) -g_{i+1}x_{i,j}, \quad \text{for} \quad 0\leq j\leq i-1.
\end{align}
\end{subequations}
All of these $\{e_{i+1,j}\}_{j=0,1,\ldots,i}$ are equal to zero from the choices of $\{x_{i+1,j}\}_{j=0,1,\ldots, i}$ in Definition \ref{def:tildePhiplusandminusHigh}, which means that the entire second line of \eqref{eq:tildePhipsHigh:choices} vanishes.
One can rewrite $\curlVR\tildePhiplusHigh{j}$ using Definition \ref{def:tildePhiplusandminusHigh} as a weighted sum of $\{\tildePhiplusHigh{j'}\}|_{j'=0,1,\ldots, j+1}$ with all coefficients being $O(1)$. Thus, by denoting all the terms in the last two lines on the LHS of \eqref{eq:tildePhipsHigh:choices} as $\sum\limits_{j=0}^{i+1}h_{i+1,j} \PhiplusHigh{j}$, one finds  $h_{i+1,j}=O(r^{-1})$ for all $j\in \{0,1,\ldots, i+1\}$, which thus proves equation \eqref{eq:Phiplushighi:Schw:generall:tildePhiplusi} for $\tildePhiplusHigh{i+1}$.

For spin $-1$ component, equation \eqref{eq:Phiminus2:Schw:generall} comes directly from \eqref{eq:Phi-12}. Since equations \eqref{eq:Phiminus2:Schw:generall} and \eqref{eq:Phi+1:Schw:generall} have the same form,
equations \eqref{eq:Phiminusi:Schw:generall} and \eqref{eq:tildePhiminusi:Schw:generall:tildePhiminusi} follow in the same fashion.
\end{proof}

\begin{prop}
\label{prop:nullinfBeha:PhiplusiandPhiminusi} Let $i,\reg\in \mathbb{N}$.  Let $\regl>0$ be suitably large.
\begin{enumerate}
  \item[$(i)$] If $\norm{\Psiplus}^2_{W_{-2}^{\reg+\regl+2i+2}(\Sigmazero)}<\infty$
      and $\lim\limits_{r\to\infty}\sum\limits_{j=0}^i\absCDeri{
      \PhiplusHigh{j}}{\reg}\vert_{\Sigmazero}
      <\infty$, then there is a $u$-dependent constant $C_i(u)\in (0,\infty)$ such that for any $\tb\geq \tb_0$, $\lim\limits_{r\to\infty}\sum\limits_{j=0}^i
      \absCDeri{\PhiplusHigh{j}}{\reg}\vert_{\Sigmatb}<\infty$. The same statement holds if one replaces all $\PhiplusHigh{j}$ by $\tildePhiplusHigh{j}$;
  \item[$(ii)$]  If $\norm{\Psiplus}^2_{W_{-2}^{\reg+\regl+2i+4}(\Sigmazero)}<\infty$
      and $\lim\limits_{r\to\infty}\Big(\sum\limits_{j=0}^i\absCDeri{
      \edthR\edthR'\PhiplusHigh{j}}{\reg}\vert_{\Sigmazero}
      +r^{-\alpha}\absCDeri{\PhiplusHigh{i+1}}{\reg}\vert_{\Sigmazero}\Big)
      <\infty$ for some $\alpha\in [0,2]$, then there is a $u$-dependent constant $C_{i+1,\alpha}(u)\in (0,\infty)$ such that for any $\tb\geq \tb_0$, $\lim\limits_{r\to\infty}
      \Big(\sum\limits_{j=0}^i
      \absCDeri{\edthR\edthR'\PhiplusHigh{j}}{\reg}\vert_{\Sigmatb}
      +r^{-\alpha}\absCDeri{\PhiplusHigh{i+1}}{\reg}\vert_{\Sigmatb}\Big)<\infty$.
      The same statement holds if one replaces all $\PhiplusHigh{j}$ by $\tildePhiplusHigh{j}$ for all $j\in \{0,\ldots, i+1\}$;
  \item[$(iii)$] If $\sum\limits_{j=0}^2\norm{(r^2 V)^j\Psiminus}^2_{W_{-2}^{\reg+\regl+2i+2}(\Sigmazero)}<\infty$ and $\lim\limits_{r\to\infty}\sum\limits_{j=2}^{i+2}\absCDeri{
      \Phiminus{j}}{\reg}\vert_{\Sigmazero}
      <\infty$, then there is a $u$-dependent constant $C_i(u)\in (0,\infty)$ such that for any $\tb\geq \tb_0$, $\lim\limits_{r\to\infty}\sum\limits_{j=2}^{i+2}
      \absCDeri{\Phiminus{j}}{\reg}\vert_{\Sigmatb}<\infty$.
      The same statement holds if one replaces all $\Phiminus{j}$ by $\tildePhiminus{j}$;
  \item[$(iv)$] If $\sum\limits_{j=0}^2\norm{(r^2 V)^j\Psiminus}^2_{W_{-2}^{\reg+\regl+2i+2}(\Sigmazero)}<\infty$ and $\lim\limits_{r\to\infty}\Big(\sum\limits_{j=2}^{i+3}
      \absCDeri{
      \edthR'\edthR\Phiminus{j}}{\reg}\vert_{\Sigmazero}
      +r^{-\alpha}\absCDeri{
      \Phiminus{i+3}}{\reg}\vert_{\Sigmazero}\Big)
      <\infty$  for some $\alpha\in [0,2]$, then there is a $u$-dependent constant $C_{i+1,\alpha}(u)\in (0,\infty)$ such that for any $\tb\geq \tb_0$, $\lim\limits_{r\to\infty}\Big(\sum\limits_{j=2}^{i+2}
      \absCDeri{\edthR'\edthR\Phiminus{j}}{\reg}\vert_{\Sigmatb}
      +r^{-\alpha}\absCDeri{\Phiminus{i+3}}{\reg}\vert_{\Sigmatb}\Big)<\infty$.
      The same statement holds if one replaces all $\Phiminus{j}$ by $\tildePhiminus{j}$ for all $j\in \{2,\ldots,i+3\}$.
\end{enumerate}
\end{prop}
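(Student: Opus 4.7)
The plan is to prove all four statements by a single simultaneous induction on $i$, exploiting the fact that on a Schwarzschild background each of the equations \eqref{eq:Phiplushighi:Schw:generall}, \eqref{eq:Phiplushighi:Schw:generall:tildePhiplusi}, \eqref{eq:Phiminusi:Schw:generall}, and \eqref{eq:tildePhiminusi:Schw:generall:tildePhiminusi} can be read as a linear first-order transport equation along the ingoing null direction $\pu$. Indeed, since by definition $\curlVR\PhiplusHigh{i}=\PhiplusHigh{i+1}$, equation \eqref{eq:Phiplushighi:Schw:generall} rearranges as
\begin{align*}
\pu \PhiplusHigh{i+1} = f_{i,2}(r-3M)r^{-2}\PhiplusHigh{i+1}
+\big(2\edthR\edthR' + f_{i,1} - 6f_{i,1}Mr^{-1}\big)\PhiplusHigh{i}
+g_iM\PhiplusHigh{i-1},
\end{align*}
and analogously for $\Phiminus{i+2}$. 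The coefficient $f_{i,2}(r-3M)r^{-2}$ is $O(r^{-1})$ and the last two terms on the right involve only strictly lower levels $j\leq i$, so the equation is amenable to integration along outgoing null geodesics (i.e.\ along curves of constant $v$ running from $\Sigmazero$ up to $\Sigmatb$).

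For part (i), the base case $i=0$ is handled directly from the pointwise decay estimates for $\psiplus$ already obtained in Propositions \ref{prop:weakdecay:spin+1-1:v3} and \ref{prop:ImproDecaySpin+1}: since on Schwarzschild $\Phiplus=\mu^{-1}\Psiplus=\mu^{-1}r\psiplus$, the weighted initial bound $\norm{\Psiplus}^2_{W_{-2}^{\reg+\regl}(\Sigmazero)}<\infty$ combined with an $r^p$-type scri flux (the $F^{\reg+1}(\Scrionetwo)$ norm appearing in Proposition \ref{prop:wave:rp}) yields a uniform $C^\reg$ bound for $\Phiplus$ along each $\Sigmatb$ as $r\to\infty$. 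For the inductive step, assuming $\PhiplusHigh{j}$ satisfies the claim for $j\leq i$, I would introduce the integrating factor $\mathcal{E}(u,v)=\exp\!\big(-\int^u f_{i,2}(r-3M)r^{-2}\di u'\big)$, which remains bounded (uniformly in $v$, with $u$-dependent bound) since its integrand is $O(r^{-1})$ and $r$ is bounded below along an ingoing null segment that stays in the wave zone. Writing $\pu(\mathcal{E}\PhiplusHigh{i+1})=\mathcal{E}\cdot\text{source}$, integrating in $u$ from the initial slice, and commuting with $\CDeri^{\mathbf{a}}$ for $|\mathbf{a}|\leq\reg$, one obtains
\begin{align*}
\absCDeri{\PhiplusHigh{i+1}}{\reg}(\tb,v,\omega)
\lesssim \absCDeri{\PhiplusHigh{i+1}}{\reg}\big|_{\Sigmazero}
+ \int \sum_{j\leq i}\absCDeri{\PhiplusHigh{j}}{\reg+2}\,\di u',
\end{align*}
which is uniformly bounded as $v\to\infty$ by the inductive hypothesis (together with the standard commutator $[\CDeri^{\mathbf{a}},\pu]$ producing only tangential derivatives and lower-order $r$-weighted terms controlled by the weighted initial data). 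Parts (iii) for $\Phiminus{i+2}$ follow verbatim from the identical structure of \eqref{eq:Phiminusi:Schw:generall}, with the shift of two levels in the initial energy accounting for the fact that $\Phiminus{2}$ already involves two $\curlVR$ derivatives of $\Phiminus{0}$.

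For parts (ii) and (iv), I would commute the transport equation with $\edthR\edthR'$ (which is a self-adjoint elliptic operator on the sphere with constant eigenvalues on each $\ell$-mode, so it produces no bulk commutator against $\pu$ or against $(r-3M)r^{-2}$) and repeat the integration argument. The allowed $r^\alpha$ growth of $\PhiplusHigh{i+1}$ (respectively $\Phiminus{i+3}$) is propagated unchanged in $u$: since the transport equation preserves homogeneity in $r$ at leading order, an $O(r^\alpha)$ datum on $\Sigmazero$ produces an $O(r^\alpha)$ solution on $\Sigmatb$, with the $\edthR\edthR'$-derivatives at one level below remaining bounded by the inductive step in (i) (resp.\ (iii)). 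The passage from $\PhiplusHigh{j}$ to $\tildePhiplusHigh{j}$ (and from $\Phiminus{j+2}$ to $\tildePhiminus{j+2}$) is purely algebraic: by Definition \ref{def:tildePhiplusandminusHigh} each tilde scalar is a triangular linear combination of lower-level untilded scalars with $M^k$-weighted coefficients, so the bounds transfer term by term in both directions.

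The main obstacle I anticipate is handling the range of integration of the ingoing null segment in the hyperboloidal region: although $f_{i,2}(r-3M)r^{-2}$ is integrable along outgoing null cones in $u$ out to $\Scri$, the length of the $u$-interval linking $\Sigmazero$ to $\Sigmatb$ at fixed large $v$ grows like $\tb$, so the $u$-dependence of the final constant $C_i(u)$ (respectively $C_{i+1,\alpha}(u)$) must be tracked carefully and cannot be replaced by a global constant. A secondary technical point is that in the commuted transport equation, $[\CDeri^{\mathbf{a}},\pu]$ and $[\CDeri^{\mathbf{a}},f_{i,2}(r-3M)r^{-2}]$ generate error terms of the schematic form $r^{-1}\CDeri^{\mathbf{a}}$ acting on $\PhiplusHigh{i+1}$ itself; these are absorbed by a Gr\"onwall argument on the $u$-integral, at the cost of a finite number of additional regularity losses $\regl$, which is exactly what the statement of the proposition allows.
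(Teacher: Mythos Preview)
Your approach is correct and aligns with the paper's (omitted) proof, which simply cites \cite[Propositions 3.4 and 3.5]{angelopoulos2018vector}; the transport-equation-in-$u$ plus induction-on-$i$ argument you sketch is precisely the Angelopoulos--Aretakis--Gajic mechanism. One terminological slip: curves of constant $v$ are \emph{ingoing} null geodesics, not outgoing, though your actual procedure (integrating $\pu$ in $u$ at fixed $v$ from $\Sigmazero$ to $\Sigmatb$) is correct.
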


\begin{proof}
The proof is similar to the one of \cite[Propositions 3.4 and 3.5]{angelopoulos2018vector} and we omit it.
\end{proof}

\begin{definition}
\label{def:NPCs}
Let $i\in \mathbb{Z}^+$. Define the $i$-th N--P constants of spin $+1$ and $-1$ components to be $\NPCP{i}(\theta,\phi)=\lim\limits_{\rb\to\infty}\curlVR\tildePhiplusHigh{i-1}$ and $\NPCN{i}(\theta,\phi)=\lim\limits_{\rb\to\infty}\curlVR
\tildePhiminus{i+1}$, respectively.
\end{definition}

\begin{remark}
As is shown in Proposition \ref{prop:NPCsindepentonu} below, these N--P constants are independent of $\tb$ under very general conditions, hence they are only dependent on $\theta$ and $\phi$.
\end{remark}

\begin{lemma}
On Schwarzschild, it holds true that
$\NPCN{i}=2(\edthR')^2\NPCP{i}$ for $i\in \mathbb{Z}^+$. In particular, if $\NPCN{i}$ vanishes, then $\NPCP{i}$ vanishes, and vice versa.
\end{lemma}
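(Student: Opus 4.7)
The plan is to reduce the lemma to a single Teukolsky--Starobinsky identity (TSI) on Schwarzschild and propagate it up the $\PhiplusHigh{i}/\Phiminus{i+2}$ hierarchy. First, specialising the TSI \eqref{eq:TSI:simpleform} to $a=0$ gives
\[
2(\edthR')^2(\Delta^{-1}\psiplus) = \VR^2(\Delta\psiminus).
\]
Using the definitions $\Phiplus = \Delta^{-1}(\R)^{3/2}\psiplus$ and $\Phiminus{0} = \Delta(\R)^{-1/2}\psiminus$ together with the elementary identities $\VR(r)=1$ and $\VR\Phiminus{i} = r^{-2}\Phiminus{i+1}$ valid on Schwarzschild, a short bookkeeping shows that both sides of the TSI carry the same $r^{-3}$ prefactor multiplying $2(\edthR')^2\Phiplus$ and $\Phiminus{2}$ respectively, so the identity collapses to
\[
2(\edthR')^2\Phiplus = \Phiminus{2}.
\]

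Next I would apply $\curlVR^i$ to this identity. On Schwarzschild $\curlVR = r^4 \Delta^{-1} V$ is a purely $(t,r)$-differential operator with coefficients independent of $(\theta,\pb)$, hence $[\curlVR,\edthR']=0$ on scalars of any spin weight, yielding $2(\edthR')^2\PhiplusHigh{i} = \Phiminus{i+2}$ for every $i\in\mathbb{N}$. To pass to the tilde-modified quantities I induct on $i$: the base case $\tildePhiplusHigh{0}=\PhiplusHigh{0}$, $\tildePhiminus{2}=\Phiminus{2}$ is immediate, and the recursions for $\tildePhiplusHigh{i+1}$ and $\tildePhiminus{i+3}$ in Definition \ref{def:tildePhiplusandminusHigh} involve the \emph{same} scalar coefficients $x_{i+1,j}M^{i+1-j}$, so $2(\edthR')^2$ commutes through both sums identically and the induction closes, giving $2(\edthR')^2\tildePhiplusHigh{i} = \tildePhiminus{i+2}$ for all $i\in \mathbb{N}$.

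To conclude, apply one more $\curlVR$ and pass to the limit $\rb\to\infty$. The regularity supplied by Proposition \ref{prop:nullinfBeha:PhiplusiandPhiminusi} ensures that $(\edthR')^2$ and the pointwise limit at null infinity can be interchanged, so
\[
\NPCN{i} = \lim_{\rb\to\infty}\curlVR\tildePhiminus{i+1} = 2(\edthR')^2\lim_{\rb\to\infty}\curlVR\tildePhiplusHigh{i-1} = 2(\edthR')^2\NPCP{i}.
\]
The in-particular statement then follows from injectivity of $(\edthR')^2$ on spin-weight-$(+1)$ scalars on $\mathbb{S}^2$, which carry no $\ell=0$ content and whose $\ell\geq 1$ modes have strictly negative $2\edthR\edthR'$-eigenvalue. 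The only genuine obstacle is the algebraic bookkeeping in the first step: threading the numerous rescalings between $\psi$, $\Psi$ and $\Phi$ on both spin sides so that the $r$-powers on the two sides of the TSI match cleanly; once that is settled, the remainder of the argument is a one-line commutation plus a routine induction.
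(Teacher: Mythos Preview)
Your proof is correct and follows the same approach as the paper, which also reduces the TSI \eqref{eq:TSI:simpleform} on Schwarzschild to $2(\edthR')^2\Phiplus=\Phiminus{2}$ and then invokes injectivity of $(\edthR')^2$ on spin-weight-$(+1)$ scalars. The paper's own proof is much terser---it simply asserts that the conclusion ``follows manifestly'' from this identity---whereas you have spelled out the commutation of $\curlVR$ with $\edthR'$, the induction to the tilde quantities via identical recursion coefficients, and the passage to the limit at null infinity.
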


\begin{proof}
Equation \eqref{eq:TSI:simpleform}, one of the Teukolsky--Starobinsky identities, reduces to a simple form
\begin{align}
2(\edthR')^2\Phiplus
={}\Phiminus{2}.
\end{align}
The conclusion then follows manifestly from the fact that the operator $(\edthR')^2$ has no non-trivial kernel when acting on spin weight $+1$ scalar.
\end{proof}

\begin{prop}
\label{prop:NPCsindepentonu}
\begin{enumerate}
  \item Let the spin $\pm 1$ components of Maxwell field be supported on $\ell=1$ mode. Let $\regl>0$ be suitably large.
      \begin{itemize}
      \item Assume $\norm{\Psiplus}^2_{W_{-2}^{\regl}(\Sigmazero)}<\infty$ and $\lim\limits_{r\to\infty}\absCDeri{
      \PhiplusHigh{0}}{1}\vert_{\Sigmazero}
      <\infty$, then the first N--P constant $\NPCP{1}$ is independent of $\tb$;
      \item Assume $\sum\limits_{j=0}^2\norm{(r^2 V)^j\Psiminus}^2_{W_{-2}^{\regl}(\Sigmazero)}<\infty$ and $\lim\limits_{r\to\infty}\absCDeri{
      \Phiminus{2}}{1}\vert_{\Sigmazero}
      <\infty$, then the first N--P constant $\NPCN{1}$ is independent of $\tb$.
      \end{itemize}
  \item Let  the spin $\pm 1$ components of Maxwell field be supported on $\ell=\ell_0$ $(\ell_0\geq 2)$ mode. Let $\regl(\ell_0)>0$ be suitably large.
      \begin{itemize}
      \item Assume $\norm{\Psiplus}^2_{W_{-2}^{\regl(\ell_0)}(\Sigmazero)}<\infty$ and $\lim\limits_{r\to\infty}\sum\limits_{j=0}^{\ell_0-1}
          \absCDeri{
      \PhiplusHigh{j}}{1}\vert_{\Sigmazero}
      <\infty$, then the $\ell_0$-th N--P constant $\NPCP{\ell_0}$ is independent of $\tb$;
       \item Assume $\sum\limits_{j=0}^2\norm{(r^2 V)^j\Psiminus}^2_{W_{-2}^{\regl(\ell_0)}(\Sigmazero)}<\infty$ and $\lim\limits_{r\to\infty}\sum\limits_{j=2}^{\ell_0+1}\absCDeri{
      \Phiminus{j}}{1}\vert_{\Sigmazero}
      <\infty$, then the $\ell_0$-th N--P constant $\NPCN{\ell_0}$ is independent of $\tb$.
      \end{itemize}
\end{enumerate}
\end{prop}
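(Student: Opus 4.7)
The plan is to exploit the algebraic coincidence that, for the $\ell=\ell_0$ mode, the eigenvalue identity \eqref{eq:l=l0mode:eigenvalue} produces exactly $-\ell_0(\ell_0+1)=-f_{\ell_0-1,1}$ for the angular operator appearing in each of \eqref{eq:Phiplushighi:Schw:generall:tildePhiplusi} and \eqref{eq:tildePhiminusi:Schw:generall:tildePhiminusi}. Hence, setting $i=\ell_0-1$ annihilates the principal angular term on the relevant mode, and what remains is a transport equation along $\pu$ for $\tildePhiplusHigh{\ell_0-1}$ (resp.\ $\tildePhiminus{\ell_0+1}$) whose every remaining coefficient comes with a factor of $O(r^{-1})$, which is precisely the structure needed for conservation at $\Scri$.

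Concretely, for the spin $+1$ case restricted to the $\ell=\ell_0$ mode, \eqref{eq:Phiplushighi:Schw:generall:tildePhiplusi} with $i=\ell_0-1$ reduces to
\[
\pu\curlVR\tildePhiplusHigh{\ell_0-1}
=f_{\ell_0-1,2}\tfrac{r-3M}{r^{2}}\curlVR\tildePhiplusHigh{\ell_0-1}
-\tfrac{6f_{\ell_0-1,1}M}{r}\tildePhiplusHigh{\ell_0-1}
+\sum_{j=0}^{\ell_0-1}h_{\ell_0-1,j}\PhiplusHigh{j}.
\]
The initial-data hypothesis combined with Proposition \ref{prop:nullinfBeha:PhiplusiandPhiminusi}(i) ensures that, for every $\tb\geq\tb_0$, both $\PhiplusHigh{j}|_{\Sigmatb}$ and $\tildePhiplusHigh{j}|_{\Sigmatb}$ possess finite limits as $\rb\to\infty$ for each $0\leq j\leq\ell_0-1$. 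Since $h_{\ell_0-1,j}=O(r^{-1})$ and both $(r-3M)r^{-2}$ and $Mr^{-1}$ are $O(r^{-1})$, the entire right-hand side vanishes at $\Scri$. Along $\Scri$ the vector field $\pu$ reduces to a constant multiple of $\partial_\tb$ because the hyperboloidal slices terminate transversally at $\Scri$ on the level set $\{\tb=\text{const}\}$, so interchanging the limit $\rb\to\infty$ with the $\pu$-derivative yields $\partial_\tb\NPCP{\ell_0}=0$.

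The spin $-1$ case is structurally identical: one takes $i=\ell_0-1$ in \eqref{eq:tildePhiminusi:Schw:generall:tildePhiminusi}, observes that the corresponding angular operator acting on a spin $-1$ scalar again has eigenvalue $-f_{\ell_0-1,1}$ on the $\ell=\ell_0$ mode by \eqref{eq:l=l0mode:eigenvalue}, and invokes Proposition \ref{prop:nullinfBeha:PhiplusiandPhiminusi}(iii) in place of (i) to supply the required finite-limit behaviour of $\Phiminus{j+2}|_{\Sigmatb}$ and $\tildePhiminus{j+2}|_{\Sigmatb}$ at $\Scri$. The two sub-cases $\ell_0=1$ and $\ell_0\geq 2$ differ only in how many rungs of the tower $\PhiplusHigh{0},\ldots,\PhiplusHigh{\ell_0-1}$ (resp.\ $\Phiminus{2},\ldots,\Phiminus{\ell_0+1}$) must be bootstrapped through Proposition \ref{prop:nullinfBeha:PhiplusiandPhiminusi}, which is exactly what the stronger regularity demanded by the hypothesis for $\ell_0\geq 2$ provides.

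The main technical care lies in rigorously interchanging the limit $\rb\to\infty$ with the $\pu$-derivative on the left-hand side of the displayed transport equation. A clean way is to first integrate that equation in $\tb\in[\tb_1,\tb_2]$, then pass to the limit $\rb\to\infty$ under the integral using the uniform-in-$\rb$ bounds supplied by Proposition \ref{prop:nullinfBeha:PhiplusiandPhiminusi}, and only afterwards differentiate in $\tb_2$; the vanishing at $\Scri$ of every $O(r^{-1})$ term then yields $\NPCP{\ell_0}(\tb_2)=\NPCP{\ell_0}(\tb_1)$, with the spin $-1$ analogue handled identically. No estimate beyond those already assembled in Section \ref{sect:Maxwell:Schw:lgeneral} is required.
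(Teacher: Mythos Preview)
Your proposal is correct and follows essentially the same approach as the paper's proof: restrict equation \eqref{eq:Phiplushighi:Schw:generall:tildePhiplusi} (resp.\ \eqref{eq:tildePhiminusi:Schw:generall:tildePhiminusi}) to the $\ell=\ell_0$ mode so that the angular term drops out, observe that every surviving term on the right is $O(r^{-1})$ times a quantity bounded at $\Scri$ by Proposition~\ref{prop:nullinfBeha:PhiplusiandPhiminusi}, and pass to the limit. The paper phrases the final step as a direct application of the bounded convergence theorem rather than your integrate-then-limit formulation, but these are equivalent.
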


\begin{proof}
If the field is supported on $\ell=1$ mode, then from \eqref{eq:Phi+1:Schw:generall} and  \eqref{eq:Phiminus2:Schw:generall}, $\Psi=\Phiplus$ or $\Psi=\Phiminus{2}$ solves
\begin{align}
-\pu\curlVR \Psi -{4(r-3M)r^{-2}}\curlVR\Psi -{12M}{r^{-1}}\Psi={}&0.
\end{align}
The results in Proposition \ref{prop:nullinfBeha:PhiplusiandPhiminusi} implies $\lim\limits_{r\to \infty}\pu (\curlVR \Psi)\vert_{\Sigmatb}=0$ for any $\tb\geq \tb_0$. The conclusion follows  from the bounded convergence theorem.

Instead, if the field is supported on $\ell=\ell_0$ mode for some $\ell_0\geq 2$, equations \eqref{eq:Phiplushighi:Schw:generall:tildePhiplusi}
 for $\tildePhiplusHigh{\ell_0-1}$ and \eqref{eq:tildePhiminusi:Schw:generall:tildePhiminusi} for $\tildePhiminus{\ell_0+1}$ become
\begin{align}
\label{eq:268}
&-\pu \curlVR \tildePhiplusHigh{\ell_0-1}
-{2(\ell_0+1)(r-3M)r^{-2}}\curlVR\tildePhiplusHigh{\ell_0-1}
+\sum_{j=0}^{\ell_0-1}O(r^{-1})\tildePhiplusHigh{j}={}0,\\
&-\pu \curlVR \tildePhiminus{\ell_0+1}
-{2(\ell_0+1)(r-3M)r^{-2}}\curlVR\tildePhiminus{\ell_0+1}
+\sum_{j=2}^{\ell_0+1}O(r^{-1})\tildePhiminus{j}={}0.
\end{align}
One also obtains $\lim\limits_{r\to \infty}\pu (\curlVR \tildePhiplusHigh{\ell_0-1})\vert_{\Sigmatb}=\lim\limits_{r\to \infty}\pu (\curlVR\tildePhiminus{\ell_0+1})\vert_{\Sigmatb}=0$ from Proposition \ref{prop:nullinfBeha:PhiplusiandPhiminusi}, and by the same way of arguing, the statement follows.
\end{proof}

\begin{prop}
\label{prop:vanishingNPC:betterasymnearscri}
Let the spin $\pm 1$ components of Maxwell field be supported on an $\ell= \ell_0$ mode with $\ell_0\geq 1$. Let $\alpha\in [0,1]$ be arbitrary and let $\reg\in \mathbb{N}$. Assume the $\ell_0$-th N--P constant $\NPCN{\ell_0}$  vanishes.
\begin{itemize}
\item There exists a $\regl(\ell_0)>0$ such that if $\sum\limits_{j=0}^2\norm{(r^2 V)^j\Psiminus}^2_{W_{-2}^{\reg+\regl(\ell_0)}(\Sigmazero)}
    +\lim\limits_{r\to\infty}\sum\limits_{j=2}^{\ell_0+1}\absCDeri{
      \Phiminus{j}}{\reg}\vert_{\Sigmazero}
     +\lim\limits_{r\to\infty}\absCDeri{
      r^{1-\alpha}\curlVR\tildePhiminus{\ell_0+1}}{\reg}\vert_{\Sigmazero}
      <\infty$,
      then there is a  constant $C_{\ell_0}(\tb,\theta,\pb)\in (0,\infty)$ such that for any $\tb\geq \tb_0$, $\lim\limits_{r\to\infty}
\absCDeri{r^{1-\alpha}\curlVR\tildePhiminus{\ell_0+1}}{\reg}
\vert_{\Sigmatb}<C_{\ell_0}(u,\theta,\pb)$.
In particular, if $\alpha>0$, then $\lim\limits_{r\to\infty}\absCDeri{r^{1-\alpha}
\curlVR\tildePhiminus{\ell_0+1}}{\reg}
\vert_{\Sigmatb}$ is independent of $u$ or $\tb$;
\item There exists a $\regl(\ell_0)>0$ such that if $\norm{\Psiplus}^2_{W_{-2}^{\reg+\regl(\ell_0)}(\Sigmazero)}
    +\lim\limits_{r\to\infty}\sum\limits_{j=0}^{\ell_0-1}
          \absCDeri{
      \PhiplusHigh{j}}{\reg}\vert_{\Sigmazero}
      +\lim\limits_{r\to\infty}r^{1-\alpha}
          \absCDeri{
      \curlVR\tildePhiplusHigh{\ell_0-1}}{\reg}\vert_{\Sigmazero}
      <\infty$, then there is a constant $C_{\ell_0}(u,\theta,\pb)$ such that for any $\tb\geq \tb_0$, $\lim\limits_{r\to\infty}
\absCDeri{r^{1-\alpha}
\curlVR\tildePhiplusHigh{\ell_0-1}}{\reg}
\vert_{\Sigmatb}<C_{\ell_0}(u,\theta,\pb)$. In particular, if $\alpha>0$, then $\lim\limits_{r\to\infty}\absCDeri{r^{1-\alpha}
\curlVR\tildePhiplusHigh{\ell_0-1}}{\reg}
\vert_{\Sigmatb}$ is independent of $u$  or $\tb$.
\end{itemize}
\end{prop}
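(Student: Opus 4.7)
The proof proposal proceeds by analyzing the propagation equation for $\Psi := \curlVR\tildePhiminus{\ell_0+1}$ in the spin $-1$ case (and analogously $\Psi := \curlVR\tildePhiplusHigh{\ell_0-1}$ for spin $+1$). First I would restrict equation \eqref{eq:tildePhiminusi:Schw:generall:tildePhiminusi} with $i=\ell_0-1$ to the $\ell=\ell_0$ mode. Using \eqref{eq:l=l0mode:eigenvalue} with $s=-1$, the angular eigenvalue $-\ell_0(\ell_0+1)$ exactly cancels $f_{\ell_0-1,1}=\ell_0(\ell_0+1)$, so that $\tildePhiminus{\ell_0+1}$ satisfies
\begin{align*}
-\pu\Psi-2(\ell_0+1)(r-3M)r^{-2}\Psi-6\ell_0(\ell_0+1)Mr^{-1}\tildePhiminus{\ell_0+1}+\sum_{j=0}^{\ell_0-1}h_{\ell_0-1,j}\Phiminus{j+2}=0,
\end{align*}
with $h_{\ell_0-1,j}=O(r^{-1})$. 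This cancellation is precisely what makes $\NPCN{\ell_0}=\lim_{r\to\infty}\Psi$ well-defined and conserved in $u$ (as in Proposition \ref{prop:NPCsindepentonu}).

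Next, under the assumed initial data bounds, point (iii) of Proposition \ref{prop:nullinfBeha:PhiplusiandPhiminusi} guarantees that for every $\tb\geq\tb_0$, the scalars $\Phiminus{j+2}$ (hence $\tildePhiminus{\ell_0+1}$) have finite limits at $\Scri$, so the source terms in the displayed equation are $O(r^{-1})$ with bounded prefactors near $\Scri$. I would then compute
\begin{align*}
\pu(r^{1-\alpha}\Psi)=-r^{-\alpha}\bigl[(1-\alpha)\mu+2(\ell_0+1)(1-3M/r)\bigr]\Psi+r^{1-\alpha}\cdot O(r^{-1})\cdot(\text{bounded at }\Scri),
\end{align*}
and integrate from $\tb_0$ to $\tb$ at fixed $v$, then send $v\to\infty$ by bounded convergence. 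The vanishing of $\NPCN{\ell_0}$ means $\Psi\to 0$ at $\Scri$, so the first term contributes $r^{-\alpha}\cdot o(1)\cdot(r^{\alpha-1}\text{-bounded})=o(r^{-1})$; the source term contributes $O(r^{-\alpha})$ times bounded factors. For $\alpha>0$ both contributions vanish in the $r\to\infty$ limit, which yields
\begin{align*}
\lim_{r\to\infty}\pu(r^{1-\alpha}\Psi)\Big|_{\Scri}=0,
\end{align*}
establishing $u$-independence of the scri-limit. For $\alpha=0$, the limit is only a finite function of $u$, integration in $u$ over any compact interval gives the $u$-dependent bound $C_{\ell_0}(u,\theta,\phi)$.

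Higher regularity $\reg\geq 1$ follows by commuting the propagation equation with $\CDeri$: the commutators produce only lower-order terms whose scri limits have already been handled (via Proposition \ref{prop:nullinfBeha:PhiplusiandPhiminusi}) plus terms of the same schematic form $-2(\ell_0+1)r^{-1}(\CDeri^{\mathbf{a}}\Psi)+O(r^{-1})\cdot(\text{bounded})$, so the same bounded-convergence argument propagates the weighted limit. The spin $+1$ case is entirely analogous using equation \eqref{eq:Phiplushighi:Schw:generall:tildePhiplusi} for $\tildePhiplusHigh{\ell_0-1}$ on the $\ell=\ell_0$ mode, where now the eigenvalue of $2\edthR\edthR'$ for $s=+1$ cancels $f_{\ell_0-1,1}$.

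The main technical obstacle I anticipate is carefully verifying that the commutator terms $[\CDeri^{\mathbf{a}},\pu-f_{\ell_0-1,2}(r-3M)r^{-2}\curlVR]$ applied to $\tildePhiminus{\ell_0+1}$ only produce error terms whose scri limits are controlled by the assumed weighted bounds on $\Phiminus{j+2}$ and $\tildePhiminus{\ell_0+1}$ at the lower regularity levels; in particular one must track that differentiating the $\curlVR\tildePhiminus{\ell_0+1}$ coefficient does not generate a term scaling like $r^{-\alpha}$ with an unbounded prefactor at $\Scri$. This requires an induction on $\reg$ and careful bookkeeping, but no essentially new ideas beyond those in the proof of Proposition \ref{prop:nullinfBeha:PhiplusiandPhiminusi}.
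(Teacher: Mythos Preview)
Your proposal is correct and follows essentially the same approach as the paper: restrict the equation \eqref{eq:tildePhiminusi:Schw:generall:tildePhiminusi} (resp.\ \eqref{eq:Phiplushighi:Schw:generall:tildePhiplusi}) to the $\ell=\ell_0$ mode so that the angular eigenvalue cancels $f_{\ell_0-1,1}$, rescale by $r^{1-\alpha}$, use Proposition~\ref{prop:nullinfBeha:PhiplusiandPhiminusi} to control the lower-order terms at $\Scri$, invoke the vanishing of $\NPCN{\ell_0}$ to kill the $\Psi$ contribution, and apply bounded convergence. The paper's proof is terser (it treats the spin $+1$ case and leaves higher regularity implicit), while you spell out the induction on $\reg$ via commutators; your parenthetical estimate ``$r^{-\alpha}\cdot o(1)\cdot(r^{\alpha-1}\text{-bounded})=o(r^{-1})$'' is garbled, but the intended bound---that $r^{-\alpha}\cdot O(1)\cdot\Psi = o(r^{-\alpha})$ since $\Psi\to 0$---is correct.
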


\begin{proof}
We show it only for spin $+1$ component, the proof of spin $-1$ component being the same. Consider first the $\ell=\ell_0$ mode $\Psiplus^{\ell=\ell_0}$. The scalar $\tildePhiplusHigh{\ell_0-1}$  satisfies equation \eqref{eq:268}, and hence performing a rescaling gives
\begin{align}
-\pu (r^{1-\alpha}\curlVR \tildePhiplusHigh{\ell_0-1})
={}O(r^{-\alpha})\curlVR\tildePhiplusHigh{\ell_0-1}
+\sum_{j=0}^{\ell_0-1}O(r^{-\alpha})\tildePhiplusHigh{j}.
\end{align}
By Proposition \ref{prop:nullinfBeha:PhiplusiandPhiminusi} and the assumption of vanishing $\ell_0$-th N--P constant, this yields $\lim\limits_{r\to \infty}\absCDeri{r^{1-\alpha}
\curlVR\tildePhiplusHigh{\ell_0-1}}{\reg}
\vert_{\Sigmatb}=0$, and one obtains
$\lim\limits_{r\to \infty}\pu (r^{1-\alpha}\curlVR \Psi)\vert_{\Sigmatb})=0$ for any $\tb\geq \tb_0$ in the case that $\alpha>0$. The conclusion for $\alpha>0$ follows  from the bounded convergence theorem. For $\alpha=0$, the RHS is bounded by a $u$-dependent constant, hence $\lim\limits_{r\to\infty}
\absCDeri{r
\curlVR\tildePhiplusHigh{\ell_0-1}}{\reg}
\vert_{\Sigmatb}<C(u)$.
\end{proof}


\subsection{$\ell=1$ mode of spin $+1$ component}
\label{sect:spin+1:l=1:Schw}

\begin{prop}
\label{prop:BED:Psiplus:l=1}
Let $j\in \mathbb{N}$ and let $\reg$ suitably large. Let $\Psiplus$ be supported on $\ell=1$ mode. Let $F(\reg,p,\tb,\Psiplus)$ for $p\in [-1,2]$ be defined as in Definition \ref{eq:def:Ffts:Phiplus:-1to2}. Define additionally for any $p\in (2,5)$ that
\begin{align}
\label{def:Ffts:Phiplus:2to3}
F(\reg,p,\tb,\Psiplus)={}&\norm{rV\Psiplus}^2_{W_{p-2}^{\reg-1}(\Sigmatb)}
+\norm{\Psiplus}^2_{W_{-2}^{\reg}(\Sigmatb)}.
\end{align}
Then,
\begin{enumerate}
\item if the first N-P constant $\NPCP{1}$ does not vanish,
there is a constant $\regl(j)$ such that for any small $\delta>0$, any $p\in [0,3-\delta]$ and any $\tb\geq\tb_0$,
\begin{align}
\label{eq:BEDC:Phiplus:l=1:0to3}
F(\reg,p,\tb,\Lxi^j\Psiplus)\lesssim_{\delta,j,\reg} {}&\tb^{-3+\delta-2j+p}F(\reg+\regl(j),3-\delta,\tb_0,\Psiplus),
\end{align}
i.e., the basic energy $\gamma$-decay condition with $\gamma=3-\delta$ holds for spin $+1$ component;
\item
if the first N-P constant $\NPCP{1}$ vanishes, there is a constant $\regl(j)$ such that for any small $\delta>0$, any $p\in [0,5-\delta]$ and any $\tb\geq\tb_0$,
\begin{align}
\label{eq:BEDC:Phiplus:l=1:0to5}
F(\reg,p,\tb,\Lxi^j\Psiplus)\lesssim_{\delta,j,\reg}  {}&\tb^{-5+\delta-2j+p}F(\reg+\regl(j),5-\delta,\tb_0,\Psiplus),
\end{align}
i.e., the basic energy $\gamma$-decay condition with $\gamma=5-\delta$ holds for spin $+1$ component.
\end{enumerate}
\end{prop}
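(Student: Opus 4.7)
My plan is to iterate the $r^p$ hierarchy used in Section 3 to higher weights, exploiting two simplifications available on Schwarzschild with the $\ell=1$ restriction: the angular operator $2\edthR\edthR'$ acts on spin $+1$, $\ell=1$ scalars as the scalar multiple $-2$, so the zeroth order coefficient in \eqref{eq:Phi+1:Schw:generall} sits exactly in the borderline case $b_{0,0}+(\ell_0+s)(\ell_0-s+1)=-2+2=0$ of Proposition \ref{prop:wave:rp:highmodes}(3); and the vanishing of $a$ eliminates the troublesome Kerr trapping integral $\int_{\tb_1}^{\tb_2}\tb^{1+\delta}\|\Lxi\varphi\|^2$ on the right of \eqref{eq:rp:p=2:2:Highmodes}. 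Concretely, I would first rerun the argument of Proposition \ref{prop:rpplusglobal} on $\Phiplus^{\ell=1}$ using Proposition \ref{prop:wave:rp:highmodes}(3) in place of Proposition \ref{prop:wave:rp}, to obtain the closed hierarchy
\[
F(\reg,p,\tb_2,\Psiplus)+\int_{\tb_1}^{\tb_2}F(\reg-1,p-1,\tb,\Psiplus)\,d\tb\lesssim F(\reg+\regl,p,\tb_1,\Psiplus)
\]
for every $p\in[0,2]$. This already contains the $p=2$ endpoint, which was unavailable in Kerr, and feeding it into \cite[Lemma 5.2]{andersson2019stability} recovers the $\tb^{-2}$ energy decay of Theorem \ref{thm:1} for the $\ell=1$ mode.

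To break the $p=2$ barrier I would work at the level of the commuted scalar $\tildePhiplusHigh{1}=\PhiplusHigh{1}+3M\Phiplus$, governed by \eqref{eq:Phiplushighi:Schw:generall:tildePhiplusi} with $i=1$, $f_{1,1}=6$, $f_{1,2}=-6$, and $O(r^{-1})$ source terms $h_{1,0}\Phiplus+h_{1,1}\PhiplusHigh{1}$. Since the effective angular coefficient becomes $-2+f_{1,1}=4>0$ on $\ell=1$, Proposition \ref{prop:wave:rp:highmodes}(2) applies directly to $\tildePhiplusHigh{1}$ with $p\in[0,2-\delta]$. Re-expressing $\curlVR\Phiplus=\tildePhiplusHigh{1}-3M\Phiplus$, the resulting estimates translate into an extended hierarchy for $\Psiplus$ itself reaching $p\in[0,3-\delta]$; Proposition \ref{prop:NPCsindepentonu} ensures finiteness of the scri flux of $\tildePhiplusHigh{0}$ that appears as a boundary term (its limit is exactly $\NPCP{1}$), while the $O(r^{-1})$ lower-order sources are dominated by the already closed hierarchy for $\Phiplus$ after fixing $R_0$ sufficiently large. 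Interpolation via \cite[Lemma 5.2]{andersson2019stability} and $\Lxi^j$ commutation, carried out as in Proposition \ref{prop:BEDC:Phiplus:1} (using \eqref{eq:Phi+1:Schw:generall} with the Schwarzschild replacement $Y=2\mu^{-1}\Lxi-V$ away from the horizon to trade $r^2V\Lxi\Psiplus$ for two regular derivatives), yield \eqref{eq:BEDC:Phiplus:l=1:0to3}.

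When $\NPCP{1}=0$ I would repeat the procedure one level higher on $\tildePhiplusHigh{2}$, governed by \eqref{eq:Phiplushighi:Schw:generall:tildePhiplusi} with $i=2$, $f_{2,1}=12$, so that the effective angular coefficient $-2+12=10>0$ again places us in the strictly coercive range of Proposition \ref{prop:wave:rp:highmodes}(2). Proposition \ref{prop:vanishingNPC:betterasymnearscri} guarantees that $r\curlVR\tildePhiplusHigh{1}$ has a finite nonincreasing limit at $\Scri$, which is what is needed to close the $p\in[0,2-\delta]$ hierarchy at this level. Translating back through $\curlVR\tildePhiplusHigh{1}$ adds two further powers of $r$ to the usable weight for $\Psiplus$, pushing the hierarchy to $p\in[0,5-\delta]$ and producing \eqref{eq:BEDC:Phiplus:l=1:0to5} after the same interpolation and $\Lxi^j$ iteration.

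The main obstacle will be handling the lower-order coupling $\sum_j h_{i,j}\PhiplusHigh{j}$ in \eqref{eq:Phiplushighi:Schw:generall:tildePhiplusi}: a naive Cauchy--Schwarz on the source integral $\int V\overline{\tildePhiplusHigh{i}}\sum_j h_{i,j}\PhiplusHigh{j}$ would reintroduce the top-weight energy on the right-hand side and prevent closure. I would manage this precisely as in the treatment of \eqref{eq:rpminus:012:p=2} in the proof of Proposition \ref{prop:rpglobal:Phiminus}: adding a sufficiently large multiple of the already-closed lower-level hierarchy, combined with the explicit $O(r^{-1})$ decay of $h_{i,j}$ and a large choice of $R_0$, allows these error contributions to be absorbed into both the bulk and boundary terms. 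A related subtle point is that the top weight at the $\tildePhiplusHigh{1}$ (resp.\ $\tildePhiplusHigh{2}$) level is strictly $p<2$ rather than $p=2$: the resulting $\delta$ loss, which is the origin of the $\veps$ in \eqref{eq:BEDC:Phiplus:l=1:0to3}--\eqref{eq:BEDC:Phiplus:l=1:0to5}, encodes a Hardy-type inequality \eqref{eq:HardyIneqRHS} used to trade the nonvanishing $\Scri$ boundary term against a spacetime integral, and is the reason almost, rather than exact, Price's law is achieved.
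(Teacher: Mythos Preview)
Your approach has a genuine gap: Proposition~\ref{prop:wave:rp:highmodes} does \emph{not} apply to $\tildePhiplusHigh{1}$ (or $\tildePhiplusHigh{2}$) on the $\ell=1$ mode. You compute the ``effective angular coefficient'' as $-2+f_{1,1}=4>0$, but this is not the quantity entering the hypothesis of Proposition~\ref{prop:wave:rp:highmodes}. When the equation for $\PhiplusHigh{i}$ is put in the form \eqref{eq:wave:rp}, one finds $b_{0,0}=-f_{i,1}$ (exactly as the paper records below \eqref{eq:Phiplushighi:Schw:l0}), so the coercivity condition reads
\[
b_{0,0}+(\ell_0+s)(\ell_0-s+1)=-f_{i,1}+\ell_0(\ell_0+1)=-6+2=-4<0
\]
for $i=1$, $\ell_0=1$. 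Concretely, in the $r^p$ multiplier computation the term $(2\edthR\edthR'+f_{1,1})\tildePhiplusHigh{1}=4\tildePhiplusHigh{1}$ produces a bulk contribution $(2-p)\mu r^{p-3}\cdot 4|\tildePhiplusHigh{1}|^2$ with the \emph{wrong} sign for $p<2$, and a negative flux $-4|\tildePhiplusHigh{1}|^2$ at $\Scri$ for $p=2$; neither can be absorbed. This is why the paper's commutation strategy in Proposition~\ref{prop:BED:Psiplus:l=l0} runs only up to $i=\ell_0-1$: for $\ell_0=1$ one is already at the borderline with $i=0$, and further $\curlVR$-commutation is unavailable.

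The paper instead exploits that on $\ell=1$ the borderline condition makes $(2\edthR\edthR'+2)\Phiplus=0$ identically, so \eqref{eq:Phi+1:Schw:l=1} is a purely radial equation whose only obstruction to higher $p$ is the $-12Mr^{-1}\Phiplus$ term. Multiplying by $-2r^{p-2}\chi^2 V\overline{\Phiplus}$ and integrating gives \eqref{eq:multiplierintegral:generalp:l=1:Schw}; the good bulk $\sim r^{p-1}|V\Phiplus|^2$ persists for all $p$, and the cross term $24Mr^{p-3}\Re(V\overline{\Phiplus}\,\Phiplus)$ is handled in three regimes: integration by parts for $p\in[2,3)$, Cauchy--Schwarz plus the Hardy inequality \eqref{eq:HardyIneqLHSRHS} for $p\in[3,4)$ (using $\lim_{r\to\infty}r^{p-4}|\Phiplus|^2=0$, which is where $\NPCP{1}=0$ enters), and a time-weighted Cauchy--Schwarz closed against the already-established $p<4$ decay for $p\in[4,5-\delta]$. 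No commutation with $\curlVR$ is used at all for $\ell=1$.
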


\begin{proof}
The wave equation \eqref{eq:Phi+1:Schw:generall} reads
\begin{align}
\label{eq:Phi+1:Schw:l=1}
-r^2 YV \Phiplus -{2\mu^{-1}(r-3M)}V\Phiplus -{12M}{r^{-1}}\Phiplus={}&0.
\end{align}
By multiplying this equation by $-2r^{p-2}\chi^2 V\overline{\Phiplus}$, taking the real part, and integrating over $\Donetwo$ with a measure $\di^4 \mu$,
\begin{align}
\label{eq:multiplierintegral:generalp:l=1:Schw}
\int_{\Donetwo}\Big(&Y(2r^{p}\chi^2 \abs{V\Phiplus}^2)+24Mr^{p-3} \chi^2\Re(V\overline{\Phiplus} \Phiplus)\notag\\
&((p+r\partial_r  )\chi^2 r^{p-1}+4r^{p}\Delta^{-1} (r-3M))\chi^2 \abs{V\Phiplus}^2\Big)\di^4\mu
={}0.
\end{align}

For $2\leq p< 3$, the second line on the LHS is bounded from below by a bulk integral $\int_{\Donetwo^{R_0-M}}\chi^2 r^{p-1}\abs{V\Phiplus}^2\di^4\mu$, and one applies an integration by parts to the second term on the LHS to obtain both positive fluxes and a positive spacetime integral.  Adding this to the BEAM estimate gives for any $p\in [2,3)$ and $\reg\geq 1$
\begin{align}
\label{eq:rphierachyintermsofF:Psiminus}
F(\reg,p,\tb_2,\Psiplus)+\int_{\tb_1}^{\tb_2}F(\reg,p-1,\tb,\Psiplus)\di\tb
\lesssim_{p,\reg}{} F(\reg+\regl,p,\tb_1,\Psiplus),
\end{align}
where the $\reg\geq 2$ cases follow in the same way as in Proposition \ref{prop:wave:rp} by commuting with the operator set $\mathbb{D}_2$.  This gives an extended $r^p$ hierarchy for $p\in [0,3)$. The estimate \eqref{eq:BEDC:Phiplus:l=1:0to3} follows easily by repeating the discussions in Section \ref{sect:BED2:spin+1}.

For $3\leq p< 4$, one can use instead the Cauchy-Schwarz inequality to bound the second term on the LHS of \eqref{eq:multiplierintegral:generalp:l=1:Schw} by
\begin{align}
\hspace{4ex}&\hspace{-4ex}
\bigg|\int_{\Donetwo}24Mr^{p-3} \chi^2\Re(V\overline\Phiplus \Phiplus)\di^4\mu\bigg|\notag\\
\lesssim{}&\veps\int_{\Donetwo^{R_0-M}}r^{p-1} \chi^2 \abs{V\Phiplus}^2\di^4\mu
+\veps^{-1}\int_{\Donetwo^{R_0-M}}r^{p-5} \chi^2 \abs{\Phiplus}^2\di^4\mu,
\end{align}
and this last term is in turn bounded via the Hardy's inequality \eqref{eq:HardyIneqLHSRHS} by $\veps^{-1}\big(\int_{\Donetwo^{R_0-M}}r^{p-3} \abs{\prb  \Phiplus}^2\di^4\mu+\int_{\Donetwo^{R_0-M, R_0}}r^{p-5} \abs{ \Phiplus}^2\di^4\mu\big)$ since $\lim\limits_{r\to \infty} r^{p-4}\abs{\Phiplus}^2=0$. This thus yields an extended $r^p$ hierarchy for $p\in [0,4)$, i.e., the estimate \eqref{eq:rphierachyintermsofF:Psiminus} holds for $p\in [0,4)$. Therefore,
there is a constant $\regl(j)$ such that for any small $\delta>0$, any $p\in [0,4-\delta]$ and any $\tb\geq2\tb_0$,
\begin{align}
\label{eq:BEDC:Phiplus:l=1:0to4}
F(\reg,p,\tb,\Lxi^j\Psiplus)\lesssim_{\delta,j,\reg} {}&\tb^{-4+\delta-2j+p}F(\reg+\regl(j),4-\delta,\tb/2,\Psiplus).
\end{align}

Furthermore, we shall extend the hierarchy to $p\in [0,5)$.
For $4\leq p\leq 5-\delta$ where $\delta>0$ is small, we estimate the second term on the LHS  of \eqref{eq:multiplierintegral:generalp:l=1:Schw} by
\begin{align}
\hspace{4ex}&\hspace{-4ex}
\bigg|\int_{\Donetwo}24Mr^{p-3} \chi^2_R\Re(V\overline\Phiplus \Phiplus)\di^4\mu\bigg|\notag\\
\lesssim{}&\veps\int_{\Donetwo}r^{p} \tb^{-1-\delta}\chi^2_R \abs{V\Phiplus}^2 \di^4\mu +\veps^{-1}\int_{\Donetwo}r^{p-6}\tb^{1+\delta} \chi^2_R \abs{\Phiplus}^2 \di^4\mu,
\end{align}
The first term on the LHS can be absorbed by choosing $\veps$ small, and the second term is bounded using the estimate \eqref{eq:BEDC:Phiplus:l=1:0to4} by
$\int_{\tb_1}^{\tb_2}\tb^{1+\delta}F(1,p-4,\tb,\Psiplus)\di\tb
\lesssim_{\delta}  \tb_1^{-6+2\delta+p}F(\regl,4-\delta,\tb_0,\Psiplus)$. Therefore, one obtains for any $p\in [4,5-\delta]$ and $\tb_2>\tb_1\geq \tb_0$,
\begin{align}
\hspace{4ex}&\hspace{-4ex}
F(\reg,p,\tb_2,\Psiplus)
+\int_{\tb_1}^{\tb_2}F(\reg,p-1,\tb,\Psiplus)\di\tb
\notag\\
\lesssim_{\delta,\reg} {} &F(\reg+\reg',p,\tb_1,\Psiplus)
+\tb_1^{-6+2\delta+p}F(\reg+\regl,4-\delta,\tb_1,\Psiplus.
\end{align}
This gives for any $p\in [4,5-\delta]$ and $\tb\geq 4\tb_0$,
\begin{align}
F(\reg,p,\tb,\Psiplus)
\lesssim_{\delta,\reg} {}&\tb^{-5+\delta+p}F(\reg+\regl,5-\delta,\tb/2,\Psiplus).
\end{align}
Together with the discussions for $3\leq p<4$, this yields the estimate \eqref{eq:BEDC:Phiplus:l=1:0to5}.
\end{proof}

\subsection{$\ell=\ell_0$ mode of spin $+1$ component with $\ell_0\geq 2$}
\label{sect:Maxwell:Schw:l=l0:plus}
\begin{prop}
\label{prop:BED:Psiplus:l=l0}
Let $\Psiplus$ be supported on a fixed $\ell=\ell_0$ mode with $\ell_0\geq 2$. Let $j\in \mathbb{N}$ and let $\reg\geq \ell_0$.  Define for any $p>2$ that
\begin{align}
\label{def:Ffts:PhiplusHigh:2to5}
F^{(\ell_0-1)}(\reg,p,\tb,\Psiplus)={}&\norm{rV\Psiplus}^2_{W_{0}^{\reg-1}(\Sigmatb)}
+\norm{\Psiplus}^2_{W_{-2}^{\reg}(\Sigmatb)}\notag\\
&
+\sum_{m=1}^{\ell_0-2}\Big(\norm{rV\PhiplusHigh{m}}^2_{W_{0}^{\reg-1-m}(\Sigmatb^{3M})}
+\norm{\PhiplusHigh{m}}^2_{W_{-2}^{\reg-m}(\Sigmatb^{3M})}\Big)\notag\\
&+\Big(\norm{rV\tildePhiplusHigh{\ell_0-1}}^2_{W_{p-2}^{\reg-\ell_0}(\Sigmatb^{3M})}
+\norm{\PhiplusHigh{\ell_0-1}}^2_{W_{-2}^{\reg+1-\ell_0}(\Sigmatb^{3M})}\Big),
\end{align}
Then,
\begin{enumerate}
\item \label{pt1: prop:BED:Psiplus:l=l0} if the $\ell_0$-th N-P constant $\NPCP{\ell_0}$ does not vanish,
there is a constant $\regl(j)$ such that for any small $\delta>0$, any $p\in [0,2]$ and any $\tb\geq\tb_0$,
\begin{align}
\label{eq:BEDC:Phiplus:l=l0:0to3}
F(\reg,p,\tb,\Lxi^j\Psiplus)\lesssim_{\delta,j,\ell_0,\reg} {}&\tb^{-3+\delta-2(\ell_0-1)-2j+p}F^{(\ell_0-1)}(\reg+\regl(j,\ell_0),3-\delta,\tb_0,\Psiplus),
\end{align}
i.e., the basic energy $\gamma$-decay condition holds for spin $+1$ component for $\gamma=3-\delta+2(\ell_0-1)$;
\item
if the $\ell_0$-th N-P constant $\NPCP{\ell_0}$ vanishes, there is a constant $\regl(j)$ such that for any small $\delta>0$, any $p\in [0,2]$ and any $\tb\geq\tb_0$,
\begin{align}
\label{eq:BEDC:Phiplus:l=l0:0to5}
F(\reg,p,\tb,\Lxi^j\Psiplus)\lesssim_{\delta,j,\ell_0,\reg}  {}&\tb^{-5+\delta-2(\ell_0-1)-2j+p}F^{(\ell_0-1)}(\reg+\regl(j,\ell_0),5-\delta,\tb_0,\Psiplus),
\end{align}
i.e., the basic energy $\gamma$-decay condition holds for spin $+1$ component for $\gamma=5-\delta+2(\ell_0-1)$.
\end{enumerate}
\end{prop}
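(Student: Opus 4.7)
The plan is to run a multi-level $r^p$ hierarchy along the chain $\PhiplusHigh{0} = \Phiplus, \PhiplusHigh{1}, \ldots, \PhiplusHigh{\ell_0-1}$, using the wave system \eqref{eq:Phiplushighi:Schw:generall} (or rather the modified system \eqref{eq:Phiplushighi:Schw:generall:tildePhiplusi} for $\tildePhiplusHigh{i}$), and to observe that since the field is supported on the single mode $\ell=\ell_0$, the effective spectral shift $b_{0,0}+(\ell_0+s)(\ell_0-s+1) = -f_{i,1}+\ell_0(\ell_0+1) = \ell_0(\ell_0+1)-(i+1)(i+2)$ is \emph{strictly positive} for every intermediate level $i\in\{0,1,\ldots,\ell_0-2\}$, and \emph{vanishes exactly} at the top level $i=\ell_0-1$. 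Thus for each $0\leq i\leq \ell_0-2$, point \ref{pt:rp:highmodes:2} of Proposition \ref{prop:wave:rp:highmodes} applies to $\tildePhiplusHigh{i}$ and yields the $r^p$ hierarchy up to $p=2$; at the top level $\tildePhiplusHigh{\ell_0-1}$ only point \ref{pt:rp:highmodes:3} applies, and we must extend the hierarchy by hand as in the $\ell_0=1$ case treated in Proposition \ref{prop:BED:Psiplus:l=1}.

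I would proceed inductively in $i$, carrying out the steps in the following order. First, a BEAM-type estimate together with point \ref{pt:rp:highmodes:2} of Proposition \ref{prop:wave:rp:highmodes} applied to $\PhiplusHigh{0}=\Phiplus$ (whose equation has $b_{0,0}=-2$ and strictly positive effective gap $\ell_0(\ell_0+1)-2>0$) gives the standard $r^p$ hierarchy on $[0,2]$; by the Dafermos--Rodnianski mean value argument (\cite[Lemma 5.2]{andersson2019stability}) this converts to $\tb^{-2+p}$ decay for the associated energy. Then, moving to level $i=1,\ldots,\ell_0-2$, equation \eqref{eq:Phiplushighi:Schw:generall:tildePhiplusi} is of the form $\squareShat_{+1}\tildePhiplusHigh{i} + \text{(allowed terms)} = -\sum_{j\leq i}h_{i,j}\PhiplusHigh{j}$ with $h_{i,j}=O(r^{-1})$. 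Since all lower-level scalars $\PhiplusHigh{j}$ ($j<i$) have already been estimated in the previous induction steps with one more power of $\tb^{-2}$, the source term on the right is placed into the weighted norm $\norm{\cdot}^2_{W_{-1-\delta}^{\reg}(\Donetwo^{R_0-M})}$ using the $O(r^{-1})$ decay of the coupling coefficients, and is absorbed modulo an already-controlled quantity. This yields the full $r^p$ hierarchy on $[0,2]$ for $\tildePhiplusHigh{i}$, and Lemma 5.2 of \cite{andersson2019stability} translates this into $\tb^{-2i-2+p}$ decay of the corresponding energies. The restriction of the norms in $F^{(\ell_0-1)}$ to $\{\rb\geq 3M\}$ is precisely what is needed so that the factor $(r-3M)/r^2$ in \eqref{eq:Phiplushighi:Schw:generall:tildePhiplusi} carries a good sign and the $r^p$ machinery runs; near $\{r\leq 3M\}$ the lower level $\PhiplusHigh{0}=\Phiplus$ already carries the full information.

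At the top level $i=\ell_0-1$ the gap vanishes, so I would repeat the strategy from Proposition \ref{prop:BED:Psiplus:l=1}: multiply the equation for $\tildePhiplusHigh{\ell_0-1}$ by $-2r^{p-2}\chi^2 V\overline{\tildePhiplusHigh{\ell_0-1}}$, integrate over $\Donetwo$, and study the bulk and flux terms. For $p\in [2,3)$ the zeroth order term contributes a bulk integral of the right sign; for $p\in [3,4)$ it is controlled by Cauchy--Schwarz together with the Hardy inequality \eqref{eq:HardyIneqLHSRHS}, which is available because, by definition of the N--P constant, $\lim_{r\to\infty}\curlVR \tildePhiplusHigh{\ell_0-1}=\NPCP{\ell_0}$ is (possibly) a nonzero function on $\mathbb{S}^2$ but $r^{p-4}|\tildePhiplusHigh{\ell_0-1}|^2\to 0$. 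If $\NPCP{\ell_0}\neq 0$ this allows the hierarchy to extend to $p=3-\delta$; if $\NPCP{\ell_0}=0$, Proposition \ref{prop:vanishingNPC:betterasymnearscri} provides the improved decay $r^{1-\alpha}\curlVR\tildePhiplusHigh{\ell_0-1}=O(1)$ for all $\alpha\in(0,1]$, and the same $\veps$-$\veps^{-1}$ splitting combined with the already-proven $\tb^{-4+\delta+p}$ decay at $p=4-\delta$ pushes the hierarchy up to $p=5-\delta$ in complete parallel with the last paragraph of the proof of Proposition \ref{prop:BED:Psiplus:l=1}.

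Finally, with the extended $r^p$-hierarchy and the resulting $\tb^{-(3-\delta)+p}$ (resp.\ $\tb^{-(5-\delta)+p}$) decay for the energy of $\tildePhiplusHigh{\ell_0-1}$ in hand, I propagate the decay downward to obtain \eqref{eq:BEDC:Phiplus:l=l0:0to3} and \eqref{eq:BEDC:Phiplus:l=l0:0to5}. Since $\PhiplusHigh{i} \sim \curlVR \PhiplusHigh{i-1}$ each descent step loses two powers of $r$ but, conversely, yields a gain of $\tb^{-2}$ in the time decay of the lower-level scalar (precisely as encoded in the $2(\ell_0-1)$ offset appearing in the statement). This downward propagation is carried out by rewriting $F(\reg,p,\tb,\Lxi^j\Phiplus)$ through the wave equation \eqref{eq:Phi+1:Schw:generall} in terms of mixed $V$- and angular derivatives---exactly as in the step from $F(\reg-8-8j,5/3,\tb,\Lxi^{j+1}\Psiplus)$ to $F(\reg-6-8j,0,\tb,\Lxi^{j}\Psiplus)$ in the proof of Proposition \ref{prop:BEDC:Phiplus:1}---and is iterated $\ell_0-1$ times. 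Commutation with $\Lxi^j$ is then handled inductively, using that $\Lxi$ commutes with the TME and with every step of the argument, by following the $j$-induction template of the proof of Proposition \ref{prop:BEDC:Phiplus:1}. The main technical obstacle I expect is the careful bookkeeping of the coupling source terms $g_i M \PhiplusHigh{i-1}$ together with the lower-order remainders $h_{i,j}\PhiplusHigh{j}$: these are not of lower differential order, and it is crucial that the decay improvements at the lower levels have been secured \emph{before} we move to level $i$, so that they can be absorbed either by a suitably small $R_0^{-1}$ factor or by a bootstrap involving $\int_{\tb_1}^{\tb_2}\tb^{1+\delta}\|\Lxi\cdot\|^2\,\di\tb$, exactly as in the $\ell_0=1$ analysis.
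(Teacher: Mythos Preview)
Your proposal is correct and follows essentially the same approach as the paper: a multi-level $r^p$ hierarchy along $\PhiplusHigh{0},\ldots,\PhiplusHigh{\ell_0-1}$ using Proposition \ref{prop:wave:rp:highmodes} at intermediate levels where the spectral gap $\ell_0(\ell_0+1)-(i+1)(i+2)$ is strictly positive, and an extension by hand at the top level $\tildePhiplusHigh{\ell_0-1}$ via the multiplier $-2r^{p-2}\chi^2 V\overline{\tildePhiplusHigh{\ell_0-1}}$ together with Cauchy--Schwarz and Hardy, followed by downward propagation and the $\Lxi^j$ induction. The only minor difference is that the paper handles the top-level source terms $\sum_j h_{\ell_0-1,j}\PhiplusHigh{j}$ (which include the $-6\ell_0(\ell_0+1)Mr^{-1}\tildePhiplusHigh{\ell_0-1}$ contribution) uniformly for $p\in(2,4)$ by Cauchy--Schwarz plus the already-established lower-level $r^p$ estimates, rather than splitting $[2,3)$ and $[3,4)$ with a separate integration-by-parts positivity step as you describe; both routes yield the same hierarchy \eqref{eq:rp:Psiplus:Schw:l0:2to4}.
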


\begin{proof}
The wave equation \eqref{eq:Phi+1:Schw:generall} now takes the form of
\begin{align}
\label{eq:Phi+1:Schw:l0}
-r^2 YV \Phiplus -(\ell_0(\ell_0+1)-2)\Phiplus-2(r-3M)r^{-2}\curlVR\Phiplus -{12M}{r^{-1}}\Phiplus={}&0.
\end{align}
For any $i\in\mathbb{N}$, equation \eqref{eq:Phiplushighi:Schw:generall} simplifies to
\begin{align}
\label{eq:Phiplushighi:Schw:l0}
&-r^2 YV \PhiplusHigh{i} -(\ell_0(\ell_0+1)-(i+1)(i+2))\PhiplusHigh{i}\notag\\
&
-{2(i+1)\mu^{-1}(r-3M)}V\PhiplusHigh{i}
+O(r^{-1})\PhiplusHigh{i}+O(1)\PhiplusHigh{i-1}={}0.
\end{align}
This can be put into the form of \eqref{eq:wave:rp}, and one finds as long as $i\leq \ell_0-2$, the assumptions in Proposition \ref{prop:wave:rp:highmodes} are satisfied with $b_{0,0}(\PhiplusHigh{i})+\ell_0(\ell_0+1)=-(i+1)(i+2)+\ell_0(\ell_0+1)>0$,
while if $i=\ell_0-1$, the assumptions in Proposition \ref{prop:wave:rp:highmodes} are satisfied with $b_{0,0}(\PhiplusHigh{\ell_0-1})+\ell_0(\ell_0+1)=0$. The estimates in Proposition \ref{prop:wave:rp:highmodes} then applies, and for any $p\in (0,2)$, the error term arising from the last two terms on the LHS \eqref{eq:Phiplushighi:Schw:l0} can clearly be bounded by a small portion of the spacetime integral of the other terms on the LHS plus the corresponding estimate of $\PhiplusHigh{i-1}$. For any $1\leq i\leq \ell_0-1$ and $p\in [0,2]$, let
\begin{align}
\label{def:Ffts:PhiplusHigh:0to2}
F^{(i)}(\reg,p,\tb,\Psiplus)={}&\norm{rV\Psiplus}^2_{W_{0}^{\reg-1}(\Sigmatb)}
+\norm{\Psiplus}^2_{W_{-2}^{\reg}(\Sigmatb)}\notag\\
&
+\sum_{m=1}^i\Big(\norm{rV\PhiplusHigh{m}}^2_{W_{p-2}^{\reg-1-m}(\Sigmatb^{3M})}
+\norm{\PhiplusHigh{m}}^2_{W_{-2}^{\reg-m}(\Sigmatb^{3M})}\Big),
\end{align}
and for any $1\leq i\leq \ell_0-1$ and $p\in [-1,0)$, let $F^{(i)}(\reg,p,\tb,\Psiplus)=0$. Then
it holds for any $1\leq i\leq \ell_0-1$, $p\in [0,2)$ and $\tb_2>\tb_1\geq \tb_0$,
\begin{align}
\label{eq:EnerDecay:Impro:Psiplus:l0:0to2:v2:Integralform}
F^{(i)}(\reg,p,\tb_2,\Psiplus)
+\int_{\tb_1}^{\tb_2}F^{(i)}(\reg,p-1,\tb,\Psiplus)\lesssim_{p,\reg} {}&F^{(i)}(\reg+\regl,p,\tb_1,\Psiplus).
\end{align}
Together with the fact that for any $i\in \mathbb{N}$, $F^{(i)}(\reg,2,\tb,\Psiplus)\sim F^{(i+1)}(\reg,0,\tb,\Psiplus)$, we arrive at for any $p\in [0,2)$ and any $1\leq i\leq \ell_0-1$, there exists a constant $\regl(j,\ell_0-i)$ such that
\begin{align}
\label{eq:EnerDecay:Impro:Psiplus:l0:0to2:v2}
F^{(i)}(\reg,p,\tb_1,\Lxi^j\Psiplus)\lesssim_{\delta,j,\ell_0,\reg}{}&
\tb^{-2(\ell_0-1-i)-2j-2+p+C\delta}
F^{(\ell_0-1)}(\reg+\regl(j,\ell_0-i),2-\delta,\tb_0,\Psiplus).
\end{align}
Furthermore, since $F^{(1)}(\reg,0,\tb,\Lxi^j\Psiplus)\sim
F(\reg,2,\tb,\Lxi^j\Psiplus)$, we obtain from inequality \eqref{eq:BEDC:Phiplus} that for any $p\in [0,2]$,
\begin{align}
\label{eq:EnerDecay:Impro:Psiplus:l0:0to2:v1}
F(\reg,p,\tb,\Lxi^j\Psiplus)\lesssim_{\delta,j,\ell_0,\reg} {}&\tb^{-2(\ell_0-1)-2j-2+p+C\delta}
F^{(\ell_0-1)}(\reg+\regl(j,\ell_0),2-\delta,\tb_0,\Psiplus).
\end{align}
We apply  now the estimate \eqref{eq:rp:p=2:2} in Proposition \ref{prop:wave:rp:highmodes} with $p=2$ to equation \eqref{eq:Phiplushighi:Schw:l0} and find the error term arising from the last two terms on the LHS \eqref{eq:Phiplushighi:Schw:l0} is bounded by a small portion of the spacetime integral of  the LHS, which is thus absorbed, plus an integral
\begin{align}
\hspace{4ex}&\hspace{-4ex}\bigg|\sum_{\abs{\mathbf{a}}\leq \reg-1-i}\int_{\Donetwo^{R_0}}
V\overline{\mathbb{D}_2^{\mathbf{a}}\PhiplusHigh{i}}
\mathbb{D}_2^{\mathbf{a}}\PhiplusHigh{i-1} \di^4 \mu\bigg|\notag\\
\leq{}&\veps\int_{\tb_1}^{\tb_2}\frac{1}{\tb^{1+\delta}}
\Big(\norm{rV\PhiplusHigh{i}}^2_{W_{0}^{\reg-1-i}(\Sigmatb^{R_0})}
+\norm{\PhiplusHigh{i}}^2_{W_{-2}^{\reg-i}(\Sigmatb^{R_0})}\Big)\di\tb\notag\\
&
+\frac{C}{\veps}\int_{\tb_1}^{\tb_2}
\tb^{1+\delta}\norm{\PhiplusHigh{i-1}}^2_{W_{-2}^{\reg}(\Sigmatb^{R_0})}
\di\tb.
\end{align}
The first term is absorbed by choosing $\veps$ small and the second term is bounded from the estimates \eqref{eq:EnerDecay:Impro:Psiplus:l0:0to2:v2} and \eqref{eq:EnerDecay:Impro:Psiplus:l0:0to2:v1} by $C\tb_1^{-2(\ell_0-i)+C\delta}F^{(\ell_0-1)}
(\reg+\regl(\ell_0-i),2-\delta,\tb_0,\Psiplus)$.
Running again the above discussions, one eventually obtains
for any $p\in [0,2]$,
\begin{align}
\label{eq:EnerDecay:Impro:Psiplus:l0:0to2}
F(\reg,p,\tb,\Lxi^j\Psiplus)\lesssim_{j,\ell_0,\reg} {}&\tb^{-2(\ell_0-1)-2j-2+p}
F^{(\ell_0-1)}(\reg+\regl(j,\ell_0),2,\tb_0,\Psiplus).
\end{align}

Equation \eqref{eq:Phiplushighi:Schw:generall:tildePhiplusi} of $\tildePhiplusHigh{i}$ for $i=\ell_0-1$ reads
\begin{align}
\label{eq:Phiplushighi:Schw:l=l0:tildePhiplusi}
&-r^2 YV  \tildePhiplusHigh{\ell_0-1}
-2\ell_0(r-3M)r^{-2}\curlVR\tildePhiplusHigh{\ell_0-1}
+\sum_{j=0}^{\ell_0-1}h_{\ell_0-1,j} \PhiplusHigh{j}={}0,
\end{align}
with $h_{\ell_0-1,j}=O(r^{-1})$ for all $j\in \{0,1,\ldots, \ell_0-1\}$.
By multiplying this equation by $-2r^{p-2}\chi^2 V\overline{\tildePhiplusHigh{\ell_0-1}}$, taking the real part, and integrating over $\Donetwo$ with a measure $\di^4 \mu$,
\begin{align}
\label{eq:rpinfty:Phiplus:Schw:l0:generalp}
\int_{\Donetwo}\Big(&Y(2r^{p}\chi^2 \abs{V\tildePhiplusHigh{\ell_0-1}}^2)
+((p\chi^2+r\partial_r(\chi^2)  ) r^{p-1}+2\ell_0r^{p}\Delta^{-1} (r-3M))\chi^2 \abs{V\tildePhiplusHigh{\ell_0-1}}^2\Big)\di^4\mu\notag\\
\hspace{4ex}&\hspace{-4ex}={}-\int_{\Donetwo} 2\chi^2r^{p-3} \sum_{j=0}^{\ell_0-1}rh_{\ell_0-1,j}\Re(V\overline{\tildePhiplusHigh{\ell_0-1}} \PhiplusHigh{j})\di^4\mu.
\end{align}
For $p\in (2,4)$, the integral on the RHS is bounded by $\int_{\Donetwo}(\veps r^{p-1}\chi_{R}^2\abs{V\tildePhiplusHigh{\ell_0-1}}^2
+C\veps^{-1}\sum\limits_{j=0}^{\ell_0-1}
r^{p-5}\chi_{R}^2\abs{\PhiplusHigh{j}}^2)\di^4\mu$. The $\veps$ part is absorbed by the LHS,  and the second term is bounded via the Hardy's inequality \eqref{eq:HardyIneqLHSRHS} by
\begin{align}
\hspace{4ex}&\hspace{-4ex}
C\veps^{-1}\sum\limits_{j=0}^{\ell_0-1}\Big(\int_{\Donetwo^{R_0-M}} r^{p-3}\abs{\prb \PhiplusHigh{j}}^2 \di^4\mu+\int_{\Donetwo^{R_0-M, R_0} } r^{p-3}\abs{\PhiplusHigh{j}}^2\di^4\mu\Big)\notag\\
\lesssim{}& \veps^{-1}\sum\limits_{j=0}^{\ell_0-1}\int_{\Donetwo^{R_0-M}} r^{p-7}(\abs{\PhiplusHigh{j}}^2+\abs{Y\PhiplusHigh{j}}^2 ) \di^4\mu
+\int_{\Donetwo^{R_0-M}} r^{p-3}\abs{V\PhiplusHigh{\ell_0-1}}^2 \di^4\mu,
\end{align}
where we have used the definition of $\PhiplusHigh{j+1}$ to rewrite $\prb \PhiplusHigh{j}$. One can bound these terms by $F^{(\ell_0-1)}(\reg,2,\tb_1,\Psiplus)$ using the estimate \eqref{eq:EnerDecay:Impro:Psiplus:l0:0to2:v2:Integralform} if $p\in [2,4)$, thus, for any $p\in [2,4)$,
\begin{align}
\label{eq:rp:Psiplus:Schw:l0:2to4}
F^{(\ell_0-1)}(\reg,p,\tb_2,\Psiplus)
+\int_{\tb_1}^{\tb_2}F^{(\ell_0-1)}(\reg-1,p-1,\tb,\Psiplus)\di\tb
\lesssim_{p,j,\reg}{}F^{(\ell_0-1)}(\reg+\regl,p,\tb_1,\Psiplus).
\end{align}

In the case that the $\ell_0$-th N-P constant $\NPCP{\ell_0}$ does not vanish,
one can use the above inequality for $p\in [2,3)$ and the estimate \eqref{eq:EnerDecay:Impro:Psiplus:l0:0to2} and obtain for any $\delta>0$ and $p\in [0,2]$,
\begin{align}
\label{eq:EnerDecay:Impro:Psiplus:l0:0to3}
F(\reg,p,\tb,\Lxi^j\Psiplus)\lesssim{}&\tb^{-2(\ell_0-1)-2j-3+\delta+p}
F^{(\ell_0-1)}(\reg+\regl(j,\ell_0),3-\delta,\tb_0,\Psiplus).
\end{align}
This proves inequality \eqref{eq:BEDC:Phiplus:l=l0:0to3}.

In the second case where the $\ell_0$-th N-P constant $\NPCP{\ell_0}$ vanishes,
we utilize the estimate \eqref{eq:rp:Psiplus:Schw:l0:2to4} and the estimate \eqref{eq:EnerDecay:Impro:Psiplus:l0:0to2} to achieve for any $\delta>0$, $p\in [0,2]$ and $1\leq i\leq \ell_0-1$,
\begin{subequations}
\label{eq:EnerDecay:Impro:Psiplus:l0:0to4}
\begin{align}
F^{(i)}(\reg,p,\tb_1,\Lxi^j\Psiplus)\lesssim_{\delta,j,\ell_0,\reg}{}&
\tb^{-2(\ell_0-1-i)-2j-4+\delta+p}
F^{(\ell_0-1)}(\reg+\regl(j,\ell_0-i),4-\delta,\tb_0,\Psiplus),\\
F(\reg,p,\tb,\Lxi^j\Psiplus)\lesssim_{\delta,j,\ell_0,\reg}
{}&\tb^{-2(\ell_0-1)-2j-2+p}
F^{(\ell_0-1)}(\reg+\regl(j,\ell_0),2,\tb/2,\Psiplus)\notag\\
\lesssim_{\delta,j,\ell_0,\reg}{}&\tb^{-2(\ell_0-1)-2j-4+\delta+p}
F^{(\ell_0-1)}(\reg+\regl(j,\ell_0),4-\delta,\tb_0,\Psiplus).
\end{align}
\end{subequations}
Consider equation \eqref{eq:rpinfty:Phiplus:Schw:l0:generalp} for $p\in [4,5)$. The integral on the RHS is bounded by
\begin{align}
\veps\int_{\Donetwo}r^{p} \tb^{-1-\delta}\chi^2_R \abs{V\tildePhiplusHigh{\ell_0-1}}^2 \di^4\mu +\veps^{-1}\sum_{j=0}^{\ell_0-1}\int_{\Donetwo}r^{p-6}\tb^{1+\delta} \chi^2_R \abs{\PhiplusHigh{j}}^2 \di^4\mu.
\end{align}
The $\veps$ part is absorbed by the LHS for small $\veps$, and in view of the estimates \eqref{eq:EnerDecay:Impro:Psiplus:l0:0to4}, the $\veps^{-1}$ part is dominated by
\begin{align}
\hspace{8ex}&\hspace{-8ex}
\int_{\tb_1}^{\tb_2}\tb^{1+\delta}
\bigg(F(\ell_0,p-4,\tb,\Psiplus)
+\sum_{i=1}^{\ell_0-1}F^{(i)}(1,p-4,\tb,\Psiplus)\bigg)\di\tb\notag\\
\lesssim_{\delta,j,\ell_0} {}& \tb_1^{-6+2\delta+p}F^{(\ell_0-1)}(\regl(\ell_0),4-\delta,\tb_0\Psiplus).
\end{align}
Similar to the proof of Proposition \ref{prop:BED:Psiplus:l=1}, we can thus obtain
$F^{(\ell_0-1)}(\reg,2,\tb,\Psiplus)\lesssim_{\delta,\reg} \tb^{-3+\delta}F^{(\ell_0-1)}(\reg+\regl,5-\delta,\tb/2,\Psiplus)$,
which together with the estimates \eqref{eq:EnerDecay:Impro:Psiplus:l0:0to4} closes the proof.
\end{proof}

\subsection{$\ell=1$ mode of spin $-1$ component}
\label{sect:spin-1:l=1:Schw}

\begin{prop}
Let $j\in \mathbb{N}$.  Let $\Psiminus$ be supported on $\ell=1$ mode. Let $F^{(1)}(\reg,p,\tb,\Psiminus)$ and $F^{(2)}(\reg,p,\tb,\Psiminus)$ for $p\in [-1,2]$ be defined as in Definition \ref{def:Fterm:Psiminus:1and2level}. Define additionally for any $p\in (2,5)$ that
\begin{align}
\label{def:Ffts:Psiminus:2to3}
F^{(2)}(\reg,p,\tb,\Psiminus)={}&\sum_{i=0,1}
\Big(\norm{rV\PsiminusHigh{i}}^2_{W_{0}^{\reg}(\Sigmatb)}
+\norm{\PsiminusHigh{i}}^2_{W_{-2}^{\reg+1}(\Sigmatb)}\Big)\notag\\
&+\norm{rV\Phiminus{2}}^2_{W_{p-2}^{\reg-1}(\Sigmatb^{3M})}
+\norm{\Phiminus{2}}^2_{W_{-2}^{\reg}(\Sigmatb^{3M})}.
\end{align}
Then,
\begin{enumerate}
\item\label{pt:Psiminus:l=1:0to3} if the first N-P constant $\NPCN{1}$ does not vanish,
there is a constant $\regl(j)$ such that for any small $\delta>0$, any $p\in [0,3-\delta]$ and any $\tb\geq\tb_0$,
\begin{subequations}
\label{eq:BEDC:Psiminus:l=1:0to3}
\begin{align}
F^{(2)}(\reg,p,\tb,\Lxi^j\Psiminus)\lesssim_{\delta,j,\reg}  {}&\tb^{-3+\delta-2j+p}
F^{(2)}(\reg+\regl(j),3-\delta,\tb_0,\Psiminus),\\
F^{(1)}(\reg,p,\tb,\Lxi^j\Psiminus)\lesssim_{\delta,j,\reg}  {}&\tb^{-5+\delta-2j+p}
F^{(2)}(\reg+\regl(j),3-\delta,\tb_0,\Psiminus),
\end{align}
\end{subequations}
i.e., the basic energy $\gamma$-decay condition with $\gamma=3-\delta$ holds for spin $-1$ component;
\item\label{pt:Psiminus:l=1:0to5}
if the first N-P constant $\NPCN{1}$ vanishes, there is a constant $\regl(j)$ such that for any small $\delta>0$, any $p\in [0,5-\delta]$ and any $\tb\geq\tb_0$,
\begin{subequations}
\label{eq:BEDC:Psiminus:l=1:0to5}
\begin{align}
F^{(2)}(\reg,p,\tb,\Lxi^j\Psiminus)\lesssim_{\delta,j,\reg}  {}&\tb^{-5+\delta-2j+p}
F^{(2)}(\reg+\regl(j),5-\delta,\tb_0,\Psiminus),\\
F^{(1)}(\reg,p,\tb,\Lxi^j\Psiminus)\lesssim_{\delta,j,\reg}  {}&\tb^{-7+\delta-2j+p}
F^{(2)}(\reg+\regl(j),5-\delta,\tb_0,\Psiminus),
\end{align}
\end{subequations}
i.e., the basic energy $\gamma$-decay condition with $\gamma=5-\delta$ holds for spin $-1$ component.
\end{enumerate}
\end{prop}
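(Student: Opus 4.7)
The proof strategy parallels Proposition \ref{prop:BED:Psiplus:l=1} for $\Psiplus$, exploiting the key structural observation that on a Schwarzschild background with $\ell = 1$ mode, equation \eqref{eq:Phiminus2:Schw:generall} for $\Phiminus{2}$ reduces to
\begin{equation*}
-\pu\curlVR \Phiminus{2} - 4(r-3M)r^{-2}\curlVR\Phiminus{2} - 12Mr^{-1}\Phiminus{2} = 0,
\end{equation*}
after using $(2\edthR'\edthR + 2)\Phiminus{2} = 0$ from the eigenvalue identity \eqref{eq:l=l0mode:eigenvalue}. This equation has exactly the same form as equation \eqref{eq:Phi+1:Schw:l=1} for $\Phiplus$. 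Moreover, since $a = 0$, the auxiliary sources involving lower-level $\Phiminus{0}, \Phiminus{1}$ in \eqref{eq:Phi-12} vanish, so $\Phiminus{2}$ satisfies a decoupled wave equation identical in structure to the $\ell=1$ $\Phiplus$ equation.

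The first step is to establish the extended $r^p$ hierarchy for $\Phiminus{2}$ by multiplying by $-2 r^{p-2} \chi^2 V \overline{\Phiminus{2}}$, taking the real part, and integrating over $\Donetwo$. This yields the analog of \eqref{eq:multiplierintegral:generalp:l=1:Schw}. For $p \in [2, 3)$ the lower-order term provides a positive contribution; for $p \in [3, 4)$ one applies Cauchy--Schwarz and the Hardy inequality \eqref{eq:HardyIneqLHSRHS}, valid unconditionally since $\lim_{r \to \infty} r^{p-4}|\Phiminus{2}|^2 = 0$ follows from finite initial data and Proposition \ref{prop:nullinfBeha:PhiplusiandPhiminusi}. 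Thus one obtains the integrated hierarchy $F^{(2)}(\reg, p, \tb_2, \Psiminus) + \int_{\tb_1}^{\tb_2} F^{(2)}(\reg-1, p-1, \tb, \Psiminus)\, d\tb \lesssim_{p,\reg} F^{(2)}(\reg + \regl, p, \tb_1, \Psiminus)$ for $p \in [0, 4)$. In the regime $p \in [4, 5-\delta]$, the vanishing of $\NPCN{1}$ enters crucially through Proposition \ref{prop:vanishingNPC:betterasymnearscri}, which provides the improved asymptotic behavior of $\curlVR \tildePhiminus{2}$ at $\scri$ needed to close the Hardy step; the bootstrap is completed by inserting the $\tb^{1+\delta}$-weighted integration trick from Proposition \ref{prop:BED:Psiplus:l=1}, with the error $\int_{\tb_1}^{\tb_2}\tb^{1+\delta} F^{(2)}(\regl, p-4, \tb, \Psiminus)\, d\tb$ absorbed using the previously established $p \in [0, 4-\delta]$ bound.

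The second step combines the $\Phiminus{2}$ hierarchy above with the already-established $r^p$ estimates \eqref{eq:rpminusglobal:012:p=2} for the full set $\mathbf{Q}_2 = \{0,1,2\}$; on Schwarzschild the $a^2$-dependent error terms in \eqref{eq:rpminusglobal:012:p=2} vanish identically, so the closed hierarchy immediately yields the $F^{(2)}$ decay claims via \cite[Lemma 5.2]{andersson2019stability}. Commutation with $\Lxi^j$ follows exactly the inductive pattern of Propositions \ref{prop:BEDC:Phiplus:1} and \ref{prop:BEDC:Psiminus}, using that $\Lxi$ is Killing and commutes with the governing equations.

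The third step upgrades the $F^{(2)}$ decay to the stronger $F^{(1)}$ decay. Using the equivalence $F^{(1)}(\reg, 2, \tb, \Psiminus) \sim F^{(2)}(\reg, 0, \tb, \Psiminus)$ from \eqref{eq:phiminus:equivalenceenergy:01:012}, the bound $F^{(2)}(\reg, 0, \tb, \Lxi^j \Psiminus) \lesssim \tb^{-\gamma-2j}$ gives $F^{(1)}(\reg, 2, \tb, \Lxi^j \Psiminus) \lesssim \tb^{-\gamma-2j}$. The $\mathbf{Q}_1$ hierarchy \eqref{eq:rpminusglobal:01:less2} then applies with $p = 2$: following \eqref{eq:BEDC:Phiminus:1}--\eqref{eq:BEDC:Phiminus:2:v1}, an application of \cite[Lemma 5.2]{andersson2019stability} yields $F^{(1)}(\reg, p, \tb, \Lxi^j \Psiminus) \lesssim \tb^{-\gamma-2-2j+p} F^{(1)}(\reg + \regl, 2, \tb/2, \Lxi^j \Psiminus)$ for $p \in [0, 2]$, producing the extra $\tb^{-2}$ factor that distinguishes the $F^{(1)}$ decay rate from the $F^{(2)}$ rate. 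The main obstacle I anticipate lies in extending the hierarchy to $p \in [4, 5-\delta]$ in case (\ref{pt:Psiminus:l=1:0to5}), where one must precisely quantify the improvement of $\Phiminus{2}$ near $\scri$ coming from $\NPCN{1} = 0$ and confirm that the resulting boundary and bulk error terms, after the $\tb^{1+\delta}$-weighted Cauchy--Schwarz, remain below the already-established decay rates --- the same delicate bootstrap that appears in the analogous spin $+1$ argument.
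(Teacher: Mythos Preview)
Your proposal is correct and follows essentially the same route as the paper's own proof: you identify that on Schwarzschild with $\ell=1$ the equation \eqref{eq:Phi-12:Schw} for $\Phiminus{2}$ coincides with equation \eqref{eq:Phi+1:Schw:l=1} for $\Phiplus$, rerun the argument of Proposition \ref{prop:BED:Psiplus:l=1} to obtain the $F^{(2)}$ hierarchy and its decay, and then upgrade to $F^{(1)}$ via the equivalence $F^{(2)}(\reg,0,\tb,\Psiminus)\sim F^{(1)}(\reg,2,\tb,\Psiminus)$ together with the $\mathbf{Q}_1$ hierarchy of Proposition \ref{prop:BEDC:Psiminus}. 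One minor clarification: in the $p\in[4,5-\delta]$ step the vanishing of $\NPCN{1}$ is not used to close a Hardy inequality directly but rather to ensure that the top-order initial energy $F^{(2)}(\reg+\regl,5-\delta,\tb_0,\Psiminus)$ is finite; the bulk error is handled exactly by the $\tb^{1+\delta}$-weighted Cauchy--Schwarz you describe, with the resulting term controlled by the already-established $p\in[0,4-\delta]$ decay.
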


\begin{proof}
The system \eqref{eq:Phi-1012} of equations reduces to
\begin{subequations}
\label{eq:Phi-1012:Schw}
\begin{align}
\label{eq:Phi-10:Schw}
&-r^2YV\Phiminus{0} -2\Phiminus{0}
={}-\tfrac{2(r-3M)}{r^2}\Phiminus{1},\\
\label{eq:Phi-11:Schw}
&-r^2YV\Phiminus{1} -2\Phiminus{1}={}0,\\
\label{eq:Phi-12:Schw}
&-r^2YV\Phiminus{2}-2\mu^{-1}(r-3M)
{V\Phiminus{2}}-12Mr^{-1}\Phiminus{2}
={}0.
\end{align}
\end{subequations}
The last subequation \eqref{eq:Phi-12:Schw} is exactly of the same form as the equation \eqref{eq:Phi+1:Schw:l=1}. Thus the same way of arguing as in the proof of Proposition \ref{prop:BED:Psiplus:l=1} applies and yields the decay estimates for $F^{(2)}(\reg,p,\tb,\Lxi^j\Psiminus)$. One can further follow the proof in Proposition \ref{prop:BEDC:Psiminus} (in particular, the relation  $F^{(2)}(\reg,0,\tb,\Lxi^j\Psiminus)\sim F^{(1)}(\reg,2,\tb,\Lxi^j\Psiminus)$ is crucial) and arrive at the estimates for $F^{(1)}(\reg,p,\tb,\Lxi^j\Psiminus)$.
\end{proof}

\subsection{$\ell=\ell_0$ mode of spin $-1$ component with $\ell_0\geq 2$}
\label{sect:Maxwell:Schw:l=l0:minus}

\begin{prop}
Let $j\in \mathbb{N}$ and $\reg\geq \ell_0$.Let $F^{(1)}(\reg,p,\tb,\Psiminus)$ and $F^{(2)}(\reg,p,\tb,\Psiminus)$ for $p\in [-1,2]$ be defined as in Definition \ref{def:Fterm:Psiminus:1and2level}.  Define for any $p\geq 2$ that
\begin{align}
\label{def:Ffts:Psiminus:l0:generalp}
F^{(\ell_0+1)}(\reg,p,\tb,\Lxi^j\Psiminus)={}&\sum_{i=0,1}
\Big(\norm{rV\PsiminusHigh{i}}^2_{W_{0}^{\reg}(\Sigmatb)}
+\norm{\PsiminusHigh{i}}^2_{W_{-2}^{\reg+1}(\Sigmatb)}\Big)\notag\\
&+\sum_{i=2}^{\ell_0}\Big(
\norm{rV\Phiminus{i}}^2_{W_{0}^{\reg+1-i}(\Sigmatb^{3M})}
+\norm{\Phiminus{i}}^2_{W_{-2}^{\reg+2-i}(\Sigmatb^{3M})}\Big)\notag\\
&+\norm{rV\tildePhiminus{\ell_0+1}}^2_{W_{p-2}^{\reg-\ell_0}(\Sigmatb^{3M})}
+\norm{\Phiminus{\ell_0+1}}^2_{W_{-2}^{\reg-\ell_0+1}(\Sigmatb^{3M})}\Big).
\end{align}
Assume $\Psiminus$ is supported on a fixed $\ell=\ell_0$ mode with $\ell_0\geq 2$. Then,
\begin{enumerate}
\item\label{pt:Psiminus:l=l0:0to3} if the $\ell_0$-th N--P constant does not vanish,
there is a constant $\regl(j,\ell_0)$ such that for any small $\delta>0$, any $p\in [0,3-\delta]$ and any $\tb\geq\tb_0$,
\begin{subequations}
\label{eq:BEDC:Psiminus:l=l0:0to3}
\begin{align}
F^{(2)}(\reg,p,\tb,\Lxi^j\Psiminus)\lesssim_{\delta,j,\reg,\ell_0}  {}&\tb^{-3+\delta-2(\ell_0-1)-2j+p}
F^{(\ell_0+1)}(\reg+\regl(j,\ell_0),3-\delta,\tb_0,\Psiminus),\\
F^{(1)}(\reg,p,\tb,\Lxi^j\Psiminus)\lesssim_{\delta,j,\reg,\ell_0}  {}&\tb^{-5+\delta-2(\ell_0-1)-2j+p}
F^{(\ell_0+1)}(\reg+\regl(j,\ell_0),3-\delta,\tb_0,\Psiminus),
\end{align}
\end{subequations}
i.e., the basic energy $\gamma$-decay condition holds for spin $-1$ component for $\gamma=3-\delta+2(\ell_0-1)$;
\item\label{pt:Psiminus:l=l0:0to5}
if the $\ell_0$-th N--P constant vanishes, there is a constant $\regl(j)$ such that for any small $\delta>0$, any $p\in [0,5-\delta]$ and any $\tb\geq\tb_0$,
\begin{subequations}
\label{eq:BEDC:Psiminus:l=l0:0to5}
\begin{align}
F^{(2)}(\reg,p,\tb,\Lxi^j\Psiminus)\lesssim_{\delta,j,\reg,\ell_0}  {}&\tb^{-5+\delta-2(\ell_0-1)-2j+p}
F^{(\ell_0+1)}(\reg+\regl(j,\ell_0),5-\delta,\tb_0,\Psiminus),\\
F^{(1)}(\reg,p,\tb,\Lxi^j\Psiminus)\lesssim_{\delta,j,\reg,\ell_0}  {}&\tb^{-7+\delta-2(\ell_0-1)-2j+p}
F^{(\ell_0+1)}(\reg+\regl(j,\ell_0),5-\delta,\tb_0,\Psiminus),
\end{align}
\end{subequations}
i.e., the basic energy $\gamma$-decay condition holds for spin $-1$ component for $\gamma=5-\delta+2(\ell_0-1)$.
\end{enumerate}
\end{prop}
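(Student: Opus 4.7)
The proof will follow the blueprint of Proposition~\ref{prop:BED:Psiplus:l=l0} for the spin $+1$ case, adapted to the chain of equations \eqref{eq:Phiminusi:Schw:generall} and combined with the mechanism for improving $F^{(2)}$-decay into $F^{(1)}$-decay used in Proposition~\ref{prop:BEDC:Psiminus}. The arrangement of the scalars $\Phiminus{i+2}$, $i=0,1,\ldots,\ell_0-1$, exactly parallels the arrangement of $\PhiplusHigh{i}$, $i=0,\ldots,\ell_0-1$, on the spin $+1$ side, so the $r^p$-hierarchy is built in the same fashion.

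First, I would set up a hierarchy of weighted estimates for the chain $\{\Phiminus{i+2}\}$. For $0\leq i\leq \ell_0-2$, equation \eqref{eq:Phiminusi:Schw:generall} fits the framework of Proposition~\ref{prop:wave:rp:highmodes} with $b_{0,0}(\Phiminus{i+2})+\ell_0(\ell_0+1)=\ell_0(\ell_0+1)-(i+1)(i+2)>0$ on the $\ell=\ell_0$ mode, so one obtains weighted estimates for $p\in [0,2]$ with the lower-order coupling $g_i M\Phiminus{i+1}$ absorbed by adding large multiples of the estimate for the preceding index (iterating from $i=\ell_0-2$ downwards) and taking $R_0$ large. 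For the top index $i=\ell_0-1$, I would switch from $\Phiminus{\ell_0+1}$ to $\tildePhiminus{\ell_0+1}$ satisfying \eqref{eq:tildePhiminusi:Schw:generall:tildePhiminusi}, where now $b_{0,0}+\ell_0(\ell_0+1)=0$ and the source $\sum_j h_{\ell_0-1,j}\Phiminus{j+2}$ has $h_{\ell_0-1,j}=O(r^{-1})$. Defining the composite energy $F^{(\ell_0+1)}$ as in the statement and combining the obtained hierarchy with a mean-value argument and a commutation with $\Lxi$ identical to the one used in the proof of Proposition~\ref{prop:BEDC:Psiplus:1}, I would produce the spin $-1$ analogue of \eqref{eq:EnerDecay:Impro:Psiplus:l0:0to2:v1}, namely $\tb^{-2-2(\ell_0-1)-2j+p+C\delta}$ decay for $F^{(2)}(\reg,p,\tb,\Lxi^j\Psiminus)$.

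Second, to extend the hierarchy past $p=2$, I would multiply \eqref{eq:tildePhiminusi:Schw:generall:tildePhiminusi} by $-2r^{p-2}\chi^2 V\overline{\tildePhiminus{\ell_0+1}}$ and integrate over $\Donetwo$, mirroring \eqref{eq:rpinfty:Phiplus:Schw:l0:generalp}. For $p\in [2,4)$ the source integral is controlled by Cauchy--Schwarz together with the Hardy inequality \eqref{eq:HardyIneqLHSRHS}, yielding an extended hierarchy that suffices for the non-vanishing-N--P-constant case and gives the $F^{(2)}$ half of \eqref{eq:BEDC:Psiminus:l=l0:0to3}. When $\NPCN{\ell_0}=0$, Proposition~\ref{prop:vanishingNPC:betterasymnearscri} ensures $\lim_{r\to\infty}\curlVR\tildePhiminus{\ell_0+1}=0$, so I would bootstrap using the $\tb^{1+\delta}$-weighted Cauchy--Schwarz trick from the $p\in[4,5)$ step of the proof of Proposition~\ref{prop:BED:Psiplus:l=l0}, feeding back the already-proven $p\in[0,4)$ hierarchy to close things up to $p=5-\delta$. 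This gives the $F^{(2)}$ half of \eqref{eq:BEDC:Psiminus:l=l0:0to5}.

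Finally, the transfer from $F^{(2)}$-decay to the stronger $F^{(1)}$-decay (with the extra two powers of $\tb$) is where Proposition~\ref{prop:evenimproveEnerDecay:phiminus:Schw} enters. Its LHS controls $\norm{\Psiminus}^2_{W_{-3}^{\reg}(\Sigmatb)}$-type quantities (after commuting with $\CDeri$ and applying a red-shift argument near horizon exactly as in the proof of Proposition~\ref{prop:improveEnerDecay:phiminus}), which is equivalent up to lower-order terms to $F^{(1)}$; its RHS is bounded by $\Lxi$-derivative energies whose decay is already governed by the $F^{(2)}$ estimates applied to $\Lxi\Psiminus$. Since the $F^{(2)}$ decay rates carry an extra $\tb^{-2}$ under an additional $\Lxi$ derivative, this gives precisely the claimed two-power improvement, which propagates through $\Lxi^j$ commutations. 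The main technical obstacle I anticipate is the bookkeeping in the first step: unlike spin $+1$, the bottom of the chain is the BEAM-controlled pair $\PsiminusHigh{0},\PsiminusHigh{1}$ rather than $\Psiminus$ itself, so the iterative choice of multipliers (descending from $i=\ell_0-1$) must be arranged so that each layer is controlled by the next, while simultaneously handling the degeneracy $b_{0,0}+\ell_0(\ell_0+1)=0$ at the top and maintaining compatibility with the system-level estimate \eqref{eq:rpminusglobal:012:p=2} at the base.
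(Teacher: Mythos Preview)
Your first two steps are essentially the paper's argument: the equation \eqref{eq:Phi-12:Schw:l0} for $\Phiminus{2}$ on the $\ell=\ell_0$ mode has exactly the same form as \eqref{eq:Phi+1:Schw:l0} for $\Phiplus$, so the entire proof of Proposition~\ref{prop:BED:Psiplus:l=l0} transfers verbatim to the chain $\{\Phiminus{i+2}\}_{i=0}^{\ell_0-1}$ and yields the claimed decay for $F^{(2)}(\reg,p,\tb,\Lxi^j\Psiminus)$.

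Your third step, however, is both overcomplicated and not quite right. You invoke Proposition~\ref{prop:evenimproveEnerDecay:phiminus:Schw} and claim its left-hand side is ``equivalent up to lower-order terms to $F^{(1)}$'', but that is not the case: the left-hand side of \eqref{eq:W-3energynorm:spin-1:Schw} carries $r^{-5}$ and $r^{-6}$ weights on $\Phiminus{1}$ and $\Psiminus$, whereas $F^{(1)}(\reg,p,\tb,\Psiminus)$ involves $W_{p-2}$ norms of $rV\PsiminusHigh{i}$ and $W_{-2}$ norms of $\PsiminusHigh{i}$ for $i=0,1$. Proposition~\ref{prop:evenimproveEnerDecay:phiminus:Schw} is used later in the paper (in Proposition~\ref{prop:ptwFromBEDC:extremecomps:Schw}) to upgrade \emph{pointwise} decay of $\psiminus$, not to establish the basic-energy decay condition.

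The actual mechanism the paper uses is much simpler and purely algebraic: the equivalence
\[
F^{(2)}(\reg,0,\tb,\Lxi^j\Psiminus)\ \sim\ F^{(1)}(\reg,2,\tb,\Lxi^j\Psiminus),
\]
which is just \eqref{eq:phiminus:equivalenceenergy:01:012}, lets you feed the $p=0$ endpoint of the $F^{(2)}$ decay into the top $p=2$ of the already-established $F^{(1)}$ hierarchy \eqref{eq:BEDC:Phiminus:1} from Proposition~\ref{prop:BEDC:Psiminus}. Running that hierarchy down from $p=2$ to general $p\in[0,2]$ gains exactly the two extra powers of $\tb^{-1}$ and gives \eqref{eq:BEDC:Psiminus:l=l0:0to3} and \eqref{eq:BEDC:Psiminus:l=l0:0to5} for $F^{(1)}$. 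No elliptic or red-shift input is needed at this stage.
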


\begin{proof}
The system \eqref{eq:Phi-1012} of equations reduces to
\begin{subequations}
\label{eq:Phi-1012:Schw:l0}
\begin{align}
\label{eq:Phi-10:Schw:l0}
&-r^2YV\Phiminus{0} -\ell_0(\ell_0+1)\Phiminus{0}
={}-\tfrac{2(r-3M)}{r^2}\Phiminus{1},\\
\label{eq:Phi-11:Schw:l0}
&-r^2YV\Phiminus{1} -\ell_0(\ell_0+1)\Phiminus{1}={}0,\\
\label{eq:Phi-12:Schw:l0}
&-r^2YV\Phiminus{2}-(\ell_0(\ell_0+1)-2)\Phiminus{2}
-2\mu^{-1}(r-3M)
{V\Phiminus{2}}-12Mr^{-1}\Phiminus{2}
={}0.
\end{align}
\end{subequations}
Define for $p\in [0,2]$ that
\begin{align}
\label{def:Ffts:Psiminus:l0:0to2}
F^{(\ell_0+1)}(\reg,p,\tb,\Lxi^j\Psiminus)={}&\sum_{i=0,1}
\Big(\norm{rV\PsiminusHigh{i}}^2_{W_{0}^{\reg}(\Sigmatb)}
+\norm{\PsiminusHigh{i}}^2_{W_{-2}^{\reg+1}(\Sigmatb)}\Big)\notag\\
&+\sum_{i=2}^{\ell_0+1}\Big(
\norm{rV\Phiminus{i}}^2_{W_{p-2}^{\reg+1-i}(\Sigmatb^{3M})}
+\norm{\Phiminus{i}}^2_{W_{-2}^{\reg+2-i}(\Sigmatb^{3M})}\Big).
\end{align}
One finds the last subequation \eqref{eq:Phi-12:Schw:l0} is exactly of the same form as the equation \eqref{eq:Phi+1:Schw:l0}. Thus by arguing the same as in the proof of Proposition \ref{prop:BED:Psiplus:l=l0}, one can show the claimed estimates for $F^{(2)}(\reg,p,\tb,\Lxi^j\Psiminus)$. We then go back to the proof in Proposition \ref{prop:BEDC:Psiminus} and utilize the relation  $F^{(2)}(\reg,0,\tb,\Lxi^j\Psiminus)\sim F^{(1)}(\reg,2,\tb,\Lxi^j\Psiminus)$ to achieve the estimates for $F^{(1)}(\reg,p,\tb,\Lxi^j\Psiminus)$.
\end{proof}

\subsection{Closing the proof of Theorem \ref{thm:Schw}}
\label{sect:close:thm:Schw}

\begin{prop}
\label{prop:ptwFromBEDC:extremecomps:Schw}
Consider a Maxwell field in a Schwarzschild spacetime.
Let $j\in \mathbb{N}$.
\begin{enumerate}
\item \label{pt1:prop:ptwFromBEDC:extremecomps:Schw}
Let the basic energy $\gamma$-decay condition  with a $\gamma\geq 1$, a suitably large $\reg$ and $D_{+ 1}=D_{+ 1}(M,a,\reg,j)$ be satisfied for spin $+ 1$ component, then there exists a $\regl$ such that
\begin{align}
\absCDeri{\Lxi^j(r^{-2}\psiplus)}{\reg-\regl}
\lesssim{}&(D_{+1})^{\half}v^{-1}r^{-2}\tb^{-(\gamma-1)/2 -j}.
\end{align}
\item
Let the basic energy $\gamma$-decay condition  with a $\gamma\geq 1$, a suitably large $\reg$ and $D_{- 1}=D_{- 1}(M,a,\reg,j)$ be satisfied for spin $- 1$ component, then there exists a $\regl(j)$ such that
\begin{align}
\label{eq:ptwdecay:psiminus:gammadecay:Schw:l=1}
\absCDeri{\Lxi^j\psiminus}{\reg-\regl(j)}
\lesssim{}&(D_{-1})^{\half}v^{-1}\tb^{-(3+\gamma)/2 -j}.
\end{align}
\item Let the basic energy $\gamma$-decay condition  with a $\gamma\geq 1$, a suitably large $\reg$ and $D_{\pm 1}=D_{\pm 1}(M,a,\reg,j)$ be satisfied for spin $\pm 1$ components, then there exists a $\regl(j)$ such that
\begin{align}
\absCDeri{\Lxi^j\psiminus}{\reg-\regl(j)}
\lesssim{}&(D_{-1})^{\half}v^{-1}\tb^{-(3+\gamma)/2 -j},\\
\absCDeri{\Lxi^j(r^{-2}\psiplus)}{\reg-\regl(j)}
\lesssim{}&(D_{+1}+D_{-1})^{\half}v^{-3}\tb^{-(\gamma-1)/2 -j}.
\end{align}
\end{enumerate}
\end{prop}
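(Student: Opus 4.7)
The plan is to prove the three parts in order, with each building on the previous.

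Part (1) follows essentially immediately from Proposition \ref{prop:weakdecay:spin+1-1:v3}. That proposition is stated on any subextremal Kerr without any $\veps$-loss, giving $\absCDeri{\Lxi^j\psiplus}{\reg-\reg'}\lesssim (D_{+1})^{\half}v^{-1}\tb^{-(\gamma-1)/2-j}$. Since $\CDeri$-derivatives of $r^{-2}$ produce only additional favorable negative powers of $r$, the Leibniz rule converts this into the claimed bound on $\absCDeri{\Lxi^j(r^{-2}\psiplus)}{\reg-\regl}$.

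For part (2) I would establish the Schwarzschild analog of the pointwise estimate \eqref{eq:evenimprovePTWDecay:phiminus} with $\delta=0$. The key input is Proposition \ref{prop:evenimproveEnerDecay:phiminus:Schw}, which delivers the $\delta=0$ identity \eqref{eq:W-3energynorm:spin-1:Schw} replacing the Kerr inequality \eqref{eq:integralonSigmatb:generalexp:phiminus:1:v1}. I would then run the argument of Proposition \ref{prop:evenimproveEnerDecay:phiminus} verbatim on Schwarzschild: commute \eqref{eq:W-3energynorm:spin-1:Schw} with $\Lxi^j$, use Proposition \ref{prop:improveEnerDecay:phiminus} to control the RHS by $D_{-1}\tb^{-5-\gamma-2j}$ as in Remark \ref{rem:evenimproveEnerDecay:phiminus:Schw}, add a red-shift estimate for $\psiminus$ analogous to \eqref{eq:integralonSigmatb:generalexp:phiminus:1:v2}, and iterate via the mean-value principle to obtain $\norm{\Lxi^j\Psiminus}_{W_{-3}^{\reg-\regl}(\DOC_{\tb,\infty})}^2\lesssim D_{-1}\tb^{-4-\gamma-2j}$. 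Since $a=0$ identically, the $a^2\int \tb^{1+\delta}\|\cdot\|^2\di\tb$ absorption terms of the Kerr proof vanish and every step that previously forced $\delta>0$ now closes at the endpoint weight. Commuting further with $\CDeri$ via \eqref{eq:waveofPsiminus:intermsofPhi1:v11} propagates this to the stated regularity. The pointwise bound finally follows by combining Proposition \ref{prop:weakdecay:spin+1-1:v3} (which already matches claim (2) in the exterior $\{r\geq\tb\}$) with Sobolev \eqref{eq:Sobolev:3} applied to a suitable rescaling $r^{\alpha}\Lxi^j\Psiminus$, supplying the missing $\tb^{-1}$ factor in the interior.

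For part (3) I would exploit the fact that the Teukolsky--Starobinsky identity \eqref{eq:TSI:original} simplifies dramatically on Schwarzschild: setting $a=0$ makes $L_{+1}(\psiplusc)$ vanish entirely, giving
\begin{equation*}
2(\edthR')^2\psiplusc = V^2\!\big((\R)\psiminus\big) + V\!\big(\tfrac{2Mr^2}{\R}\psiminus\big).
\end{equation*}
The basic energy $\gamma$-decay condition for spin $-1$, together with the pointwise improvement of part (2), controls $\Psiminus^{(i)}$ and their $rV$-derivatives for $i\leq 1$, which in turn pointwise-control $V^k((\R)\psiminus)$ for $k\leq 2$ with the $v$-decay improving by $v^{-1}$ per $V$-derivative. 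Following the proof of Proposition \ref{prop:ImproDecaySpin+1}, I would estimate $\int_{\mathbb{S}^2}\abs{\Lxi^j(\edthR')^2\psiplusc}^2\di^2\mu$ by these pointwise bounds; since $(\edthR')^2$ has no non-trivial kernel on spin-$(+1)$ scalars supported on $\ell\geq 1$ modes, it inverts, and after bootstrapping once (using the weak bound of Proposition \ref{prop:weakdecay:spin+1-1:v3} as the initial input and the improved spin $-1$ decay as the driving source), a Sobolev imbedding on spheres produces $\absCDeri{\Lxi^j(r^{-2}\psiplus)}{\reg-\regl}\lesssim (D_{+1}+D_{-1})^{\half}v^{-3}\tb^{-(\gamma-1)/2-j}$. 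The absence of the $L_{+1}$-term is precisely what permits $\delta=0$ here, in contrast to the Kerr Proposition \ref{prop:ImproDecaySpin+1}.

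The main obstacle is claim (2): executing the Kerr-style iteration on Schwarzschild at $\delta=0$ while tracking carefully that every $a^2$-absorption step in the Kerr proof vanishes, that the red-shift and $r^p$-hierarchy arguments close at the endpoint weight $W_{-3}$, and that the Sobolev step actually recovers the missing $\tb^{-1}$ factor in the interior uniformly down to the horizon. Once (2) is in hand, parts (1) and (3) are comparatively direct from Proposition \ref{prop:weakdecay:spin+1-1:v3} and the Schwarzschild TSI respectively.
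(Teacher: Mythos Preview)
Your treatment of parts (1) and (3) matches the paper's: part (1) is a direct consequence of Proposition~\ref{prop:weakdecay:spin+1-1:v3}, and part (3) follows from part (2) together with the Schwarzschild TSI (where indeed $L_{+1}\equiv 0$), exactly as in Sections~\ref{sect:vrdecay:psiplus} and~\ref{sect:vdecay:psiplus}.

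The gap is in part (2), specifically for the $\ell=1$ mode. Your claim that ``every step that previously forced $\delta>0$ now closes at the endpoint weight'' because $a=0$ is not correct: in the Kerr proof of Proposition~\ref{prop:evenimproveEnerDecay:phiminus}, the choice $f^2=\mu(r^2+a^2)^{2\delta}$ produces the coercive term $4\delta\mu(1-\mu+\mu\delta)(r^2+a^2)^{-3/2+2\delta}\abs{\mathbf{S}^{1/2}\Psiminus}^2$ in \eqref{eq:betterdecay:generalexp:phiminus:5:1}, and it is \emph{this} term (via Hardy) that gives the $c\delta\int r^{-3+4\delta}\abs{\mathbf{S}^{1/2}\Psiminus}^2$ needed for the iteration. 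Setting $\delta=0$ kills this term. On Schwarzschild the substitute source of coercivity in the identity \eqref{eq:W-3energynorm:spin-1:Schw} is the non-degenerate term $2Mr^{-6}(\abs{\mathbf{S}^{1/2}\Phiminus{1}}^2-2\abs{\Phiminus{1}}^2)$, which by \eqref{eq:ellip:highermodes} is nonnegative but vanishes identically for $\ell=1$. For $\ell=1$ the identity reduces to \eqref{eq:W-3energynorm:spin-1:Schw:l=1}, whose left-hand side carries only $\mu$- and $\mu^2$-degenerate terms and controls only the combination $\abs{\Phiminus{1}-r\Psiminus}^2$, not $\abs{\Psiminus}^2$ itself; red-shift repairs the horizon degeneracy but does not recover the missing bulk control of $\norm{\Psiminus}^2_{W_{-3}}$ at large $r$. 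Thus your iteration to $\norm{\Lxi^j\Psiminus}^2_{W_{-3}(\DOC_{\tb,\infty})}\lesssim D_{-1}\tb^{-4-\gamma-2j}$ is justified only for $\ell\geq 2$ (and this is exactly what the paper does for those modes).

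For $\ell=1$ the paper uses a different, pointwise ODE argument in the interior $\{\rb\leq\tb\}$: the wave equation \eqref{eq:waveofPsiminus:intermsofPhi1} collapses to $\VR(\mu r^4 Y\psiminus)=6r^3\Lxi\psiminus$, which one integrates in $r$ from the horizon to bound $\abs{Y\psiminus}$ (and hence $\abs{\prb\psiminus}$) by $(D_{-1})^{1/2}\tb^{-(7+\gamma)/2+\delta}r^{-\delta}$, and then integrates from the boundary point $\rb=\tb$ (where Proposition~\ref{prop:weakdecay:spin+1-1:v3} already gives the sharp value $\tb^{-(5+\gamma)/2}$). The $r$-integration of $r^{-\delta}$ over $[\rb,\tb]$ produces $\tb^{1-\delta}$, so the intermediate $\delta$ cancels and one lands on the sharp $\tb^{-(5+\gamma)/2}$. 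Higher $r\partial_r$-regularity is then obtained by commuting with $rY$ via \eqref{eq:commmutator:rYandvariWave:Schw}.
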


\begin{proof}
The first point has been shown in  Section \ref{sect:vrdecay:psiplus}, and assuming the second point is valid, the third point is justified in Sections \ref{sect:vdecay:psiplus} and \ref{sect:vtdecay:psimiddle}. Hence, we only need to show the second point about spin $-1$ component. Moreover, we consider only the interior region $\{\rb\leq \tb\}$, since if in the exterior region $\{\rb>\tb\}$, the estimate \eqref{eq:ptwdecay:psiminus:gammadecay:Schw:l=1} follows from the already proven estimate \eqref{eq:evenimprovePTWDecay:phiminus}.

Consider first $\ell=1$ mode.  Equation \eqref{eq:W-3energynorm:spin-1:Schw} then becomes
\begin{align}\label{eq:W-3energynorm:spin-1:Schw:l=1}
\hspace{4ex}&\hspace{-4ex}
\int_{\Sigmatb}\Big(6\mu r^{-5}\abs{\Phiminus{1}-r\Psiminus}^2+\mu r
\abs{\prb(r^{-2}{\Phiminus{1}})}^2
+2\mu^2 r\abs{\prb(r^{-1}\Psiminus)}^2\Big)\di^3 \mu\notag\\
={}&\int_{\Sigmatb}\Big(\mu r^{-3}\abs{G(\Psiminus)}^2
+2r^{-1}\abs{ \mu H\Lxi \Psiminus}^2\Big)\di^3\mu.
\end{align}
The wave equation \eqref{eq:waveofPsiminus:intermsofPhi1} takes the form of
\begin{align}\label{eq:VmuYpsiminus:Schw:l=1}
\VR(\mu r^4 Y\psiminus)= {}6 r^3\Lxi\psiminus.
\end{align}
This implies in the interior region that
\begin{align}
\abs{\prb(\mu r^4 Y \psiminus)}\lesssim_{\delta}{}& (D_{-1})^{\half} \tb^{-\frac{7+\gamma}{2}+\delta}r^{3-\delta}.
\end{align}
Integrating from horizon gives
\begin{align}
\label{eq:Ypsiminus:Schw}
\abs{Y \psiminus}\lesssim_{\delta} {}&(D_{-1})^{\half}\tb^{-\frac{7+\gamma}{2}+\delta}r^{-\delta}.
\end{align}
This estimate and the estimate \eqref{eq:evenimprovePTWDecay:phiminus} together show that
\begin{align}
\label{eq:prbpsiminus:Schw}
\abs{\prb \psiminus}\lesssim_{\delta} {}&(D_{-1})^{\half}\tb^{-\frac{7+\gamma}{2}+\delta}r^{-\delta}.
\end{align}
Integrating from the point $\rb=\tb$ and in view of the fact that $\abs{\psiminus(\tb, \rb=\tb,\theta,\pb)}\lesssim(D_{-1})^{\half}\tb^{-\frac{5+\gamma}{2}}$, one immediately sees that for any $\rb\leq \tb$,
\begin{align}
\abs{\psiminus(\tb, \rb,\theta,\pb)}\lesssim{}&(D_{-1})^{\half}\tb^{-\frac{5+\gamma}{2}}.
\end{align}
Clearly, one can commute with $\Lxi$ to obtain
\begin{align}
\label{eq:psiminus:gooddecay:uptosecond:l=1}
\abs{\mu \prb\Lxi^j(r\prb\psiminus)}+\abs{\prb\Lxi^j\psiminus}
\lesssim_{\delta}{}(D_{-1})^{\half}\tb^{-\frac{7+\gamma}{2}+\delta-j}r^{-\delta}, \quad
\abs{\Lxi^j\psiminus}\lesssim{}(D_{-1})^{\half}\tb^{-\frac{5+\gamma}{2}-j}.
\end{align}
We shall show below that $\abs{\prb(r\prb\Lxi^j\psiminus)}
\lesssim_{\delta}(D_{-1})^{\half}\tb^{-\frac{7+\gamma}{2}+\delta-j}r^{-\delta}$.
In view of the following commutator for any scalar $\varphi$
\begin{align}
\label{eq:commmutator:rYandvariWave:Schw}
[rY,\VR(\mu r^4 Y)]\varphi={}&r^4\partial_r (\mu ) Y(rY\varphi) -2r^3\Lxi(rY\varphi)
-2\VR (\mu r^4 Y\varphi)\notag\\
&-2\mu^2 r^7 \partial_r (\mu^{-1}r^3)Y\varphi - r^2\partial_{r}^2(\mu r^3) Y\varphi,
\end{align}
commuting equation \eqref{eq:VmuYpsiminus:Schw:l=1} with $rY$ gives \begin{align}\label{eq:VmuYpsiminus:Schw:l=1}
\hspace{4ex}&\hspace{-4ex}\VR(\mu r^4 Y(rY\psiminus))
+r^4\partial_r (\mu ) Y(rY\psiminus)\notag\\
= {}&6 rY(r^3\Lxi\psiminus)
 +2r^3\Lxi(rY\psiminus)
+12r^3 \Lxi\psiminus\notag\\
&+2\mu^2 r^7 \partial_r (\mu^{-1}r^3)Y\psiminus + r^2\partial_{r}^2(\mu r^3) Y\psiminus.
\end{align}
This yields $\prb(\mu r^4 Y(rY\psiminus))
+r^4\partial_r (\mu ) Y(rY\psiminus)= F(\psiminus)$ where $\abs{F(\psiminus)}\lesssim_{\delta} (D_{-1})^{\half} \tb^{-\frac{7+\gamma}{2}+\delta}r^{3-\delta}$. For $\rb_0$ finite, one can multiply this equation by $\mu Y(rY\psiminus)$ and integrate over $\rb$ from $r_+$ to $\rb_0$. The integral involving $F(\psiminus)$ can be bounded via a Cauchy-Schwarz inequality by $\veps \int_{r_+}^{\rb_0}\mu \abs{Y(rY\psiminus)}^2 +\veps^{-1}\int_{r_+}^{\rb_0}\mu \abs{F(\psiminus)}^2$ where the first term can be absorbed by the LHS and the second term is bounded by $D_{-1}(\tb_0-r_+)^2\tb^{-7-\gamma+2\delta}$. This together with \eqref{eq:psiminus:gooddecay:uptosecond:l=1} thus gives for $i\in \{1,2\}$,
\begin{align}
\abs{r^{-1}(r\prb)^i\Lxi^j\psiminus}\lesssim_{\delta}{}&(D_{-1})^{\half}
\tb^{-\frac{7+\gamma}{2}+\delta-j}r^{-\delta}.
\end{align}
One can repeat the above discussions by using the commutator \eqref{eq:commmutator:rYandvariWave:Schw} to show the above estimate holds for any $i\in \mathbb{N}^+$, which then proves \eqref{eq:ptwdecay:psiminus:gammadecay:Schw:l=1} for $\ell=1$ mode.

We next consider $\ell\geq 2$ modes. As discussed in Remark \ref{rem:evenimproveEnerDecay:phiminus:Schw}, the RHS of \eqref{eq:W-3energynorm:spin-1:Schw} decays like $\tb^{-5-\gamma}$, and the LHS for $\ell\geq 2$ modes is larger than
\begin{align}
\hspace{4ex}&\hspace{-4ex}
c\int_{\Sigmatb}\big(
\mu r^{-5}\abs{\mathbf{S}^{\half}\Phiminus{1}}^2
+2M r^{-6}\abs{\mathbf{S}^{\half}\Phiminus{1}}^2
+\mu r
\abs{\prb(r^{-2}{\Phiminus{1}})}^2\notag\\
&\quad +\mu r^{-3}\abs{\mathbf{S}\Psiminus}^2
+\mu^2 r\abs{\prb(r^{-1}\mathbf{S}^{\half}\Psiminus)}^2\big)\di^3 \mu\notag\\
\gtrsim{}&\int_{\Sigmatb}\big(
r^{-5}\abs{\mathbf{S}^{\half}\Phiminus{1}}^2
+\mu r
\abs{\prb(r^{-2}{\Phiminus{1}})}^2
+\mu r^{-3}\abs{\mathbf{S}\Psiminus}^2\notag\\
&\qquad
+r^{-3}\abs{\mathbf{S}^{\half}\Psiminus}^2
+\mu^2 r\abs{\prb(r^{-1}\mathbf{S}^{\half}\Psiminus)}^2
\big)\di^3 \mu.
\end{align}
One can follow the same way of arguing in the proof of Proposition \ref{prop:evenimproveEnerDecay:phiminus}
and obtain spacetime integral decay
\begin{align}
\label{eq:evenimproveEnerDecay:phiminus:Schw}
\norm{\Lxi^j\Psiminus
}^2_{W_{-3}^{\reg-\regl}(\DOC_{\tb,\infty})}
\lesssim{}D_{-1}\tb^{-4-\gamma-2j},
\end{align}
It is then manifest from the estimate \eqref{eq:improvePTWDecay:phiminus} and inequality \eqref{eq:Sobolev:3} that
\begin{align}
\absCDeri{\Lxi^j\psiminus}{\reg-\regl}\lesssim{} (D_{-1})^{\half} \tb^{-\frac{5+\gamma}{2}-j}.
\end{align}
Together with \eqref{eq:weakdecay:spin-1:v3}, this proves \eqref{eq:ptwdecay:psiminus:gammadecay:Schw:l=1} for $\ell\geq 2$ modes.
\end{proof}
\emph{Proof of Theorem \ref{thm:Schw}}:
We consider only the spin $+1$ component, since the proof for spin $-1$ component is analogous and the estimates of the middle component then follow from the estimates of spin $\pm 1$ components and Proposition \ref{prop:estiofMiddlecomp:generalassupofextremecomps}.

In the case that the $\ell_0$-th N--P constant $\NPCP{\ell_0}$ of the $\ell=\ell_0$ mode $\Psiplus^{\ell=\ell_0}$ does not vanish, then from Proposition \ref{prop:BED:Psiplus:l=l0} the basic energy $\gamma$-decay condition with $\gamma=2\ell_0+1-\delta$ and $D_{+1}=F^{(\ell_0-1)}(\reg+\regl(j,\ell_0),3-\delta,\tb_0,\Psiplus)$ holds for $\ell=\ell_0$ mode. For the part $\Psiplus^{\ell\geq \ell_0+1}$ which are supported on $\ell\geq \ell_0+1$ modes, one can in fact obtain an $r^p$ estimate for $\PhiplusHigh{\ell_0}$ for $p\in [0,1-\delta]$. It is clear from the proof in Proposition \ref{prop:BED:Psiplus:l=l0} that they satisfy the basic energy $\gamma$-decay condition with $\gamma=2\ell_0+1-\delta$ and
\begin{align}
D_{+1}={}&\norm{\Psiplus}^2_{W_{-2}^{\reg+\regl(j,\ell_0)}(\Sigmazero)}
+\sum_{m=1}^{\ell_0}\norm{\PhiplusHigh{m}}^2_{W_{-2}^{\reg+\regl(j,\ell_0)-m}(\Sigmazero^{3M})}+
\norm{rV\PhiplusHigh{\ell_0}}^2_{W_{-1-\delta}^{\reg+\regl(j,\ell_0)-\ell_0-1}(\Sigmatb^{3M})}\notag\\
\lesssim{}&F^{(\ell_0-1)}(\reg+\regl(j,\ell_0),3-\delta,\tb_0,\Psiplus).
\end{align}
In total, the basic energy $\gamma$-decay condition with $\gamma=2\ell_0+1-\delta$ and $D_{+1}=F^{(\ell_0-1)}(\reg+\reg'(j,\ell_0),3-\delta,\tb_0,\Psiplus)$ holds.

In the other case that the $\ell_0$-th N--P constant $\NPCP{\ell_0}$ of the $\ell=\ell_0$ mode $\Psiplus^{\ell=\ell_0}$ vanishes, Proposition \ref{prop:BED:Psiplus:l=l0} implies the basic energy $\gamma$-decay condition with $\gamma=2\ell_0+3-\delta$ and $D_{+1}=F^{(\ell_0-1)}(\reg+\regl(j,\ell_0),5-\delta,\tb_0,\Psiplus)$ holds for $\ell=\ell_0$ mode. We then turn to $\ell=\ell_0+1$ mode, and find from Point \ref{pt1: prop:BED:Psiplus:l=l0} of Proposition \ref{prop:BED:Psiplus:l=l0} that the basic energy $\gamma$-decay condition with $\gamma=2\ell_0+3-\delta$ and \begin{align}
D_{+1}={}&\norm{\Psiplus}^2_{W_{-2}^{\reg+\regl(j,\ell_0)}(\Sigmazero)}
+\sum_{m=1}^{\ell_0}\norm{\PhiplusHigh{m}}^2_{W_{-2}^{\reg+\regl(j,\ell_0)-m}(\Sigmazero^{3M})}+
\norm{rV\tildePhiplusHigh{\ell_0}}^2_{W_{1-\delta}^{\reg+\regl(j,\ell_0)-\ell_0-1}(\Sigmatb^{3M})}\notag\\
\lesssim{}&F^{(\ell_0-1)}(\reg+\regl(j,\ell_0),5-\delta,\tb_0,\Psiplus).
\end{align}
Consider in the end $\Psiplus^{\ell\geq \ell_0 +2}$. One can achieve an $r^p$ estimate for $\PhiplusHigh{\ell_0+1}$ for $p\in [0,1-\delta]$, and hence it satisfies the basic energy $\gamma$-decay condition with $\gamma=2\ell_0+3-\delta$ and
\begin{align}
D_{-1}={}&\norm{\Psiplus}^2_{W_{-2}^{\reg+\regl(j,\ell_0)}(\Sigmazero)}
+\sum_{m=1}^{\ell_0+1}\norm{\PhiplusHigh{m}}^2_{W_{-2}^{\reg+\regl(j,\ell_0)-m}(\Sigmazero^{3M})}+
\norm{rV\PhiplusHigh{\ell_0+1}}^2_{W_{-1-\delta}^{\reg+\regl(j,\ell_0)-\ell_0-2}(\Sigmatb^{3M})}\notag\\
\lesssim{}&F^{(\ell_0-1)}(\reg+\regl(j,\ell_0),5-\delta,\tb_0,\Psiplus).
\end{align}
In summary, the basic energy $\gamma$-decay condition with $\gamma=2\ell_0+3-\delta$ and $D_{-1}=F^{(\ell_0-1)}(\reg+\regl(j,\ell_0),5-\delta,\tb_0,\Psiplus)$ holds.

Given the above basic energy $\gamma$-decay condition for spin $+1$ component, then from Point \ref{pt1:prop:ptwFromBEDC:extremecomps:Schw} of Proposition \ref{prop:ptwFromBEDC:extremecomps:Schw} and the following relations for $\beta\in\{0,2\}$
\begin{align}
F^{(\ell_0-1)}(\reg+\regl(j,\ell_0),3+\beta-\delta,\tb_0,\Psiplus)\lesssim{}&
\InizeroEnergyplus{\reg+\regl(j,\ell_0)}{-1-\delta+\beta},
\end{align}
the pointwise decay estimates stated in Theorem \ref{thm:Schw} follow.
\QED


\newcommand{\mnras}{Monthly Notices of the Royal Astronomical Society}
\newcommand{\prd}{Phys. Rev. D}
\newcommand{\apj}{Astrophysical J.}
\bibliographystyle{amsplain}

\end{document}